\tikzset{
    >=stealth,
    every picture/.style={thick},
    graphs/every graph/.style={empty nodes},
}
\tikzstyle{vertex}=[
\tikzstyle{printersafe}=[decoration={snake,amplitude=0pt}]
\newcommand{\supp}{\operatorname{supp}}
\newcommand{\pp}{\mathbb{P}}
\newcommand{\qq}{\mathbb{Q}}
\newcommand{\zz}{\mathbb{Z}}
\newcommand{\rr}{\mathbb{R}}
\newcommand{\cc}{\mathbb{C}}
\newcommand{\kk}{\mathbb{K}}
\def\O#1.{\mathcal {O}_{#1}}			
\def\pr #1.{\mathbb P^{#1}}				
\def\af #1.{\mathbb A^{#1}}			
\def\ses#1.#2.#3.{0\to #1\to #2\to #3 \to 0}	
\def\xrar#1.{\xrightarrow{#1}}			
\def\K#1.{K_{#1}}						
\def\bA#1.{\mathbf{A}_{#1}}			
\def\bM#1.{\mathbf{M}_{#1}}				
\def\bL#1.{\mathbf{L}_{#1}}				
\def\bB#1.{\mathbf{B}_{#1}}				
\def\bK#1.{\mathbf{K}_{#1}}			
\def\subs#1.{_{#1}}					
\def\sups#1.{^{#1}}
  \newtheorem{theorem}{Theorem}[section]
  \newtheorem{lemma}[theorem]{Lemma}
  \newtheorem{proposition}[theorem]{Proposition}
  \newtheorem{corollary}[theorem]{Corollary}
  \newtheorem{conjecture}[theorem]{Conjecture}
  \newtheorem{definition}[theorem]{Definition}
  \newtheorem{example}[theorem]{Example}
  \newtheorem{problem}[theorem]{Problem}
  \newtheorem{question}[theorem]{Question}
\newtheorem{remark}[theorem]{Remark}
\theoremstyle{remark}
\numberwithin{equation}{section}
\begin{document}

\title[Coregularity of Fano varieties]{Coregularity of Fano varieties}

\author[J.~Moraga]{Joaqu\'in Moraga}
\address{Department of Mathematics, Princeton University, Fine Hall, Washington Road, Princeton, NJ 08544-1000, USA
}
\email{jmoraga@princeton.edu}

\subjclass[2020]{Primary 14B05, 14E30, 14L24, 14M25;
Secondary  14A20, 53D20.}
\maketitle

\begin{abstract}
The regularity of a Fano variety, denoted by ${\rm reg}(X)$,
is the largest dimension of the dual complex 
of a log Calabi--Yau structure on $X$.
The coregularity is defined to be 
\[
{\rm coreg}(X):=
\dim X - {\rm reg}(X)-1.
\]
The coregularity 
is the complementary dimension of the regularity.
We expect that the coregularity of a Fano variety governs, to a large extent, the geometry of $X$.
In this note, we review the history of Fano varieties, give some examples,
survey some important theorems, introduce the coregularity,
and propose several problems regarding 
this invariant of Fano varieties.
\end{abstract}

\setcounter{tocdepth}{1} 
\tableofcontents

\section{Preliminaries}

In this note, we introduce and study the coregularity of Fano varieties.
In this section, we present some basic objects that will be used to introduce the invariant:
canonical line bundle, Fano manifolds, Fano varieties, Calabi--Yau pairs, 
theory of complements, and dual complexes.

\subsection{Canonical line bundle} 
One of the main aims of algebraic geometry is to classify 
smooth projective varieties. 
Given a $n$-dimensional 
smooth projective variety $X\subset \pp^N$, 
we can define its {\em tangent bundle} $T_X$. 
Dualizing the tangent bundle, we obtain 
the {\em cotangent bundle} $\Omega_X:=T_X^*$.
In some sense, the tangent bundle and the cotangent bundle, 
are the only natural vector bundles that we can associate
with any smooth projective variety. 
The {\em canonical line bundle}, denoted by $\omega_X$, 
is the $n$-th exterior power of the cotangent bundle, i.e., 
$\omega_X := \bigwedge^n \Omega_X$.
Over the complex numbers, 
$\omega_X$ is the determinant bundle 
of holomorphic forms on $X$.
The previous bundles are independent of the chosen projective embedding of $X$.
The {\em canonical divisor} $K_X$ is a divisor on $X$ for which $\omega_X\simeq \mathcal{O}_X(K_X)$.
There are three pure classes of smooth projective varieties, 
depending on the positivity or negativity of its
canonical line bundle:
\begin{enumerate} 
    \item A smooth projective variety $X$ is said to be {\em Fano} if 
    $\omega_X$ is anti-ample, i.e., $\omega_X\cdot C<0$ for every curve $C\subset X$.
    \item A smooth projective variety $X$ is said to be {\em Calabi--Yau} if 
    $\omega_X$ is numerically trivial, i.e., $\omega_X \cdot C=0$ for every curve $C\subset X$.
    \item A smooth projective variety $X$ is said to be {\em canonically polarized} if $\omega_X$ is ample, i.e., $\omega_X\cdot C>0$ for every curve $C\subset X$.
\end{enumerate}
The previous trichotomy generalizes to higher-dimensional varieties
the classic trichotomy of Riemann surfaces: 
the Riemann sphere, complex tori, and curves of genus at least two.
A smooth hypersurface $H\subset \pp^N$ of degree $d$ is
Fano (resp. Calabi--Yau and canonically polarized) 
if $d<n+1$ (resp. $d=n+1$ and $d>n+1$).
Among algebraic varieties, we expect that Fano varieties are easier to understand due to multiple reasons.
In dimension one there is only one them, so the initial expectation is that there are fewer Fano varieties than Calabi--Yau and canonically polarized varieties in any given dimension.
On the other hand, they tend to behave similarly to a projective space.
They tend to be simply connected, covered by holomorphic Riemann spheres,
and rigid under small deformations.
In this note, we focus on the study of Fano varieties, 
although this often intertwinds with a better understanding of both
Calabi--Yau and canonically polarized varieties.

\subsection{Fano manifolds}
The classification 
of smooth surfaces
with ample anti-canonical divisor
was achieved by Pasquale del Pezzo in 1887.
These surfaces are currently known as
{\em del Pezzo surfaces}
and they form $9$ families
depending on the degree of the anti-canonical divisor $-K_X$.
The concept of {\em Fano manifolds}
was studied by Leonard Roth~\cite{Roth1930}
and by Gino Fano~\cite{Fano1942} in the 30's and 40's, respectively.
The latter formally introduces the concept of Fano manifolds. 
Thus, del Pezzo surfaces
are smooth Fano surfaces.
During the 50's and the 60's there were some 
mentions to Fano manifolds, 
but it was nothing near a central topic in algebraic geometry.
The first systematic study of Fano varieties 
was led by Iskovskih in~\cite{Isk78a,Isk78b}.
In this paper, Iskovskih combines the recent techniques
introduced by Grothendieck to algebraic geometry 
and Fano's brilliant intuition to give a complete
treatment of smooth Fano $3$-folds.
In these papers, the main invariant to classify smooth Fano $3$-folds is the index. 
The index of a Fano manifold $X$ is defined to be:
\[
i(X):=\max\{ r\in \zz_{\geq 1} \mid 
rH \sim K_X \text{ for $H$ ample Cartier on $X$}
\}. 
\]
The main gadget to study these manifolds
was to find smooth elements $S\in |H|$.
The existence of these smooth elements was proved by Shokurov~\cite{Sho79}.

The index of a Fano manifold of dimension $3$ is an integer in $\{1,2,3,4\}$.
If $i(X)=4$, then $X\simeq \pp^3$.
If $i(X)=3$, then $X$ is isomorphic to a quadric in $\pp^4$.
If $i(X)=2$, then 
$H^3\in \{1,\dots,7\}$. 
If $H^3\in \{5,6,7\}$, then there is a unique Fano $3$-fold $V_d\subset \pp^{d+1}$.
If $H^3\in \{1,2,3,4\}$ gives weighted hypersurfaces or complete intersections.
Most Fano manifolds have index $i(X)=1$
and the anti-canonical map, defined by $|-K_X|$,
is a morphism $\phi_{-K_X}\colon X\rightarrow \pp^N$.
In what follows, we set $2g-2=-K_X^3$.
There are three possible behaviours in the case of index one:
\begin{itemize}
    \item the anti-canonical map $\phi_{-K_X}$ is an embedding into
    $\pp^{2g-2}$ and the image is an intersection of quadrics;
    \item {\em hyperelliptic threefolds}: the anti-canonical map $\phi_{-K_X}$ maps $X$ into $\pp^{g+1}$
    with a degree two map onto its image; and 
    \item {\em trigonal threefolds}: the image of the anti-canonical map $\phi_{-K_X}$ embeds 
    $X$ in $\pp^{g+1}$ and its image is contained in a $4$-fold scroll.
\end{itemize}
Using the previous description, 
Iskovskih gave a classification of smooth Fano $3$-folds of Picard rank one.
In~\cite{Bat81}, Batyrev classified smooth toric Fano $3$-folds.
In~\cite{MM82}, Mori and Mukai classified $3$-dimensional Fano manifolds
with second Betti number of at least two.
Conte started the study of Fano $3$-folds with Gorenstein singularities in~\cite{Cont85}.
Soon later, Maeda considered log Fano $3$-folds, i.e., log smooth pairs $(X,\Delta)$
for which $-(K_X+\Delta)$ was ample~\cite{Mae86}.
This was the first approximation to the modern definition of Fano varieties.
Afterwards, Fano varieties were studied from their topological perspective~\cite{Tsu88}, 
arithmetic perspective~\cite{FMT89}, the existence of K\"ahler-Einstein metrics~\cite{Mab87},
birational transformations~\cite{Tak89}, and characteristic p methods~\cite{Bal89}.

The existence of rational curves on Fano varieties is one of the cornerstones of the minimal model program~\cite{Mor82}.
In~\cite{KMM92}, Koll\'ar, Miyaoka, and Mori proved that two general points on a $n$-dimensional Fano manifold can be connected by a rational curve $C$ for which $-K_X\cdot C \leq n(n+1)$, i.e.,
Fano manifolds are rationally connected.
As a consequence, they prove that the inequality $-K_X^n\leq (n(n+1))^n$ holds and that 
there are only finitely many deformation families of $n$-dimensional Fano manifolds.
In~\cite{BB92}, the Borisov brothers studied toric Fano varieties, i.e., 
a toric variety $X$ for which $-K_X$ is ample (with no smoothness condition).
They prove that in each dimension, there are only finitely many toric Fano varieties
for which a certain singularity invariant is controlled. 
This leads us to the introduction of, possibly singular, Fano varieties. 

\subsection{Fano varieties}
A Fano variety is the singular generalization of a Fano manifold. 
However, in order to still get a nice theory, one needs to define the class
of allowed singularities carefully.

\begin{example}\label{ex:1}
{\em 
Let $G$ be a finite group
acting on a Fano manifold $X$, let $Y:=X/G$ be the quotient, and 
$\pi\colon X\rightarrow Y$ be the quotient morphism.
Then, by the Riemann-Hurwitz formula, we can write 
\[
K_X=\pi^*(K_Y+\Delta), 
\]
where $\Delta$ is an effective divisor on $X$ with standard coefficients, i.e., 
coefficients of the form $1-\frac{1}{n}$ for some positive integer $n$.
Note that $K_X+\Delta$ is a $\qq$-divisor, but it is still $\qq$-Cartier, i.e., 
$m(K_X+\Delta)$ is a honest Cartier divisor for a sufficiently divisible integer $m$.
As a $\qq$-divisor, we have that $-(K_X+\Delta)$ is ample, i.e., $-m(K_X+\Delta)$ is a very ample Cartier divisor for some $m$ divisible enough.
}
\end{example}

The previous example suggests that we should generalize the definition of Fano manifolds in two directions: 
we should allow boundary divisors
and possibly singular varieties. 

A {\em pair} consists of the data 
$(X,\Delta)$ of a normal quasi-projective variety
$X$
and an effective divisor $\Delta$ on $X$ for which $K_X+\Delta$ is $\qq$-Cartier. 
To define classes of singularities for our pair, we introduce the concept of log discrepancies. 
Let $\phi\colon Y\rightarrow X$ be a projective birational morphism and $E\subset Y$ be a prime divisor.
The {\em log discrepancy} of $(X,\Delta)$ at $E$, is defined to be
\[
a_E(X,\Delta)=1-{\rm coeff}_E(K_Y-\phi^*(K_X+\Delta)).
\]
In a few words, $a_E(X,\Delta)$ measures how singular is $(X,\Delta)$ along the tangent directions corresponding to the points of $E$.
Thus, nice singularities should correspond to larger values of $a_E(X,\Delta)$ for every possible $E$. 
In the previous context, we say that $E$ is a prime divisor over $X$.
If $E$ lies in the exceptional locus of $\phi$ (where the morphism is not an isomorphism), then we
say that $E$ is exceptional over $X$.
This leads to the following definitions: 
\begin{enumerate}
    \item We say that $(X,\Delta)$ is {\em terminal} if $a_E(X,\Delta)>1$ for every prime divisor $E$ exceptional over $X$.
    \item We say that $(X,\Delta)$ is {\em canonical}
    if $a_E(X,\Delta)\geq 1$ for every prime divisor $E$ over $X$.
    \item We say that $(X,\Delta)$ is {\em log terminal} (or {\em Kawamata log terminal}) if $a_E(X,\Delta)>0$ for every prime divisor $E$ over $X$. 
    We write klt for short.
    \item We say that $(X,\Delta)$ is {\em log canonical} if $a_E(X,\Delta)\geq 0$ for every prime divisor $E$ over $X$.
    We write lc for short.
\end{enumerate}
The class of terminal singularities is the largest class of singularities obtained by the minimal model program for smooth varieties.
On the other hand, log canonical singularities
are the wider class of singularities in which we expect that the MMP can be run.
Quotient singularities are klt~\cite{Kol13}. 
Symplectic singularities are canonical~\cite{Rei87}.
The Reid-Tai criterion allows us to discern whether a quotient singularity is canonical or terminal~\cite{Rei85}.
The affine cone over a rational normal curve is also klt.
On the other hand, a cone over an elliptic curve is lc but not klt (see, e.g.,~\cite{Kol13}).
The boundary $\Delta$ can often be thought of as a correction term for the canonical divisor $K_X$ not being $\qq$-Cartier.
We say that a variety $X$ is {\em klt type} 
(resp {\em lc type})
if there exists a boundary $\Delta$ for which
$(X,\Delta)$ is klt (resp. lc).

We say that variety $X$ is {\em Fano} (also known as $\qq$-Fano) if $X$ has klt singularities
and $-K_X$ is ample.
We say that a variety $X$ is {\em of Fano type}
if there exists a boundary $\Delta$ on $X$ for which $(X,\Delta)$ has klt singularities
and $-(K_X+\Delta)$ is ample. 
From Example~\ref{ex:1}, we deduce that every finite quotient of a Fano manifold
is of Fano type.
Furthermore, every projective toric variety is of Fano type.
Even if del Pezzo surfaces are classified, 
we can find far more Fano varieties of dimension two (see, e.g.,~\cite{OZ99,MZ04}).

The books by Prokhorov and Iskovskikh~\cite{IP99a,IP99b} 
gives a very good account for the study of Fano varieties, especially those of dimension three.
Many results regarding rationally connectedness, rigidity,
and simply connectedness of Fano manifolds
were generalized to the Fano type setting~\cite{Zha95,Tak00,Zha06,dFH11}.
Fano varieties have also been extensively studied from the perspective of birational rigidity~\cite{Puk02,Che06}.
It is worth mentioning that Fano varieties are tightly 
related to mirror symmetry~\cite{CL10}.
In~\cite{Ale94}, Alexeev proved that Fano type surfaces with mild singularities form bounded families. 

\subsection{Calabi--Yau pairs} 
Analogously, a Calabi--Yau pair is the singular generalization of a Calabi--Yau manifold. 
Again, we consider pairs instead of varieties.

A {\em log Calabi--Yau pair}
is a pair $(X,\Delta)$ with log canonical singularities
for which $K_X+\Delta\equiv 0$, i.e.,
the intersection of every curve with the
$\qq$-divisor $K_X+\Delta$ is zero.
We say that a variety $X$ is of {\em log Calabi--Yau type}
if there exists a boundary $\Delta$ on $X$ for which
the pair $(X,\Delta)$ is log Calabi--Yau. 
Every Calabi--Yau manifold is already a 
variety of log Calabi--Yau type.
Analogously as in Example~\ref{ex:1}, the 
finite quotient of a Calabi--Yau manifold
is a variety of log Calabi--Yau type.

Every Fano type variety is log Calabi--Yau.
Indeed, we can proceed as follows: 
if $-(K_X+\Delta)$ is an ample divisor, then
we can take a large multiple for which 
$-m(K_X+\Delta)$ is a very ample divisor.
So we can consider a general section 
\begin{equation}\label{eq:comp} 
\Gamma \in |-m(K_X+\Delta)|
\end{equation} 
which intersects the support of $\Delta$ transversally. 
By choosing $\Gamma$ general enough, we achieve that
the pair $(X,\Delta+\Gamma/m)$ is klt.
Furthermore, by construction, we have that 
\[
K_X+\Delta+\Gamma/m\equiv 0.
\]
The previous argument is in general not very effective, 
as we have no control over the integer $m$ which we need for this construction to work.

\begin{example} 
{\em 
For instance, if we consider the projective plane 
$\pp^2$, there are several ways to produce a boundary $\Gamma$ for which $(\pp^2,\Gamma)$ is log Calabi--Yau.
We can take $\Gamma$ to be:
\begin{enumerate}
    \item the sum of three transversal lines, 
    \item the sum of a line and a conic intersecting transversally, or 
    \item an elliptic curve.
\end{enumerate}
Of course, if we allow fractional coefficients on the boundary (i.e, consider elements of $|-mK_{\pp^2}|$ for $m\geq 2$), then we can construct far more examples.
}
\end{example} 

In general, we try to seek for those boundaries $\Gamma$ which have more components and smaller denominators in their coefficients.
In other words, that belong to $|-m(K_X+\Delta)|$
for smaller values of $m$.

The singularities introduced in the previous subsection:
Kawamata log terminal and log canonical
are the local versions of 
Fano type
and log Calabi--Yau type varieties. 
Indeed, the affine cone over a Fano type variety is of klt type
and the affine cone over a log Calabi--Yau type variety
is of lc type.

\begin{remark}
{\em 
When we work with singularities of a pair, i.e., 
a precise closed point $x\in (X,\Delta)$, we may write 
$(X,\Delta;x)$ to mean that the statement that we are writing holds for the pair $(X,\Delta)$ around the closed point $x$.
}
\end{remark}

In the other direction, 
given a klt singularity $(X,\Delta;x)$ (resp. lc singularity), we can find a projective birational morphism that contracts a unique prime divisor $E$ mapping onto $x$ so that $E$ admits the structure of a Fano type (resp. log Calabi--Yau type) variety. 
The previous statements are proved in~\cite{Pro00,Kud01,Xu14}.
The understanding of klt singularities has been closely intertwined with the understanding of Fano varieties.

\subsection{Theory of complements} 
As pointed out in the previous subsection, 
every Fano type variety 
admits a boundary which turns it into 
a log Calabi--Yau pair.
This choice always exists but it is highly non-unique. One of the aim goals of the theory of complements
is to make this choice effective 
in terms of certain invariants of $X$. 
The second aim is to make this choice in a somewhat canonical way, or at the very least, find certain
distinguished log Calabi--Yau structures the Fano type variety.

A boundary $B$ with $\qq$-coefficients 
on a normal projective variety $X$ 
for which $(X,B)$ is log Calabi--Yau, 
is called a {$\qq$-complement}.
A $\qq$-complement is said to be a {\em $N$-complement}
if the linear equivalence
\[
N(K_X+B)\sim 0
\]
holds. 
This means that $NB\in |-NK_X|$.
Note that in the previous definition we are not requiring the divisor $K_X$ to be Cartier, 
not even that $NK_X$ is Cartier. 
Hence, $B\sim -NK_X$ is 
linear equivalence of Weil divisors, i.e., 
their difference is the divisor associated
to a rational function on $X$. 
Note that this notion even works for $\qq$-divisors.

The theory of complements was initiated by
Prokhorov and Shokurov in the early 2000's.
In~\cite{Sho00}, Shokurov proved that surfaces
of Fano type almost always admit 
a $N$-complement 
where $N\in \{1,2,3,6\}$.
More precisely, Shokurov proves the following theorem.

\begin{theorem}\label{thm:boundedness-complements-surfaces}
Let $X$ be a Fano type surface. 
Then, one of the following statements hold:
\begin{enumerate}
    \item the variety $X$ admits a $N$-complement where
    $N\in \{1,2,3,6\}$, or
    \item for every $B$ such that $(X,B)$ is log Calabi-Yau, the pair $(X,B)$ has klt singularities.
\end{enumerate}
\end{theorem}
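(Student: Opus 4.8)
The plan is to argue by contrapositive: I will assume that some boundary $B$ makes $(X,B)$ a \emph{non-klt} log Calabi--Yau pair and produce from it an $N$-complement with $N\in\{1,2,3,6\}$. This makes the two alternatives exhaustive, since if instead \emph{every} log Calabi--Yau structure on $X$ is klt we are already in case (2). Throughout, the hypothesis that $X$ is of Fano type is what drives the argument: it lets us run the relevant minimal model program, it is preserved under the birational modifications we use, and, most importantly, it forces $-K_X$ to be big, which is the source of the cohomology vanishing needed to propagate complements.

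First I would replace $(X,B)$ by a model on which the non-klt locus is a divisor. Take a $\qq$-factorial dlt modification $\phi\colon Y\to X$, so that $(Y,B_Y)$ is dlt, $K_Y+B_Y=\phi^*(K_X+B)$, and $\phi$ extracts precisely the log canonical places of $(X,B)$. Because $(X,B)$ is log canonical but not klt it has at least one such place, so $\lfloor B_Y\rfloor\neq 0$; in particular this step converts a possibly zero-dimensional log canonical center of $(X,B)$ into an honest prime divisor on $Y$. The variety $Y$ is again of Fano type (a standard property of dlt modifications of Fano type varieties), so $-K_Y$ is big, and we fix a component $S\subset\lfloor B_Y\rfloor$, writing $B_Y=S+B_Y'$ with $B_Y'\geq 0$ not containing $S$.

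The arithmetic of the theorem now comes from adjunction to $S$. Restricting, $(K_Y+B_Y)|_S=K_S+B_S$ with $B_S=\mathrm{Diff}_S(B_Y')$ the different, and since $K_Y+B_Y\equiv 0$ the pair $(S,B_S)$ is a one-dimensional log Calabi--Yau pair with standard coefficients. Such pairs are completely classified: either $S$ is an elliptic curve with $B_S=0$, or $S$ is rational and the coefficients solve $\sum_i(1-\tfrac1{m_i})=2$, whose only solutions are the two-point (nodal) configuration and the orbifold types $(2,2,2,2),(3,3,3),(2,4,4),(2,3,6)$. Reading off the least $N$ with $N(K_S+B_S)\sim 0$ gives $N\in\{1,2,3,4,6\}$; the one case producing $N=4$, namely $(2,4,4)$, must be treated separately and improved so that the admissible global indices are exactly $\{1,2,3,6\}$, and this bookkeeping is a genuinely delicate point rather than a formality.

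It remains to lift an $N$-complement $B_S^+$ of $(S,B_S)$ to $Y$ and descend it to $X$. Here I would use a short exact sequence
\[
0\to \mathcal{O}_Y(D-S)\to \mathcal{O}_Y(D)\to \mathcal{O}_S(D|_S)\to 0,
\]
where $D$ is the integral divisor obtained by rounding $-N K_Y$ along the boundary so that $D|_S$ carries the section cutting out $NB_S^+$. Kawamata--Viehweg vanishing gives $H^1(Y,\mathcal{O}_Y(D-S))=0$, because $D-S-K_Y$ is nef and big by the bigness of $-K_Y$, so the restriction $H^0(Y,\mathcal{O}_Y(D))\to H^0(S,\mathcal{O}_S(D|_S))$ is surjective; lifting the section and adding the fixed divisorial part yields an $N$-complement $B_Y^+$ on $Y$, and $\phi_*B_Y^+$ is the desired complement on $X$. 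The main obstacle is precisely this last stretch: one must check that the lifted section together with the existing boundary defines a pair that is still log canonical of the same index $N$, and that the reduction and the $(2,4,4)$-exception never push $N$ outside $\{1,2,3,6\}$. Controlling the singularities of the extended pair, rather than invoking the vanishing, is the crux.
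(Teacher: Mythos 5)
Your overall route is the one the paper itself points to: this is a survey, and Theorem~\ref{thm:boundedness-complements-surfaces} is stated without proof, with the argument delegated to Shokurov~\cite{Sho00} and Prokhorov's account~\cite{Pro01}. Your sketch reproduces that strategy (dlt modification, adjunction to a boundary curve, classification of one-dimensional log Calabi--Yau pairs, lifting of sections), so the approach is not wrong; but the two steps you yourself flag as ``the crux'' are, as written, genuinely broken, and they are exactly where the real content lives. The worst is the vanishing. You assert that $D-S-K_Y$ is nef and big ``by the bigness of $-K_Y$''. Bigness alone never suffices for Kawamata--Viehweg, and in your situation the natural twist is not even big: since $K_Y+S+B_Y'\equiv 0$, the divisor $-N(K_Y+B_Y)$ you are rounding is numerically trivial. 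The genuine argument first runs a $-(K_Y+S)$-MMP (legitimate because $Y$ is of Fano type) to reach a model on which the relevant divisor is nef and big, and must separately handle the outcome in which this MMP ends with a Mori fiber space over which $S$ dominates the base, where one lifts complements through the canonical bundle formula instead. These are precisely the two mechanisms --- lifting from log canonical places and lifting from the base of a fibration --- that the paper attributes to Prokhorov--Shokurov; your single exact sequence on the dlt model has no vanishing behind it and the lifting step fails there.

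The second gap is the coefficient bookkeeping. For an arbitrary $\qq$-complement $B$, the different $B_S=\mathrm{Diff}_S(B_Y')$ has coefficients of the form $\bigl(m-1+\sum_i a_i b_i\bigr)/m$, which are \emph{not} standard, so your classification of $(S,B_S)$ into the types $(2,2,2,2)$, $(3,3,3)$, $(2,4,4)$, $(2,3,6)$ does not apply as stated. One must first replace $B_S$ by its standard approximation and then invoke Shokurov's monotonicity (the inequality $\lfloor (N+1)b\rfloor/N\geq b$ for standard $b$ and the regular indices $N$) to guarantee that an $N$-complement of the rounded-down pair still dominates $B_S$, so that the lifted pair on $Y$ is log canonical of index exactly $N$; this is a real reduction, not a formality, and it is absent from your sketch. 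Finally, you acknowledge but do not carry out the exclusion of $N=4$ arising from the $(2,4,4)$ case: Shokurov's surface theorem in its usual form produces $N\in\{1,2,3,4,6\}$ (compare the paper's own Theorem~\ref{thm:3-fold-comp}, which keeps $4$ in the list in coregularity one), so landing in $\{1,2,3,6\}$ as the statement demands requires an argument you have not supplied. In short: correct skeleton, matching the cited proof, but the vanishing step is invalid as stated and the two delicate points you defer are precisely the theorem's substance.
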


In other words, either we can find a 
$6$-complement
or it is not possible to produce a strictly
log canonical singularity on $(X,B)$.
By Alexeev's boundedness of Fano surfaces~\cite{Ale94},
the Fano surfaces in the second class of Theorem~\ref{thm:boundedness-complements-surfaces}
belong to a bounded family, i.e., 
they can be described using 
a fixed number of variables and polynomial equations. 
These surfaces are known as
{\em exceptional Fano type surfaces}. 
Although exceptional Fano type surfaces belong to a bounded family, 
until now we do not have a complete classification of these surfaces.
Nevertheless, the fact that they belong to a bounded family 
implies that there exists a
positive integer $N$ for which all
exceptional Fano type surfaces
admit a $N$-complement. 
This implies the following theorem, due to Shokurov.

\begin{theorem}
There exists a positive integer $N_2$ satisfying the following.
Let $X$ be a Fano type surface. 
Then, $X$ admits a $N_2$-complement.
\end{theorem}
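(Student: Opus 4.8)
The goal is to establish that there is a single positive integer $N_2$, depending only on the dimension (two) and not on the individual surface, that serves as a complement index for every Fano type surface. The plan is to split the class of Fano type surfaces into two families according to the dichotomy provided by Theorem~\ref{thm:boundedness-complements-surfaces}, produce a uniform bound on each family separately, and then take $N_2$ to be the least common multiple of the two bounds.

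\textbf{The non-exceptional case.} First I would treat the surfaces $X$ falling under alternative (1) of Theorem~\ref{thm:boundedness-complements-surfaces}. For these, Shokurov's theorem already furnishes an $N$-complement with $N\in\{1,2,3,6\}$. Since every integer in this set divides $6$, and since an $N$-complement where $N\mid N'$ can be promoted to an $N'$-complement (the linear equivalence $N(K_X+B)\sim 0$ implies $N'(K_X+B)\sim 0$), every such surface admits a $6$-complement. Thus the uniform bound on this family is simply $6$, requiring no further work.

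\textbf{The exceptional case.} The substantive part is the family of exceptional Fano type surfaces, those falling under alternative (2). Here I would invoke Alexeev's boundedness result~\cite{Ale94}, quoted in the excerpt, which says these surfaces form a bounded family: they are parametrized by a scheme of finite type, so all of them appear as fibers of a single morphism $\mathcal{X}\to T$ over a finite-type base $T$. The strategy is to spread out the anti-canonical systems: on the total space one controls the Cartier index of $-K_{\mathcal{X}/T}$ and the degrees involved in a uniform way, so that a suitable multiple $N_{\mathrm{exc}}(-K_{X_t})$ becomes base-point-free with a section cutting out a $\qq$-complement, uniformly across all fibers $X_t$. By the boundedness, finitely many such integers $N_{\mathrm{exc}}$ suffice, and their least common multiple gives a single index working for the entire exceptional family. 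I would then set $N_2:=\lcm(6,N_{\mathrm{exc}})$.

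\textbf{The main obstacle.} The delicate step is extracting a \emph{uniform} complement index from boundedness rather than merely concluding that each member individually admits some complement. Boundedness gives a finite-type parameter space, but passing from that to a single $N_2$ requires semicontinuity and spreading-out arguments that behave well under the klt hypothesis and under specialization, ensuring the linear equivalence $N_2(K_{X_t}+B_t)\sim 0$ persists fiberwise without the index jumping on special fibers. Controlling how sections of $-N K_{X_t}$ vary in the family, and guaranteeing that a general such section yields a genuine log canonical complement for every $t$ simultaneously, is where the real content lies; this is precisely the mechanism by which Shokurov upgrades the per-surface statement of Theorem~\ref{thm:boundedness-complements-surfaces} into the uniform statement asserted here.
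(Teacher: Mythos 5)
Your proposal is correct and follows essentially the same route as the paper: the dichotomy of Theorem~\ref{thm:boundedness-complements-surfaces} handles the non-exceptional surfaces via $N\in\{1,2,3,6\}$ (hence a $6$-complement), while Alexeev's boundedness of exceptional Fano type surfaces~\cite{Ale94} yields a single uniform index for the exceptional family, and the two bounds are combined. Your elaboration of the spreading-out/semicontinuity mechanism behind the boundedness step is exactly the content the paper leaves implicit when it asserts that membership in a bounded family ``implies that there exists a positive integer $N$'' working for all exceptional surfaces.
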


In~\cite{Pro01}, there is a good account of Shokurov's proof of the boundedness of complements for surfaces.
and some further results are proved.
As of today, it is unknown what is the optimal value for $N_2$.
However, it is expected that $N_2=66$ suffices
and this number is related to Sylvester's sequence as we explain below.
The previous theorem motivated the problem known as ``boundedness of complements", i.e., 
find a constant $N_n$ only depending on the dimension $n$, 
such that every Fano type variety $X$ of dimension
$n$ admits a $N_n$-complement.
This problem is also known as the ``existence of bounded complements". 
In some words, the problem asks about
an effective way of constructing 
a log Calabi--Yau variety in a Fano type variety.
In~\cite{PS01,PS09}, Prokhorov and Shokurov started an approach
to tackle the existence of bounded complements. 
The strategy consists of two main steps:
\begin{itemize}
    \item lifting complements from log canonical places, and
    \item lifting complements from the base of
    the fibration by using the canonical bundle formula.
\end{itemize}
Hence, by running a suitable argument, one could aim to lift the so-called bounded complement from lower-dimensional Fano type varieties.
In~\cite{Bir19}, Birkar settled the 
existence of bounded complements for $n$-dimensional Fano type varieties. 

\begin{theorem}\label{thm:boundedness-complements}
There exists a constant $N_n$, only depending on $n$, satisfying the following.
Let $X$ be a Fano type variety of dimension $n$.
Then, there exists a $N_n$-complement.
\end{theorem}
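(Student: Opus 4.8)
The plan is to argue by induction on the dimension $n$, assuming the statement together with its natural relative and local variants in all dimensions smaller than $n$; the base cases are furnished by the surface results stated above. The overall engine is the Prokhorov--Shokurov philosophy recalled in the excerpt: one produces a bounded complement either by descending to a lower-dimensional variety where induction applies and then lifting, or by showing that $X$ is so rigid that the index of any complement is forced to be bounded for structural reasons. The first step is to run a minimal model program so as to reduce to the case of a Mori fiber space $f\colon X\rar Z$ with $-K_X$ relatively ample, and to separate the argument according to $\dim Z$.

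If $\dim Z>0$, the generic fiber is a Fano type variety of dimension $<n$, and I would invoke the canonical bundle formula of Kawamata to transfer the problem to the base. A $\qq$-complement on $Z$, produced by the inductive hypothesis applied to the discriminant-plus-moduli divisor induced on $Z$, pulls back and combines with fiberwise complements to yield a bounded complement on $X$. The delicate point here is controlling the denominators contributed by the moduli part, which requires boundedness of the fibers together with effective semistable reduction estimates, and it is precisely for this reason that the induction must carry along the stronger statement for generalized polarized pairs rather than for varieties alone.

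When $\dim Z=0$, after the MMP $X$ is a Fano variety of Picard number one, and the governing dichotomy is whether $X$ is \emph{exceptional}, meaning every $\qq$-complement $(X,B)$ is klt, or whether some complement carries a non-klt center. In the non-exceptional case I would apply divisorial adjunction, restricting a candidate complement to a log canonical center $S$; the pair induced on $S$ is of lower dimension, so the inductive hypothesis supplies a bounded complement on $S$, and the real work is a lifting lemma that extends it back to $X$ without losing control of the index. This lifting relies on Kawamata--Viehweg vanishing to force surjectivity of the restriction map $H^0(X,-NK_X)\rar H^0(S,(-NK_X)|_S)$, so that sections cutting out the complement on $S$ actually spread out over $X$.

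The hard part, and the genuine obstacle, is the exceptional case: when no complement produces a non-klt center there is nothing lower-dimensional onto which to restrict, and the inductive descent breaks down entirely. To handle it I would establish \emph{effective birationality}, namely that for $\epsilon$-log canonical Fano varieties of dimension $n$ there is an integer $m=m(n,\epsilon)$ with $|-mK_X|$ birational. Exceptional Fano varieties are automatically $\epsilon$-log canonical for a universal $\epsilon=\epsilon(n)$, so effective birationality applies and, together with the Borisov--Alexeev--Borisov boundedness of $\epsilon$-lc Fano varieties, places all exceptional $X$ in a single bounded family; boundedness then yields a uniform $N$ for which an $N$-complement exists. Proving effective birationality is itself the core difficulty, since it cannot be isolated but must be interleaved with the complement statement in one simultaneous induction across dimensions, via a careful study of the anticanonical volume, the log canonical thresholds of the systems $|-mK_X|$, and the creation of isolated non-klt centers by tie-breaking.
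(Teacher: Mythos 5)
The paper itself contains no proof of this statement: it is Birkar's theorem, quoted from~\cite{Bir19}, and the survey only recalls the Prokhorov--Shokurov two-step strategy (lifting complements from log canonical places, and lifting from the base of a fibration via the canonical bundle formula). Measured against that, your outline is a faithful reconstruction of the actual argument in~\cite{Bir19}: induction on dimension carried along with generalized pairs, reduction via MMP to a Mori fiber space, the canonical bundle formula with control of the moduli part when $\dim Z>0$, adjunction to a non-klt center plus a vanishing-based lifting when $\dim Z=0$ and $X$ is non-exceptional, and effective birationality interleaved with the complement statement to dispose of the exceptional case. You also correctly identify the exceptional case as the genuine obstacle and the reason the induction cannot be run for complements alone.

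Two cautions, one of which is a real logical issue. First, invoking the Borisov--Alexeev--Borisov theorem to bound exceptional Fano varieties is circular as stated: BAB is proved in~\cite{Bir21} \emph{using} the boundedness of complements of~\cite{Bir19}, so the complement theorem cannot rest on it. In~\cite{Bir19} the exceptional case is handled internally --- effective birationality together with bounds on anti-canonical volumes and log canonical thresholds yields boundedness of exceptional pairs directly --- and your claim that exceptionality forces $\epsilon$-log canonicity for a universal $\epsilon=\epsilon(n)$ is itself one of the theorems established in that same simultaneous induction, not an automatic fact. Second, two technical points your sketch glosses over: the lifting step cannot literally restrict to an arbitrary lc center $S$ with Kawamata--Viehweg vanishing on $X$; one must pass to a dlt modification or plt blow-up so that the center becomes divisorial, perform divisorial adjunction there, and descend --- and throughout, the boundary coefficients must be confined to a fixed finite (hyperstandard) set attached to $n$, since otherwise the index of the lifted complement cannot be bounded. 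With the BAB appeal replaced by the internal boundedness of exceptional pairs and these adjustments made explicit, your plan is the correct one.
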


This achievement used crucially 
some deep theorems in
the minimal model program~\cite{BCHM10,HMX14}.
Moreover, it is a vital step for the boundedness
of Fano varieties~\cite{Bir21}.
A similar statement holds for klt singularities 
via global-to-local arguments (see subsection~\ref{subsec:fano-vs-klt}).

\begin{theorem}
There exists a constant $N_n$, 
only depending on $n$, satisfying the following.
Let $(X;x)$ be a $n$-dimensional klt singularity.
Then, there exists a boundary $B$ on $X$ for which
$(X,B;x)$ is log canonical, 
$x$ is a log canonical center of $(X,B)$, 
and $N(K_X+B)\sim 0$ holds on a neighborhood of the point $x\in X$.
\end{theorem}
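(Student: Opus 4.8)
The plan is a global-to-local reduction: I would replace the singularity by a canonically attached Fano type variety of dimension $n-1$, apply the global boundedness of complements (Theorem~\ref{thm:boundedness-complements}) there, and then lift the resulting complement back to $X$. Throughout, all statements are understood over a neighborhood of $x$.

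First I would extract a Koll\'ar component. As recalled in the previous subsection, for the klt singularity $(X;x)$ there is a projective birational morphism $\phi\colon Y\rar X$ extracting a unique prime divisor $E$ with $\phi(E)=\{x\}$ such that $(Y,E)$ is plt and $-(K_Y+E)$ is $\phi$-ample; in particular $E$ is a Fano type variety of dimension $n-1$. By adjunction, $(K_Y+E)|_E=K_E+\mathrm{Diff}_E$ and $(E,\mathrm{Diff}_E)$ is a log Fano pair. Applying Theorem~\ref{thm:boundedness-complements} to the $(n-1)$-dimensional Fano type pair $(E,\mathrm{Diff}_E)$ produces a constant $N_{n-1}$ and an $N_{n-1}$-complement $B_E\geq \mathrm{Diff}_E$, so that $(E,B_E)$ is log Calabi--Yau and $N_{n-1}(K_E+B_E)\sim 0$.

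The decisive step is to lift this complement from the log canonical place $E$ to $X$. Taking $N$ to be a controlled bounded multiple of $N_{n-1}$, I would produce a divisor $\Gamma\in|-N(K_Y+E)|$ whose restriction to $E$ equals $N(B_E-\mathrm{Diff}_E)$, and set $B:=\phi_*(\Gamma/N)$; then $N(K_X+B)\sim 0$ near $x$, since $\phi_*(-N(K_Y+E))=-NK_X$. The existence of $\Gamma$ comes from tensoring the ideal sheaf sequence of $E$ with $\oo_Y(-N(K_Y+E))$, namely
\[
0\rar \oo_Y(K_Y-(N+1)(K_Y+E))\rar \oo_Y(-N(K_Y+E))\rar \oo_E(-N(K_E+\mathrm{Diff}_E))\rar 0.
\]
Since $-(N+1)(K_Y+E)$ is $\phi$-ample and $Y$ is klt, relative Kawamata--Viehweg vanishing gives that $R^1\phi_*$ of the left-hand sheaf vanishes, so the complement on $E$ extends to $Y$. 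By plt adjunction and inversion of adjunction, the resulting $(X,B)$ is log canonical with $a_E(X,B)=0$; hence $E$, and therefore its image $x=\phi(E)$, is a log canonical center of $(X,B)$.

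The main obstacle is exactly this lifting, and more precisely the control of $(X,B)$ away from $E$. Ampleness of $-(K_Y+E)$ secures the extension and forces $E$ to be an exact log canonical place, but it does not by itself prevent the lifted section $\Gamma$ from developing high-multiplicity components elsewhere that would break log canonicity of $(X,B)$; one must choose $\Gamma$, equivalently perturb $B_E$ inside its complementary linear system and possibly refine $\phi$, so that $(X,B)$ remains log canonical in a whole neighborhood of $x$. A secondary point is bounding denominators: the coefficients of $\mathrm{Diff}_E$ lie in a fixed finite set of hyperstandard rationals and the index of $K_Y+E$ is bounded, so $N$ can be taken to be a fixed multiple of $N_{n-1}$ clearing these, and the local constant $N_n$ then depends only on $n$. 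These are precisely the issues addressed by the complement-lifting techniques of Prokhorov--Shokurov and Birkar.
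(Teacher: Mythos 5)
Your strategy is essentially the same one the paper intends: the paper states this theorem without proof, deriving it from Birkar's Theorem~\ref{thm:boundedness-complements} ``via global-to-local arguments'' through the plt blow-up of Proposition~\ref{prop:from-klt-to-Fano}, which is precisely your reduction (Koll\'ar component, adjunction to an $(n-1)$-dimensional log Fano pair, complement there, lift by relative Kawamata--Viehweg vanishing), matching the Prokhorov--Shokurov/Birkar lifting argument. Two small remarks: you need the version of Theorem~\ref{thm:boundedness-complements} for pairs $(E,\mathrm{Diff}_E)$ with hyperstandard coefficients (which is what \cite{Bir19} actually proves), and the obstacle you flag about log canonicity away from $E$ is in fact resolved with no perturbation needed, since Kawakita's inversion of adjunction applied to $\bigl(Y,E+\Gamma/N\bigr)$ gives log canonicity in a neighborhood of $E$, and $\phi^{-1}(x)=E$ together with the locality of the statement at $x$ lets you shrink $X$ so that this neighborhood suffices.
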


The boundary $B$ constructed in the previous statement is called a $N$-complement
of the klt singularity $(X;x)$.
It is worth mentioning, that we expect the 
existence of bounded complements 
to hold for log Calabi-Yau varieties as well. 
More precisely, 
whenever a variety $X$ admits a log Calabi--Yau structure it also admits an effective log Calabi--Yau structure.
In the language of complements: 
if a $n$-dimensional variety $X$ admits a $\qq$-complement, 
then it admits a $N_n$-complement.
In~\cite{FMX19}, the authors proved this statement in dimension at most three. 

\begin{theorem}
There exists a constant $N_3$ satisfying the following.
Let $X$ be a normal projective variety
of log Calabi-Yau type
and dimension at most $3$.
Then, $X$ admits a $N_3$-complement.
\end{theorem}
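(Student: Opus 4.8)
The plan is to reduce the statement to Birkar's boundedness of complements for Fano type varieties (Theorem~\ref{thm:boundedness-complements}) by analyzing the fibration structure attached to $-K_X$ together with the canonical bundle formula, and to run an induction on dimension. First I would fix a boundary $\Delta$ with $(X,\Delta)$ log Calabi--Yau and pass to a $\qq$-factorial dlt modification $\phi\colon (Y,\Delta_Y)\to (X,\Delta)$. Since $\phi$ is crepant, complements correspond under $\phi$ with the same index $N$: an $N$-complement of $(Y,\Delta_Y)$ pushes forward to one of $(X,\Delta)$ because pushforward preserves log canonicity and the linear equivalence $N(K_Y+B_Y)\sim 0$. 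Hence I may assume from the outset that $X$ is $\qq$-factorial with dlt (after a small perturbation, klt) singularities. The argument then splits according to the Iitaka dimension $\kappa:=\kappa(X,-K_X)=\kappa(X,\Delta)$, which lies in $\{0,1,2,3\}$ since $-K_X\equiv\Delta\geq 0$ is pseudo-effective.

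When $\kappa=\dim X$, the divisor $-K_X$ is big, and a standard argument produces $0\leq\Delta'\leq\Delta$ with $(X,\Delta')$ klt and $-(K_X+\Delta')$ ample, so that $X$ is of Fano type; Theorem~\ref{thm:boundedness-complements} then yields an $N$-complement with $N$ depending only on $\dim X\leq 3$. The difficulty is therefore concentrated in the range $0\leq\kappa<\dim X$.

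For $0<\kappa<\dim X$ I would run the $(-K_X)$-MMP to reach the ample model of $-K_X$, obtaining a contraction $f\colon X'\to Z$ with $\dim Z=\kappa<\dim X$, general fiber log Calabi--Yau of dimension $\dim X-\kappa\leq 2$, and $K_{X'}+\Delta'$ numerically trivial over $Z$. The canonical bundle formula writes $K_{X'}+\Delta'\sim_{\qq}f^*(K_Z+B_Z+M_Z)$, where $(Z,B_Z+M_Z)$ is a generalized pair with $K_Z+B_Z+M_Z\equiv 0$, i.e.\ a lower-dimensional generalized log Calabi--Yau structure. Arguing by induction on dimension (in the generalized-pair form of the statement), $Z$ admits a bounded complement, and the Prokhorov--Shokurov lifting technique~\cite{PS01,PS09}, together with the DCC and denominator control on the discriminant $B_Z$, promotes it to a bounded complement on $X'$ and hence on $X$. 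The base cases $\dim X\leq 1$ are elementary, and the surface case $\dim X=2$ reduces by the same fibration analysis over a curve to Shokurov's Theorem~\ref{thm:boundedness-complements-surfaces} for the Fano-type pieces and to explicit index bounds for the genuinely Calabi--Yau pieces.

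The main obstacle is the control of the moduli part $M_Z$ of the canonical bundle formula: one must show that $M_Z$ is the trace of a nef $b$-divisor which is $b$-semiample and whose Cartier index is bounded by a universal constant, since otherwise the index of the lifted complement could not be bounded. This is precisely where $\dim X\leq 3$ is decisive, for then the fibers have dimension at most $2$ and $M_Z$ is governed by families of elliptic, abelian, K3, Enriques, or bielliptic fibers, for which the effective canonical bundle formula and the boundedness of the relevant moduli are available. A parallel difficulty occurs when $\kappa=0$, where $X$ is birational to a Calabi--Yau variety, the only candidate is $B=\Delta$, and $N_3$ must bound the torsion index $N$ with $N(K_X+\Delta)\sim 0$; this is controlled by boundedness of Calabi--Yau surfaces and the explicit index bounds in dimension at most $2$, the relevant constant being tied to Sylvester's sequence (yielding $66$ in the surface case). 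Assembling the universal bounds on $M_Z$ and $B_Z$, Birkar's theorem in the Fano-type case, and the inductive hypothesis produces a single constant $N_3$ valid in every case.
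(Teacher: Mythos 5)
The paper itself contains no proof of this statement: it is quoted from~\cite{FMX19}, and your outline does reproduce the broad architecture of that proof (dlt model, trichotomy on $\kappa(X,-K_X)$, MMP to the ample model of $-K_X$, canonical bundle formula with induction on the base, and Theorem~\ref{thm:boundedness-complements} in the Fano type case). But three of your steps have genuine gaps, and they are exactly where the difficulty of the theorem lives. First, the reduction ``after a small perturbation, klt'' is false: if the dlt model $(Y,\Delta_Y)$ has $\lfloor \Delta_Y\rfloor\neq 0$, you cannot decrease coefficients and remain log Calabi--Yau, since $K_Y+\Delta_Y\equiv 0$ leaves no ample class to trade against a perturbation. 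Consequently your $\kappa=\dim X$ case also breaks: taking $\Delta'=(1-\epsilon)\Delta$ with $-(K_X+\Delta')$ big only produces a \emph{klt} pair when $X$ is already of klt type, whereas the statement allows $X$ with strictly lc (non-Fano-type) structure, to which Theorem~\ref{thm:boundedness-complements} does not apply. The strictly log canonical case is the heart of~\cite{FMX19}: it is treated by adjunction to log canonical centers of a dlt model, Koll\'ar-type gluing along the $\pp^1$-linked centers, and index theorems for slc Calabi--Yau surface pairs --- none of which appears in your sketch.

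Second, your $\kappa=0$ case does not reduce to dimension at most two. When $\dim X=3$ and $\kappa(X,-K_X)=0$, rigidity indeed forces $B=\Delta$, but then $N_3$ must bound the torsion index of a three-dimensional (possibly strictly lc, possibly $B=0$ klt) Calabi--Yau pair; ``boundedness of Calabi--Yau surfaces and the explicit index bounds in dimension at most $2$'' bound nothing here. One needs the three-dimensional index theorem: Kawamata's bound for canonical threefolds with $K\equiv 0$ (reached via a crepant terminalization in the $B=0$ klt case) and the index results for lc/slc Calabi--Yau threefold pairs in the remaining cases. (Relatedly, the definition in this paper only gives $K_X+\Delta\equiv 0$; you need Gongyo's theorem that numerically trivial lc pairs satisfy $K_X+\Delta\sim_\qq 0$ before any complement discussion starts.) Third, you correctly identify effective $b$-semiampleness of the moduli part as the main obstacle, but then assert it is ``available'' for fibers of dimension two; it is not. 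The Prokhorov--Shokurov results~\cite{PS09} give the effective statement in relative dimension one (the elliptic, index-$12$ case), while effective $b$-semiampleness in relative dimension two --- where the fibers are Calabi--Yau surface \emph{pairs} with boundary, not just classical K3, abelian, Enriques, or bielliptic surfaces --- is open in general, and~\cite{FMX19} must argue around it rather than invoke it. As written, then, the proposal is a correct road map of the known strategy with the three decisive inputs (the strictly lc reduction, the threefold index theorem, and the moduli-part control in relative dimension two) asserted rather than supplied.
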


The existence of bounded complements
for log Calabi-Yau varieties
of dimension at least $4$ remains open.
The theory of complements helped to make big breakthroughs in the study of Fano varieties: 
The boundedness of Fano varieties with mild singularities~\cite{Bir19,Bir21b} 
and the algebro-geometric K-stability theory~\cite{LXZ21}. 
In the latter, the authors introduced
the concept of special complements
which behaves well with respect to the normalized volume. 
It is expected that further understanding of complements
will be crucial for the development of Fano type varieties
and klt singularities.
In what follows, we propose some new special classes of complements related to dual complexes of Calabi-Yau type varieties.

\subsection{Dual complexes}
\label{subsec:DC}

A dual complex 
is a combinatorial object that encrypts the data of the intersection of divisors on a normal variety.

Given a normal variety $X$ 
and a reduced divisor
$E=\sum_{i\in I} E_i$
with simple normal crossing support.
We define the dual complex 
$\mathcal{D}(E)$ 
to be the CW complex whose 
$k$-dimensional cells $v_W$ correspond to irreducible components $W$
of $\bigcap_{j\in J}E_j$
where $J\subset I$ and
$|J|=k+1$.
Given $j_0\in J$,
the irreducible variety $W$
is contained in a unique irreducible component $Z$
of $\bigcap_{J\setminus \{j_0\} } E_j$. 
This inclusion induces a gluing
map $v_Z\hookrightarrow v_W$.
The {\em dimension} of the dual complex $\mathcal{D}(E)$
is the largest dimension of its cells.

Now, we turn to define the dual complex of a log Calabi--Yau pair $(X,B)$.
The naive idea is to define it as the dual complex of $\lfloor B\rfloor$. 
However, there are two issues with this. 
First, this definition does not behave well under birational modifications
as a blow-up of $X$ may introduce new divisors with coefficient one in the boundary of the log Calabi--Yau pair.
On the other hand, 
the divisor
$\lfloor B \rfloor$ may not have simple normal crossing support.
The concept of divisorially log terminal models was introduced, in part, to fix these two issues.

Let $(X,B)$ be a log canonical pair. 
A {\em log canonical place}
of $(X,B)$ is a prime divisor
$E$ over $X$ for which 
$a_E(X,B)=0$. 
We say that $(X,B)$ is {\em strictly log canonical}
if it is log canonical and admits a log canonical place. 
A {\em log canonical center}
of $(X,B)$ is the center on $X$
of a log canonical place of $(X,B)$.
A log canonical pair $(X,B)$
is said to be 
{\em divisorially log terminal}
(or {\em dlt}) if there exists an open subset $U\subseteq X$ satisfying the following conditions:
\begin{enumerate}
    \item the divisor
    $\lfloor B \rfloor|_U$ has simple normal crossing support, 
    \item every log canonical center of $(X,B)$ intersects $U$, and 
    \item the log canonical centers of $(X,B)$ are given
    by strata of $\lfloor B\rfloor$.
\end{enumerate}
In a few words, the pair
$(X,B)$ is dlt if it looks
simple normal crossing 
at the generic point of every log canonical center.
If $(X,B)$ is dlt,
then blowing up 
a stratum of $\lfloor B\rfloor$
induces a barycenter subdivision
on one of the simplices
of the dual complex $\mathcal{D}(\lfloor B\rfloor)$.
Hence, the dual complex 
of $(X,B)$, seen as a topological space, 
stabilizes
when the pair has dlt singularities. 
The following theorem due to Hacon allows us to birationally transform log canonical pairs
into dlt pairs (see, e.g.,~\cite{KK10}).

\begin{theorem}
Let $(X,B)$ be a log canonical pair.
There exists a projective birational morphism
$\phi\colon Y\rightarrow X$ 
satisfying the following conditions:
\begin{enumerate}
    \item the pair $(Y,B_Y)$ has dlt singularities,
    where $\phi^*(K_X+B)=K_Y+B_Y$, and 
    \item the projective birational morphism $\phi$ only contracts divisors with log discrepancy zero
    with respect to $(X,B)$.
\end{enumerate}
\end{theorem}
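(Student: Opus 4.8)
The plan is to construct $Y$ as a suitable output of the minimal model program run over $X$, starting from a log resolution. First I would take a log resolution $g\colon W\to X$ of $(X,B)$, so that $\operatorname{Exc}(g)\cup \Supp(g_*^{-1}B)$ has simple normal crossing support; write $E_1,\dots,E_r$ for the $g$-exceptional prime divisors and set $a_i:=a_{E_i}(X,B)$, which satisfy $a_i\ge 0$ because $(X,B)$ is log canonical. On $W$ I would consider the boundary $\Delta_W:=g_*^{-1}B+\sum_{i=1}^r E_i$ obtained by giving coefficient one to every exceptional divisor. The pair $(W,\Delta_W)$ is log smooth, hence $\qq$-factorial and dlt, and a direct discrepancy computation gives
\[
K_W+\Delta_W=g^*(K_X+B)+\sum_{i=1}^r a_i E_i,
\]
so that $F:=\sum_i a_i E_i\ge 0$ is effective and $g$-exceptional, with $\Supp(F)=\{E_i\mid a_i>0\}$.

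The heart of the argument is to run the $(K_W+\Delta_W)$-minimal model program over $X$. Since $K_W+\Delta_W\equiv_X F$ with $F$ effective and $g$-exceptional, this is really an MMP that contracts the effective exceptional divisor $F$, and I would invoke the results of~\cite{BCHM10} to guarantee that it terminates with a model $\phi\colon Y\to X$ on which $K_Y+\Delta_Y$ is nef over $X$, where $\Delta_Y=\phi_*\Delta_W$. Writing $F_Y=\phi_*F$, the end product satisfies $K_Y+\Delta_Y=\phi^*(K_X+B)+F_Y$ with $F_Y$ effective, $\phi$-exceptional, and $\phi$-nef; the negativity lemma then forces $F_Y=0$. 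Consequently every $E_i$ with $a_i>0$ has been contracted, while the divisors with $a_i=0$—precisely the log canonical places of $(X,B)$—survive as the $\phi$-exceptional divisors, appearing in $\Delta_Y$ with coefficient one. This yields $K_Y+\Delta_Y=\phi^*(K_X+B)$ and shows that the only divisors contracted by $\phi$ have log discrepancy zero, which is condition~(2). Since the dlt property is preserved by each step of the program, $(Y,\Delta_Y)$ is dlt, giving condition~(1).

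The main obstacle is the existence and termination of this relative MMP: the divisor $F$ is exceptional, hence not big over $X$, and $(W,\Delta_W)$ is only dlt rather than klt, so some care is needed to bring the situation within the scope of~\cite{BCHM10}. I would address this by running the program with scaling of a relatively ample divisor and reducing to an MMP governed by the effective divisor $F$: the steps can only contract components of $\Supp(F)$, there are finitely many such divisors, and termination then follows from the existence of minimal models. The delicate bookkeeping at the core of the proof is to verify that this reduction contracts exactly the divisors $E_i$ with $a_i>0$, leaves the log canonical places untouched, and preserves dlt-ness along every step of the program.
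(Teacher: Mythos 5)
The paper offers no proof of this theorem: it is quoted from the literature and attributed to Hacon, with \cite{KK10} as the reference. Your proposal reconstructs essentially that standard argument --- log resolution, coefficient one on every exceptional divisor, the identity $K_W+\Delta_W=g^*(K_X+B)+F$ with $F=\sum_i a_iE_i\geq 0$, a relative MMP over $X$, and the negativity lemma to force $F_Y=0$ --- and the bookkeeping you defer at the end is indeed routine: every step of the program is $F$-negative, so any contracted divisor must be a component of $\Supp(F)$ (a divisor intersecting the contracted ray negatively contains its locus), $\qq$-factorial dlt-ness persists along divisorial contractions and flips, and once $K_Y+\Delta_Y$ is nef over $X$ the negativity lemma kills the effective $\phi$-exceptional divisor $F_Y$, so the surviving exceptional divisors are exactly the $E_i$ with $a_{E_i}(X,B)=0$, which gives both conclusions. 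Two refinements to your final paragraph. First, your stated obstacle that $F$ is ``exceptional, hence not big over $X$'' is a misdiagnosis: over a birational base the generic fiber is a point, so every divisor is automatically relatively big; the genuine issue is only that $(W,\Delta_W)$ is dlt rather than klt, which blocks a direct appeal to the termination statements of \cite{BCHM10}. Second, the clean resolution --- the one used in the cited literature --- is to note that your construction automatically satisfies $\Supp(F)\subseteq \lfloor \Delta_W\rfloor$, since every $g$-exceptional divisor appears in $\Delta_W$ with coefficient one; the MMP with scaling over $X$ then terminates by special termination (in the form due to Birkar and Fujino: the LMMP with scaling on a $\qq$-factorial dlt pair over $X$ terminates whenever $K_W+\Delta_W\equiv_X M\geq 0$ with $\Supp(M)\subseteq\lfloor\Delta_W\rfloor$), rather than by a vague appeal to ``existence of minimal models.'' With that precise input your proof is complete and coincides with the standard one the paper cites.
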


The previous theorem is known as the existence of dlt modifications. 
The dlt modification
allows us to define the dual complex of a log Calabi--Yau pair as follows.
This motivates the following definition. 

Let $(X,B)$ be a log Calabi--Yau pair. 
Let $\phi \colon Y\rightarrow X$ be a dlt modification of $(X,B)$.
We define 
the {\em dual complex}
$\mathcal{D}(X,B)$
of $(X,B)$ to be
$\mathcal{D}(\lfloor B_Y\rfloor)$.
A priori, the dual complex 
depends on the chosen dlt modification.
Indeed, as explained before, we can obtain different triangulations by 
performing more blow-ups.
However, De Fernex, Koll\'ar, and Xu proved that 
two such dual complexes
are simple-homotopy equivalent (see, e.g.,~\cite{dFKX17}).

\begin{theorem}
Let $(X,B)$ be a log Calabi-Yau pair. 
Then its dual complex
$\mathcal{D}(X,B)$ is well-defined up
to simple-homotopy equivalence.
\end{theorem}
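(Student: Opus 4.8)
The plan is to reduce the statement to a comparison of two dlt modifications and then to connect them through a relative minimal model program, checking that each elementary step of that program induces a simple-homotopy equivalence of the associated dual complexes. The log Calabi--Yau hypothesis enters only to guarantee that every dlt modification is crepant over $X$, so that the various models share the same intrinsic data.

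First I would fix two dlt modifications $\phi_i\colon (Y_i,B_{Y_i})\to (X,B)$, $i=1,2$, as produced by the existence of dlt modifications, and recall that each $\phi_i$ is crepant, so $\phi_i^*(K_X+B)=K_{Y_i}+B_{Y_i}$. The key structural observation is that the set of log canonical places of $(X,B)$, namely the prime divisors $E$ over $X$ with $a_E(X,B)=0$, is intrinsic to the pair and is shared by every crepant model. On $Y_i$ the divisorial lc places are exactly the components of $\lfloor B_{Y_i}\rfloor$, and by the dlt condition every lc center of $(X,B)$ is a stratum of $\lfloor B_{Y_i}\rfloor$; hence $\mathcal{D}(\lfloor B_{Y_i}\rfloor)$ genuinely records the intersection combinatorics of the lc centers, which is the data we must prove to be a simple-homotopy invariant.

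Next I would connect $Y_1$ and $Y_2$. After resolving the indeterminacy of $Y_1\drar Y_2$ over $X$ and running a relative minimal model program on $\mathbb{Q}$-factorial dlt models, one obtains a finite chain of crepant birational maps between $\mathbb{Q}$-factorial dlt models in which only lc places are ever extracted or contracted. Three elementary moves occur: (a) blow-ups of a stratum of $\lfloor B\rfloor$, which by the barycentric subdivision observation recorded in Subsection~\ref{subsec:DC} induce a subdivision and hence a PL homeomorphism of dual complexes; (b) flips and flops, which fix the set of lc places but alter the locus along which they meet; and (c) divisorial contractions of a single lc-place component, which delete the corresponding vertex of the dual complex. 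For each of (b) and (c) I would invoke adjunction: restricting $(Y,B_Y)$ to an lc center exhibits the link of the corresponding cell of $\mathcal{D}(\lfloor B_Y\rfloor)$ as the dual complex of an induced lower-dimensional log Calabi--Yau pair, so the change produced by the move is supported on a single cell together with its link. One then identifies each such change with an elementary collapse or expansion, i.e.\ a Whitehead move, which is by definition a simple-homotopy equivalence; composing the finitely many moves yields the desired equivalence between $\mathcal{D}(\lfloor B_{Y_1}\rfloor)$ and $\mathcal{D}(\lfloor B_{Y_2}\rfloor)$.

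The main obstacle is steps (b) and (c): one must prove that a flip, flop, or divisorial contraction in the crepant program induces not merely a homotopy equivalence but a \emph{simple}-homotopy equivalence of dual complexes. This is where the inductive, adjunction-based analysis of lc centers is essential, since one needs to control precisely how the cell structure is reorganized, recognizing the local change as the collapse of a face onto part of its boundary rather than as a general homotopy, and to verify that no nontrivial Whitehead torsion is introduced. The possibility that an lc center is reducible or occurs in several strata requires the most care, and I would manage it by inducting on the dimension of the dual complex through the adjunction isomorphism that identifies the link of a cell with a lower-dimensional instance of the same problem.
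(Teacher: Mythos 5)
Your proposal follows essentially the same route as the proof this paper relies on: the paper does not prove the theorem itself but cites de~Fernex--Koll\'ar--Xu \cite{dFKX17}, whose argument is exactly your scheme of connecting two dlt modifications by a crepant relative MMP and checking that each elementary step --- subdivision under blow-up of a stratum of $\lfloor B_Y\rfloor$, and elementary collapses/expansions under flips and divisorial contractions of lc places --- preserves the simple-homotopy type of the dual complex. The crux you flag in steps (b) and (c), namely that the local change is an honest collapse with no Whitehead torsion, which is verified there by using adjunction to lc centers to exhibit the relevant star as a cone collapsing onto part of its link, is precisely where the cited proof does its work, so your plan is correct in outline and matches it.
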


Is expected that the dual complexes of log Calabi--Yau pairs are PL-homeomorphic to PL-spheres up to a finite cover.
More precisely, we have the following conjecture.

\begin{conjecture}\label{conj:dual-comp}
Let $(X,B)$ be a $n$-dimensional log Calabi-Yau pair.
There exists a finite cover $\pi\colon Y \rightarrow X$
such that
$\mathcal{D}(Y,B_Y)\simeq_{\rm PL} S^{n-1}$.
\end{conjecture}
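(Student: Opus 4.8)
The plan is to attack this by induction on $n = \dim X$, and the first thing to settle is what the statement really asserts: since $\mathcal{D}(X,B)$ has dimension at most $n-1$, the target $S^{n-1}$ only makes sense in the top-dimensional (coregularity-zero) case, so I read the conjecture as the assertion that when $\dim \mathcal{D}(X,B) = n-1$ this complex is, up to finite cover, a PL-sphere. Fixing a dlt modification $\phi\colon Y \to X$ with $K_Y + B_Y = \phi^*(K_X+B) \equiv 0$, the object to analyze becomes the genuine CW complex $\mathcal{D}(\lfloor B_Y \rfloor)$ attached to a simple normal crossing configuration. The four ingredients I would assemble are: the local structure of the complex, its rational homology, the finiteness of its fundamental group, and a final appeal to the Poincar\'e characterization of spheres.

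The \emph{local structure} comes from adjunction. For each component $E_i$ of $\lfloor B_Y \rfloor$, restricting gives a log Calabi--Yau pair $(E_i, B_{E_i})$ with $K_{E_i} + B_{E_i} = (K_Y + B_Y)|_{E_i} \equiv 0$ of dimension $n-1$, and the link of the vertex $v_{E_i}$ inside $\mathcal{D}(\lfloor B_Y \rfloor)$ is precisely $\mathcal{D}(E_i, B_{E_i})$; iterating over higher-codimension strata shows that the link of every cell is itself the dual complex of a lower-dimensional log Calabi--Yau pair. By the inductive hypothesis each such link is, after a finite cover, a PL-sphere of the correct dimension, so—granting that these covers are compatible—$\mathcal{D}(X,B)$ is a closed PL-manifold of dimension $n-1$. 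That $\mathcal{D}(X,B)$ is a pseudomanifold can be checked directly and more cheaply; upgrading "pseudomanifold" to "PL-manifold" is exactly where the inductive sphere statement is consumed.

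Next I would compute $H_*(\mathcal{D}(X,B); \qq)$ and match it with $S^{n-1}$. The Calabi--Yau condition $K_Y + B_Y \equiv 0$ is what forces the answer: it pins down the top piece of the weight filtration on the cohomology of the open part as one-dimensional, so the dual complex is a rational homology sphere, the combinatorial shadow of the fact that a maximal log canonical stratification behaves like the boundary of a simplex. The genuinely hard step is controlling $\pi_1(\mathcal{D}(X,B))$: I would show it is finite using the finiteness theorems for fundamental groups in klt and Fano-type geometry, after identifying $\pi_1(\mathcal{D})$ as a quotient of the fundamental group of a suitable open locus of $X$, and then realize the universal cover of $\mathcal{D}$ by an honest finite cover $\pi\colon Y \to X$, so that $\mathcal{D}(Y, B_Y)$ is simply connected. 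The main obstacle lives here: the covers produced link-by-link in the inductive step are a priori only local, and showing that they are induced by a single global finite cover of $X$—so that the inductive spheres glue rather than merely exist separately—is precisely the difficulty that keeps the conjecture open in high dimension.

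Finally, after passing to the cover, $\mathcal{D}(Y, B_Y)$ is a simply connected closed PL-manifold with the $\qq$-homology of $S^{n-1}$; I would then promote this to integral homology and invoke the PL Poincar\'e theorem to conclude $\mathcal{D}(Y, B_Y) \simeq_{\rm PL} S^{n-1}$. I expect the residual analytic obstruction to be the dimension $n-1 = 4$ case, where PL and smooth structures coincide and the four-dimensional Poincar\'e conjecture is unavailable; away from that dimension the characterization of spheres among simply connected PL-homology-manifolds closes the argument.
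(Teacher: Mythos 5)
You should first note that the paper does not prove this statement at all: it is stated as Conjecture~\ref{conj:dual-comp} and explicitly recorded as open, known only in dimension at most $4$ by \cite{KX16} and in some special cases of Mori fiber spaces by Mauri \cite{Mau20}. So there is no proof in the paper to compare against, and your proposal must be judged as an attempted resolution of an open problem. As such, what you have written is a program --- essentially the known Koll\'ar--Xu strategy (pseudomanifold structure via adjunction and the connectedness/$\pp^1$-linking results, rational homology control from Hodge theory, finiteness of $\pi_1$, then a Poincar\'e-type characterization of spheres) --- and not a proof, because the steps you flag as ``to be granted'' are exactly the open parts.

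Concretely, three gaps are fatal as the argument stands. First, the local-to-global cover problem: the inductive step only makes each \emph{link} a sphere after a cover of that stratum, so the complex itself is a priori only a pseudomanifold whose links are finite spherical quotients; producing one finite cover $\pi\colon Y\to X$ that trivializes all these link-quotients \emph{simultaneously} is essentially equivalent to the conjecture, so the induction is circular at precisely the point you call ``the main obstacle.'' Relatedly, finiteness of $\pi_1(\mathcal{D}(X,B))$ is itself only known in low dimensions (the paper cites \cite{Mor21} and \cite{Bra19} for threefolds, \cite{KX16} up to dimension $4$); the finiteness theorems for regional fundamental groups of klt or Fano type varieties do not apply directly, since a log Calabi--Yau $X$ need not be of Fano type and the relevant group is a quotient of the fundamental group of an open stratum. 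Second, the step ``promote rational to integral homology'' fails in general: a simply connected closed manifold that is a rational homology sphere need not be an integral homology sphere in dimension $\geq 5$ (the Wu manifold ${\rm SU}(3)/{\rm SO}(3)$ is a simply connected rational homology $5$-sphere with $H_2\simeq \zz/2\zz$), so the hypotheses of the PL Poincar\'e theorem are not yet available and additional input would be needed to kill torsion. Third, as you correctly note, when $n-1=4$ the PL Poincar\'e conjecture is equivalent to the open smooth $4$-dimensional Poincar\'e conjecture, so even granting everything else the conclusion $\simeq_{\rm PL} S^4$ is out of reach by this route. Your reinterpretation of the statement (restricting to the maximal-regularity case, since a klt Calabi--Yau has empty dual complex preserved by any finite cover) is sensible and matches the intended meaning, but the proposal correctly identifies rather than closes the obstructions that keep the conjecture open.
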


By~\cite{KX16}, this conjecture
is known for varieties
of dimension at most $4$.
This conjecture is also known for Mori fiber spaces
whenever the base has dimension at most two
or when the total space has Picard at most two,
by the work of Mauri~\cite{Mau20}.
By~\cite{Mor21}, there are only finitely many possible fundamental groups
for the dual complexes
of $n$-dimensional log Calabi--Yau threefolds. 
Furthermore, by~\cite{Bra19}, these fundamental groups are finite.
Log Calabi--Yau varieties
also appear
when compactifying character varieties. 
In this direction,
the previous 
conjecture has been
studied
by Simpson~\cite{Sim16}
and Goldman and Toledo~\cite{GT10}.
Even though the previous conjecture is known up to dimension $4$, 
very little is known about the possible triangulations of the sphere
that we can obtain in dual complexes
of log Calabi--Yau pairs.
Any smooth lattice polytope can be achieved. 
Indeed, we can consider the projective toric variety corresponding to its dual fan with
its reduced toric boundary.

The following definition will be useful to state some of the results in this article.

\begin{definition}
{\em 
Let $(X,\Delta)$ and $(X',\Delta')$ be two log pairs
such that $X$ and $X'$ are birational.
We say that $(X,\Delta)$
and $(X',\Delta')$ are
{\em log crepant equivalent} 
if there is a common resolution 
$p\colon Z\rightarrow X$
and $q\colon Z\rightarrow X'$
for which the equality
\[
p^*(K_X+\Delta) = 
q^*(K_{X'}+\Delta')
\] 
holds. 
Observe that log crepant pairs share many common properties.
For instance, they have the same 
log discrepancies,
$(X,\Delta)$ is log Calabi--Yau if and only if $(X',\Delta')$ is log Calabi--Yau.
In this case, they have the same coregularity and dual complex.
}
\end{definition}

\subsection{Fano varieties and klt singularities}
\label{subsec:fano-vs-klt}
Throughout the preliminaries, we have discussed
both Fano varieties
and klt singularities.
We culminate the preliminary by presenting some 
global-to-local
and 
local-to-global principle
that enlighten the tight relation between
these global and local objects. 

Given a projective variety $X$ 
and an ample $\qq$-Cartier $\qq$-divisor $A$ on $X$, 
we can consider the {\em orbifold cone} $C_X(A)$
of $X$ with respect to the $\qq$-polarization $A$
to be the spectrum of the following ring:
\[
\bigoplus_{m\in \zz}H^0(X,\mathcal{O}_X(mA)), 
\]
where we multiply the sections as elements of $\kk(X)$.
The affine variety $C_X(A)$ is endowed with a $\mathbb{G}_m$-action 
and has a unique fixed point which we usually call
the vertex of the action.
We denote the vertex $x_0$ if there is no room for confusion.
The singularities obtained in the previous way are called
{\em cone singularities}. 
Let $\pi\colon Y\rightarrow C_X(A)$ be the blow-up of the maximal ideal at $x_0$.
Then $\pi$ extracts a unique prime divisor $E$ over $x_0$
which is isomorphic to $X$. 
If we perform adjunction of $K_X+E$ to $E$, under this isomorphism, we obtain
\begin{equation}\label{eq:adj-cone}
K_X+E|_E \sim_\qq K_X + \sum_{P \subset X} \left(1-\frac{1}{n_P}\right)P
\end{equation}
where the sum runs over all the prime divisors $P$ of $X$
and $n_P$ is the Weil index of $A$ at $P$. 
In what follows, we write $\Delta_A$ for the boundary in~\eqref{eq:adj-cone}. 
In particular, if $A$ is a Cartier divisor, then we have 
\[
K_X+E|_E \sim_\qq K_X.
\]
The previous adjunction procedure allows comparing the singularities of 
$X$ with those of $C_X(A)$.
The following result is proved in~\cite{Kol13}. 

\begin{proposition}\label{prop:from-Fano-to-klt}
Let $X$ be a projective variety and $A$ be an ample $\qq$-Cartier $\qq$-divisor on $X$. Then, the following statements hold:
\begin{enumerate}
    \item the singularity $(C_X(A);x_0)$ is klt if and only if 
    $(X,\Delta_A)$ is log Fano, 
    \item the singularity $(C_X(A);x_0)$ is canonical if and only if
    $(X,\Delta_A)$ is log Fano and $-(K_X+\Delta_A)\sim_\qq rA$ for $r\geq 1$, and
    \item the singularity $(C_X(A);x_0)$ is terminal if and only if
    $(X,\Delta_A)$ is log Fano and $-(K_X+\Delta_A)\sim_\qq rA$ for $r>1$.
\end{enumerate}
\end{proposition}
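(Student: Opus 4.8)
The plan is to reduce everything to a single discrepancy computation on the blow-up $\pi\colon Y\to C_X(A)$ of the vertex, whose exceptional divisor is $E\cong X$. Two inputs drive the argument. First, the adjunction formula \eqref{eq:adj-cone} identifies the different of $E$ with $\Delta_A$; restricting $K_Y+E$ to $E$, using that $\mathcal{O}_E(E)\cong \mathcal{O}_X(-A)$ and that $\pi^\ast K_{C_X(A)}|_E\sim_\qq 0$ (being pulled back from the point $x_0$), I would solve for the log discrepancy of $E$ and obtain the numerical relation
\[
-(K_X+\Delta_A)\sim_\qq r\,A,\qquad r:=a_E(C_X(A)).
\]
Thus $r$ is literally the log discrepancy of the exceptional divisor, and $A$ ample forces $-(K_X+\Delta_A)$ ample precisely when $r>0$. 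Second, the same computation rewrites this as the crepant identity $\pi^\ast K_{C_X(A)}=K_Y+(1-r)E$, so that for every divisor $F$ over $C_X(A)$ one has $a_F(C_X(A))=a_F\big(Y,(1-r)E\big)$.

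With these in hand I would argue that the singularities of $C_X(A)$ at $x_0$ are governed by $(X,\Delta_A)$ together with the size of $r$. The divisor $E$ itself contributes the log discrepancy $r$, which accounts for the thresholds $r\ge 1$ and $r>1$. Every other divisor over $x_0$ lives over $E$ on $Y$; since $Y\to X$ is the Seifert $\mathbb{G}_m$-bundle attached to $A$ and $E$ is its zero section, such a divisor is either horizontal or is extracted over a center meeting $E$, and in each case inversion of adjunction for $(Y,(1-r)E)$ along $E$ translates its discrepancy into a discrepancy of $(X,\Delta_A)=(E,\mathrm{Diff}_E)$. Concretely, a divisor $G$ over $X$ yields a divisor over $C_X(A)$ with the same log discrepancy $a_G(X,\Delta_A)$, while the coefficient $(1-r)$ on $E$ records exactly the constraint coming from $r$. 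Running this dictionary in both directions gives: $(C_X(A);x_0)$ is klt iff $(X,\Delta_A)$ is klt and $r>0$, i.e.\ $(X,\Delta_A)$ is log Fano; and $(C_X(A);x_0)$ is canonical (resp.\ terminal) iff the discrepancies of $(X,\Delta_A)$ satisfy the canonical (resp.\ terminal) bound and $r\ge 1$ (resp.\ $r>1$).

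The main obstacle is this discrepancy dictionary, and in particular the inversion-of-adjunction step in the orbifold case where $A$ is not Cartier. When $A$ is Cartier, $Y$ is the total space of a genuine line bundle, $Y\to X$ is smooth, and horizontal divisors are handled by a product computation; the non-Cartier case forces $Y\to X$ to be a Seifert bundle whose orbifold locus is exactly the support of $\Delta_A$, and one must check that the different computed by \eqref{eq:adj-cone} is compatible with Kawakita-type inversion of adjunction so that no divisor over $x_0$ is overlooked. Establishing that $(C_X(A);x_0)$ carries \emph{no} discrepancies beyond those predicted by $E$ and by divisors over $(X,\Delta_A)$ — i.e.\ the ``only if'' directions — is where the care is needed; once that is secured, the three equivalences follow by reading off the bounds on $r=a_E(C_X(A))$.
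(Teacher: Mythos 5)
The paper gives no in-text proof of this proposition; it is quoted from \cite{Kol13}, and your argument is in essence exactly the proof there (Section 3.1 of \cite{Kol13}): blow up the vertex, derive the crepant identity $\pi^*K_{C_X(A)}=K_Y+(1-r)E$ with $r=a_E(C_X(A))$ and $-(K_X+\Delta_A)\sim_\qq rA$, and convert discrepancies over the cone into discrepancies of $(X,\Delta_A)$ via the Seifert $\mathbb{G}_m$-bundle structure. The dictionary you are worried about can be secured without any Kawakita-type inversion of adjunction in the orbifold case: for $G$ exceptional over $X$, extracted on some $X'\rightarrow X$, base-change the orbifold bundle to $X'$ and take the horizontal divisor $G_{Y'}$ over $G$; since $K_{Y'}+E'$ is the pullback of $K_{X'}+\Delta'$ for the log pullback $\Delta'$ of $\Delta_A$, and since $G_{Y'}$ is horizontal so that $\mult_{G_{Y'}}$ of the pullback of $E$ vanishes, one gets $a_{G_{Y'}}(C_X(A))=a_{G_{Y'}}(Y,(1-r)E)=a_G(X,\Delta_A)$ on the nose. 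Conversely every divisor over $C_X(A)$ other than $E$ has its discrepancy computed either this way or on $C_X(A)\setminus\{x_0\}$, which is \'etale-locally a product with $X$; inversion of adjunction proper is only needed for the klt equivalence, where plt of $(Y,E)$ near $E$ corresponds to klt of $(E,{\rm Diff}_E)=(X,\Delta_A)$.

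Two caveats, both of which reflect well on your write-up. First, your conclusions in cases (2) and (3) --- canonical (resp.\ terminal) if and only if $(X,\Delta_A)$ is canonical (resp.\ terminal) \emph{and} $r\geq 1$ (resp.\ $r>1$) --- are the correct ones and are what \cite{Kol13} actually proves; the proposition as printed drops the canonicity/terminality hypothesis on $(X,\Delta_A)$, and as literally stated it fails. For example, $X=\pp(1,1,3)$ with the Cartier polarization $A=\mathcal{O}_X(3)$ has $\Delta_A=0$, is log Fano with $-K_X\sim_\qq \frac{5}{3}A$, so $r=\frac{5}{3}>1$, yet every neighborhood of the vertex of $C_X(A)$ meets the ruling over the $\frac{1}{3}(1,1)$ point, along which the cone is locally that singularity times $\mathbb{A}^1$ and hence has a divisor of log discrepancy $\frac{2}{3}<1$; your horizontal-lift dictionary detects exactly this. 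Second, in case (1) the ``if'' direction (a gloss shared with the paper's phrasing) tacitly requires $-(K_X+\Delta_A)$ to be $\qq$-linearly \emph{proportional} to $A$: otherwise $K_{C_X(A)}$ is not $\qq$-Cartier and the crepant identity cannot even be written (take $X=\pp^1\times\pp^1$ with $A=\mathcal{O}(1,2)$, which is log Fano while the cone is not klt). With these two amendments --- which simply restore the statement of \cite{Kol13} --- your proof is complete and is the intended one.
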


The previous procedure allows constructing a singularity
$C_X(A)$ out of a projective variety $X$ and a $\qq$-polarization.
In general, we may obtain the same singularity $(C_X(A);x_0)$
for different choices of $X$ and the polarization.
However, if we consider cone singularities with the endowed $\mathbb{G}_m$-action, then two cone singularities $(C_X(A);x_0)$ and $(C_Y(A_Y);y_0)$
are equivariantly isomorphic if and only if 
there is an isomorphism $\phi\colon X \rightarrow Y$ for which
$\phi^*A_Y=A$.
At any rate, the procedure of producing a klt type singularity out
of a Fano variety $X$ is not unique as we may choose several different ample divisors on $X$.
Anyways, the cone construction is a powerful tool to reduce projective problems to local problems.

In the other direction, 
given a klt singularity $(X;x)$. 
One possible way to associate a projective variety to $(X;x)$
is to find a strictly log canonical complement $(X,B;x)$
and take a dlt modification $(Y,B_Y)\rightarrow (X,B;x)$. 
Then, the exceptional divisors that map to $x$ are projective varieties.
However, there are some issues with this construction.
First, we may extract several divisors over the singularity $x\in X$.
Second, these divisors may not be Fano.
However, we know that the fibers of the dlt modification
(more generally, every Fano type morphism) are rationally connected
(see, e.g.,~\cite{HM07}).
In this situation, the solution is to run a suitable
minimal model program on $Y$ over $X$ 
that will contract all divisors except a single one.
This leads to the following proposition proved
by Prokhorov and Xu (see, e.g.,~\cite{Pro01,Xu14}).

\begin{proposition}\label{prop:from-klt-to-Fano}
Let $(X,B;x)$ be a klt singularity.
There exists a projective birational morphism
$\pi\colon Y\rightarrow X$ satisfying the following conditions:
\begin{itemize}
    \item the exceptional locus of $\pi$ consists of a unique prime divisor $E$ mapping onto $x$, 
    \item the divisor $-E$ is ample over $X$, and 
    \item the pair $(Y,B_Y+E)$ has plt singularities,
\end{itemize}
where $B_Y$ is the strict transform of $B$ on $Y$.
\end{proposition}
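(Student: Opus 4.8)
The plan is to follow the two-step philosophy reviewed above in the theory of complements: first replace the klt boundary $B$ by a log canonical boundary that makes $x$ a minimal log canonical center, and then use the minimal model program to single out one exceptional divisor over $x$. Throughout I work in a neighborhood of $x$, so the statement is local and I am free to shrink $X$.

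First I would produce a boundary $\Gamma \geq B$ such that $(X,\Gamma)$ is log canonical near $x$, is not klt at $x$, and has $\{x\}$ as a minimal (and, after shrinking, the only) log canonical center. Such a $\Gamma$ exists: since $(X,B)$ is klt, one can take $\Gamma = B + \lambda H$, where $H$ is a general effective $\qq$-Cartier divisor through $x$ and $\lambda$ is the log canonical threshold of $H$ with respect to $(X,B)$ at $x$; cutting down with further general divisors arranges that $x$ is a minimal lc center. Alternatively one invokes the existence of complements for klt singularities stated earlier, which directly furnishes a boundary $B^+\geq B$ with $(X,B^+;x)$ log canonical and $x$ an lc center.

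Next I would extract the log canonical places lying over $x$. Applying the existence of dlt modifications (Hacon's theorem stated above) to $(X,\Gamma)$ gives a crepant projective birational morphism $\psi\colon Y'\rar X$ with $(Y',\Gamma_{Y'})$ dlt and $K_{Y'}+\Gamma_{Y'}=\psi^*(K_X+\Gamma)$. A dlt modification only extracts lc places, so after shrinking all $\psi$-exceptional prime divisors lie over $x$; call them $E_0,\dots,E_m$, each occurring in $\Gamma_{Y'}$ with coefficient $1$. To reach a single divisor I would select $E_0$ and run the relative minimal model program over $X$ for $K_{Y'}+\bigl(\Gamma_{Y'}-\sum_{i\geq 1}E_i\bigr)$, which is numerically equivalent over $X$ to $-\sum_{i\geq 1}E_i$ because $K_{Y'}+\Gamma_{Y'}\equiv_X 0$. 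Since $Y'$ is of Fano type over $X$, this program terminates by the minimal model program results cited in the excerpt, and the negativity of $-\sum_{i\geq 1}E_i$ along the fibers over $x$ forces it to contract precisely $E_1,\dots,E_m$ while preserving $E_0$. A cleaner variant replaces the hand-chosen $E_0$ by a tie-breaking perturbation of $\Gamma$, so that exactly one lc place over $x$ retains log discrepancy $0$ and the program extracts that unique place.

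Let $\pi\colon Y\rar X$ denote the output, with exceptional divisor $E=(E_0)_Y$. Once all other exceptional divisors are contracted the relative Picard rank over $X$ is one, so $K_Y+\Gamma_Y-E\equiv_X -E$ is nef and hence $\pi$-ample, giving the second property; the exceptional locus is the single divisor $E$, giving the first. For the third property I would check that $(Y,B_Y+E)$ is plt: since $B$ is a klt boundary, $B_Y$ has coefficients $<1$, so $\lfloor B_Y+E\rfloor=E$, and because $E$ is the unique lc place over $x$ and $B\leq\Gamma$, every prime divisor exceptional over $Y$ has positive log discrepancy with respect to $(Y,B_Y+E)$, which is exactly plt. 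Equivalently, adjunction of $K_Y+E$ to $E$ together with the minimality of the center $x$ exhibits $E$ as a Kollár component. The main obstacle is the MMP step: guaranteeing termination over $X$ and controlling exactly which divisors are contracted, so that one and only one exceptional divisor survives with $-E$ relatively ample. This is where the deep termination and finiteness results for Fano type morphisms, together with either the tie-breaking argument or a careful negativity-lemma analysis, are indispensable; the complement-theoretic and adjunction parts are comparatively formal once this is in place.
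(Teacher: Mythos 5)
Your route coincides with the one the paper itself sketches just before the proposition (and delegates to Prokhorov and Xu): enlarge $B$ to a boundary $\Gamma$ making $x$ a minimal log canonical center, take a dlt modification, and run a relative MMP over $X$ to kill all but one log canonical place. Within that route, though, two steps are asserted rather than argued, and one of them is wrong as literally written. For the first: that the $(K_{Y'}+\Gamma_{Y'}-\sum_{i\geq 1}E_i)$-MMP ``contracts precisely $E_1,\dots,E_m$ while preserving $E_0$'' is exactly the delicate point, and ``negativity along the fibers over $x$'' is not by itself a proof. The standard argument is: the MMP terminates with $-\sum_{i\geq 1}E_i'$ nef over $X$, and then the refined form of the negativity lemma (fibers meeting the support of an effective exceptional divisor whose negative is relatively nef are contained in that support) shows that if some $E_i'$ with $i\geq 1$ survived, the whole fiber over $x$ --- including the divisor $E_0'$ --- would lie in $\operatorname{Supp}\bigl(\sum_{i\geq 1}E_i'\bigr)$, a contradiction; separately one must rule out that $E_0$ itself is contracted, e.g.\ by the strict increase of log discrepancies along a $(K+\Psi)$-negative MMP together with $a_{E_0}(X,\Gamma)=0$, or by your tie-breaking variant making $E_0$ the unique lc place. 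You name both tools but never assemble them.

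The genuine gap is the final step: ``once all other exceptional divisors are contracted the relative Picard rank over $X$ is one, so $-E$ is nef and hence $\pi$-ample.'' Neither claim follows at that stage. The MMP you ran makes $-\sum_{i\geq 1}E_i$ nef over $X$ (indeed zero after the contractions); it gives no information about $-E_0$, which at the end of that MMP need not be relatively nef. And $\rho(Y/X)=1$ is not a consequence of $\operatorname{Exc}(\pi)=E$: that count is only valid when $X$ is $\qq$-factorial, while the proposition allows an arbitrary klt $X$. The standard repair is a second MMP: run the $(K_Y+\Psi_Y-\epsilon E)$-MMP over $X$, which is an MMP for $-\epsilon E$; every step is a flip, since a divisorial contraction over $X$ could only contract $E$, and it would have to do so along a ray $R$ with $E\cdot R<0$, whereas this MMP only contracts rays with $E\cdot R>0$. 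It terminates with $-E$ nef, hence semiample, over $X$, and the relative ample model makes $-E$ ample over $X$; there $K_Y+B_Y+E=\pi^*(K_X+B)+a_E(X,B)\,E$ is still $\qq$-Cartier, so your plt verification --- which is fine, since $B\leq\Gamma$ forces every exceptional lc place of $(Y,B_Y+E)$ to be an lc place of the ambient dlt pair, whose only stratum among divisors over $x$ is $E$ --- goes through on the final model.
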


The projective birational morphism $\pi\colon Y\rightarrow X$ 
as in the previous proposition
is called a {\em purely log terminal blow-up}
or {\em plt blow-up} for short.
Hence, the previous statement says that every klt singularity
admits a plt blow-up.
The plt blow-up may not be unique.
For instance, for a toric singularity $(T;t)$, 
every toric projective morphism $Y\rightarrow T$ that extracts
a unique toric divisor over $t$ is a plt blow-up.
Plt blow-ups often help to reduce problems about
klt singularities
to problems about Fano varieties.

Both constructions; the cone construction and plt blow-ups, 
are in some sense inverse to each other.
On one side, if $X$ is a Fano variety and $A$ is an ample Cartier divisor, then $(C_X(A);x_0)$ is a klt singularity
and the blow-up of $x_0$ is a plt blow-up
whose exceptional divisor is isomorphic to $X$.
On the other hand, 
if $(X;x)$ is a klt singularity
and $Y\rightarrow X$ a plt blow-up extracting 
an exceptional divisor $E$, then 
the singularity $(X;x)$
degenerates to an orbifold cone over $E$.
In general, it is not true that $(X;x)$ is itself an orbifold cone singularity.

\section{The Coregularity} 

In this section, we turn to introduce the main object of this note: the coregularity.
The coregularity measures the difference between 
the dimension of $X$
and the dimension of the largest dual complex among log Calabi--Yau structures of $X$.
For simplicity, we first define its counterpart, the regularity.

\begin{definition}
{\em 
Let $X$ be a normal projective variety.
The {\em regularity} of $X$,
denoted by ${\rm reg}(X)$, is defined to be
\[
\max \left\{ 
\dim \mathcal{D}(X,B) \mid
\text{$(X,B)$ is log Calabi--Yau}
\right\}.  
\]
If the previous set is empty, i.e., if $X$ is not 
of log Calabi--Yau type, 
then we say that the regularity is infinite.
We set the dimension of the empty set to be $-1$.
Note that ${\rm reg}(X)=-1$ if and only if
$X$ admits a log Calabi-Yau structure
and every log Calabi-Yau structure is klt.
In other words, $X$ is exceptional.
If the regularity is a positive integer
$X$ admits a log Calabi--Yau structure
$(X,B)$ which admits a log canonical center.

The coregularity of $X$ is defined to be 
\[
{\rm coreg}(X):=
\dim X - {\rm reg}(X) -1.
\]
The 
coregularity of $X$ is 
negative infinite
if $X$ does not admit a log Calabi--Yau structure.
If the coregularity of $X$ is non-negative, then
it equals the dimension of the smallest 
log canonical center 
among dlt modifications of log Calabi--Yau pairs $(X,B)$.
If $(X,B)$ is a log Calabi--Yau pair, then we set
$X$ to be a log canonical center of the pair $(X,B)$.
Even if this seems unnatural, it does fit the adjunction formula. 

Analogously, we may define the 
{\em local regularity}
of a normal projective variety
$X$ at a closed point $x\in X$.
The local regularity, denoted by
${\rm reg}(X;x)$, is defined to be
\[
{\rm max}\{\dim\mathcal{D}(X,B;x)\mid\text{$(X,B;x)$ is log canonical}\}. 
\]
If the previous set is empty, i.e., 
if $(X;x)$ is not of log canonical type, 
then we set the local regularity to be infinite. 
Otherwise, $(X;x)$ admits a log canonical singularity structure.
If $(X;x)$ is a klt type singularity, then
the regularity is always at least zero. 
Again, the coregularity is the dimension
of the smallest minimal log canonical center
among the dlt modifications of lc singularities $(X,B;x)$.
}
\end{definition}

In~\cite[Section 7]{Sho00}, Shokurov defined the regularity of a contractions.
He mentions that the regularity 
characterizes the topological difficulty of the pair.
The previous definitions can be given
in a more general context of
Fano type morphisms $X\rightarrow Z$
with a marked closed point $z\in Z$.
If $Z$ is a point, then this recovers the global definition.
If $X\rightarrow Z$ is the identity, then this recovers the local definition.
Our definition of regularity tries to maximize the dimension
among all the possible dual complexes. 
On the other hand, our definition of coregularity tries
to minimize the difference between 
$\dim X$ and ${\rm reg}(X,B)$.

As for invariants of Fano varieties, 
the most well-known invariants
are the index
and the anti-canonical volume $(-K_X)^n$. 
As explained before, 
the index measures
how divisible the anti-canonical divisor
$-K_X$ is in the Picard group of $X$.
The volume controls the asymptotic growth
of sections of multiples of the anti-canonical divisor. 
It is expected that most Fano varieties have index one.
And, although this statement can not be made precise 
in a general setting, it is often the case when 
looking at precise families of examples.
However, we do not expect any general framework 
for Fano varieties of index one.
On the other hand, there is no such expectation for volumes.
The volume, even in the toric case, 
can take many different values in a fixed dimension.
Usually, bounding the volume from below and/or 
above is related to boundedness statements in algebraic geometry.
As for the coregularity, 
we expect that most Fano varieties
have coregularity zero. 
In the following sections, we give some examples
and propose structural conjectures about
Fano varieties of coregularity zero.

\section{Examples and Properties}
This section explores examples and properties of the coregularity.
In the first part of this section, we focus on examples.
In subsection~\ref{subsec:del-Pezzo},
we discuss del Pezzo surfaces, 
while
in subsection~\ref{subsec:sing-del-Pezzo}, 
we give examples of singular del Pezzo surfaces.
In subsection~\ref{subsec:quot-sing}, we discuss finite quotient singularities
and in subsection~\ref{subsec:quotients}, we study the projective 
analogue; finite quotients of the projective space.
In subsection~\ref{subsec:ter-3-fold}, we discuss terminal 3-fold singularities,
while in subsection~\ref{subsec:klt-3-fold}, we study 
klt $3$-fold singularities in a more general setting.
In this second part of this section, we focus on properties.
We study Fano varieties with torus actions in subsection~\ref{subsec:torus-actions},
the coregularity under morphisms in subsection~\ref{subsec:coreg-morphisms}, and 
the coregularity under deformations and degenerations in
subsection~\ref{subsec:coreg-deform}.
Finally, in subsection~\ref{subsec:prop-dual-complexes} and subsection~\ref{subsec:ex-dual-complexes}, 
we give further properties and examples of dual complexes.

\subsection{Del Pezzo surfaces}
\label{subsec:del-Pezzo}
del Pezzo surfaces 
are classified by the anti-canonical volume $(-K_X)^2$
which is also called the degree 
of the del Pezzo surface.
The degree of a del Pezzo
surface is an integer 
in $\{1,2,\dots,9\}$.

There is a unique 
del Pezzo surface of degree $9$, 
the projective plane $\pp^2$.
In this case, we can take the 
$B=L_1+L_2+L_3$ 
so that 
$\mathcal{D}(\pp^2,B)$
is a circle with three marked points 
corresponding to the lines.
Then, the coregularity of $\pp^2$ is zero.
A del Pezzo surface of degree $d$ in
$\{8,7,6\}$
is the blow-up of
$\pp^2$ at 
$9-d$ points. 
For these surfaces, we may apply
an automorphism of $\pp^2$
and assume that they are 
the intersection points
of $L_1\cap L_2$, $L_1\cap L_3$ and $L_2\cap L_3$.
Hence, the dual complex of the log pull-back 
of $(\pp^2,L_1+L_2+L_3)$ to these models
are circles with $12-d$ marked points.
Indeed, the log discrepancy of the exceptional divisors is $2$ with respect to $\pp^2$.
Indeed, these are just the blow-ups of smooth points.
Moreover, each line decreases this log discrepancy exactly by one.
Hence, 
the del Pezzo surfaces of degree 
$d\in \{8,7,6\}$ have
coregularity zero 
and the number of points in the dual complex is maximal
up to isomorphism.
All the previous examples are toric.

Up to isomorphism, there is a unique
del Pezzo surface of degree $5$.
This surface can be obtained from $\pp^2$
by blowing up $4$ points
with no three of them collinear.
We may assume, up to an automorphism, 
that the three points are the intersections
$L_1\cap L_2$, $L_1\cap L_3$, and $L_2\cap L_3$. 
The log pull-back of $(\pp^2,L_1+L_2+L_3)$ to this blow-up
is not a log Calabi-Yau pair, so
we need to choose a different boundary.
We can choose a conic $C$ through the $4$ points
and a line $L$ that can be chosen to be in general position.
The log pull-back of $(\pp^2,C+L)$ to this del Pezzo
is a log Calabi-Yau pair of the form
$(X,C_X+L_X)$, where $C_X$ and $L_X$ are the strict transform
of the conic and the line respectively. 
Since we are not blowing up any intersection point
we have that $C_X\cap L_X$ consists of two points.
The dual complex
$\mathcal{D}(X,C_X+L_X)$ is a circle with two marked points, corresponding to $C_X$ and $L_X$. 
The self-intersection of $C_X$ is $-3$ and the self-intersection of $L_X$ is $1$.

A del Pezzo surface of degree $4$
is a Segre surface in $\pp^4$ defined by the intersection of two quadrics.
They form a $2$-dimensional family 
and they have sixteen $(-1)$-curves.
Del Pezzo surfaces 
of degree $4$ can be described 
as the blow-up of $\pp^2$ 
at $5$ points
with no three collinear. 
A del Pezzo surface of degree $3$
is a cubic surface of degree three in $\pp^3$.
They form a $4$-dimensional family 
and they have twenty-seven $(-1)$-curves. 
A del Pezzo surface of degree $2$
is a double cover of $\pp^2$ branching
over a quartic plane curve.
They form a $6$-dimensional family 
and they have fifty-six $(-1)$-curves.
del Pezzo surfaces of degree $d\in \{2,3,4\}$
can be described as blow-ups of $\pp^2$
at $9-d$ points with no three collinear
and no six on a conic. 
In this case, we can mimic the construction 
from the previous paragraph. 
We choose a conic $C$ through five
of the $9-d$ points
and a line $L$ passing through two of them.
Then, the log pull-back of $(\pp^2,C+L)$
is log Calabi-Yau of coregularity zero.
Hence, all del Pezzo's of degree at least two
have coregularity zero.

A del Pezzo surface of degree $1$
is a double cover of a quadratic cone in $\pp^3$
branched over a smooth genus $4$-curve.
This surface can be described
as the blow-up of 
$\pp^2$ at eight points
with no three collinear, 
no six lying on a conic, 
and no eight lying on a 
cubic with a node at one of them.
In this case, we can choose a cubic $C_3$
containing the $8$ points.
Then, the log pull-back of $(\pp^2,C_3)$
to the del Pezzo of degree $1$
is log Calabi-Yau of coregularity one.
Indeed, the dual complex is just a point
corresponding to $C_3$. 
Hence, a del Pezzo of degree 
one has coregularity at most one.

The following proposition is proved
by Prokorov and Shokurov in~\cite{Pro01,Sho00}.

\begin{proposition}
Let $X$ be a Fano type surface of coregularity zero.
Then $X$ either admits a $1$-complement
or a $2$-complement of coregularity zero.
\end{proposition}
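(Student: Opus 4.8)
The plan is to combine Shokurov's classification of complements on Fano-type surfaces (Theorem~\ref{thm:boundedness-complements-surfaces}) with the interpretation of coregularity zero as the existence of a log canonical center of dimension zero on some log Calabi--Yau structure. First I would observe that, since $X$ has coregularity zero, there is a log Calabi--Yau pair $(X,B)$ whose dual complex $\mathcal{D}(X,B)$ has dimension $\operatorname{reg}(X)=\dim X-1=1$; thus the dual complex is a (possibly degenerate) circle or segment, and in particular $(X,B)$ is \emph{strictly} log canonical. This rules out the exceptional case, i.e.\ option~(2) of Theorem~\ref{thm:boundedness-complements-surfaces} cannot hold, so $X$ admits an $N$-complement with $N\in\{1,2,3,6\}$. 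The real content of the proposition is to upgrade the bound from $\{1,2,3,6\}$ down to $\{1,2\}$ while simultaneously preserving coregularity zero, so the first step is to pin down exactly which value of $N$ can occur and why $N\in\{3,6\}$ is incompatible with coregularity zero.

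The main technical step would be to analyze the dlt model $\phi\colon Y\to X$ of a coregularity-zero complement and read off the coefficients of $\lfloor B_Y\rfloor$ along the one-dimensional dual complex. Because $\dim\mathcal{D}(X,B)=1$, the boundary $\lfloor B_Y\rfloor$ contains a chain or cycle of curves meeting at nodes, and by adjunction (restricting $K_Y+B_Y\equiv 0$ to each such curve $C\cong\pp^1$) one obtains a log Calabi--Yau pair on $\pp^1$ of the shape $(K_{\pp^1}+p_1+p_2)|_C\sim 0$ where $p_1,p_2$ are the two nodal intersection points. The key point is that a $1$-dimensional log Calabi--Yau dual complex forces the complementary directions to carry coefficients in the \emph{standard set} $\{1,\tfrac12,\tfrac23,\dots\}$, and tracking which denominators can appear while keeping the two boundary points of coefficient one is what collapses the admissible $N$ from $6$ to $2$. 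Concretely, I would run the Prokhorov--Shokurov complement-lifting strategy from the one-dimensional center: a $1$- or $2$-complement on the fibers lifts to a $1$- or $2$-complement on $X$ precisely because the base combinatorics (an interval or a circle) support no worse torsion than $\zz/2$.

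The hard part will be controlling the interaction between the global $N$-complement produced by Theorem~\ref{thm:boundedness-complements-surfaces} and the \emph{local} requirement that the lifted complement still have coregularity zero; a priori, improving $N$ could destroy the log canonical center and raise the coregularity to one. To handle this I would argue that on a coregularity-zero surface the log canonical center is a point or a curve whose associated dual complex is one-dimensional, and that the Shokurov-type lifting of a complement from this center is automatically crepant, hence preserves $\mathcal{D}(X,B)$ and its dimension. The cleanest route is probably a direct case analysis following Shokurov's proof in~\cite{Sho00}: the $\{1,2,3,6\}$ dichotomy arises from the possible configurations of the boundary at a log canonical center, and exactly the configurations giving $N\in\{3,6\}$ correspond to a \emph{zero}-dimensional dual complex (coregularity one), whereas the configurations with a one-dimensional dual complex are governed by $K_{\pp^1}+p_1+p_2$ and its $\zz/2$-quotients, yielding $N\in\{1,2\}$. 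Verifying that this correspondence is exhaustive --- that no coregularity-zero configuration secretly requires $N=6$ --- is the step I expect to require the most care, and I would lean on the explicit del~Pezzo computations in subsection~\ref{subsec:del-Pezzo} as a sanity check against the general argument.
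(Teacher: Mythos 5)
The paper never proves this proposition itself --- it is quoted from \cite{Pro01,Sho00} --- but it does prove the local analogue, Theorem~\ref{thm:surf-coreg-0}, by essentially the strategy you outline: pass to a dlt modification, use \cite[Corollary 7.12]{Sho00} to see that the reduced boundary is a chain (or cycle) with one-dimensional dual complex, read off standard coefficients by adjunction at the end curves, and split into the cases yielding a $1$-complement and a $2$-complement. So your skeleton is the right one, and your instinct that the $\zz/2$-torsion comes from an end with two coefficient-$\frac{1}{2}$ points matches the D-type case of Proposition~\ref{prop:comp-klt-surf}. However, one load-bearing claim is wrong as stated: adjunction with the \emph{full} boundary $B_Y$ does not produce standard coefficients. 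For $C\subset\lfloor B_Y\rfloor$, the different of $(Y,B_Y)$ along $C$ has coefficients of the form $\frac{m-1+\sum_i a_ib_i}{m}$, which are standard only after discarding the fractional part of $B_Y$. This is precisely why the paper performs adjunction for $K_Y+S_1+\dots+S_r$ \emph{alone} and then constructs the new complement for the reduced pair; your restriction of $K_Y+B_Y\equiv 0$ to the curves of the chain conflates the two pairs, and the collapse of $N$ to $\{1,2\}$ rests on this conflation.

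A second genuine gap is your mechanism for preserving coregularity zero: the assertion that the lifting of a complement from the one-dimensional locus ``is automatically crepant, hence preserves $\mathcal{D}(X,B)$'' is false. The lifted complement $\Gamma$ is a genuinely different boundary, and $(X,B)$ and $(X,\Gamma)$ are not log crepant equivalent. What actually works is monotonicity: the constructed complement satisfies $\Gamma_Y\geq S_1+\dots+S_r$, so the zero-dimensional strata of the chain or cycle remain log canonical centers of the new pair, whose dual complex therefore still has dimension one (it may even grow, which is harmless). Relatedly, you cannot deduce the statement from the value of $N$ in Theorem~\ref{thm:boundedness-complements-surfaces} alone, since $N=2$ configurations also occur in coregularity one (e.g.\ $\bigl(\pp^1,\tfrac{1}{2}(q_1+q_2+q_3+q_4)\bigr)$ on a minimal center), so the detour through that theorem is not just unnecessary but misleading: the proposition requires actually \emph{constructing} a complement that contains the one-dimensional boundary. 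That construction --- producing the coefficient-one curves through the ends of the chain (a $1-\frac{1}{m}$ end gives a $1$-complement; a two-half-points end forces the index-one double cover and a $2$-complement), globally via lifting sections of $-N(K_Y+S)$ from $S$, or using $\omega_S\simeq\oo_S$ in the cycle case --- is exactly the step your plan leaves unexecuted. Finally, Example~\ref{ex:d34d4} shows both outcomes genuinely occur, so no uniform improvement to $N=1$ is available.
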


Now, we turn to study the 
$1$-complements
and
$2$-complements
of a log del Pezzo of degree one.
Let $X$ be a log del Pezzo of degree one
and $B$ a boundary on $X$
for which
$(X,B)$ is log Calabi-Yau and $2(K_X+B)\sim 0$.
We have a projective birational morphism
$\pi\colon X\rightarrow \pp^2$
which is the blow-up at $8$ points.
The push-forward of $K_X+B$ to $\pp^2$
induces a boundary $B_2$ on $\pp^2$
for which $2(K_{\pp^2}+B_2)\sim 0$
and $\pi^*(K_{\pp^2}+B_2)=K_X+B$.
Note that each of the eight points 
must be contained in the support of $B_2$.

We study the possible structures
that the divisor $B_2$ can have. 
By counting the degree, we can write
\[
B_2 = \frac{1}{2} \left( \sum_{i=1}^k C_i \right),
\]
where $k$ is at most $6$.
We analyze the case in which $k=2$.
In this case, we have a log Calabi-Yau pair
\begin{equation}\label{eq:two-comp-pair}
\left(\pp^2, \frac{1}{2}C_a + \frac{1}{2}C_b \right),
\end{equation} 
where $C_a$ is a curve of degree $a$,
$C_b$ is a curve of degree $b$,
and $a+b=6$.
Note that $C_1+C_2$ must pass with multiplicity
at least two at each of the eight points of the blow-up
$X\rightarrow \pp^2$.
If the pair 
$(\pp^2,(C_a+C_b)/2)$ has coregularity one, 
then $C_a+C_b$ must pass through one point with multiplicity
at least four.
If the pair
$(\pp^2,(C_a+C_b)/2)$ has coregularity zero, 
then $C_a+C_b$ must pass through one point with multiplicity at least six.
The possible pairs for
the dimension of the spaces of curves of degree $a$ and $b$
are $(28,0)$,$(21,3)$,$(15,6)$, and $(10,10)$, up to permutation.
Note that the seven double points
impose
$21$ conditions on these spaces
while the triple point
imposes $6$ conditions.
In total, we have $27$ independent conditions imposed
on the space of curves. 
This implies that no pair of the form~\eqref{eq:two-comp-pair}
can have coregularity zero.
A similar analysis concludes the following theorem.

\begin{theorem}
A del Pezzo surface has coregularity zero
if and only if it has degree at least two.
\end{theorem}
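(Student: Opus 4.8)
A del Pezzo surface has coregularity zero if and only if it has degree at least two.

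The plan is to prove the two implications separately, since the forward direction (degree $\geq 2 \Rightarrow$ coregularity zero) has already been handled constructively in the preceding discussion, and only the reverse direction requires genuine work.

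For the "if" direction, I would simply invoke the explicit constructions given in Subsection~\ref{subsec:del-Pezzo}. For $d \in \{9,8,7,6\}$ the log pullback of $(\pp^2, L_1+L_2+L_3)$ produces a log Calabi--Yau pair whose dual complex is a circle with marked points, hence of dimension $1$; since $\dim X = 2$, we get $\operatorname{coreg}(X) = 2 - 1 - 1 = 0$. For $d \in \{2,3,4,5\}$ one uses instead the log pullback of $(\pp^2, C+L)$ where $C$ is a conic through five of the blown-up points and $L$ a line through two of them; as noted, this yields a log Calabi--Yau pair of coregularity zero because $C_X$ and $L_X$ still meet (the intersection point $C \cap L$ is not blown up), so the dual complex remains one-dimensional. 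This establishes that every del Pezzo of degree at least two has coregularity zero.

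For the "only if" direction I must show that a del Pezzo surface $X$ of degree one has $\operatorname{coreg}(X) \geq 1$, equivalently that every log Calabi--Yau structure $(X,B)$ has a dual complex of dimension at most zero. First I would reduce, using the Proposition of Prokhorov--Shokurov, to analyzing $1$- and $2$-complements: if $X$ had coregularity zero it would admit a $1$- or $2$-complement of coregularity zero. The $1$-complement case means $B \in |-K_X|$ is reduced, but $-K_X$ has degree one and its pushforward to $\pp^2$ is a cubic through the eight points, whose strict transform is a single irreducible curve — giving a dual complex that is a single point, hence coregularity one, a contradiction. The substantive case is the $2$-complement, which I would push forward along $\pi\colon X \to \pp^2$ to a boundary $B_2$ with $2(K_{\pp^2}+B_2)\sim 0$, so that $B_2 = \tfrac12\sum_{i=1}^k C_i$ with $\sum \deg C_i = 6$ and each of the eight points of multiplicity at least two on $\sum C_i$.

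The main obstacle, and the technical heart of the proof, is the dimension count ruling out coregularity zero for these $2$-complements. I would organize it by the number $k$ of components. The paper already dispatches $k=2$: coregularity zero would force a point of multiplicity six on $C_a + C_b$ with $a+b=6$, and counting the dimensions of the linear systems of curves of degrees $a$ and $b$ against the $21$ conditions from seven double points plus the $6$ conditions from one triple point yields $27$ conditions, which exceeds what the available parameter spaces $(28,0),(21,3),(15,6),(10,10)$ can accommodate after imposing the multiplicity-six singularity. I would carry out the parallel bookkeeping for $k=1$ (a single sextic, where the required singularities at eight general points overdetermine the $27$-dimensional system of plane sextics) and for $k \geq 3$, where each additional component is constrained to pass through enough of the eight points that the total incidence conditions again exceed the dimension of the relevant product of linear systems. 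In each case the conclusion is that no configuration realizing a log canonical center of dimension zero on $\pp^2$ can exist, so the dual complex of $(X,B)$ is at most a point and $\operatorname{coreg}(X) = 1$. The delicate part is ensuring the condition counts are genuinely independent — one must verify that the eight points, being in the very general position characterizing a degree-one del Pezzo (no three collinear, no six on a conic, no eight on a nodal cubic), impose independent conditions on each linear system in play, so that the naive dimension counts are sharp and the obstruction is real rather than an artifact of special position.
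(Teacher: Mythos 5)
Your overall architecture coincides with the paper's: the forward direction cites the same explicit complements from the del Pezzo discussion, and the reverse direction runs the same reduction (via the Prokhorov--Shokurov proposition) to $1$- and $2$-complements, followed by pushforward to $\pp^2$ and the dimension count for $k=2$; your numbers $(28,0),(21,3),(15,6),(10,10)$ and $21+6=27$ are exactly the paper's, and your plan to handle $k=1$ and $k\geq 3$ fills in what the paper compresses into ``a similar analysis concludes.''

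There is, however, a genuine gap in your $1$-complement step, and it is not cosmetic. You argue that a reduced $B\in|-K_X|$ pushes forward to a cubic through the eight points ``whose strict transform is a single irreducible curve --- giving a dual complex that is a single point.'' Irreducibility does not imply the dual complex is a point: if the member is irreducible with a node at a point $p$ away from the eight blown-up points --- a configuration that the del Pezzo genericity conditions do \emph{not} exclude, since they only forbid a cubic with a node at one of the eight points --- then $(X,B)$ is still log canonical, and its dlt modification blows up $p$, producing a boundary $\widetilde{B}+E$ with $\widetilde{B}\cap E$ consisting of two points. The dual complex is then a circle, of dimension one, yielding coregularity zero rather than the contradiction you want. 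Such members are not pathological: blowing up the base point of the anticanonical pencil gives a rational elliptic fibration of Euler number $12$, whose singular fibers are generically twelve irreducible nodal curves, so nodal anticanonical members exist on a degree-one del Pezzo with general points. (Cuspidal members are harmless, since a cusp with coefficient one is not log canonical, but you must rule out the nodal ones directly, and irreducibility alone cannot do it.) The same degeneration also slips through your $k=2$ bookkeeping: when $C_a=C_b$ is a nodal cubic, the multiplicity-six threshold for coregularity zero is met as multiplicity four at the node plus an infinitely near intersection, and the incidence conditions are imposed on a \emph{single} cubic rather than independently on two curves, so the independence of conditions you propose to verify fails exactly in this case --- compare Example~\ref{ex:example-coreg-0}, where coregularity zero arises from boundary multiplicity two with aligned tangent directions, not from large multiplicity at a single point. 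Any complete proof of the degree-one direction must close this loophole; your proposal does not, and to be fair, the paper's own sketch, which organizes the case analysis purely by multiplicities at points and defers the remaining cases to ``a similar analysis,'' leaves the same configuration unaddressed.
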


In summary, del Pezzo surfaces of coregularity one 
form a $8$-dimensional family.
On the other hand, del Pezzo surfaces of coregularity zero
are the union of
a $2$-dimensional family, 
a $4$-dimensional family, 
a $6$-dimensional family, 
and $6$ other examples, from which four are toric.
No del Pezzo surface has coregularity two. 

There has been a lot of work on classifying
Gorenstein del Pezzo surfaces, 
i.e., surfaces $X$ with $-K_X$ ample
and canonical singularities. 
These surfaces are classified
by the fundamental group
of its smooth locus (which is finite)
in the work of Miyanishi and Zhang
(see, e.g.,~\cite{MZ88,MZ93}).
It would be interesting to extend the previous
characterization 
of del Pezzo surfaces of coregularity zero 
to Gorenstein del Pezzo.

\begin{problem}
Classify Gorenstein del Pezzo surfaces
of coregularity zero, one, and two.
\end{problem}

Using the work of Prokhorov and Shokurov~\cite{Pro01,Sho00}, 
one can prove the following result 
which characterizes Fano type surfaces of coregularity zero.

\begin{theorem}
Let $X$ be a Fano type surface of coregularity zero.
Then there exists a $2$-complement $(X,B)$ 
and a cover $\pi\colon Y\rightarrow X$ of degree at most two,
such that the log pull-back $(Y,B_Y)$ of $(X,B)$ to $Y$
is log crepant to $(\pp^2,L_1+L_2+L_3)$.
\end{theorem}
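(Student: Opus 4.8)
The plan is to reduce to a $1$-complement by a degree two cover and then to identify the log crepant class of a coregularity zero $1$-complement on a rational surface. By the previous proposition of Prokhorov and Shokurov, $X$ admits a $1$- or a $2$-complement of coregularity zero; fix such a $2$-complement $(X,B)$, so that $2(K_X+B)\sim 0$ and the coefficients of $B$ lie in $\{0,\tfrac{1}{2},1\}$. If $K_X+B\sim 0$ with $B$ reduced we take $Y=X$ and $\pi=\id$. Otherwise the relation $2(K_X+B)\sim 0$ singles out a nontrivial class of order two and determines an associated double cover $\pi\colon Y\to X$; the first step is to check that $\pi$ is crepant, that is $\pi^*(K_X+B)=K_Y+B_Y$, and that the resulting boundary $B_Y$ is reduced, so that $(Y,B_Y)$ is a $1$-complement with $K_Y+B_Y\sim 0$.

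Next I would record that nothing is lost in passing to $Y$. Since $\pi$ is crepant of degree at most two, the pair $(Y,B_Y)$ is again log Calabi--Yau, its dual complex is the induced cover of $\mathcal{D}(X,B)$, and the coregularity is unchanged and equal to zero. Moreover $-K_Y\sim B_Y$ is effective and nonzero, and the presence of a zero-dimensional log canonical center forces $Y$ to be a rational surface. Thus it suffices to treat a $1$-complement $(Y,B_Y)$ of coregularity zero on a rational surface, and to show it is log crepant to $(\pp^2,L_1+L_2+L_3)$.

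To analyze this case I would first pin down the dual complex by adjunction. After passing to a dlt modification we may assume $B_Y$ is a reduced divisor with simple normal crossings supporting $-K_Y$. Restricting to a component $C$ of $B_Y$ and using $(K_Y+B_Y)|_C=K_C+\operatorname{Diff}\sim 0$, we get $\deg(K_C+\operatorname{Diff})=0$; since each such $C$ is rational this says that every component of $B_Y$ is a $\pp^1$ meeting the rest of the boundary in exactly two points. Hence the dual graph is connected and two-regular, so $\mathcal{D}(Y,B_Y)$ is a cycle, that is a circle $S^1$, in agreement with the surface case of Conjecture~\ref{conj:dual-comp}, and $B_Y$ is a genuine anticanonical cycle of rational curves. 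I would then run the minimal model program on $Y$, contracting $(-1)$-curves: because $K_Y+B_Y\equiv 0$ every such curve meets $K_Y+B_Y$ trivially, so each contraction is crepant and preserves both the log Calabi--Yau condition and the coregularity. This terminates on an anticanonical cycle on a minimal rational surface, $\pp^2$ or a Hirzebruch surface, and the remaining task is to transform that cycle into the triangle of three lines.

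The hard part is this last identification: showing that any coregularity zero $1$-complement on a rational surface lies in the single log crepant class of the toric pair $(\pp^2,L_1+L_2+L_3)$. Concretely one must carry the short cycles, such as a nodal anticanonical cubic or a conic meeting a secant line, to the triangle while preserving $K+B$. I would do this using crepant toric modifications, namely blow-ups and blow-downs at the nodes of the cycle, which change the number of components while keeping the pair log Calabi--Yau, together with a volume-preserving quadratic Cremona transformation centered at nodes of the cycle; since all smooth projective toric surfaces with their toric boundary are connected by crepant toric blow-ups and blow-downs, the minimal outcome is log crepant to the triangle. This Cremona-theoretic reduction, and the verification that degree two rather than an a priori larger index suffices, which is exactly the dichotomy $N\in\{1,2\}$ of the proposition above, are the two points requiring the surface complement theory of Prokhorov and Shokurov~\cite{Pro01,Sho00}.
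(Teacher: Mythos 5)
Your skeleton is sound and, as far as one can compare, matches what the paper has in mind: the paper does not actually prove this theorem (it attributes it to~\cite{Pro01,Sho00}), but its proof of the local analogue, Theorem~\ref{thm:surf-coreg-0}, uses exactly your mechanism of reducing a reduced $2$-complement to a $1$-complement via the index one cover of $K_X+B$. Your steps (a)--(d) are essentially correct: the Prokhorov--Shokurov dichotomy, the crepant double cover with reduced log pull-back, the adjunction argument showing the dlt boundary is a cycle of rational curves, and the crepant $K$-MMP (note that any contracted $(-1)$-curve $E$ with $(K_Y+B_Y)\cdot E=0$ and $E\not\subset B_Y$ satisfies $B_Y\cdot E=1$). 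One small slip: your assertion that $-K_Y\sim B_Y$ is nonzero can fail. If all coefficients of $B$ equal $\tfrac12$ (this happens for coregularity zero, e.g., Example~\ref{ex:example-coreg-0} with $n=2$), then $B_Y=0$ and $K_Y\sim 0$ with a strictly log canonical cusp-type point; the cycle then only appears on the dlt modification of $(Y,0)$, after which your argument resumes.

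The genuine gap is in your final identification step, and it can be pinpointed with the paper's own complexity invariant $c=\dim X+\rho(X)-|B|$ from Theorem~\ref{thm:complexity-toric}. A corner (node) blow-up adds one to both $\rho$ and $|B|$, so it preserves $c$; a quadratic Cremona map centered at three nodes is a composite of corner blow-ups and blow-downs, so it also preserves $c$; and every toric pair, in particular $(\pp^2,L_1+L_2+L_3)$, has $c=0$. But a nodal anticanonical cubic on $\pp^2$ has $c=2$ and a conic plus a secant line has $c=1$, so the toolkit you name --- node-centered blow-ups/downs, node-centered quadratic Cremona maps, and connectivity of toric pairs --- can \emph{never} carry the short cycles to the triangle. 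What is missing is the observation that the paper's notion of log crepant equivalence also allows the following moves: blowing up a \emph{smooth} point of the reduced boundary is crepant with the exceptional divisor receiving coefficient zero (since $K_Z+\widetilde{B}=\pi^*(K_Y+B)$ when ${\rm mult}_p B=1$), and dually, contracting a $(-1)$-curve $F\not\subset B$ with $F\cdot B=1$ at a smooth boundary point satisfies $f^*(K_{Y'}+B')=K_Y+B$ and drops $c$ by one. These are exactly the ``toric model'' moves of Gross--Hacking--Keel~\cite{GHK15}: after corner blow-ups, every anticanonical cycle pair on a smooth rational surface admits a crepant contraction of disjoint such interior $(-1)$-curves onto a toric pair, and all smooth projective toric surface pairs are mutually log crepant via toric modifications. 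Replacing your node-only Cremona reduction by this argument closes the gap and completes the proof.
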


In the previous theorem, 
we say that $(Y,B_Y)$ is log crepant to $(\pp^2,L_1+L_2+L_3)$
if there exists a common resolution
of singularities
$p\colon Z\rightarrow Y$
and $q\colon Z\rightarrow \pp^2$ for which we have 
\[
p^*(K_Y+B_Y) = q^*(K_{\pp^2}+L_1+L_2+L_3).
\] 
Furthermore, in the previous case we can go
from $(\pp^2,L_1+L_2+L_3)$
to $(Y,B_Y)$ by blowing up a sequence of points
in the total transform of $L_1+L_2+L_3$
and blowing down a sequence of curves.

The following examples shows
that complements
computing the coregularity
can have arbitrarily small coefficients.

\begin{example}
\label{ex:example-coreg-0}
{\em 
Consider $n$ conics 
$C_1,\dots, C_n$ in $\pp^2$ 
passing smoothly through the origin $[0:0:1]$
with common tangent direction.
\begin{figure} 
\includegraphics[scale=0.4]{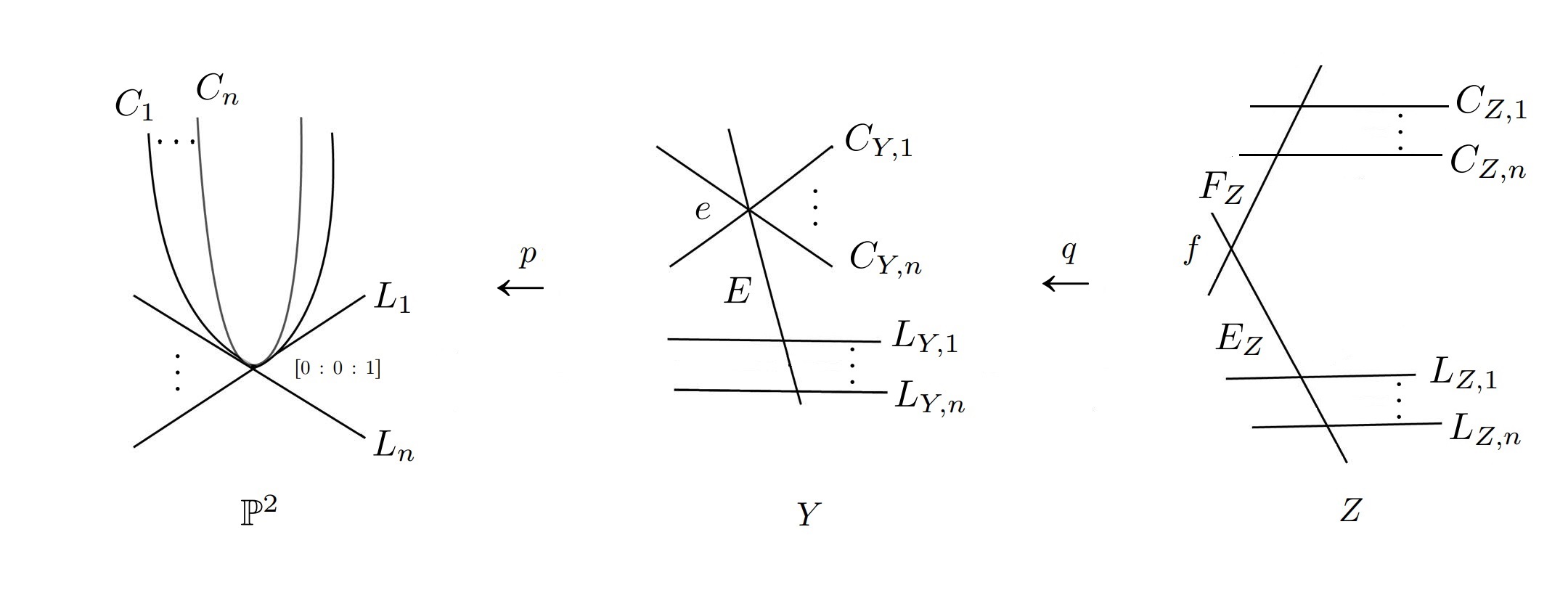}
\caption{Sequence of blow-ups.}
\label{fig0}
\end{figure} 
See Figure~\ref{fig0}.
Let $L_1,\dots,L_n$ be general lines
passing through the origin
with general tangent directions.
Then, the pair
\begin{equation}\label{eq:log-pair-small-coeff} 
\left(\pp^2, \frac{1}{n}(L_1+\dots+L_n)+\frac{1}{n}(C_1+\dots+C_n)\right)
\end{equation}  
is log Calabi-Yau.
We claim that it has coregularity zero.
For simplicity we let 
$L:=(L_1+\dots+L_n)/n$
and
$C:=(C_1+\dots+C_n)/n$.
First, we show that the origin
is a log canonical center
and more precisely the exceptional extracted by blowing-up its maximal ideal
is a log canonical place.
Indeed, each line and conic pass through the point with multiplicity one, so the sum $(L_1+\dots+L_n+C_1+\dots+C_n)/n$
pass through the origin with multiplicity two. 
Let $p\colon Y\rightarrow \pp^2$ be the blow-up at the origin.
Then, we have that 
\[
p^*\left(K_{\pp^2}+L+C\right)=
K_{Y}+L_{Y}+C_{Y}+E,
\] 
where $E$ is the exceptional divisor,
$L_{Y}$ (resp. $C_{Y}$) 
the strict trasnform of $L$ (resp. $C$).
Analogously, each $L_{Y,i}$ (resp. $C_{Y,i})$ is the strict transform of $L_i$
(resp. $C_i$).
Note that the intersection
of each component of $L_{Y}$ and $E$ is transversal.
The intersection of each component of $C_{Y}$ with $E$ is transversal.
The intersection of all the components
of $C_{Y}$ is a point $e\in E$.
Let $q\colon Z\rightarrow Y$ be the blow-up of $Y$ at $e$. 
Then, we have that 
\[
q^*(K_Y+L_{Y}+C_{Y}+E)=
K_Z+L_{Z}+C_{Z}+E_Z+F_Z 
\] 
where $E_Z$ is the strict transform of $E$ on $Y$ and $F_Z$ is the exceptional of $q$.
Analogously, we denote by
$L_{Z,i}$ (resp. $C_{Z,i}$) the strict transform of
$L_{Y,i}$ (resp. $C_{Y,i}$) on $Z$.
Note that $f=E_Z\cap F_Z\neq \emptyset$.
This model is actually a log resolution as all components
of $L_{Z},C_{Z},E_Z$ and $F_Z$ intersect transversally. 
Hence, the pair~\eqref{eq:log-pair-small-coeff} has coregularity zero.
}
\end{example}

\subsection{Surface quotient singularities}
\label{subsec:quot-sing}
In dimension two, klt singularities are finite quotient singularities. 
The most well-known of these are the DuVal singularities 
described by the following equations:
\begin{align*}
    A_n: \quad &  x^2+y^2+z^{n+1} =0,\\
    D_n: \quad &  x^2+y^2z+z^{n-1} =0, \quad n\geq 4.\\
    E_6: \quad &  x^2+y^3+z^4=0, \\
    E_7: \quad &  x^2+y^3+yz^3=0, \\
    E_8: \quad &  x^2+y^3+z^5=0.
\end{align*}
The $A_n$ singularity is toric, indeed it can also be written as
$xy+z^{n+1}=0$ 
and the pair
$(A_n,\{x=0\}+\{y=0\})$
has coregularity zero. 
Indeed, every curve in the minimal resolution of the $A_n$
appear with coefficient one 
in the log pull-back of the previous pair.
Thus, the dual complex in the minimal resolution
is a closed segment with $n+2$ marked points,
from which $n$ correspond to exceptional curves
and $2$ correspond to $\{x=0\}$ and $\{y=0\}$.
Hence, the $A_n$ singularity has coregularity zero.

The $D_n$ singularity
admits a $2$-complement given
by the pair
$(D_n, \{y=z=0\})$.
The strict transform of
the curve
$\{y=z=0\}$ to the minimal resolution 
intersects a single curve
which is the endpoint of one of the branches
of the curve that has length different than two.
The index one cover
of $K_{D_n}+\{y=z=0\}$ is a $A_n$
singularity with its torus invariant boundary.
Hence, the $D_n$ singularity 
has coregularity zero.

The singularities 
$E_6,E_7$ and $E_8$
are called exceptional singularities. 
They have coregularity one
and the only divisor over
them which can compute a log canonical place
is the center of the fork.
Nevertheless, these singularities
admit $6$-complements.
From now on, we will say that a singularity
is {\em exceptional} if its coregularity is its dimension minus one.
If a klt singularity is exceptional, 
then we can perform a blow-up which has a unique
prime exceptional divisor that is an exceptional Fano variety~\cite{Mor18c,HLM20,Mor21a}.
Exceptional Fano varieties of dimension $n$ are in a bounded family
by the work of Birkar~\cite{Bir19}.
Exceptional singularities
of dimension $n$ can be deformed into
orbifold cones over exceptional Fano varieties. 
This leads to the following theorem due to Han, Liu, and the author~\cite{HLM20}:

\begin{theorem}
Let $n$ be a positive integer
and $\epsilon>0$ be a positive real number.
The class of $n$-dimensional exceptional singularities
$(X;x)$ with log discrepancy at least $\epsilon$ 
are bounded up to deformation.
\end{theorem}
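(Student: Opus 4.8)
The plan is to pass from each singularity to the exceptional Fano variety produced by its plt blow-up, to bound those Fano varieties together with their orbifold structure, and then to recover the singularity as a deformation of the corresponding orbifold cone.

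First, I would attach to each exceptional klt singularity $(X;x)$ of dimension $n$ the distinguished divisor $E$ extracted by a plt blow-up $\pi\colon Y\to X$. By the cited structure results \cite{Mor18c,HLM20,Mor21a}, since $(X;x)$ is exceptional this $E$ is unique and carries the structure of an $(n-1)$-dimensional exceptional Fano variety. Performing adjunction of $K_Y+E$ to $E$ yields a boundary $\Delta_E$ (the different), with standard coefficients of the form $1-\tfrac{1}{m}$, such that $(E,\Delta_E)$ is an exceptional log Fano pair. The hypothesis that $(X;x)$ has log discrepancy at least $\epsilon$ should propagate through adjunction to show that $(E,\Delta_E)$ is $\epsilon$-lc; in particular each occurring $m$ satisfies $m\le 1/\epsilon$, so only finitely many coefficients appear.

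Next, I would invoke Birkar's boundedness of exceptional Fano varieties \cite{Bir19} to place the varieties $E$ in a bounded family. Combined with the finitely many possible coefficients from the previous step, this bounds the pairs $(E,\Delta_E)$. The polarization $A$ governing the cone is, up to scaling, $-E|_E$, and its Weil index at each prime divisor is again controlled by $\epsilon$; hence $A$ varies in a bounded family as well. Applying the cone construction of Proposition~\ref{prop:from-Fano-to-klt} to this bounded input produces a bounded family of $n$-dimensional klt orbifold cone singularities $C_E(A)$. Finally, I would realize each $(X;x)$ as a deformation of the cone $C_E(A)$ over its exceptional divisor: degenerating $X$ to the normal cone of $E$ along the plt blow-up — equivalently, using the $\mathbb{G}_m$-action on the extended Rees algebra associated with the divisorial valuation of $E$ — produces a flat family whose generic fiber is $(X;x)$ and whose special fiber is the orbifold cone $C_E(A)$. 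Since these special fibers lie in the bounded family constructed above, every exceptional singularity of dimension $n$ with log discrepancy at least $\epsilon$ is a deformation of a member of a single bounded family, which is precisely boundedness up to deformation.

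The hard part will be the propagation of the $\epsilon$-bound in the first step and the boundedness of the polarization in the second. Birkar's theorem bounds the underlying variety $E$ unconditionally, but to bound the full cone $C_E(A)$ — and therefore to keep the degenerations inside a bounded family — one must show that both the different $\Delta_E$ and the Weil index of $A$ are controlled purely in terms of $\epsilon$ and $n$. Establishing this uniform $\epsilon$-lc control across the plt blow-up, so that inversion of adjunction transfers the singularity hypothesis to the Fano pair, is the technical core of the argument.
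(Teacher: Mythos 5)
Your proposal follows essentially the same route as the paper's own argument (which is only sketched there, with the details deferred to~\cite{HLM20}): pass to the unique plt blow-up whose exceptional divisor is an exceptional log Fano pair $(E,\Delta_E)$ with standard coefficients bounded via the $\epsilon$-lc hypothesis, invoke Birkar's boundedness of exceptional Fanos, and degenerate the singularity to the orbifold cone over $E$ via the Rees-algebra construction. You also correctly identify the genuine technical core --- transferring the $\epsilon$-bound through adjunction to control $\Delta_E$ and the Weil index of the polarization --- which is precisely where the cited work~\cite{HLM20} does the heavy lifting.
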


In the previous statement 
bounded up to deformation means that 
there exists a bounded set of singularities $\mathcal{B}$
so that every element as in the statement deforms 
to an element in $\mathcal{B}$.
Due to the previous theorem, 
we do not expect interesting behaviours
on exceptional singularities. 
Most invariants take only finitely many possible values
on $n$-dimensional exceptional singularities
with log discrepancy at least $\epsilon>0$.
The following theorem 
characterizes klt
surface singularities of coregularity zero.

\begin{theorem}\label{thm:surf-coreg-0}
Let $(X;x)$ be a $2$-dimensional klt singularity
of coregularity zero.
Then, there exists a reduced $2$-complement
$(X,B;x)$ for which the index one cover
of $K_X+B$ is toric around the inverse image of $x$.
\end{theorem}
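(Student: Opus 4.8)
The plan is to reduce the statement to the Gorenstein, index-one situation via the index-one cover, and then to recognize the resulting germ as a cone over a toric log Fano curve.

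First I would produce the complement. Since $(X;x)$ is a klt surface singularity of coregularity zero, I would invoke the two-dimensional theory of complements of Prokhorov and Shokurov (the proposition preceding the statement, in its local form) to obtain an $N$-complement $(X,B;x)$ with $N\in\{1,2\}$ realizing the coregularity, so that $x$ is a $0$-dimensional minimal log canonical center. To see that $N$ can be taken in $\{1,2\}$ and that $B$ can be taken reduced, I would pass through the plt blow-up $\pi\colon Y\to X$ of Proposition~\ref{prop:from-klt-to-Fano} (with $B=0$): it extracts a single curve $E\cong\mathbb{P}^1$, and adjunction $(K_Y+E)|_E=K_E+\Delta_E$ exhibits a log Fano curve $(\mathbb{P}^1,\Delta_E)$ whose coregularity equals that of $(X;x)$, hence is zero. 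A log Fano curve of coregularity zero admits a complement whose two log canonical centers are points, i.e. one crepant to the toric pair $(\mathbb{P}^1,\{0\}+\{\infty\})$ after a base change of degree at most two; this complement has index $1$ or $2$ and reduced boundary. Lifting it from the log canonical place $E$ to $X$ by the Prokhorov--Shokurov lifting procedure yields the desired reduced $2$-complement $(X,B;x)$ with $2(K_X+B)\sim 0$ near $x$.

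Next I would take the index-one cover $\psi\colon\tilde X\to X$ of $K_X+B$. As $2(K_X+B)\sim 0$ near $x$, the map $\psi$ is a cyclic cover of degree dividing two, \'etale in codimension one away from the locus forcing the torsion, with $K_{\tilde X}+\tilde B=\psi^*(K_X+B)$ where $\tilde B$ is the reduced divisor $\psi^{-1}B$. By construction $K_{\tilde X}+\tilde B$ is Cartier and $\sim 0$ near the single point $\tilde x:=\psi^{-1}(x)$, so $(\tilde X,\tilde B;\tilde x)$ is a Gorenstein (index-one) log canonical Calabi--Yau surface germ. Because $\psi$ is crepant, the pairs $(X,B)$ and $(\tilde X,\tilde B)$ are log crepant equivalent over the corresponding germs, so coregularity is preserved and $(\tilde X,\tilde B;\tilde x)$ again has coregularity zero.

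It therefore remains to prove the core assertion: a Gorenstein log canonical Calabi--Yau surface germ $(\tilde X,\tilde B;\tilde x)$ of coregularity zero is toric around $\tilde x$, with $\tilde B$ its toric boundary. I would argue by passing to a dlt modification $(\tilde Y,\tilde B_{\tilde Y})$: coregularity zero forces the dual complex $\D(\lfloor\tilde B_{\tilde Y}\rfloor)$ to be one-dimensional with a $0$-cell, so the reduced boundary is a chain (or cycle) of rational curves meeting nodally, and the node lying over $\tilde x$ is the $0$-dimensional log canonical center. Since $K_{\tilde X}+\tilde B\sim 0$ is Cartier, the self-intersections and the different along this configuration are rigidly constrained, and the germ is the orbifold cone over the toric log Fano curve $(\mathbb{P}^1,\{0\}+\{\infty\})$ obtained on $E$ in the first step. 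By the cone correspondence of Proposition~\ref{prop:from-Fano-to-klt}, the cone over a toric polarized pair is a toric surface singularity; equivalently, a maximal-boundary Gorenstein log Calabi--Yau surface germ is analytically isomorphic to a cyclic quotient singularity with its torus-invariant boundary. This yields a toric structure near $\tilde x$ for which $\tilde B$ is the torus-invariant divisor, completing the proof.

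The main obstacle is this last step. The delicate points are: verifying that the cycle or chain of rational curves arising on the dlt model, together with the index-one condition $K_{\tilde X}+\tilde B\sim 0$ and the matching of differents, genuinely reconstructs a fan and hence a toric germ (rather than merely a surface birational to a toric one); handling the distinction between the segment and circle shapes of the dual complex; and controlling the degree of the index-one cover so that it is exactly the one making the boundary reduced. These are precisely the surface analogues of the maximal-intersection toricity results for log Calabi--Yau surfaces, and that classification is what powers the argument.
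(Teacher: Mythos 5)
Your proposal is correct in outline and reaches the same three milestones as the paper --- a reduced boundary $B$ with $2(K_X+B)\sim 0$ near $x$, the index-one cover, and toricity of the covered germ --- but it travels a genuinely different road in both halves. The paper never passes through the plt blow-up: it takes a dlt modification of an arbitrary $\qq$-complement of coregularity zero, quotes Shokurov's result \cite[Corollary 7.12]{Sho00} that the exceptional dual complex $\mathcal{D}(S_1+\dots+S_r)$ is a segment, and then manufactures the reduced complement by hand, attaching one or two coefficient-one curves transversally to the end components $S_1,S_r$ according to whether the adjunction boundaries $B_{S_1},B_{S_r}$ each carry one standard-coefficient point or one carries a single point and the other two half-points; in the latter case it analyzes the induced double cover of the chain explicitly (the involution swaps the $E_i$ symmetrically and acts on the middle curve by $e\mapsto e^{-1}$), which is precisely how one sees that the boundary upstairs acquires \emph{two} branches $B_1'+B_2'$ through the preimage of $x$. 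Your route --- plt blow-up, classification of coregularity-zero complements on the log Fano curve $(\mathbb{P}^1,\Delta_E)$, Prokhorov--Shokurov lifting --- is the standard inductive scheme and does work, but two of its assertions are glossed: the claimed equality $\operatorname{coreg}(E,\Delta_E)=\operatorname{coreg}(X;x)$ is only needed, and only standard, in one direction (exceptionality of $(E,\Delta_E)$ forces exceptionality of $(X;x)$, in the spirit of Lemma~\ref{lem:unique-plt} and Lemma~\ref{lem:from-ex-Fano-to-ex-sing}); and you never verify that $\tilde{B}$ has two analytic branches at $\tilde{x}$ (a cycle is excluded since $\tilde{X}$ is klt and $\tilde{B}\neq 0$, but a single branch must be ruled out by the index-one condition, which is exactly what the paper's covering analysis supplies). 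Finally, the step you flag as the main obstacle --- that a reduced-boundary log canonical Calabi--Yau surface germ with a zero-dimensional log canonical center is formally toric --- does not need to be rebuilt from self-intersections and differents: once the two branches are in place it is the local case of Theorem~\ref{thm:complexity-toric}, since the local complexity is $\dim X+\rho-|B|=2+0-2=0$, and this is the very result the paper invokes at the corresponding point of both of its cases. So your ``delicate last step'' is closed by a theorem already stated in this paper, and with the two-branch verification and the one-directional exceptionality transfer made explicit, your argument goes through; what the paper's more hands-on construction buys is exactly that explicit control of the chain and of the involution which your more modular, higher-dimension-friendly scheme leaves implicit.
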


\begin{proof}
By definition, we can find a $\qq$-complement
$\Gamma$ so that the singularity
$(X,\Gamma;x)$ has coregularity zero. 
Let $\pi\colon Y\rightarrow X$ be a dlt modification
of $(X,\Gamma;x)$.
Note that, since $(X;x)$ is klt, then $\Gamma$ is non-trivial at $x$.
Write 
\[
\pi^*(K_X+\Gamma) = K_Y+S_1+\dots+S_r+\Gamma_Y,
\]
where $\Gamma_Y$ is the strict transform of $\Gamma$ on $Y$
and the $S_i$'s are the exceptional divisors.
By~\cite[Corollary 7.12]{Sho00}, the dual complex
$\mathcal{D}(S_1+\dots+S_r)$ is a segment of a line.
We assume $S_1$ and $S_r$ are its endpoints.
Let 
$(S_1, S_2|_{S_1} + B_{S_1})$
and
$(S_r, S_{r-1}|_{S_r} + B_{S_r})$
be the log pairs
induced by adjunction of
$(K_Y,S_1+\dots+S_r)$ to $S_1$ and $S_r$, respectively.
Note that the coefficients of $B_{S_1}$ and $B_{S_r}$ are standard. 
Hence, we may assume that one of the following situations hold:
\begin{itemize}
    \item Each boundary divisor 
    $B_{S_1}$ and $B_{S_r}$ have a unique prime component
    with coefficients $1-\frac{1}{m_1}$ and $1-\frac{1}{m_r}$, or
    \item the divisor $B_{S_1}$ has a unique prime component
    and $B_{S_r}$ has two components of coefficient $\frac{1}{2}$.
\end{itemize}
In the former case, we can find curves $B_1$ and $B_r$ with coefficient one on 
$Y$ such that $B_1$ (resp. $B_r$) intersects 
$S_1+\dots+S_r$ at $B_{S_1}$ (resp. $B_{S_r}$) with multiplicity one.
Then, the pair 
$(Y,S_1+\dots+S_r+B_1+B_r)$ is log Calabi--Yau over $X$.
We set $B$ to be the push-forward of $B_1+B_r$ to $X$.
Then, $(X,B)$ is a toric singularity by Theorem~\ref{thm:complexity-toric} below.
Indeed, here the two irreducible components of $B$
play the role of the torus invariant divisors for the action.

In the latter case, we can find a curve $B_1$ which intersects
$S_1+\dots+S_r$ at $B_{S_1}$ with multiplicity one. 
Then, the log pair
$(Y,S_1+\dots+S_r+B_1)$ is log Calabi--Yau.
Let $B$ be the push-forward of $B_1$ to $X$.
Then, the pair $(X,B;x)$ is log canonical.
Indeed, its log pull-back to $Y$ is simply 
$(Y,S_1+\dots+S_r+B_1)$ which has log canonical singularities.
Let $X'\rightarrow X$ be the index one cover of $K_X+B$.
By construction, we have that $2(K_X+B)\sim 0$. 
Indeed, we have that 
\[
2(K_Y+S_1+\dots+S_r+B_1)\sim_X 0
\]
and this is preserved under push-forwards.
Hence, the index one cover of $K_X+B$
is a two-to-one cover $X'\rightarrow X$
that is unramified outside the point $x\in X$.
Indeed, $B$ is reduced.
Let $\phi\colon Y'\rightarrow Y$ be the normalization
of the main component of the fiber product
$X'\times_X Y$.
Then, we can write
\[
\phi^*(K_Y+S_1+\dots+S_r+B_1)=
K_{Y'}+E_1+\dots+E_r+\dots+E_{2r-1}+B'_1+B'_2.
\]
The involution acts on 
$E_1+\dots+E_r+\dots+E_{2r-1}$
by swapping $E_i$ with $E_{r-i}$ for $i\neq r$
and on $E_r$ the involution sends $e$ to $e^{-1}$.
The boundary $B'_1+B'_2$ is simply the pull-back of
$B_1$ to $Y'$ and each component intersects 
$E_1$ and $E_{2r-1}$, respectively.
Let $B'$ be the push-forward of $B'_1+B'_2$ to $X'$.
Then, the pair $(X',B';x')$ is log canonical.
The same argument as in the first case implies
that $(X',B';x')$ is a toric singularity.
\end{proof}

\subsection{Finite quotients of $\mathbb{P}^n$}\label{subsec:quotients}
As discussed in the introduction,
quotients of the projective space
by finite groups are Fano varieties.
Often, these have singularities coming from the fixed
points of the action. 
Analogously, we can consider the local picture, 
i.e., quotients of the affine space (or a smooth germ)
by the action of a finite group fixing the origin.
In the same vein, 
we can consider quotients of del Pezzo surfaces 
by finite group actions.

The quotient of $\pp^1$ by the action of a
finite group $G$ is exceptional if and only if
$G$ is either the $E_6,E_7$ or $E_8$ group.
Here, we are considering the quotient with the 
usual logarithmic structure endowed by
the Riemann-Hurwitz formula.
We recall that by the Chevalley-Shephard-Todd theorem, 
the quotient of $\cc^n$ by a finite group
$G$ generated by pseudo-reflections is isomorphic to $\cc^n$.
Hence, when discussing quotient singularities,
we may always assume that $G$ does not contain pseudo-reflections.
In~\cite{MP99}, Markushevich and Prokhorov
proved that a surface quotient singularity
$\cc^2/G$ is exceptional 
if and only if $G$ 
has no semi-invariants of degree at most $2$.
Similarly, the quotient 
$\cc^3/G$ is exceptional if and only if $G$
has no semi-invariants of degree at most $3$.

The del Pezzo surface $X_5$ of degree $5$
has an automorphism group isomorphic to 
$S_5$.
The quotient of $X_5$ by 
$A_5$ and $S_5$ are exceptional
(see, e.g.,~\cite{Che08}).
Using the Miller–Blichfeldt–Dickson
classification
of finite subgroups of $\mathbb{P}{\rm GL}_3(\cc)$, 
the exceptional Fanos obtained
as finite quotients
$\pp^2/G$ can be classified.
Indeed, all these exceptional Fanos are 
quotients by an element in one of the four imprimitive
classes $F,G,I,J$.
This statement has a local analog for
canonical exceptional singularities.
In~\cite{MP99}, the authors prove that 
a quotient singularity $\cc^3/G$ is 
a canonical exceptional singularity
if and only if $G$ is one of the following groups:
\begin{enumerate}
    \item The Klein's simple group,
    \item the unique central extension of the Klein simple group
    in ${\rm SL}_3(\cc)$, 
    \item the Hessian group,
    \item the normal subgroup of the Hessian group, or
    \item a central extension of $\mathcal{U}_6$.
\end{enumerate}

In~\cite{CS11}, the authors study
exceptional quotients $\pp^n/G$ for higher-dimensional projective spaces.
In dimension $3$, Cheltsov shows that the quotient is exceptional
provided that $|G|\geq 169$ and $G$ does not
have semi-invariants of degree at most $4$.
There is further work on the exceptionality of 
quotients $\pp^n/G$ in dimension at most $9$ by
Cheltsov and Shramov~\cite{CS11b,CS12}.
We propose a problem in the opposite direction:

\begin{problem}
Characterize, in terms of semi-invariants, 
the quotients 
$\mathbb{A}^n/G$ 
and 
$\mathbb{P}^n/G$ that have coregularity zero.
\end{problem}

The expected answer is that, 
whenever $G$ has enough semi-invariants
of small degree, then the quotient should have
coregularity zero. 
For instance, if 
the group $G$ fixes the sum 
of the hyperplanes
$H_1+\dots+H_n$, then the 
quotient $\mathbb{A}^n/G$ has coregularity zero.
However, this is an easy example. 
It would be interesting to have complete classifications
even in low dimensions.
In general, we have the following inequality 
between coregularities of quotients:

\begin{proposition}
Let $(X;x)$ be a klt type singularity 
and $G$ be a finite group acting on it.
Let $Y=X/G$ and $y$ be the image of $x$.
Write $p\colon X\rightarrow Y$ for the quotient
and $p^*(K_Y+\Delta_Y)=K_X$ for some divisor
$\Delta_Y$ with standard coefficients.
Then, we have that
\[
{\rm coreg}(X;x) \leq {\rm coreg}(Y,\Delta_Y;y).
\]
\end{proposition}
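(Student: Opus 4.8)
The plan is to exploit the fact that log discrepancies behave well under the quotient map $p\colon X \to Y$, together with the interpretation of coregularity as the dimension of the smallest log canonical center among dlt modifications of log Calabi--Yau (equivalently, log canonical) structures. The strategy is to start from a log canonical structure on $Y$ that computes $\mathrm{coreg}(Y,\Delta_Y;y)$, pull it back to $X$, and show that the pulled-back structure witnesses a log canonical center on $X$ of dimension at most $\mathrm{coreg}(Y,\Delta_Y;y)$, which forces $\mathrm{coreg}(X;x) \le \mathrm{coreg}(Y,\Delta_Y;y)$.

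\textbf{Setup and reduction.} First I would recall that since $p\colon X \to Y$ is the quotient by a finite group with $p^*(K_Y+\Delta_Y)=K_X$, the map is crepant with respect to the pair $(Y,\Delta_Y)$; this is exactly the Riemann--Hurwitz relation from Example~\ref{ex:1}. Consequently, for any boundary $\Gamma_Y$ on $Y$, if we set $\Gamma_X := p^*(\Gamma_Y)$ (well-defined since $p$ is finite and $\Gamma_Y$ can be pulled back after passing to a suitable multiple), then $p^*(K_Y+\Delta_Y+\Gamma_Y) = K_X + \Gamma_X$, and $(X,\Gamma_X;x)$ is log canonical if and only if $(Y,\Delta_Y+\Gamma_Y;y)$ is log canonical. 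The key compatibility is that log discrepancies are preserved up to the local ramification data: a divisor $E$ over $Y$ with $a_E(Y,\Delta_Y+\Gamma_Y)=0$ pulls back to divisors over $X$ whose log discrepancies with respect to $(X,\Gamma_X)$ are again zero.

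\textbf{Core argument.} Let $(Y,\Delta_Y+\Gamma_Y;y)$ be a log canonical structure realizing the coregularity of $(Y,\Delta_Y;y)$, so that a dlt modification has a minimal log canonical center $W$ with $\dim W = \mathrm{coreg}(Y,\Delta_Y;y)$. The plan is to pull back this structure to $X$ via $p$, obtaining a log canonical pair $(X,\Gamma_X;x)$, and to track what happens to the log canonical centers. Because $p$ is finite and crepant for $(Y,\Delta_Y)$, every log canonical center of $(Y,\Delta_Y+\Gamma_Y)$ pulls back (scheme-theoretically, up to taking components) to log canonical centers of $(X,\Gamma_X)$ whose dimensions do not exceed $\dim W$, since $p$ is generically finite and does not increase dimension on centers mapping onto $W$. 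Passing to a dlt modification of $(X,\Gamma_X;x)$ and using the characterization of coregularity as the dimension of the smallest minimal log canonical center then yields $\mathrm{coreg}(X;x) \le \dim W = \mathrm{coreg}(Y,\Delta_Y;y)$.

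\textbf{The main obstacle} I expect is the bookkeeping of log canonical centers under the finite quotient: one must verify that a minimal lc center downstairs does genuinely produce an lc center upstairs of no larger dimension, and that no unexpected lower bound on coregularity is forced by the ramification. The subtlety is that $p$ may ramify precisely along the boundary $\Delta_Y$, so one should work on an equivariant dlt modification --- or take a $G$-equivariant resolution and descend --- to ensure that the dlt structure and its strata are compatible with the group action. Handling this carefully (for instance, by first finding a $G$-equivariant log resolution of the pulled-back pair on $X$ and comparing strata under $p$) is where the real work lies; the dimension inequality itself is then a formal consequence of crepancy and finiteness of $p$.
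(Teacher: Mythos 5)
Your proposal is correct and follows essentially the same route as the paper: pull back, via the crepant finite quotient $p$, a complement $\Gamma_Y$ computing ${\rm coreg}(Y,\Delta_Y;y)$ to obtain a log canonical pair $(X,\Gamma_X;x)$, and conclude ${\rm coreg}(X;x)\leq {\rm coreg}(Y,\Delta_Y+\Gamma_Y;y)={\rm coreg}(Y,\Delta_Y;y)$. The only cosmetic difference is bookkeeping: the paper observes that $\mathcal{D}(X,\Gamma_X;x)$ quotients by $G$ onto $\mathcal{D}(Y,\Delta_Y+\Gamma_Y;y)$, so the dual complexes have equal dimension, whereas you track log canonical centers directly and note their dimensions are preserved since $p$ is finite --- both rest on the same fact that log discrepancies scale by ramification indices under the crepant finite cover, so log canonical places correspond.
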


\begin{proof}
Consider a complement
$\Gamma_Y$ on $(Y,\Delta_Y;y)$
which computes the coregularity at $y$.
Then, the pair
$(Y,\Delta_Y+\Gamma_Y;y)$ is log canonical
and $y\in Y$ is a log canonical center.
Then, the pull-back
\[
p^*(K_Y+\Delta_Y+\Gamma_Y)=K_X+\Gamma_X,
\]
defines a log canonical pair
$(X,\Gamma_X;x)$
such that $x$ is a log canonical center. 
The dual complex
$\mathcal{D}(X,\Gamma_;x)$
quotients
to the dual complex
$\mathcal{D}(Y,\Delta_Y+\Gamma_Y;y)$
by the induced action of $G$.
In particular, both dual complexes
has the same dimension. 
This implies that 
\[
{\rm coreg}(Y,\Delta_Y;y) = 
{\rm coreg}(Y,\Delta_Y+\Gamma_Y;y) = 
{\rm coreg}(X,\Delta_X;x) \geq
{\rm coreg}(X;x).
\]
This finishes the proof.
\end{proof}
We note that the previous inequality
is not sharp.
Indeed, for exceptional quotient singularities
of dimension $n$ 
the affine space $\cc^n$ has coregularity zero
while the quotient has coregularity $n-1$.
Thus, the difference between these coregularities
can be as large as possible.
A more ambitious question in this direction
is to try to determine for which finite actions
$G$ on klt singularities $(X;x)$
the quotient $(X/G,\bar{x})$ has the same coregularity as $(X;x)$.

\subsection{Singular del Pezzo surfaces}
\label{subsec:sing-del-Pezzo}
In this subsection, we discuss complements
and coregularity
of singular del Pezzo surfaces, i.e., 
surfaces $X$ with klt singularities
for which $-K_X$ is ample. 
We will focus on surfaces of Picard rank one. 
Before proceeding to construct examples, we will review the 
ADE classification of quotient singularities. 

As explained above, Du Val singularities
are classified by the ADE classification, i.e., they are either
$A_n$, $D_n$, $E_6,E_7$ or $E_8$ singularities. 
This classification generalizes to 
klt surface singularities. 

\begin{definition}
{\em 
Let $(X;x)$ be a klt surface singularity.
We say that $(X;x)$ is of {\em $A$-type} 
if the graph of its minimal resolution is an interval with vertices.
We say that $(X;x)$ is of {\em $D$-type}
if the graph of its minimal resolution is a fork with three branches 
and two branches have length one. 
In the $D$-type case, we assume that the third branch is non-empty.
We say that $(X;x)$ is of {\em $E$-type}
if it is not of $A$-type or $D$-type.
}
\end{definition} 

It follows from the work of Alexeev that every klt surface singularity belongs to one of the previous classes (see, e.g.,~\cite{Ale93}).
We have the following proposition (see, e.g.,~\cite{Sho00}).

\begin{proposition}\label{prop:comp-klt-surf}
Let $(X;x)$ be a klt surface singularity.
Then, one of the following statements hold:
\begin{enumerate}
    \item {\rm $A$-type:} there exists a $1$-complement $(X,B;x)$ and $(X,B)$ is formally toric around $x$, 
    \item {\rm $D$-type:} there exists a reduced $2$-complement $(X,B;x)$ and the index one cover of $K_X+B$ is formally toric at the pre-image of $x$, or 
    \item {\rm $E$-type:} there exists a $6$-complement $(X,B;x)$ and the dual complex $\mathcal{D}(X,B;x)$ is a point.
\end{enumerate}
Furthermore, in the case of $D$-type singularities there is no non-trivial $1$-complement
through $x\in X$.
\end{proposition}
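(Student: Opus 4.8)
The plan is to run the whole argument on the minimal resolution $f\colon \tilde X\to X$ and to split according to the three possible shapes of the dual graph $\Gamma$ of the exceptional locus. Since $(X;x)$ is klt it is a rational singularity, so the components $E_1,\dots,E_r$ of the exceptional divisor are smooth rational curves meeting transversally and $\Gamma$ is a tree; by Alexeev's classification (cited above) every such graph is of $A$-, $D$-, or $E$-type, so the trichotomy is exhaustive. As in the proof of Theorem~\ref{thm:surf-coreg-0}, I would first record via~\cite[Corollary 7.12]{Sho00} that $A$- and $D$-type singularities have coregularity zero, while $E$-type singularities are exceptional, i.e.\ of coregularity one (equal to $\dim X-1$). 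The three bullet points are then obtained by producing an explicit complement in each case and reading off its index and its dual complex.

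\emph{$A$-type.} A chain $\Gamma$ is exactly the resolution graph of a cyclic quotient singularity, so $(X;x)$ is formally toric. I would take $B$ to be the sum of the two torus-invariant curves through $x$. Then $K_X+B$ is Cartier and linearly trivial near $x$, so $(X,B;x)$ is a $1$-complement; it is formally toric by construction, and pulling back to $\tilde X$ every $E_i$ occurs with coefficient one, so the dual complex is the full segment. Alternatively one applies the toric criterion Theorem~\ref{thm:complexity-toric}: the two components of $B$ meeting the two ends of the chain already force complexity zero.

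\emph{$D$-type.} Here I would feed the coregularity-zero case into Theorem~\ref{thm:surf-coreg-0} to obtain a reduced $2$-complement $(X,B;x)$ whose index-one cover is formally toric. The feature specific to $D$-type is that the index-one cover $\pi\colon X'\to X$ of $K_X+B$ is a genuine double cover: it is \'etale away from $x$, its involution interchanges the two length-one branches of $\Gamma$, and the quotient graph is a chain, so $(X';x')$ is of $A$-type and hence toric. For the \emph{furthermore} clause, suppose a non-trivial $1$-complement $(X,B;x)$ existed with $x$ an lc center. Then $B$ is reduced, $K_X+B\sim 0$ near $x$, and the pair has coregularity zero, so by Theorem~\ref{thm:complexity-toric} it is formally toric at $x$; but then $(X;x)$ is a cyclic quotient singularity, i.e.\ of $A$-type, contradicting the hypothesis. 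Hence no non-trivial $1$-complement through $x$ exists.

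\emph{$E$-type.} For the exceptional case I would invoke the local theory of complements for surface singularities of~\cite{Sho00,Pro01}, which produces a complement whose index divides $6$; since $N(K_X+B)\sim 0$ with $N\mid 6$ implies $6(K_X+B)\sim 0$, this yields the desired $6$-complement. Exceptionality (coregularity one) forbids any complement with a one-dimensional dual complex, so the complement has a single lc place, necessarily the fork divisor of $\Gamma$, and therefore $\mathcal D(X,B;x)$ is a point. I expect the $A$- and $D$-cases to be essentially formal once Theorem~\ref{thm:complexity-toric} and Theorem~\ref{thm:surf-coreg-0} are in hand; the genuine obstacle is the $E$-type step, where pinning down the uniform index $6$ (rather than merely ``some bounded $N$'') and verifying that the unique lc place is the central fork divisor really requires Shokurov's surface complement machinery, or equivalently the explicit analysis of the graphs $E_6,E_7,E_8$ in which the value $6=\operatorname{lcm}(1,2,3,6)$ appears.
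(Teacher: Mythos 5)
Your treatment of items (1)--(3) is sound and essentially the paper's own route: the paper likewise disposes of all three cases by citing Shokurov's theory of complements for surfaces~\cite{Sho00}, and your added details (the explicit toric $1$-complement in the $A$-case, routing the $D$-case through Theorem~\ref{thm:surf-coreg-0}, and the $\operatorname{lcm}(1,2,3,6)=6$ bookkeeping in the $E$-case) are consistent with that black-box citation.

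The genuine gap is in the ``furthermore'' clause, which is the only part the paper proves in detail. You pass from ``the pair $(X,B;x)$ has coregularity zero'' to ``formally toric by Theorem~\ref{thm:complexity-toric}'', but that theorem is about the \emph{complexity}, not the coregularity: to invoke it you must show the local complexity $\dim X+\rho(X_x)-|B|=2-|B|$ vanishes, i.e.\ that the reduced boundary $B$ has at least two branches through $x$. A non-trivial $1$-complement is a priori allowed to be unibranch at $x$, in which case the complexity is one and Theorem~\ref{thm:complexity-toric} says nothing; and coregularity zero alone cannot imply toricness --- a $D_n$-singularity with its standard reduced $2$-complement is unibranch at $x$, has coregularity zero, and is not toric, so your inference pattern would fail on exactly the class of singularities under discussion. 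Ruling out the unibranch case requires an adjunction computation anyway, and this is how the paper argues, bypassing complexity altogether: since $K_X+B\sim 0$ near $x$, all log discrepancies of $(X,B)$ are integers; the exceptional curves of the minimal resolution satisfy $a_E(X)\le 1$, and since $B$ passes through $x$ its pullback charges every exceptional curve positively (connectedness of the fiber plus negativity), so $a_E(X,B)<1$ and hence $a_E(X,B)=0$ for every curve of the minimal resolution. On a dlt modification the fork curve $C\simeq \pp^1$ therefore meets three other coefficient-one curves transversally, and adjunction produces $(C,B_C)$ with three points of coefficient one while $\deg(K_C+B_C)=0$, a contradiction. You should replace the complexity step by this argument (or first prove that $B$ has two branches at $x$, which amounts to the same adjunction analysis at the ends of the chain); note that the same integrality observation is also what justifies your otherwise unproved assumption that $x$ is a log canonical center of the putative $1$-complement.
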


\begin{proof}
The statements $(1),(2)$, and $(3)$ follow from the theory of complements for surfaces~\cite{Sho00}. 
We argue that a $D$-type singularity admits no $1$-complement.
Let $(X;x)$ be a $D$-type singularity. 
Let $(X,B;x)$ be a $1$-complement. 
Assume that $B$ is non-trivial, i.e., it passes through $x$.
Then, every divisor of the minimal resolution of $(X;x)$ has log discrepancy zero with respect to $(X,B;x)$. 
Let $(Y,B_Y)$ be a dlt modification of $(X,B;x)$. 
Then, $B_Y$ contains a curve $C$ which intersects transversally three other curves of $\lfloor B_Y\rfloor$ ($C$ is the fork point in the minimal resolution).
Then, the pair $(C,B_C)$ obtained from adjunction of $K_Y+B_Y$ to $C$
satisfies that $B_C$ has three points of coefficient one. This leads to a contradiction.
We conclude that $(X;x)$ admits no $1$-complement.
\end{proof}

The previous proposition states that the only difference between klt surface singularities
and Du Val singularities
are the self-intersections
of the curves
in the minimal resolution.
Furthermore, if two singularities
have the same resolution
and self-intersections of the curves
in the minimal resolution,
then they are analytically isomorphic.
The $A$-type
and $D$-type singularities
have coregularity zero
while $E$-type singularities
have coregularity one.

We turn to study some examples
of complements
on Gorenstein del Pezzo surfaces. 
We recall that 
Gorenstein del Pezzo surfaces
of Picard rank one
are classified by the work 
of Miyanishi and Zhang (see, e.g.,~\cite{MZ88}).
The following example
is a Gorenstein del Pezzo surface of coregularity zero that admits
a $1$-complement and a $2$-complement,
both of coregularity zero.

\begin{example}
\label{ex:a3d5}
{\rm 
Let $X$ be a Gorenstein del Pezzo surface
of Picard rank one
with two singular points of type
$A_3$ and $D_5$.
We write $x$ and $y$ for these points, respectively.
Let $V\rightarrow X$ be the minimal resolution of $X$.
\begin{figure} 
\includegraphics[scale=0.2]{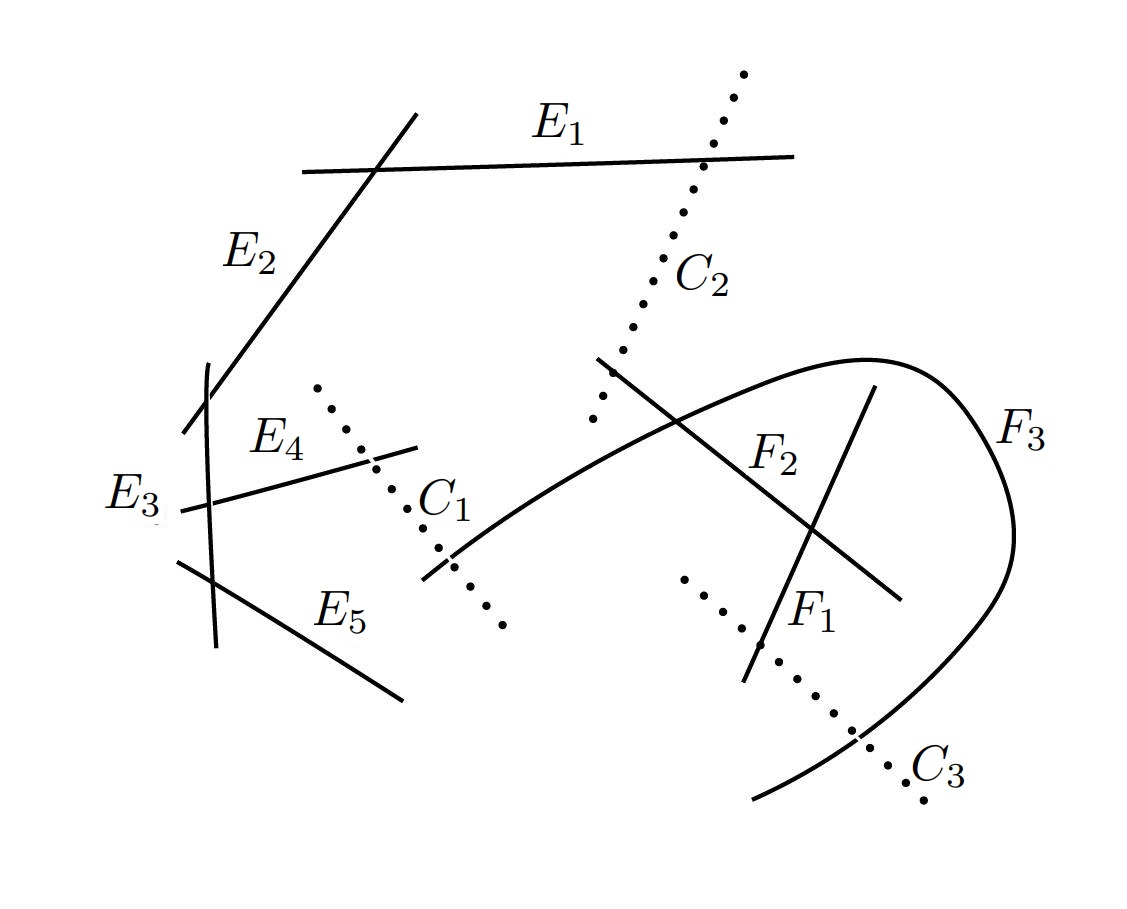}
\caption{$A_3$ + $D_5$ singularities.}
\label{fig1}
\end{figure} 
In Figure~\ref{fig1}, the solid lines correspond to the $(-2)$-curves
extracted on the minimal resolution,
while the dotted lines
are strict transforms of curves 
on $X$ which become $(-1)$-curves on $V$.
The curves $E_1,\dots,E_5$ are the exceptional divisors of the resolution of the $D_5$ singularity.
The curves
$F_1,F_2$, and $F_3$ are the exceptional divisors of the resolution of the $A_3$ singularity.
The curves $C_1,C_2$, and $C_3$ are strict transform of curves on $X$.
We can contract the $(-1)$-curves on $V$ 
repeatedly 
until we obtain a Hirzebruch surface $\Sigma_2$. 
There are some interesting complements on $X$. 

First, we can consider the reduced $2$-complement $(X,C_2)$.
Locally around the $D_5$-singularity $(X;y)$, this complement is just the standard reduced $2$-complement $(X,C_2;x)$.
On the other hand, around the $A_3$-singularity $(X;x)$, this complement
is a $2$-complement which corresponds to the usual $2$-complement when 
considering this singularity as a $D_3$-singularity.
The dual complex $\mathcal{D}(X,C_2)$
is an interval with five vertices, corresponding to $E_3,E_2,E_1,C_2,$ and $F_2$.
Thus, $X$ has coregularity zero.

On the other hand, we can consider the 
$1$-complement $(X,C_3)$. 
Note that $C_3$ is disjoint from the $D_5$-singularity $y$.
On the other hand, the dual complex
$\mathcal{D}(X,C_3)$
is a circle with four vertices corresponding to $F_1,F_2,F_3,C_3$.
We conclude that $X$ is a Gorenstein del Pezzo surface of Picard rank one 
and coregularity zero.
Furthermore, it admits both a $1$-complement
and a $2$-complement that compute the coregularity.
}
\end{example}

The following is an example of a 
Gorenstein del Pezzo surface 
of coregularity zero.
It admits a $2$-complement of coregularity zero and every $1$-complement on it has coregularity one.

\begin{example}
\label{ex:d4d4}
{\rm 
Let $X$ be a Gorenstein del Pezzo surface of Picard rank one with two singular points of type
$D_4$.
We will write $x$ and $y$ for these points.
Let $V\rightarrow X$ be the minimal resolution of $X$.
\begin{figure} 
\includegraphics[scale=0.2]{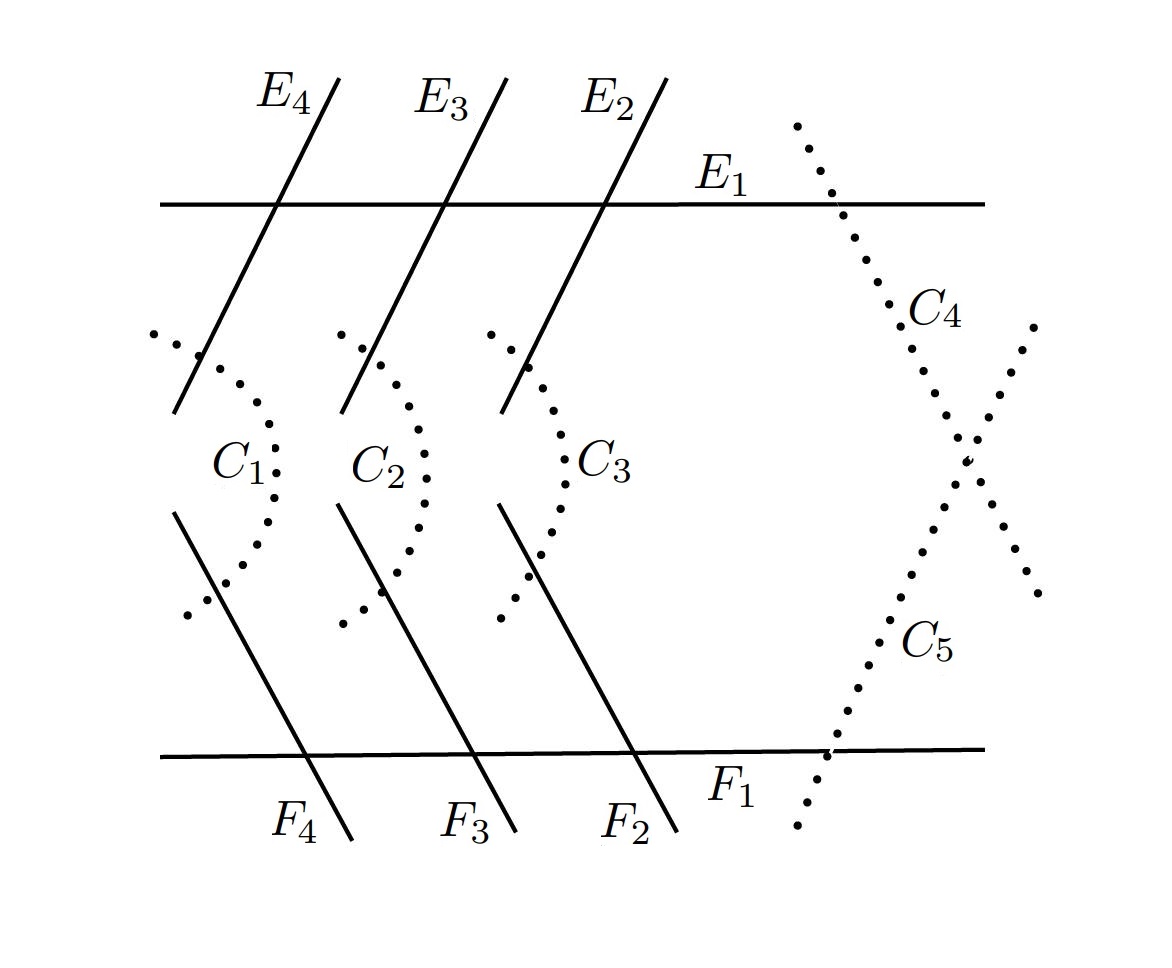}
\caption{$D_4$ + $D_4$ singularities.}
\label{fig2}
\end{figure} 
In Figure~\ref{fig2}, the solid lines correspond to the $(-2)$-curves extracted on the minimal resolution,
while the dotted lines are strict transforms of curves on $X$ which
become $(-1)$-curves on $V$. 
The curves $E_1,E_2,E_3,E_4$ are the exceptional divisors over $x$
and the curves $F_1,F_2,F_3,F_4$ are the exceptional divisors over $y$.
We can contract the $(-1)$-curves of $V$ 
repeatedly until we obtain the Hirzebruch surface $\Sigma_2$. 
We can contract the $(-1)$-curves in such a way that $X\rightarrow \Sigma_2$ is an isomorphism around $E_1$.
Moreover, we may assume that the curve
$F_{1,\Sigma_2}\subset \Sigma_2$, the image of $F_1$ on $\Sigma_2$, is a section with self-intersection two.
The image of the exceptional divisor of $X\rightarrow \Sigma_2$ on $\Sigma_2$ is four points along $F_{1,\Sigma_2}$.
We denote by $E_{i,\Sigma_2}$ the image of $E_i$ on $\Sigma_2$.
We denote by $C_{4,\Sigma_2}$ the image of $C_4$ on $\Sigma_2$.
We let $\pi\colon \Sigma_2\rightarrow \pp^1$ be the projection of $\Sigma_2$ to $\pp^1$.

The pair
$(X,C_1)$ is a reduced $2$-complement.
Locally around both $x$ and $y$, 
this is the standard reduced $2$-complement
for the $D$-type singularity.
The dual complex
$\mathcal{D}(X,C_1)$ is an interval with 
five vertices corresponding to the curves
$E_1,E_4,C_1,F_4$, and $F_1$.

We study the $1$-complements of $X$.
Let $(X,C)$ be a $1$-complement. 
By Proposition~\ref{prop:comp-klt-surf}, the curve $C$ does not contain $x$ nor $y$ on its support. 
Let $C_V$ be the strict transform of $C$ on $V$.
Then $(V,C_V)$ is a $1$-complement
and the curve $C_V$ does not intersect any of the curves $E_i$'s nor $F_i$'s.
Let $C_{\Sigma_2}$ be the push-forward of $C_V$ on $\Sigma_2$.
The following conditions are satisfied:
\begin{enumerate}
    \item the pair
    $(\Sigma_2,C_{\Sigma_2})$ is a $1$-complement, 
    \item the curve $C_{\Sigma_2}$ does not contain $E_{1,\Sigma_2}$ nor $F_{1,\Sigma_2}$ on its support, 
    \item the curve $C_{\Sigma_2}$ contains no vertical components over $\pp^1$, 
    \item every component of $C_{\Sigma_2}$ intersects $F_{1,\Sigma_2}$ in either three or four points.
\end{enumerate}
The third statement follows from the fact that $C_V$ does not intersect any of the $E_i$'s.
The last statement holds as every component of $C_{\Sigma_2}$ is horizontal over $\pp^1$ and must intersect $F_{1,\Sigma_2}$ along 
$F_{1,\Sigma_2}\cap E_{i,\Sigma_2}$
for $i\in \{2,3,4\}$.
Furthermore, every such component
may intersect $F_{1,\Sigma_2}\cap C_{4,\Sigma_2}$.

We turn to analyze the class of the components of $C_{\Sigma_2}$ in the Picard group of $\Sigma_2$.
Let $s$ be the class of the $(-2)$-section
and $f$ be the class of a fiber.
The classes $s$ and $f$ generate the cone of effective curves
and $-K_{\Sigma_2} \sim 2s+4f$.
Note that every irreducible curve 
equivalent to either $s$ or $s+f$ 
contains $E_{1,\Sigma_2}$. 
Every irreducible curve equivalent to
$s+2f$ intersects $F_{1,\Sigma_2}$ twice. 
We conclude that $C_{\Sigma_2}$ must have a single smooth component equivalent to $2s+4f$ which ramifies at the points
$F_{1,\Sigma_2}\cap E_{i,\Sigma_2}$ for
$i\in \{2,3,4\}$. 
Hence, the only possible $1$-complement $(X,C)$ of $X$ consists of a single elliptic curve $C$ which does not contain $x$ nor $y$ on its support. 
From the construction, it follows that there is a unique such a curve.
We conclude that every $1$-complement
of $X$ has coregularity one.
}
\end{example}

To conclude this subsection, 
we show an example
of a singular del Pezzo surface 
of coregularity zero
and
Picard rank one
which admits a 
$2$-complement of coregularity zero
and no $1$-complement. 
Indeed, the following lemma allows us to construct several such examples.

\begin{lemma}\label{lem:no-1-comp}
Let $X$ be a Fano type surface.
Assume that $X$ contains a D-type
singularity which is not Gorenstein.
Then, $X$ admits no $1$-complement.
\end{lemma}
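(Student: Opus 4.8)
The plan is to reduce the claim to a local computation on a $D$-type singularity and then derive a contradiction from the existence of a non-trivial $1$-complement, exactly as in the proof of Proposition~\ref{prop:comp-klt-surf}, but now tracking the failure of the Gorenstein condition. First I would suppose toward a contradiction that $(X,C)$ is a $1$-complement, so $K_X+C\sim 0$ with $C$ reduced and $(X,C)$ log canonical. Since $X$ contains a $D$-type point $p$ which is not Gorenstein, I would localize the complement at $p$: by Proposition~\ref{prop:comp-klt-surf}, a $D$-type singularity admits \emph{no} non-trivial $1$-complement through the point, so the only escape for $C$ is to avoid $p$ entirely, i.e.\ $C$ does not pass through $p$. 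The heart of the argument is therefore to show that a non-Gorenstein $D$-type singularity \emph{forces} any global $1$-complement to pass through $p$, producing the contradiction.

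To make this precise I would pass to the minimal resolution $V\to X$ and analyze the local index. The key point is that $K_X$ being Cartier (Gorenstein) at $p$ is exactly the condition that the $1$-complement $C$ can be chosen to miss $p$: the index-one cover of $K_X$ near $p$ is trivial precisely when $X$ is Gorenstein there. For a non-Gorenstein $D$-type point, the local class group is non-trivial, so $K_X$ is not Cartier at $p$, and the linear equivalence $K_X+C\sim 0$ must "see" the singularity. Concretely, I would take the dlt modification $(Y,B_Y)\to (X,C)$ and write $\phi^*(K_X+C)=K_Y+B_Y$; if $C$ avoids $p$, then $(X;p)$ is itself a $1$-complement locally (with trivial boundary at $p$), which would say $K_X$ is $1$-complemented near $p$, i.e.\ $K_X$ is Cartier and the singularity is Du Val of type $D_n$. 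This contradicts the non-Gorenstein hypothesis.

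I expect the main obstacle to lie in making the last implication airtight: translating ``the singularity is non-Gorenstein'' into ``no local $1$-complement with trivial boundary at $p$ exists.'' The cleanest route is to invoke that a surface $1$-complement $(X,B;p)$ with $B$ not passing through $p$ exhibits $K_X$ itself as a $1$-complement, which by the adjunction/index-one-cover analysis of Proposition~\ref{prop:comp-klt-surf} forces the singularity to be canonical (Du Val), hence Gorenstein. I would phrase this via the statement that every exceptional divisor of the minimal resolution has log discrepancy one with respect to $(X,0)$ exactly when $X$ is Du Val, whereas a non-Gorenstein $D$-type point has some exceptional divisor with log discrepancy strictly less than one; such a divisor cannot be accommodated by a trivial boundary in a log canonical $1$-complement without forcing $C$ through $p$, which we already ruled out.

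Assembling these, I would conclude: if $C$ misses $p$ then $K_X$ is a local $1$-complement at $p$, forcing $D_n$ (Gorenstein), contradicting the hypothesis; and if $C$ passes through $p$ then the fork structure of the $D$-type resolution yields, after dlt modification and adjunction to the central curve $C_0$ of the fork, a pair $(C_0,B_{C_0})$ with three boundary points of coefficient one on $\mathbb{P}^1$, whose degree exceeds that of $-K_{C_0}$, contradicting log canonicity. In either case no $1$-complement exists, which proves the lemma. The only genuinely delicate step is the first horn of the dichotomy, identifying ``local $1$-complement with trivial boundary'' with ``Gorenstein''; everything else is the fork computation already carried out in Proposition~\ref{prop:comp-klt-surf}.
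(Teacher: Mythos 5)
Your proof is correct and takes essentially the same route as the paper: both arguments contradict Proposition~\ref{prop:comp-klt-surf} (no non-trivial local $1$-complement through a $D$-type point) after showing that the non-Gorenstein hypothesis forces any $1$-complement $C$ to pass through the singular point. The paper reaches this by noting that $K_X+C\sim 0$ makes every log discrepancy of $(X,C)$ integral, so the divisor computing the minimal log discrepancy, which satisfies $0<a_E(X;x)<1$ by non-Gorensteinness, must become a log canonical place of $(X,C)$, forcing $C$ through $x$; your dichotomy handles the complementary case directly, observing that if $C$ misses $p$ then $K_X\sim 0$ near $p$, hence $K_X$ is Cartier and the klt (hence Cohen--Macaulay) germ is Gorenstein --- an equivalent one-line use of the same hypothesis.
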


\begin{proof}
Assume that $X$ is a Fano type surface
and $x\in X$ is  $D$-type singularity which is not Gorenstein.
Let $E$ be the exceptional divisor over $X$
with center $x\in X$ 
which computes the minimal log discrepancy of $X$ at $x$. 
Since $(X;x)$ is a $D$-type singularity which is not Gorenstien, 
we have that $0<a_E(X;x)<1$.
Let $(X,B)$ be a $1$-complement. 
Since $K_X+B\sim 0$, then every log discrepancy of $(X,B)$ is integral.
In particular, we must have that
$a_E(X,B;x)=0$. 
Hence, $(X,B;x)$ is a non-trivial $1$-complement of the $D$-type singularity $(X;x)$.
This contradicts Proposition~\ref{prop:comp-klt-surf}.
We conclude that $X$ admits no $1$-complement.
\end{proof}

The following is a modification of  Example~\ref{ex:d4d4}.

\begin{example}
\label{ex:d34d4}
{\em 
Let $X$ be a Gorenstein del Pezzo surface with two singularities of type $D_4$.
Let $V\rightarrow X$ be the minimal resolution of $X$.
We use the notation of Figure~\ref{fig2}.
Let $V'$ be the variety obtained by blowing-up the point 
$C_3\cap E_2$ in $V$.
We call $E$ the exceptional divisor of $V'\rightarrow V$.
We 
identify the curves on $V$
with their strict transforms on $V'$.
We obtain the diagram in Figure~\ref{fig3}.
The double curve $E_2$ is a $(-3)$-curve.
\begin{figure} 
\includegraphics[scale=0.2]{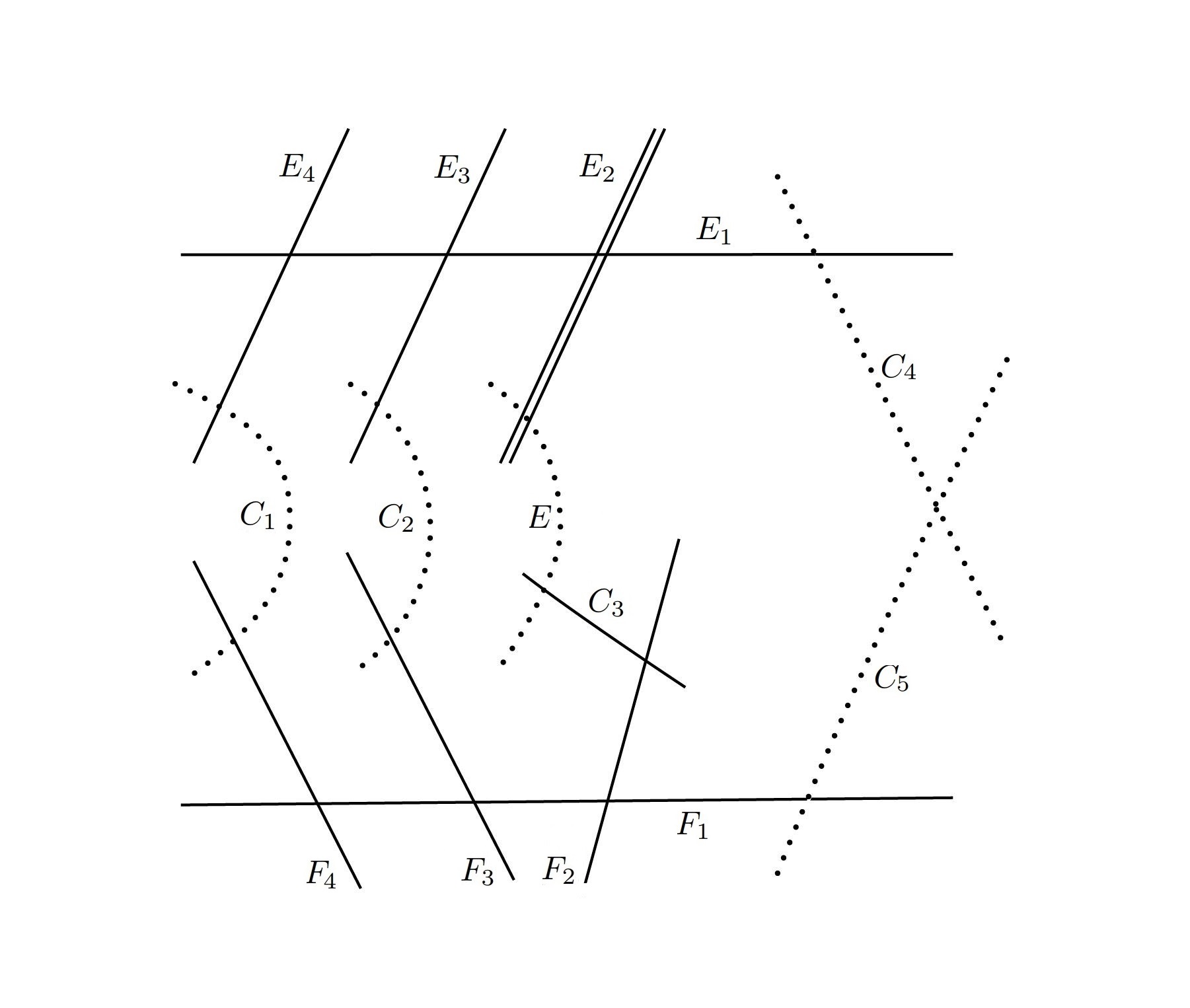}
\caption{$D$-type + $D_4$ singularities.}
\label{fig3}
\end{figure}
Let $p\colon V'\rightarrow X'$ be the 
morphism obtained by contracting the disjoint set of curves 
$\{E_1,\dots,E_4\}$ and 
$\{F_1,\dots,F_4,C_3\}$. 
Note that $C_3$ is a $(-2)$-curve on $V'$.
By abuse of notation, we use the same notation for the divisor of $V'$ which are not contracted on $X'$ and 
their images on $X'$.
Then, the image of
$F_1\cup \dots \cup F_4\cup C_3$ on $X'$ is a $D_5$-singularity $x \in X'$
and the image of 
$E_1\cup \dots \cup E_4$ on $X'$ 
is a $D$-type singularity $y\in X'$.
Since $E_2$ is a $(-3)$-curve, then
$y\in X'$ is a $D$-type singularity which is not Gorenstein.
We conclude that 
$X'$ is a singular del Pezzo surface
of Picard rank one with two singularities;
a $D$-type singularity which is not Gorenstein
and a $D_5$-singularity.
By Lemma~\ref{lem:no-1-comp}, 
we conclude that $X'$ admits not $1$-complement.
The pair $(X',E)$ is a reduced $2$-complement.
Near both $x$ and $y$, the curve
$E$ is just a reduced $2$-complement of the $D$-type singularities. 
Indeed, we can compute the pull-back: 
\[
p^*(K_{X'}+E)=
K_{V'}+E_1+E_2+E+C_3+F_2+F_1+
\frac{1}{2}(E_3+E_4+F_3+F_4).
\] 
We conclude that the dual complex
$\mathcal{D}(X',E)$
correspond to an interval with 
six vertices corresponding to the 
divisors
$E_1,E_2,E,C_3,F_2,$ and $F_1$.
Thus, $X'$ is a singular Del Pezzo surface of Picard rank one
and coregularity zero
which admits a 
$2$-complement
but no $1$-complement.
}
\end{example}

The construction of Example~\ref{ex:d34d4}
can be generalized to obtain countably
many singular del Pezzo surfaces
of Picard rank one
and coregularity zero
which admits a $2$-complement but no $1$-complement.
Indeed, we can blow up the intersection
$E\cap E_2$ or $E\cap C_3$
and proceed to blowing-up inductively the intersection of the new exceptional divisors
with the strict transforms of the previous divisors in the diagram.
Afterward, we may contract every single curve in this smooth model,
except for the last exceptional curve $E$, 
$C_1,C_2,C_4$ and $C_5$.
By doing so, we obtain a singular del Pezzo $X$ of Picard rank one 
with two $D$-type singularities. 
Furthermore, one of them is non-Gorenstein.
By Lemma~\ref{lem:no-1-comp}, the variety $X$ admits no $1$-complement.
The pair $(X,E)$ is a reduced $2$-complement. 

\subsection{Terminal 3-fold singularities}
\label{subsec:ter-3-fold}
In dimension two, terminal singularities are just smooth points.
Then, the minimal model program for a smooth surface
yields a smooth outcome.
However, the ample model (if it exists) 
of a minimal smooth surface
has canonical singularities.
In dimension two, the canonical singularities
are the Du Val singularities explained above.
In dimension three, terminal singularities
were classified by Mori (see, e.g.,~\cite[Theorem 6.1]{Rei85}).
All these singularities are hyperquotient singularities, i.e., 
the finite quotient of a hypersurface singularity.
We recall the classification of terminal $3$-fold singularities.
In the following theorem, $\mu_r$ is the group acting 
on the hypersurface. 

\begin{theorem}
Let $(X;x)$ be a terminal $3$-fold singularity.
Then, $(X;x)$ is a hyperquotient singularity
so we can write $(H;0)/\mu_r \simeq (X;x)$, 
where $H=\{f=0\}$ in $\kk[x,y,z,w]$.
Then, up to a $\mu_r$-equivariant change of variables, 
$f$ is one of the following: 
\begin{enumerate}
    \item $cA_r$ singularities: $f=xy+g(z,t)$ with $g\in m^2$ 
    or $f=x^2+y^2+g(z,t)$ with $g\in m^3$.
    \item $cD_4$ singularities: $f=x^2+g(y,z,t)$ with $g\in m^3$
    and $g_3$, the cubic part of $g$, is an irreducible cubic.
    \item $cD_n$ singularities: $f=x^2+yz^2+g(z,t)$ with $g\in m^4$.
    \item $cE$ singularities: $f=x^2+y^3+yg(z,t)+h(z,t)$ where
    $g\in m^3$ and $h\in m^4$.
\end{enumerate}
\end{theorem}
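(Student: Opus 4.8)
The plan is to follow Mori's original strategy: reduce first to the Gorenstein (index-one) case, then produce an explicit hypersurface equation by means of a general hyperplane section, and finally reinstate the group action.

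First I would pass to the index-one cover. Let $r$ be the index of $(X;x)$, the least positive integer with $rK_X$ Cartier. Attached to a generator of the cyclic group $\zz/r$ acting on the $\oo_X$-algebra $\bigoplus_{i} \oo_X(iK_X)$, one obtains a finite morphism $\pi\colon (X^\sharp;x^\sharp)\to (X;x)$, cyclic of degree $r$ and \'etale in codimension one, exhibiting $(X;x)$ as a quotient $(X^\sharp;x^\sharp)/\mu_r$. Because $\pi$ is crepant and \'etale in codimension one, $(X^\sharp;x^\sharp)$ is again terminal, and now $K_{X^\sharp}$ is Cartier, so $X^\sharp$ is Gorenstein terminal. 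This produces the hyperquotient presentation; it remains to classify the Gorenstein terminal singularity $(X^\sharp;x^\sharp)$ and to track the induced $\mu_r$-action.

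Second, I would show that a Gorenstein terminal $3$-fold singularity is an isolated compound Du Val (cDV) singularity. Terminality forces the singular locus to have codimension at least three, hence the singularity is isolated. The crucial input is the existence of a \emph{general elephant}: a general hyperplane section $S\ni x^\sharp$ has Du Val singularities. Concretely, by adjunction $K_S=(K_{X^\sharp}+S)|_S$, and combining inversion of adjunction with the fact that Gorenstein canonical surface singularities are exactly the Du Val singularities, the general $S$ is Du Val. Since a Du Val singularity has embedding dimension three, $X^\sharp$ has embedding dimension four, so $X^\sharp=\{f=0\}\subset\mathbb{A}^4$ is a hypersurface, realized as a one-parameter deformation of the ADE surface $\{f(x,y,z,0)=0\}$. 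I would then derive the normal forms by casework on the ADE type of this section: after a formal change of coordinates one either completes the square in $x$ (in the $D$ and $E$ cases, where the equation begins with $x^2$) or normalizes the leading term to $xy$ (in the $A$ case), and then applies Weierstrass preparation together with the classification of simultaneous normal forms for deformations of $A_n,D_n,E_6,E_7,E_8$. This bookkeeping produces precisely the families $cA_r$, $cD_4$, $cD_n$, and $cE$. Finally, I would reinstate the action: after an equivariant change of variables the generator of $\mu_r$ acts linearly on $x,y,z,t$, and the Reid--Tai criterion (cf.~\cite{Rei85}) selects exactly the weight systems for which $\{f=0\}/\mu_r$ stays terminal; this is what the phrase ``up to a $\mu_r$-equivariant change of variables'' encodes.

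The main obstacle is the second step: the existence of the Du Val general elephant is the deep geometric heart of the theorem and is precisely where the work of Mori and Reid enters, since a naive Bertini argument does not by itself guarantee that the generic section is canonical. Once the Du Val section is available, the normal-form computation of the third step, although lengthy, is a finite and essentially mechanical deformation-theoretic analysis, and the equivariance bookkeeping via Reid--Tai is routine casework on cyclic weights.
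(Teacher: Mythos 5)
The paper itself contains no proof of this theorem: it is quoted as Mori's classification of terminal $3$-fold singularities with \cite{Rei85} as the reference, so there is nothing internal to compare against, and your outline is precisely the standard Mori--Reid argument from that cited literature (index-one canonical cover, Reid's theorem that Gorenstein terminal three-fold points are isolated cDV points, normal-form analysis of the hypersurface equation, and equivariant Reid--Tai-type bookkeeping for the $\mu_r$-action). One caveat: in your second step, the parenthetical suggestion that adjunction plus inversion of adjunction yields the Du Val hyperplane section understates the difficulty --- no Bertini-type argument shows that a general $S$ through $x^\sharp$ makes $(X^\sharp,S)$ canonical, and that is exactly the content of Reid's theorem --- but since you explicitly flag this as the deep imported input rather than claiming to prove it, your proposal is a sound sketch that is faithful to the proof the paper implicitly points to.
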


For the weights of the actions in each case, we refer the reader to~\cite{Rei85}. 
On the other hand, canonical singularities are not classified.
However, we have the following theorem due to Reid
about the general hyperplane sections
on a rational Gorenstein singularity.

\begin{theorem}
Let $(X;x)$ be a rational Gorenstein singularity.
Let $H\subset X$ be a general hyperplane section containing $x$.
Then, the singularity $(H;x)$ is a rational or elliptic 
Gorenstein singularity.
\end{theorem}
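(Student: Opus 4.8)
The plan is to reduce the statement to a bound on the geometric genus of the surface germ $(H;x)$, and then to extract that bound from the canonical (equivalently, rational Gorenstein) property of $X$. First I would record that $H$ is Gorenstein: since $H$ is a hyperplane section it is Cartier, and $X$ is Gorenstein, so by adjunction $\omega_H \simeq (\omega_X\otimes\mathcal{O}_X(H))|_H$ is a line bundle. For a normal Gorenstein surface germ $(H;x)$ with resolution $g\colon S\to H$, recall that rationality is equivalent to $p_g(H):=\dim_{\mathbb{C}}(R^1 g_*\mathcal{O}_S)_x=0$, while $p_g(H)=1$ is exactly Laufer's minimally elliptic (i.e. elliptic Gorenstein) condition; moreover $p_g(H)=\dim_{\mathbb{C}}(\omega_H/g_*\omega_S)_x$ by Grauert--Riemenschneider vanishing on $S$ together with duality. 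Hence it suffices to prove $p_g(H)\le 1$.

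Next I would fix a log resolution $f\colon Y\to X$ for which the strict transform $S$ of a general $H$ is smooth and meets the exceptional divisor $E=\sum_i E_i$ transversally, so that $g=f|_S\colon S\to H$ is a resolution. Write $K_Y=f^*K_X+\sum_i a_i E_i$ and, for the general hyperplane through $x$, $f^*H=S+V$ with $V=\sum_i m_i E_i$, where $m_i=\ord_{E_i}(\mathfrak{m}_x\cdot\mathcal{O}_Y)$ is the order of vanishing of a general function in $\mathfrak{m}_x$. The adjunction sequence $0\to\omega_Y\to\omega_Y(S)\to\omega_S\to 0$, pushed forward and combined with Grauert--Riemenschneider vanishing $R^1 f_*\omega_Y=0$ and the canonical identity $f_*\omega_Y=\omega_X$, produces a short exact sequence $0\to\omega_X\to f_*\omega_Y(S)\to g_*\omega_S\to 0$. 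Comparing it, via the natural maps, with the adjunction sequence $0\to\omega_X\to\omega_X(H)\to\omega_H\to 0$ on $X$ (the left vertical arrow being an isomorphism precisely because $X$ is canonical), the snake lemma identifies the two cokernels:
\[
\omega_H/g_*\omega_S\;\simeq\;\mathcal{O}_X/\mathcal{J},\qquad \mathcal{J}:=f_*\mathcal{O}_Y\Big(\sum_i(a_i-m_i)E_i\Big),
\]
the adjoint ideal of the pair $(X,H)$. Thus $p_g(H)=\dim_{\mathbb{C}}(\mathcal{O}_X/\mathcal{J})_x$.

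The heart of the argument --- and the step I expect to be the main obstacle to state cleanly --- is the bound $\dim_{\mathbb{C}}(\mathcal{O}_X/\mathcal{J})_x\le 1$. Here both hypotheses enter decisively. Because $X$ is canonical we have $a_i\ge 0$ for every $i$; because $H$ is a general hyperplane section through $x$, every $\phi\in\mathfrak{m}_x$ satisfies $\ord_{E_i}(\phi)\ge m_i$ for all $i$, since the generic order $m_i$ is the minimum of $\ord_{E_i}$ over $\mathfrak{m}_x$. Combining these, $\ord_{E_i}(\phi)\ge m_i\ge m_i-a_i$ for all $i$, so $\mathfrak{m}_x\subseteq\mathcal{J}$. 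As $\mathcal{J}\subseteq\mathcal{O}_X$ is an ideal cosupported at $x$, the chain $\mathfrak{m}_x\subseteq\mathcal{J}\subseteq\mathcal{O}_X$ forces $(\mathcal{O}_X/\mathcal{J})_x$ to be a quotient of $\mathcal{O}_X/\mathfrak{m}_x=\mathbb{C}$, hence of dimension at most one. Therefore $p_g(H)\le 1$, and $(H;x)$ is rational when the colength is $0$ and elliptic Gorenstein when it is $1$.

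The remaining points require care rather than deep input: the normality and isolated-singularity properties of the general section $H$ near $x$ (Bertini/Seidenberg), which guarantee that $p_g(H;x)$ is computed as a finite length at the single point $x$; the identification of $\mathcal{J}$ with the adjoint ideal; and the implication rational Gorenstein $\Rightarrow$ canonical, which legitimizes writing $f_*\omega_Y=\omega_X$ with $a_i\ge 0$. I would verify the computation on the smooth case (where $\mathcal{J}=\mathcal{O}_X$, giving $p_g=0$) and on a simple elliptic section (where exactly one divisor contributes $m_i-a_i=1$, giving $\mathcal{J}=\mathfrak{m}_x$ and $p_g=1$) as consistency checks.
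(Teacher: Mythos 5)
The paper does not actually prove this theorem: it is quoted as a result of Reid, the only proof-relevant remark in the text being the observation immediately afterwards that canonical Gorenstein and rational Gorenstein singularities coincide. So the comparison is with Reid's classical argument---and your proposal is essentially that argument, correctly reconstructed. The reduction to the colength of the adjoint ideal $\mathcal{J}=f_*\mathcal{O}_Y\bigl(\sum_i(a_i-m_i)E_i\bigr)$ via the two adjunction sequences, followed by the two-line estimate $\ord_{E_i}(\phi)\ge m_i\ge m_i-a_i$ for every $\phi\in\mathfrak{m}_x$ (the first inequality because $m_i$ is the minimum of $\ord_{E_i}$ over $\mathfrak{m}_x$, the second because rational Gorenstein implies canonical, so $a_i\ge 0$), which gives $\mathfrak{m}_x\subseteq\mathcal{J}$ and hence colength at most one, is exactly the mechanism behind Reid's proof. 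The auxiliary inputs you flag (Bertini/Seidenberg for the general section, $R^1f_*\omega_Y=0$, $f_*\omega_Y=\omega_X$ in the canonical case, the compatibility of residue maps in the adjoint-ideal sequence) are all standard and correctly invoked.

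Two points to tighten. First, the cokernel identification should read $\omega_H/g_*\omega_S\simeq\omega_X(H)\otimes(\mathcal{O}_X/\mathcal{J})$; since $\omega_X(H)$ is invertible this affects no length computation, but the untwisted isomorphism as you wrote it is not $\mathcal{O}_X$-linear. Second, your translation of the colength bound into the conclusion---rationality iff $p_g=0$, Laufer's characterization of minimally elliptic singularities as Gorenstein with $p_g=1$, and $p_g=\dim_{\mathbb{C}}(\omega_H/g_*\omega_S)_x$ via duality---is specific to surface germs, i.e.\ to $\dim X=3$. That is the case the paper actually uses (it immediately specializes to canonical $3$-fold singularities), but the statement itself carries no dimension restriction. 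Your colength estimate works verbatim in all dimensions, and colength $0$ still yields rationality in general (it says $g_*\omega_S=\omega_H$, i.e.\ $H$ is canonical Gorenstein, hence rational Gorenstein by the same Elkik--Reid equivalence, now applied to $H$); but identifying colength $1$ with ``elliptic Gorenstein'' when $\dim H\ge 3$ requires, in addition to duality, the vanishing of the intermediate direct images $R^ig_*\mathcal{O}_S$ for $0<i<\dim H-1$---unless one adopts the colength-one condition $g_*\omega_S=\mathfrak{m}_x\,\omega_H$ as the definition of elliptic Gorenstein, as Reid in effect does. Stating which convention you use would close the argument in full generality.
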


Recall that canonical Gorenstein singularities
are the same as rational Gorenstein singularities.
In particular, we obtain the following theorem for canonical $3$-fold singularities.

\begin{theorem}\label{thm:general-section} 
Let $(X;x)$ be a canonical Gorenstein singularity of dimension $3$.
Let $H\subset X$ be a general hyperplane section containing $x$.
Then, the singularity $(H;x)$ is either a Du Val singularity
or a Gorenstein elliptic singularity.
\end{theorem}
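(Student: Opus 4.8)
The plan is to reduce the statement to the theorem of Reid stated immediately above and then to invoke the classical classification of Gorenstein surface singularities. First I would record the reduction. As the text notes, a canonical Gorenstein singularity is the same as a rational Gorenstein singularity; hence $(X;x)$ is a rational Gorenstein $3$-fold singularity and Reid's theorem applies directly. It tells us that the general hyperplane section $(H;x)$ is either a rational Gorenstein singularity or an elliptic Gorenstein singularity, so everything comes down to understanding these two cases for the surface germ $(H;x)$.

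Next I would check that $H$ is a normal Gorenstein surface, so that the terms "Du Val" and "Gorenstein elliptic" are meaningful for $(H;x)$. Since $X$ is normal of dimension $3$, a general hyperplane section is again normal by Seidenberg's theorem, and $\dim H = 2$. Because $X$ is Gorenstein and $H$ is a Cartier divisor, adjunction gives $K_H = (K_X + H)|_H$, so $K_H$ is Cartier and $(H;x)$ is a Gorenstein surface singularity. This step is purely formal and lets me pass freely between the two classifications.

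Finally I would split into the two alternatives produced by Reid's theorem. If $(H;x)$ is a rational Gorenstein surface singularity, then by the classical theory of rational double points (Du Val, Artin) it is a Du Val singularity; this is exactly the statement that two-dimensional rational Gorenstein singularities coincide with canonical surface singularities, i.e. the $A_n, D_n, E_6, E_7, E_8$ germs recalled earlier. If instead $(H;x)$ is elliptic, then by definition it is a Gorenstein elliptic singularity, giving the second alternative. The one substantive input, and the step I expect to be the main obstacle to a fully self-contained argument, is precisely this identification of rational Gorenstein surface singularities with Du Val singularities; the rest is a formal consequence of Reid's theorem together with adjunction and the normality of general hyperplane sections.
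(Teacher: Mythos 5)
Your proposal is correct and follows essentially the same route as the paper: the paper derives this theorem immediately from Reid's result on general hyperplane sections of rational Gorenstein singularities, using the identification of canonical Gorenstein with rational Gorenstein singularities, and implicitly the classical fact that rational Gorenstein surface singularities are exactly the Du Val singularities. Your additional verification that $(H;x)$ is a normal Gorenstein surface germ (via Seidenberg and adjunction) just makes explicit what the paper leaves tacit.
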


A singularity $(X;x)$ for which a general hyperplane section
$(H;x)$ is a Gorenstein elliptic singularity 
will be called a {\em compound elliptic singularity}, 
following the similar notation for compound Du Val singularities.
The following theorem can be proved using the previous classification,
Theorem~\ref{thm:general-section}, 
and a similar argument to that used in the proof of Theorem~\ref{thm:surf-coreg-0}.

\begin{theorem}
Let $(X;x)$ be a $3$-fold singularity of coregularity zero.
Then, there exists a $2$-complement $(X,B;x)$ and a finite cover
$\phi\colon Y\rightarrow X$ such that $Y$ 
is either $cA_r$ singularity,
$cD_r$ singularity, 
or a compound elliptic singularity.
\end{theorem}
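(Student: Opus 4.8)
The plan is to follow the same global-to-local strategy that worked for surfaces in Theorem~\ref{thm:surf-coreg-0}, using the hyperplane-section technology of Theorem~\ref{thm:general-section} to reduce the $3$-fold problem to the $2$-dimensional case. First I would fix a $\qq$-complement $\Gamma$ computing the coregularity, so that $(X,\Gamma;x)$ is log canonical with a zero-dimensional minimal log canonical center over $x$. Taking a dlt modification $\pi\colon Y\to X$, the coregularity-zero hypothesis means the minimal log canonical centers of $(Y,B_Y)$ are points; by the theory of complements (Theorem~\ref{thm:boundedness-complements} in low dimension, or Shokurov's surface results applied on the exceptional divisors via adjunction) I would upgrade $\Gamma$ to a bounded $N$-complement and argue that $N$ can be taken to be $2$, precisely as in the surface proof where the dual complex was forced to be a segment with standard-coefficient endpoints.

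The key geometric input is the slicing. After passing to the index-one cover $Y\to X$ of $K_X+B$ (where $(X,B;x)$ is the reduced $2$-complement to be constructed), I would take a general hyperplane section $H$ through $x$ and invoke Theorem~\ref{thm:general-section}: because $(X;x)$ has coregularity zero and $K_X+B\sim_\qq 0$ near $x$, the cover is canonical Gorenstein, so $(H;x)$ is either Du Val or a Gorenstein elliptic singularity. The Du Val alternative, combined with the coregularity-zero constraint that rules out $E$-type (which has coregularity one by Proposition~\ref{prop:comp-klt-surf}), forces $H$ to be of $A$-type or $D$-type; by Mori's classification of terminal $3$-fold singularities and Reid's general-elephant philosophy, an $A$-type (resp. $D$-type) general elephant propagates to a $cA_r$ (resp. $cD_r$) total singularity, while the elliptic alternative yields precisely the compound elliptic case. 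I would then transport the reduced $2$-complement back down to $X$ through the finite cover $\phi\colon Y\to X$, exactly as the surface proof pushed the complement through the two-to-one index-one cover.

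The main obstacle I anticipate is controlling the complement and the dual complex through the general hyperplane section: adjunction to a general $H$ does not automatically preserve being a complement or the dimension of the minimal log canonical center, and one must check that the zero-dimensional center of the $3$-fold slices to a zero-dimensional center on $H$ compatibly, rather than jumping dimension. Concretely, I would need to verify that a general member of $|{-}(K_X+B)|$ (or the index-one cover thereof) meets the dual complex transversally so that $\mathcal{D}(X,B;x)$ and $\mathcal{D}(H,B_H;x)$ have the same dimension, which is where Bertini-type genericity and the structure of the dlt modification enter. A secondary technical point is establishing the finite cover and the $\mu_r$-equivariant normal form simultaneously: one must reconcile the index-one cover of $K_X+B$ with the hyperquotient presentation so that the resulting $Y$ is literally one of $cA_r$, $cD_r$, or compound elliptic, rather than merely covered by such. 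I expect these compatibility checks, not the classification itself, to carry the real weight of the argument.
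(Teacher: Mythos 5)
Your proposal is essentially the paper's own argument: the paper justifies this theorem only by a one-sentence sketch invoking exactly the three ingredients you assemble, namely the Mori--Reid classification results, the general hyperplane section statement of Theorem~\ref{thm:general-section}, and the complement-plus-index-one-cover construction from the proof of Theorem~\ref{thm:surf-coreg-0}. Your write-up is if anything more detailed than the paper's, and the compatibility checks you flag (slicing the complement through a general hyperplane section without changing the dimension of the minimal log canonical center, and reconciling the index-one cover with the hyperquotient presentation) are genuine points that the paper leaves implicit.
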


As discussed in the previous section,
the quotient of a singularity may have lower coregularity.
The singularities of type
$cA_r$, $cD_r$ and compound elliptic singularities have coregularity zero.
The fact that the quotient still has coregularity zero
imposes a non-trivial condition on the acting group. 
Especially, if there is a complement
that this group is fixing, as this complements
behaves as a semi-invariant of low degree.
Let us note that in the case of dimension $3$ 
no canonical singularity is exceptional.
This is different from what happens to surfaces
in which there are already exceptional Du Val singularities.

\begin{problem}
Find the exceptional $3$-fold singularity with largest log discrepancy.
\end{problem}

It is expected that the previous example is computed by a $3$-fold quotient singularity.

\subsection{Klt $3$-fold singularities}
\label{subsec:klt-3-fold}
In this subsection, we discuss klt $3$-fold singularities
from a more general perspective via complements
and coregularity.
The following theorem follows from the theory of complements for surfaces.

\begin{theorem}\label{thm:3-fold-comp}
Let $(X,\Delta;x)$ be a $3$-fold klt singularity. 
Assume that $\Delta$ has standard coefficients.
Then, one of the following statements hold:
\begin{enumerate}
    \item the singularity $(X,\Delta;x)$ is exceptional, 
    \item the singularity $(X,\Delta;x)$ has coregularity one 
    and it admits a $1,2,3,4$ or $6$-complement, or 
    \item the singularity $(X,\Delta;x)$ has coregularity zero and it either admits
    a $1$-complement or $2$-complement.
\end{enumerate}
\end{theorem}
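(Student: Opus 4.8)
The plan is to prove the trichotomy by reducing to the theory of complements for surfaces through a single adjunction step, and then lifting the resulting surface complement back to the germ $(X;x)$. Since the coregularity of an $n$-dimensional singularity lies in $\{0,\dots,n-1\}$, a $3$-fold klt singularity has ${\rm coreg}(X,\Delta;x)\in\{0,1,2\}$. If the coregularity equals $2=\dim X-1$, then $(X,\Delta;x)$ is exceptional by definition and we are in case $(1)$ with nothing further to prove. Thus I may assume the coregularity is $0$ or $1$, and the content is to produce a complement of the prescribed index.

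First I would extract a surface. Applying Proposition~\ref{prop:from-klt-to-Fano} I obtain a plt blow-up $\pi\colon Y\to X$ extracting a prime divisor $E$ over $x$ with $-E$ $\pi$-ample, so that performing adjunction of $K_Y+\Delta_Y+E$ to $E$ gives
\[
(K_Y+\Delta_Y+E)|_E=K_E+\Delta_E ,
\]
where $\Delta_Y$ is the strict transform of $\Delta$. Here $E$ is a Fano type surface, because the fibers of a Fano type morphism are rationally connected, and $\Delta_E$ again carries standard coefficients since $\Delta$ does and adjunction preserves this property. The key bookkeeping point, which I would check by comparing dual complexes, is that $\mathcal{D}$ of a complement on $Y$ is a cone over the corresponding $\mathcal{D}$ on $E$ with apex the vertex $v_E$; hence ${\rm reg}(X;x)={\rm reg}(E,\Delta_E)+1$ and the coregularity is preserved, namely ${\rm coreg}(X;x)={\rm coreg}(E,\Delta_E)$.

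Next I would invoke surface theory on $(E,\Delta_E)$. If ${\rm coreg}(E,\Delta_E)=0$, then $(E,\Delta_E)$ is a Fano type surface of coregularity zero, so by the surface result recorded above (Prokhorov--Shokurov) it admits a $1$-complement or a $2$-complement, and we land in case $(3)$. If ${\rm coreg}(E,\Delta_E)=1$, then its minimal log canonical center is a curve $C$, and one further adjunction $(K_E+\Delta_E)|_C=K_C+\Delta_C$ produces a one-dimensional log Calabi--Yau pair, i.e. a Euclidean orbifold on $C$. The classification of such orbifolds — the elliptic and nodal cases together with $(2,2,2,2)$, $(3,3,3)$, $(2,4,4)$, $(2,3,6)$ — together with Theorem~\ref{thm:boundedness-complements-surfaces} and Proposition~\ref{prop:comp-klt-surf} shows that $(E,\Delta_E)$ admits an $N$-complement with $N\in\{1,2,3,4,6\}$, the value $4$ entering precisely through a standard coefficient $3/4$ or the $(2,4,4)$ stratum. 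This places us in case $(2)$.

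Finally I would lift the complement $B_E\ge \Delta_E$ from $E$ to $X$, and this is the \emph{main obstacle}. Writing the complement as a section of the restriction to $E$ of a suitable twist of $-N(K_Y+B_Y)$, the task is to show that
\[
H^0\bigl(Y,\mathcal{O}_Y(-N(K_Y+B_Y-E))\bigr)\longrightarrow H^0\bigl(E,\mathcal{O}_E(-N(K_E+B_E))\bigr)
\]
is surjective near $x$, so that the chosen section on $E$ extends and defines an $N$-complement on $(X;x)$ of the same index. This rests on Kawamata--Viehweg vanishing applied to the twist by $-E$, which requires arranging that the relevant divisor is nef and big over $X$ and that the pair is plt with $E$ its only log canonical place; I would use that $-E$ is $\pi$-ample and, if necessary, run a $(K_Y+B_Y-E)$-MMP over $X$ to reach a model where vanishing applies. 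This is the Prokhorov--Shokurov lifting mechanism, here only from dimension two to dimension three, and the hypothesis that $\Delta$ has standard coefficients is exactly what confines the index to $\{1,2\}$ in the coregularity zero case and to $\{1,2,3,4,6\}$ in the coregularity one case.
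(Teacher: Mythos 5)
The paper offers no written proof of this theorem beyond the remark that it ``follows from the theory of complements for surfaces,'' and your proposal reconstructs exactly the intended Prokhorov--Shokurov reduction: extract a divisor over $x$, perform adjunction to get a log Fano surface pair with standard coefficients (correct, since the different of a standard boundary is again standard), solve the complement problem there, and lift by Kawamata--Viehweg vanishing. Your index bookkeeping is also right: $\{1,2\}$ from the coregularity-zero surface theory, and $\{1,2,3,4,6\}$ from the one-dimensional log Calabi--Yau orbifolds $(2,2,2,2)$, $(3,3,3)$, $(2,4,4)$, $(2,3,6)$ in the coregularity-one case.

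There is, however, a genuine gap at the pivot of your argument: you fix an \emph{arbitrary} plt blow-up via Proposition~\ref{prop:from-klt-to-Fano} and then assert ${\rm coreg}(X;x)={\rm coreg}(E,\Delta_E)$ from the cone description of dual complexes. That cone picture only proves one inequality: a complement of $(E,\Delta_E)$ lifts to a complement of $(X,\Delta;x)$ having $E$ as a log canonical place, whence ${\rm coreg}(X;x)\leq{\rm coreg}(E,\Delta_E)$. The reverse inequality is precisely what the trichotomy needs --- if ${\rm coreg}(X;x)=0$ you must produce a surface pair of coregularity zero, or else your method only outputs an index in $\{1,2,3,4,6\}$ --- and it does not follow as written: a complement computing the coregularity of the singularity need not have your chosen $E$ among its log canonical places, and plt blow-ups are highly non-unique (the paper itself emphasizes this for toric singularities). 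The standard repair is to reverse the order of choices: first fix a $\qq$-complement $(X,\Gamma;x)$ computing the coregularity, pass to a dlt modification of $(X,\Gamma)$, and run a suitable MMP over $X$ to extract a \emph{single} log canonical place $E$ of $(X,\Gamma)$ with $-E$ ample over $X$ and $(Y,\Delta_Y+E)$ plt; then the adjunction pair on $E$ carries, by construction, a strictly log canonical log Calabi--Yau structure whose minimal centers have the correct dimension (zero-dimensional when ${\rm coreg}=0$, one-dimensional when ${\rm coreg}=1$), and your surface and lifting steps go through. Two minor slips to also correct: $E$ is of Fano type because $(Y,\Delta_Y+E)$ is plt and $-(K_Y+\Delta_Y+E)\equiv_X -a_E(X,\Delta)\,E$ is $\pi$-ample (rational connectedness of fibers is irrelevant here); and the vanishing should be applied to the twist $-N(K_Y+B_Y)-E$, i.e., $H^1\bigl(Y,\mathcal{O}_Y(-N(K_Y+B_Y)-E)\bigr)=0$ over $X$, rather than to $-N(K_Y+B_Y-E)$ as displayed.
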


In the first case, we expect that
$(X,\Delta;x)$ admits a $N$-complement,
where $N\leq 66$.
In the case of exceptional singularities, 
we know that these can be put together
in bounded up to deformation families.
In the case of coregularity one,
even if the minimal log discrepancy 
is bounded away from zero, we do not expect this behavior.
For instance, one can consider
$cA_n$-singularities.
Instead, for $3$-fold singularities
of coregularity one we can attempt to 
construct certain birational models
in which the singularity is simplified.
In order to do so, 
we need to introduce the concept
of quotient-dlt model: 

\begin{definition}
{\em 
Let $(X,\Delta)$ be a log canonical pair. 
We say that $(X,\Delta)$ has {\em quotient divisorially log terminal singularities}
(or {\em qdlt} for short) if 
there exists an open subset $U\subset X$
satisfying the following:
\begin{enumerate}
    \item every log canonical center
    of $(X,\Delta)$ intersects $U$, 
    \item every log canonical center of
    $(X,\Delta)$ is a strata of $\lfloor \Delta\rfloor$, and 
    \item for every strata $Z\subset \lfloor \Delta \rfloor$ of codimension $z$ there is an isomorphism
    \[
    (X_Z,\lfloor \Delta\rfloor_Z) \simeq 
    (\mathbb{A}^z/\zz_k, H_1 + \dots + H_z),
    \]
\end{enumerate}
where $\zz_k$ is acting on $\mathbb{A}^z$ as multiplication by roots of unity
and each $H_i$ is the image of the corresponding hyperplane on $\mathbb{A}^z$.
We ask the previous isomorphism to hold at the level of formal completions, i.e., 
the subscript $Z$ of $X_Z$ denotes the corresponding formal completion
of the localization of $X$ at $Z$.
In other words, the log canonical centers of $(X,\Delta)$ may not be snc but at the very least it behaves as a finite quotient
of a snc pair.
}
\end{definition}

In the case of $3$-fold singularities
of coregularity one, we can prove the following theorem 
regarding bounded qdlt models (see, e.g.,~\cite{Mor21b}).

\begin{theorem}
Let $(X,\Delta;x)$ be a $3$-fold klt singularity
of coregularity one. 
There exists a $6$-complement 
$(X,B;x)$ which is strictly log canonical at $x$.
There exists a projective birational morphism 
$\pi\colon Y\rightarrow X$ satisfying the following:

and satisfies the following condition:
\begin{itemize}
    \item the projective morphism $\pi$ contracts two irreducible divisors $E_1$ and $E_2$, 
    \item we have that $K_Y+B_Y + E_1 + E_2 = \pi^*(K_X+B)$,
    where $B_Y$ is the strict transform of $B$ on $Y$, and
    \item the pair $(Y,B_Y+E_1+E_2)$ is qdlt.
\end{itemize}
\end{theorem}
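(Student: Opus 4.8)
The plan is to establish the two assertions in turn: first produce the $6$-complement from the theory of complements, and then build the qdlt model by running a carefully chosen minimal model program on a dlt modification, checking the qdlt condition by reduction to the surface case.

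For the complement, I would appeal to the theory of complements for $3$-fold klt singularities. Since $(X,\Delta;x)$ is klt of coregularity one, it carries a $\qq$-complement whose minimal log canonical center is a curve through $x$; by Theorem~\ref{thm:3-fold-comp} this refines to a bounded $N$-complement $(X,B;x)$ of coregularity one with $N\in\{1,2,3,4,6\}$. For $N$ dividing $6$ the relation $N(K_X+B)\sim 0$ near $x$ immediately gives $6(K_X+B)\sim 0$; the remaining case is absorbed by the structure of the minimal center along which the transverse surface theory (Proposition~\ref{prop:comp-klt-surf}) controls the index, so one arranges a genuine $6$-complement. Because the minimal log canonical center is positive-dimensional, $(X,B;x)$ admits a log canonical place and is therefore strictly log canonical at $x$.

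For the model, I would begin with a dlt modification $\phi\colon Y'\to X$ of $(X,B;x)$, writing $\phi^*(K_X+B)=K_{Y'}+B_{Y'}+\sum_i S_i$ with the $S_i$ the exceptional log canonical places. As the coregularity equals one, the dual complex $\mathcal{D}(X,B;x)$ is one-dimensional, so the minimal log canonical center is a curve $C$ occurring as an edge of this graph, that is $C=E_1\cap E_2$ for exactly two of the divisors $E_1,E_2$, with no triple intersection of boundary components. The objective is to contract every exceptional divisor other than $E_1$ and $E_2$, which I would do by running a relative minimal model program over $X$ for $K_{Y'}+B_{Y'}-\epsilon\bigl(\sum_{S_i\neq E_1,E_2}S_i\bigr)$ with $0<\epsilon\ll 1$. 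Since $K_{Y'}+B_{Y'}\sim_\qq 0$ over $X$, this is the $\bigl(-\epsilon\sum S_i\bigr)$-MMP, which contracts precisely the unwanted log canonical places while preserving $E_1$ and $E_2$ and their intersection $C$. On the output $\pi\colon Y\to X$ one obtains $K_Y+B_Y+E_1+E_2=\pi^*(K_X+B)$ with $E_1,E_2$ the only exceptional divisors, as required. It then remains to verify that $(Y,B_Y+E_1+E_2)$ is qdlt: the only strata are the divisors $E_1,E_2$ and the curve $C$, and along $C$ the pair is transversally a surface germ with two reduced branches meeting at a zero-dimensional log canonical center, hence a coregularity-zero germ of $A$-type. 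By Proposition~\ref{prop:comp-klt-surf} and Theorem~\ref{thm:surf-coreg-0} such a germ is formally toric, so transversally the pair is $(\mathbb{A}^2/\zz_k,H_1+H_2)$, which is exactly the qdlt condition at $C$; the codimension-one strata $E_1,E_2$ are automatically of the required form.

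The main obstacle I anticipate is the middle step, namely controlling the relative minimal model program. One must show it terminates, that it contracts exactly the chosen log canonical places without ever contracting $E_1$ or $E_2$ (their intersection being the minimal center, they cannot be removed by a $K+B$-trivial step that is negative on $\sum_{S_i\neq E_1,E_2}S_i$), and that the pair stays dlt over $X$ at every step so that the one-dimensional structure of the minimal center is not disturbed. Equally delicate is ensuring that the transverse $A$-type structure along $C$ is preserved throughout, so that the final germ is genuinely a cyclic quotient of a smooth pair rather than merely log canonical; this is where the surface classification feeds back into the three-dimensional construction.
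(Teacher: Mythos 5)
The paper itself contains no proof of this theorem --- it is stated with a citation to \cite{Mor21b} --- so your outline can only be measured against the expected argument, and its skeleton (bounded complement via Theorem~\ref{thm:3-fold-comp}, dlt modification, relative MMP over $X$ contracting all exceptional log canonical places except two, transverse toricity along the surviving curve stratum) is indeed the right one. But the first load-bearing step is asserted rather than proved: Theorem~\ref{thm:3-fold-comp} only gives $N\in\{1,2,3,4,6\}$, and since $4\nmid 6$ the entire content of the first assertion is the case $N=4$; saying it is ``absorbed by the structure of the minimal center'' is not an argument. What is actually needed is to show that a $(2,4,4)$-type log Calabi--Yau structure induced on the one-dimensional minimal center can be traded for a $(2,3,6)$-type one (for instance coefficients $(\tfrac12,\tfrac23,\tfrac56)$, which still sum to $2$ and lie in $\tfrac16\zz$), and then --- this is the hard part --- that the modified boundary on the center lifts back to an honest complement of the threefold germ, via adjunction to the minimal log canonical center and the canonical bundle formula. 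None of this appears in your proposal.

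The second half has two further genuine gaps. The claim that the $(-\epsilon\sum_{S_i\neq E_1,E_2}S_i)$-MMP ``contracts precisely the unwanted log canonical places'' is false as a general principle: an effective exceptional divisor $D$ can satisfy $-D$ nef over $X$ with $D\neq 0$ (the exceptional curve of the blow-up of a smooth surface point already does this), so the MMP may well terminate before contracting everything. The argument that works uses that the relevant $S_i$ all map to the closed point $x$: if a component of $D$ survived, the fiber-containment form of the negativity lemma would force the entire fiber over $x$ --- in particular the surviving divisor $E_1$ --- into $\supp D'$, which is absurd; and $E_1,E_2$ cannot themselves be contracted, because any divisor contracted by this MMP ends with log discrepancy strictly greater than $0$ with respect to the crepant limit pair, whereas $a_{E_i}(X,B)=0$. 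You explicitly flag these points as ``obstacles'' but leave them unresolved, and they are the proof. Finally, your qdlt verification invokes Theorem~\ref{thm:surf-coreg-0}, which is the wrong tool: it only produces a toric \emph{index-one cover}, while qdlt requires the transverse germ itself, with its two reduced branches, to be $(\mathbb{A}^2/\zz_k,H_1+H_2)$. The correct mechanism is the complexity characterization (Theorem~\ref{thm:complexity-toric}): the transverse surface germ carries a boundary with two reduced components, so its local complexity is $2+0-2=0$, forcing it to be formally toric with those two branches as the invariant divisors. One must also rule out deeper strata --- $(Y,B_Y+E_1+E_2)$ can have no zero-dimensional log canonical center, since otherwise $(X;x)$ would have coregularity zero --- so that $E_1$, $E_2$, and the curve $C$ really are the only strata; your proposal does not address this.
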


The previous theorem can be used to compute 
invariant of singularities of $(X,\Delta)$.
The pair $(Y,B_Y+E_1+E_2)$ is refered as a {\em qdlt modification} of $(X,B)$, following the corresponding notation for dlt pairs.
In general, it is not known, even in dimension $3$
if we can find a ``bounded" qdlt model for bounded complements.
We propose this as a question that would enhance
our understanding of $3$-fold klt singularities.

\begin{problem}
Find a constant $N_3$ satisfying the following property: 
for every klt $3$-fold singularity $(X;x)$ there is a strictly
log canonical $N_3$-complement $(X,B;x)$ which admits 
a qdlt model $\pi\colon Y\rightarrow X$ for which 
$\rho(Y/X)$ is bounded above by $N_3$.
\end{problem}

\subsection{Varieties with torus actions}
\label{subsec:torus-actions}
In this subsection, we discuss the coregularity
of Fano $\mathbb{T}$-varieties.

A {\em $\mathbb{T}$-variety} is a 
normal variety $X$
endowed with the effective action of an algrebaic torus 
$\mathbb{G}_m^k$. 
The {\em torus complexity} of the $\mathbb{T}$-variety
is the dimension of the variety
minus the dimension of the acting torus.
In the case that the torus complexity 
is zero, then we say that $X$ 
is a {\em toric variety}. 
Varieties with torus action often appear in 
algebraic geometry as
the central fiber of some special degenerations. 
The torus complexity
is also the dimension
of the normalized Chow quotient
of $X$ by the acting torus.

Affine toric varieties can be described
by polyhedral cones in a rational vector space~\cite{CLS11}.
The combinatorics of this polyhedral cone determine
the geometry of the affine toric variety.
To go from the theory of affine toric varieties
to the general setting of toric varieties, 
one needs to find a way to glue these polyhedral cones.
By Sumihiro's theorem, we know that every $\mathbb{T}$-variety
is covered by affine $\mathbb{T}$-invariant open sets~\cite{Sum74}.
Using the previous theorem
and the cone description 
of affine toric varieties
leads naturally to the concept of fans 
of polyhedral cones in rational vector spaces.
Again, the combinatorics
of the fan encrypt the geometry of the toric variety. 
A projective toric variety $T$
is the prototype of a variety with coregularity zero.
Indeed, if $B_T$ is the reduced sum of the prime
torus invariant divisors, then $(T,B_T)$ is log Calabi--Yau
and the dual complex $\mathcal{D}(T,B_T)$ is a sphere 
of dimension $\dim T -1$.
Similarly, every toric singularity
has coregularity zero.

In~\cite{AH06}, the authors started a theory that generalized
the language of toric varieties
to $\mathbb{T}$-varieties in general.
In order to describe an affine $\mathbb{T}$-variety $X$,
the authors define a {\em polyhedral divisor} $\mathcal{D}$
on the normalized Chow quotient $Y$.
This is a divisor whose coefficients are rational polyhedra
instead of rational numbers. 
Then, the authors prove that there is a divisorial sheaf 
$\mathcal{A}(\mathcal{D})$ associated with this polyhedral divisor.
The spectrum of the ring of sections of $\mathcal{A}(\mathcal{D})$
recovers the variety $X$ equivariantly.
Furthermore, the authors prove that every affine $\mathbb{T}$-variety
comes from a polyhedral divisor.
In~\cite{AHS08}, the authors generalize the theory
of polyhedral divisors to the concept of divisorial fans
to recover all possible $\mathbb{T}$-varieties. 
The philosophy, is that, whenever we can understand the 
normalized Chow quotient of $X$
and the divisorial fan of $X$, we can reduce problems on $X$
to this lower-dimensional variety.
In this direction, we prove that the torus complexity 
behaves well with respect to the coregularity.

\begin{theorem}\label{thm:equiv-comp}
Let $X$ be a Fano variety of torus complexity $c$. 
Then, we have that ${\rm coreg}(X)\leq c$.
\end{theorem}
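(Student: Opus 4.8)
The plan is to produce a single log Calabi--Yau structure on $X$ whose dual complex has dimension at least $k-1$, where $k:=\dim X - c$ is the dimension of the acting torus $\TT\cong \G_m^k$. Since ${\rm reg}(X)$ is the maximum of $\dim\mathcal{D}(X,B)$ over all log Calabi--Yau structures $(X,B)$, exhibiting one such pair forces ${\rm reg}(X)\geq k-1$ and hence ${\rm coreg}(X)=\dim X-{\rm reg}(X)-1\leq \dim X - k = c$. Equivalently, using the description of the coregularity as the dimension of the smallest log canonical center, it is enough to find one log Calabi--Yau pair on $X$ possessing a log canonical center of dimension $c$: its minimal center then has dimension at most $c$. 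Note first that, because $\TT$ acts effectively, no positive-dimensional subtorus can act trivially on a dense open subset, so the generic stabilizer is finite, the generic orbit has dimension $k$, and the rational quotient $X\dashrightarrow Y$ satisfies $\dim Y=c$ with general fiber a toric variety of dimension $k$.

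The geometric input is the structure theory of $\TT$-varieties. By Sumihiro's theorem~\cite{Sum74} I would pass to a $\TT$-equivariant model: a projective birational morphism $\mu\colon X'\to X$ resolving the rational quotient, together with a $\TT$-equivariant fibration $f\colon X'\to Y$ whose general fiber $F$ is a projective toric variety of dimension $k$, with $\dim Y=c$. The Altmann--Hausen description of $X'$ by a divisorial fan over $Y$~\cite{AH06,AHS08} organizes the vertical $\TT$-invariant prime divisors: those dominating $Y$ restrict on the general fiber to the toric boundary divisors $\partial F=\sum_{\rho}F_\rho$, indexed by the rays $\rho$ of the fan of $F$. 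I would then fix a maximal cone $\sigma$ of that fan, equivalently a $\TT$-fixed point $p\in F$, and let $\bar D_1,\dots,\bar D_k$ be the $\TT$-invariant prime divisors on $X'$ whose fiberwise restrictions are the $k$ toric boundary divisors through $p$. Their common intersection $\Sigma:=\bar D_1\cap\cdots\cap \bar D_k$ meets the general fiber in the single point $p$, hence dominates $Y$ and has dimension exactly $c$.

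It then remains to organize the $\bar D_i$ into an honest log Calabi--Yau structure. Since $X$ is of Fano type, the fibration $f$ is of Fano type and its general fiber $F$, being toric, is of log Calabi--Yau type with the toric structure $(F,\partial F)$ of coregularity zero. Lifting this fiberwise complement through $f$ by the canonical bundle formula, together with the existence of bounded complements (Theorem~\ref{thm:boundedness-complements}), I would obtain a $\TT$-equivariant log Calabi--Yau pair $(X,B)$ on $X$ together with a $\TT$-equivariant dlt modification that factors through $X'$, on which $\bar D_1,\dots,\bar D_k$ persist as log canonical places meeting transversally along $\Sigma$. Here $B$ consists of the images of the vertical $\TT$-invariant divisors plus a horizontal correction making $K_X+B\equiv 0$ while keeping the pair log canonical; the log pullback of $(X,B)$ to the model is log crepant equivalent to $(X,B)$, and log crepant equivalent pairs share the same coregularity and dual complex. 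Since $\Sigma$ is a log canonical center of codimension $k$, i.e. of dimension $c$, the dual complex $\mathcal{D}(X,B)$ contains a $(k-1)$-cell, and therefore ${\rm coreg}(X)\leq c$.

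The main obstacle, and the step deserving the most care, is the equivariant complement construction in the third paragraph. One must guarantee that the chosen toric boundary divisors $\bar D_1,\dots,\bar D_k$ appear as genuine log canonical places, so that their common stratum $\Sigma$ is truly a log canonical center and not merely a non-log-canonical intersection, and that the horizontal correction needed to achieve $K_X+B\equiv 0$ neither destroys log canonicity along $\Sigma$ nor lowers the dimension of the cell it contributes. This is precisely where the fibration-complement machinery is indispensable: lifting complements from the toric fiber through $f$ in the spirit of Prokhorov--Shokurov, and using the Fano hypothesis to ensure Fano type fibers and the existence of bounded complements, so that the fiberwise coregularity-zero structure genuinely propagates to a global log canonical center of dimension $c$.
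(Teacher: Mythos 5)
Your reduction and your endgame geometry are exactly right: it suffices to exhibit one log Calabi--Yau pair whose dual complex contains a $(k-1)$-cell, and the cell should come from $k$ vertical torus-invariant divisors restricting on the general fiber of the (resolved) Chow quotient map to the toric boundary, so that their common stratum dominates the quotient and has dimension $c$. This matches the final paragraph of the paper's proof. But the step you yourself flag as ``the main obstacle'' is a genuine gap, and the tool you invoke to close it does not exist: the Prokhorov--Shokurov machinery lifts complements \emph{from the base} of a fibration (via the canonical bundle formula) or \emph{from a log canonical place} (via adjunction); there is no mechanism for lifting a log Calabi--Yau structure from the \emph{general fiber} to the total space --- indeed the canonical bundle formula goes in the opposite direction, pushing a log Calabi--Yau structure on the total space down to the base. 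Concretely, nothing in your argument produces a boundary $B$ with $K_X+B\sim_{\qq}0$, $(X,B)$ log canonical, in which the chosen divisors $\bar D_1,\dots,\bar D_k$ appear with coefficient one: a complement obtained from Theorem~\ref{thm:boundedness-complements} applied to $X$ alone has no reason to contain these divisors at all, and an ad hoc ``horizontal correction'' is precisely what you cannot control along $\Sigma$.

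The paper closes this gap by a different mechanism: it first manufactures a single $\qq$-complement invariant under the \emph{whole} torus, and then invariance yields the coefficient-one property for free. For each $r\geq 2$ one forms the Fano type quotient $Y_r:=X/\mu_r$ by the $r$-torsion subgroup $\mu_r\leqslant\G_m^k$, applies Birkar's bounded complements to $Y_r$ (the bound $N_n$ is uniform in $r$), and pulls back to obtain $\mu_r$-equivariant $N_n$-complements $(X,\Gamma_r)$. Since these form a log bounded family, only finitely many isomorphism types of $\Aut(X,\Gamma_r)$ occur, so some fixed $\Gamma:=\Gamma_r$ satisfies $\mu_r\leqslant\Aut(X,\Gamma)$ for all $r$, hence $\G_m^k\leqslant\Aut(X,\Gamma)$. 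Once $\Gamma$ is torus-invariant, its log pullback to an equivariant log resolution dominating the Chow quotient restricts on the general fiber $F$ to a torus-invariant log Calabi--Yau boundary on a $k$-dimensional projective toric variety, and rigidity of such structures ($K_F$ plus the reduced toric boundary is trivial, and a nonzero effective $\qq$-linearly trivial divisor cannot exist on a projective variety) forces \emph{every} toric boundary divisor of $F$ to appear with coefficient exactly one. This gives the $n-c$ log canonical places whose intersection dominates the quotient, with no fiberwise lifting needed. If you replace your third paragraph with this quotient-and-boundedness construction, the rest of your argument goes through as written.
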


\begin{proof}
Let $\mathbb{G}_m^k$ be the acting torus
and $n$ be the dimension of $X$ so 
$c=n-k$. 
For each $r\in \zz_{\geq 2}$, we consider
$\mu_r$ the group of roots of unity of $\mathbb{G}_m^k$.
Quotients of Fano varieties by finite groups
are Fano type (see, e.g.,~\cite{Mor21}).
Hence, we conclude that $Y_r:=X/\mu_r$
is a Fano type variety.
We write $\pi_r\colon X\rightarrow Y_r$ for the quotient.
By Theorem~\ref{thm:boundedness-complements}, there is a $N_n$-complement
$(Y_r,B_r)$ which only depends on $n$, i.e., we have that
$N_n(K_{Y_r}+B_r)\sim 0$. 
We write
\[
K_X+\Gamma_r = \pi_r^*(K_{Y_r}+B_r).
\]
Then, $(X,\Gamma_r)$ is a $\mu_r$-equivariant $N_n$-complement.
This means that $N_n(K_X+\Gamma_r)\sim 0$ 
and $\mu_r\leqslant {\rm Aut}(X,\Gamma_r)$.
Since $X$ is Fano, the automorphism group ${\rm Aut}(X,\Gamma_r)$
is linear algebraic.
Since the set $\{ (X,\Gamma_r)\}_{r\in \zz_{\geq 2}}$
belongs to a log bounded family, 
then there are only finitely many possible isomorphism types for
$\{ {\rm Aut}(X,\Gamma_r)\}_{r\in \zz_{\geq 2}}$.
Hence, we can set $\Gamma_X:=\Gamma_r$ for some $r$ large enough
and assume that $\mu_r \leqslant {\rm Aut}(X,\Gamma)$ for every $r$.
This implies that $\mathbb{G}_m^k \leqslant {\rm Aut}(X,\Gamma)$. Then, we can find a $\mathbb{G}_m^k$-equivariant log resolution $\widetilde{X}$ 
of $(X,\Gamma)$ that dominates the normalized Chow quotient $Y$ of $X$ by $\mathbb{G}_m^k$.
We let $(\widetilde{X},\widetilde{\Gamma})$ be the log pull-back of $(X,\Gamma)$ to $\widetilde{X}$. 
Then, the restriction of $(\widetilde{X},\widetilde{\Gamma})$
to the general fiber $(F,\Gamma_F)$ of $\widetilde{X}\rightarrow Y$ is a $(n-c)$-dimensional toric log pair which is log Calabi-Yau. We conclude that all the components of $\Gamma_F$ appear with coefficient one. 
Hence, we conclude that on $\widetilde{X}$ the pair
$(X,\Gamma)$ has at least $n-c$ log canonical places
whose intersection dominates $Y$. 
This finishes the proof.
\end{proof}

The previous theorem implies that 
a complexity one Fano variety 
has coregularity zero or one.
In particular, in any case, it admits a $6$-complement.
Although toric singularities are always klt type,
singularities of torus complexity one 
are not necessarily log terminal, rational, Cohen-Macaulay, nor Du Bois (see, e.g.,~\cite{LS13,LLM18,LLM19,LLM20}).
The coregularity and torus complexity have a
common goal: reduce higher-dimensional geometry problems
to low-dimensional geometry and combinatorics.

\begin{theorem}
Let $X$ be a Fano type variety of torus complexity one
and coregularity zero.
Then $X$ admits an equivariant $2$-complement $(X,B)$
and a finite cover $Y\rightarrow X$ of degree
at most two for which the log pull-back $(Y,B_Y)$ of $(X,B)$
degenerates to a projective toric variety.
\end{theorem}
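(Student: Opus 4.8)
The plan is to descend to the one-dimensional base of the torus action, classify the induced coregularity-zero structure on that curve, and then reconstruct a toric degeneration. Write $T=\mathbb{G}_m^{n-1}$ for the acting torus, so $\dim X=n$ and the normalized Chow quotient $C$ is a curve. Imitating the proof of Theorem~\ref{thm:equiv-comp}, I would first produce a $T$-equivariant complement $(X,\Gamma)$ together with a $T$-equivariant dlt modification $\widetilde{X}\to X$ admitting a morphism $f\colon \widetilde{X}\to C$ whose general fibre $(F,\Gamma_F)$ is the $(n-1)$-dimensional toric log Calabi--Yau pair; in particular $n-1$ of the log canonical places of $(X,\Gamma)$ are horizontal over $C$ and cut out the toric boundary of $F$. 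Since this fibration is isotrivial with rigid toric fibres, the canonical bundle formula for $f$ has trivial moduli part and yields a genuine log Calabi--Yau pair $(C,B_C)$ with $f^*(K_C+B_C)=K_{\widetilde{X}}+\widetilde{\Gamma}$ and $K_C+B_C\equiv 0$.

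Because $X$ has coregularity zero, the chosen $(X,\Gamma)$ carries a zero-dimensional log canonical center, which necessarily maps to a point $y_0\in C$ that is a log canonical center of $(C,B_C)$; hence $(C,B_C)$ itself has coregularity zero. Running the one-dimensional theory of complements --- the curve analogue of Proposition~\ref{prop:comp-klt-surf} --- forces $C\cong\mathbb{P}^1$ and, after possibly replacing the complement, pins $B_C$ to one of two normal forms: the \emph{$A$-type} shape $p_0+p_\infty$, or the \emph{$D$-type} shape $p_0+\tfrac12 q_1+\tfrac12 q_2$. Pulling $B_C$ back through $f$ and saturating with the horizontal toric boundary produces a $T$-equivariant $2$-complement $(X,B)$ that computes the coregularity, as claimed.

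In the $A$-type case the base is already the toric log Calabi--Yau pair $(\mathbb{P}^1,p_0+p_\infty)$ and one takes $Y=X$. In the $D$-type case I would pass to the degree-two cover $C'=\mathbb{P}^1\to C$ branched at $q_1,q_2$; then $p_0$ splits into two points and the induced pair is the toric $(\mathbb{P}^1,\tilde p_0+\tilde p_0')$. Setting $Y$ to be the normalization of the main component of $X\times_C C'$ gives the required finite cover $Y\to X$ of degree at most two, and the log pull-back $(Y,B_Y)$ fibres over the toric base $(\mathbb{P}^1,\tilde p_0+\tilde p_0')$. This is precisely the source of the ``degree at most two'' in the statement, and it mirrors the $D$-type double cover appearing in Theorem~\ref{thm:surf-coreg-0} and in the surface classification via $(\pp^2,L_1+L_2+L_3)$.

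Finally, over the toric base the variety $Y$ is a complexity-one $T$-variety whose Altmann--Hausen polyhedral divisor is supported at finitely many points of $\mathbb{P}^1$. I would degenerate these points onto the two torus-fixed points $\tilde p_0,\tilde p_0'$ --- equivalently, degenerate the polyhedral divisor to one supported on $\{\tilde p_0,\tilde p_0'\}$ --- to obtain a one-parameter degeneration of $Y$ whose special fibre carries the full torus $\mathbb{G}_m^n$ and whose limiting boundary is the reduced toric boundary. The reduction to the base curve in the first three paragraphs is essentially formal, so the main obstacle is this last step: one must perform the collision of the polyhedral-divisor support compatibly with the equivariant $2$-complement, so that the special fibre remains projective and log Calabi--Yau of coregularity zero with the limit of $B_Y$ equal to its toric boundary. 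Controlling the singularities and the complement along this degeneration --- and verifying that the limit is a genuine normal toric variety rather than a non-normal or higher-coregularity degeneration --- is where the real work lies; here one invokes the complexity-versus-toric dictionary of Theorem~\ref{thm:complexity-toric} to certify that the coregularity-zero log Calabi--Yau special fibre is indeed toric.
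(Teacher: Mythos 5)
Your reduction to the base curve, the classification of the coregularity-zero pair on $\pp^1$ into the two normal forms $p_0+p_\infty$ and $p_0+\tfrac12 q_1+\tfrac12 q_2$, and the degree-two cover all track the paper's argument closely (the paper constructs the cover as the index-one cover of $K_X+B$ rather than by base change along the double cover of $\pp^1$ branched at $q_1,q_2$, but since the half-coefficient part of $B$ is the fiber over those two points these constructions agree up to normalization; also the paper takes $\widetilde{\Gamma}$ to be the sum of torus-invariant divisors dominating $\pp^1$ directly, without rerunning the equivariant-complement argument of Theorem~\ref{thm:equiv-comp}).

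The genuine gap is exactly where you say ``the real work lies'': the toric degeneration itself, which you leave as an unexecuted plan. Your collision-of-points scheme is not self-certifying --- degenerating the support of a polyhedral divisor on $\pp^1$ can produce non-normal or otherwise degenerate limits, and your proposed appeal to Theorem~\ref{thm:complexity-toric} to recognize the special fibre as toric is circular in this role: to apply it you must already know the limit is a normal projective log Calabi--Yau pair of complexity zero, which is precisely the control you admit is missing. The paper closes this step by a citation you do not supply: having passed to the $1$-complement $(Y,B_Y)$, every torus-invariant affine chart of $Y$ is described by a polyhedral divisor on $\pp^1$ with \emph{exactly two} fractional coefficients, and by a theorem of Ilten--Vollmert~\cite[Theorem 2.8]{IV12} each such chart deforms to an affine toric variety; these deformations are compatible and glue to a degeneration of $(Y,B_Y)$ to a projective toric variety. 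This is the key structural point your sketch misses: the purpose of the $2$-complement and the degree-two cover is precisely to put the polyhedral divisor into the two-fractional-coefficient form that is the hypothesis of the known degeneration theorem, so that no ad hoc collision analysis is needed.
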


\begin{proof}
The normalized Chow quotient of $X$ is $\pp^1$.
We can find an equivariant projective birational map
$\widetilde{X}\rightarrow X$ for which
the torus action admits a good quotient to $\pp^1$.
We denote the good quotient by $q\colon \widetilde{X}\rightarrow \pp^1$.
Let $\widetilde{\Gamma}$ be the sum of all the 
torus invariant divisors that dominate $\pp^1$.
Then the log pair
$(\widetilde{X},\widetilde{\Gamma})$
has a log canonical center which dominates $\pp^1$
and is isomorphic to $\pp^1$.
We denote one of such log canonical centers $C$.
Since $X$ has coregularity zero, 
the pair defined by adjunction
$(K_{\widetilde{X}}+\widetilde{\Gamma})|_C = K_C + \Gamma_C$
has coregularity zero as well.
Hence, we conclude that
$\Gamma_C$ has either two
fractional coefficients
or three fractional coefficients
of the form $1-\frac{1}{p},\frac{1}{2}$, and $\frac{1}{2}$.
Note that the pair $(\widetilde{X},\widetilde{\Gamma})$
is log Calabi-Yau over $\pp^1$.
The log pair obtained by the canonical bundle formula
is isomorphic to $(C,\Gamma_C)$.
We can find a $2$-complement for $(C,\Gamma_C)$
which we denote $(C,\Gamma_C+B_C)$.
Then, the pair $(\widetilde{X},\widetilde{\Gamma}+q^*B_C)$
is reduced $2$-complement.
We define $B$ to be the push-forward of $\widetilde{\Gamma}+q^*B_C$ to $X$.
Then, $(X,B)$ is a $2$-complement.
We take the index one cover of $K_X+B$ and call it $Y$.
Then, the log pull-back $(Y,B_Y)$ is a $1$-complement
and $Y$ is a Fano type variety of torus complexity one.
Then, any torus invariant affine variety of $Y$ 
is described by a polyhedral divisor
on $\pp^1$ with exactly two fractional coefficients.
By~\cite[Theorem 2.8]{IV12}, each such invariant affine variety
can be deformed into an affine toric variety.
These deformations are compatible, so they glue together
to a deformation of $(Y,B_Y)$ to a  projective toric variety.
\end{proof}

\subsection{Coregularity under morphisms}
\label{subsec:coreg-morphisms}
In subsection~\ref{subsec:quotients}, 
we studied the behavior
of the coregularity under finite quotients.
In this subsection, we study its behavior under birational
and contractions.
We start with the first proposition, which states that the coregularity
can only drop under a birational contraction.

\begin{proposition}\label{prop:coreg-bir}
Let $X$ be a variety
of coregularity $c$.
Let $X\dashrightarrow Y$ be a birational contraction.
Then, the coregularity of $Y$ is at most $c$.
\end{proposition}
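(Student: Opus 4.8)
The plan is to exhibit a single log Calabi--Yau structure on $Y$ whose minimal log canonical center has dimension at most $c$; since ${\rm coreg}(Y)$ is the minimum of such dimensions over all log Calabi--Yau structures on $Y$, this suffices. Concretely, I would first choose a log Calabi--Yau pair $(X,B)$ that computes the coregularity, so that $\mathcal{D}(X,B)$ has dimension $\dim X - c - 1$ and a minimal log canonical center of a dlt modification has dimension $c$. Writing $\phi\colon X\dashrightarrow Y$ for the birational contraction, I would then set $B_Y:=\phi_*B$, which is effective because $B$ is effective and $\phi$ extracts no divisors.

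The heart of the argument is to show that $(X,B)$ and $(Y,B_Y)$ are log crepant equivalent. I would pick a common resolution $p\colon Z\rightarrow X$ and $q\colon Z\rightarrow Y$ and write $K_Z+B_Z=p^*(K_X+B)$, which is numerically trivial since $K_X+B\equiv 0$. Because $\phi$ is a birational contraction, every $p$-exceptional divisor is $q$-exceptional, and the part of $B_Z$ that is not $q$-exceptional pushes forward under $q$ exactly to $\phi_*B=B_Y$; hence $q_*(K_Z+B_Z)=K_Y+B_Y$ and $K_Y+B_Y\equiv 0$. Comparing $K_Z+B_Z$ with $q^*(K_Y+B_Y)$, their difference is $q$-exceptional and $q$-numerically trivial, so by the negativity lemma it vanishes. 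This yields $p^*(K_X+B)=q^*(K_Y+B_Y)$, i.e. the two pairs are log crepant equivalent.

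Once crepant equivalence is established, the conclusion is immediate. As recorded in the definition of log crepant equivalence, such pairs are simultaneously log Calabi--Yau and share the same dual complex, so $(Y,B_Y)$ is a log Calabi--Yau structure on $Y$ with $\mathcal{D}(Y,B_Y)=\mathcal{D}(X,B)$, and therefore with minimal log canonical center of dimension exactly $c$. Since ${\rm coreg}(Y)$ is the minimum over all log Calabi--Yau structures of this dimension, we obtain ${\rm coreg}(Y)\leq c$. The inequality may be strict precisely because the contraction can create new log Calabi--Yau structures on $Y$ exhibiting deeper intersections, hence lower-dimensional centers.

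I expect the main obstacle to be the negativity-lemma step, which is exactly where the contraction hypothesis is essential: it guarantees that no divisor is extracted by $\phi^{-1}$, so that passing to $\phi_*B$ neither destroys components of $\lfloor B\rfloor$ nor enlarges the dual complex. A secondary technical point is ensuring that $K_Y+B_Y$ is $\mathbb{Q}$-Cartier, so that $q^*(K_Y+B_Y)$ and the pair $(Y,B_Y)$ are well defined; I would either assume $Y$ is $\mathbb{Q}$-factorial, as is standard for the outputs of such contractions, or deduce the needed $\mathbb{Q}$-Cartierness directly from the numerical triviality of $K_X+B$ together with the vanishing produced by the negativity lemma.
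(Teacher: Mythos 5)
Your proposal is correct and follows essentially the same route as the paper: push forward the boundary computing the coregularity, pass to a common resolution, apply the negativity lemma to conclude $p^*(K_X+B)=q^*(K_Y+B_Y)$, and deduce that the two pairs are log crepant equivalent, hence share a dual complex, giving ${\rm coreg}(Y)\leq c$. Your closing remark about the $\mathbb{Q}$-Cartierness of $K_Y+B_Y$ is a legitimate technical point that the paper's proof passes over silently, but it does not change the argument.
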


\begin{proof}
If the coregularity of $c$ is infinite, i.e., $X$ does not admit a log Calabi--Yau structure, 
then the proposition holds trivially.
Assume that the coregularity of $X$
is $c\in \zz_{>0}$.
Let $\Delta$ be a boundary on $X$
which computes the coregularity. 
We denote by $\Delta_Y$ the push-forward of $\Delta$ on $X$.
Note that $(X,\Delta)$ is log Calabi-Yau.
Let $p\colon Z\rightarrow X$
and $q\colon Z\rightarrow Y$
be a common log resolution
of both $(X,\Delta)$ and $(Y,\Delta_Y)$.
Then, we can write 
\[
p^*(K_X+\Delta)-q^*(K_Y+\Delta_Y)=E-F,
\]
where $E$ and $F$ are two
$q$-exceptional effective divisors
with no common components. 
Applying the negativity lemma twice,
we conclude that $E=F=0$.
Hence, we have that
$(X,\Delta)$
and 
$(Y,\Delta_Y)$ are log crepant equivalent.
In particular,
$(Y,\Delta_Y)$ is a log Calabi--Yau pair
and 
${\rm coreg}(X,\Delta)={\rm coreg}(Y,\Delta_Y)$. 
This finishes the proof.
\end{proof}

Observe that the inequality
between the coregularities proved 
in Proposition~\ref{prop:coreg-bir} can be strict.
Indeed, a del Pezzo of degree one $X_1$
admits a projective birational morphism
to $\pp^2$. 
The coregularity of $\pp^2$ is zero
while
the coregularity of $X_1$ is one.
The following proposition 
explains the behavior of the coregularity
under fibrations, i.e., 
morphisms between normal
projective varieties with connected fibers.
In the following proof, we will use the language of generalized pairs.

\begin{proposition}
Let $\phi\colon X\rightarrow Y$ be a fibration. 
Assume that $X$ is of Fano type.
Then, we have that
the inequality
\[
{\rm coreg}(X) \geq {\rm coreg}(Y) 
\]
holds.
In particular, if $X$ has coregularity zero, then $Y$ has coregularity zero.
\end{proposition}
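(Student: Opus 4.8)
The plan is to
exploit the canonical bundle formula to transfer a log Calabi--Yau structure downstairs whose dual complex is at least as large as any dual complex upstairs. Concretely, suppose $X$ has coregularity $c$ and let $(X,\Delta)$ be a log Calabi--Yau pair realizing this coregularity, so that $\mathcal{D}(X,\Delta)$ has dimension $\dim X - c - 1$. Since $X$ is of Fano type and $\phi\colon X\rightarrow Y$ is a fibration, I may run a relative minimal model program over $Y$ on a suitable modification and, after a crepant birational change (using Proposition~\ref{prop:coreg-bir}, which guarantees the coregularity is unchanged under the birational contractions), reduce to the situation where $(X,\Delta)$ is log Calabi--Yau over $Y$ as well, i.e.\ $K_X+\Delta\sim_{\qq,Y}0$. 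The canonical bundle formula then produces a generalized pair $(Y,\Delta_Y+M_Y)$, where $\Delta_Y$ is the discriminant (boundary) part and $M_Y$ is the nef moduli part, satisfying $K_Y+\Delta_Y+M_Y\sim_\qq 0$; this is an effective generalized log Calabi--Yau structure on $Y$.

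The key point is to compare log canonical centers. The
canonical bundle formula is built so that log canonical places of $(Y,\Delta_Y+M_Y)$ lying over $Y$ correspond to log canonical places of $(X,\Delta)$ dominating them; in particular, if $(X,\Delta)$ has a log canonical center of dimension $d$ whose image in $Y$ is a log canonical center of the generalized pair, then $(Y,\Delta_Y+M_Y)$ acquires a log canonical center of dimension at most $d$. The step I would carry out carefully is to show that the minimal log canonical center of the generalized pair downstairs has dimension no larger than $\mathrm{coreg}(X)$; equivalently, that the dual complex $\mathcal{D}(Y,\Delta_Y+M_Y)$ has dimension at least $\dim Y - \mathrm{coreg}(X) - 1$. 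This follows by taking the smallest log canonical center $W$ of $(X,\Delta)$ (of dimension $\mathrm{coreg}(X)$) and analyzing its image: either $W$ dominates $Y$, in which case its dimension is at least $\dim Y - \dim F$ for $F$ the general fiber and one gets a center downstairs of controlled dimension, or $W$ maps to a proper subvariety, which by the structure of the canonical bundle formula is itself a log canonical center of the generalized pair of dimension at most $\dim W=\mathrm{coreg}(X)$. In both cases $\mathrm{coreg}(Y)\le \mathrm{coreg}(X)$.

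The final
ingredient is that coregularity is insensitive to passing from an ordinary log Calabi--Yau pair to a generalized one: since $M_Y$ is nef and $(Y,\Delta_Y+M_Y)$ is a generalized log Calabi--Yau pair on a Fano type (hence log Calabi--Yau type) base $Y$, its generalized dual complex computes $\mathrm{coreg}(Y)$ in the same way as in the non-generalized setting, because the moduli part contributes no new coefficient-one components and the minimal generalized log canonical center has the same dimension interpretation. Assembling these, I obtain $\mathrm{coreg}(Y)\le\mathrm{coreg}(X)$, which is the asserted inequality; the special case $\mathrm{coreg}(X)=0$ forces $\mathrm{coreg}(Y)=0$ since coregularity is a non-negative integer whenever $Y$ is of log Calabi--Yau type, and $Y$ inherits log Calabi--Yau type from $X$ via $\phi$.

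\medskip

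\noindent\emph{Main obstacle.}
I expect the hard part to be the careful bookkeeping in the canonical bundle formula, specifically verifying that a log canonical center realizing $\mathrm{coreg}(X)$ descends to a log canonical center of the \emph{generalized} pair $(Y,\Delta_Y+M_Y)$ of no larger dimension. The subtlety is twofold: one must ensure that the moduli part $M_Y$, being only nef (and a priori only $\qq$-$b$-nef), does not destroy the log canonical center structure or introduce spurious positivity that would prevent a center of the required dimension from existing, and one must handle the case where the minimal center $W$ upstairs is vertical versus horizontal over $Y$ uniformly. This is precisely where the theory of generalized pairs and the adjunction/subadjunction properties of the canonical bundle formula do the real work, and where the argument must be made robust against the choice of crepant model.
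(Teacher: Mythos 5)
Your overall route is the same as the paper's: apply the canonical bundle formula to a log Calabi--Yau structure $(X,\Delta)$ computing ${\rm coreg}(X)$, obtain a generalized log Calabi--Yau pair $(Y,\Delta_Y+M_Y)$, and use the fact (Filipazzi's result, which the paper cites) that log canonical centers of a dlt model of $(X,\Delta)$ map to generalized log canonical centers of a generalized dlt model of $(Y,\Delta_Y+M_Y)$, so the generalized pair downstairs has a center of dimension at most $c={\rm coreg}(X)$. Up to this point your argument matches the paper, and your case analysis (horizontal versus vertical minimal center) is a harmless refinement, modulo the slip that a center dominating $Y$ has dimension \emph{at least} $\dim Y$, not at least $\dim Y-\dim F$.

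The genuine gap is your final step. You assert that coregularity ``is insensitive to passing from an ordinary log Calabi--Yau pair to a generalized one'' because the moduli part ``contributes no new coefficient-one components.'' But ${\rm coreg}(Y)$ is defined in this paper via honest log Calabi--Yau pairs $(Y,B)$ with $B$ an effective boundary, and a generalized log Calabi--Yau structure does not directly produce one: the moduli part $M_Y$ is merely nef (the pushforward of a nef divisor from a higher model), and a nef divisor need not be semiample or even $\qq$-effective in general; moreover, even when $M_Y\sim_\qq \Gamma\geq 0$, an arbitrary such $\Gamma$ can create worse-than-lc singularities along the deep stratum you need, destroying the center of dimension $c$. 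Exhibiting an honest boundary is exactly where the paper's proof does real work and where the Fano type hypothesis enters for a second time: by \cite[Lemma 2.12]{Bir19} the base $Y$ is of Fano type, hence so is the generalized dlt model $Y'$, which is therefore a Mori dream space; the paper runs an $M_{Y'}$-MMP (consisting only of flips, since the diminished base locus of the nef divisor $M_{Y'}$ has codimension at least two) to reach a model $Y''$ on which $M_{Y''}$ is \emph{semiample}, chooses a general member $0\leq \Gamma_{Y''}\sim_\qq M_{Y''}$ so that $(Y'',\Delta_{Y''}+\Gamma_{Y''})$ stays dlt and the $c$-dimensional stratum of $\lfloor \Delta_{Y''}\rfloor$ survives, transports $\Gamma$ back to $Y'$ by strict transform to get an honest lc pair of coregularity at most $c$, and finally descends to $Y$ via the birational-contraction statement (the paper's Proposition~\ref{prop:coreg-bir}). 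Without this effectivization of the moduli part (or some substitute for it), your inequality ${\rm coreg}(Y)\leq {\rm coreg}(X)$ is only established for a generalized notion of coregularity, not for the invariant the proposition is about. Your ``main obstacle'' paragraph correctly senses the danger but the resolution you sketch does not supply the missing mechanism.
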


\begin{proof}
Let $\Delta$ be a boundary on $X$ which computes the coregularity.
Then, $(X,\Delta)$ is a log Calabi--Yau pair.
In particular, we have that
$K_X+\Delta \sim_{Y,\qq} 0$.
Let $(Y,\Delta_Y+M_Y)$ be the pair obtained by
the canonical bundle formula. 
Then, $(Y,\Delta_Y+M_Y)$ is a generalized log canonical pair.
By construction, we have that 
\[
K_X+\Delta \sim_{\qq} \phi^*(K_Y+\Delta_Y+M_Y).
\] 
In particular, $(Y,\Delta_Y+M_Y)$
is a generalized log Calabi--Yau pair.
By~\cite[Theorem 2.9]{FS20}, we can take a generalized dlt modification
$(Y',\Delta_{Y'}+M_{Y'})$ of
$(Y,\Delta_Y+M_Y)$.
Proceeding as in~\cite[Lemma 2.36]{Mor21}, we obtain a commutative diagram,
\[
\xymatrix{
(X,\Delta)\ar[d]_-{\phi} & (X',\Delta')\ar@{-->}[l]\ar[d] \\
(Y,\Delta_Y+M_Y) &
(Y',\Delta_{Y'}+M_{Y'})\ar[l]
}
\] 
where $(X',\Delta')$ is a dlt modification of $(X,\Delta)$.
By~\cite{Fil18}, we know that every log
canonical cneters of $(X',\Delta')$
maps to a generalized log canonical center
of $(Y',\Delta_{Y'}+M_{Y'})$.
Since $(X,\Delta)$ has coregularity $c$, then $(X',\Delta')$ has a log canonical center of dimension $c$.
In particular, 
we conclude that 
$(Y',\Delta_{Y'}+M_{Y'})$ has a generalized log canonical center of dimension at most $c$.
In particular, the minimal dlt center
of $(Y',\Delta_{Y'}+M_{Y'})$ has dimension at most $c$.
This implies that the generalized pair
$(Y,\Delta_Y+M_Y)$ has coregularity at most $c$.

Now, we turn to prove that 
$Y$ has coregularity at most $c$. 
In order to do so, we want to turn
the nef part $M_{Y'}$ into an effective divisor.
By~\cite[Lemma 2.12]{Bir19}, we have that $Y$ is of Fano type.
We conclude that $Y'$ is also of Fano type as well.
In particular, $Y'$ is a Mori dream space.
The diminished base locus of $M_{Y'}$ has codimension at least two. 
We run a $M_{Y'}$-MMP which terminates with a good minimal model $Y'\dashrightarrow Y''$. 
All the steps of this minimal model are flips. 
Furthermore, we have that 
$M_{Y''}$ is a semiample divisor.
Let $\Delta_{Y''}$ be the push-forward
of $\Delta_{Y'}$ to $Y''$.
Then, $(Y'',\Delta_{Y''}+M_{Y''})$ is a generalized log Calabi--Yau pair,
with generalized dlt singularities,
and $M_{Y''}$ is semiample.
Hence, for a general effective element
$0\leq \Gamma_{Y''}\sim_\qq M_{Y''}$
the pair
$(Y'',\Delta_{Y''}+\Gamma_{Y''})$
has dlt singularities.
Since $\lfloor \Delta_{Y''}\rfloor$
has at least $c$ components with a common intersection point, so
the same holds for 
$\lfloor \Delta_{Y''}+\Gamma_{Y'}\rfloor$.
We conclude that 
$(Y'',\Delta_{Y''}+\Gamma_{Y''})$ is a dlt pair of coregularity at most $c$.
Let $\Gamma_{Y'}$ be the strict transform of $\Gamma_{Y''}$ on $\Gamma_{Y'}$.
Then, the pair
$(Y',\Delta_{Y'}+\Gamma_{Y'})$
is a log canonical pair of coregularity at most $c$.
Thus, we have that ${\rm coreg}(Y')\leq c$.
By Proposition~\ref{prop:coreg-bir}, we conclude that ${\rm coreg}(Y)\leq c$.
This finishes the proof.
\end{proof}

We note that the previous proof holds 
without the assumption that $X$ is of Fano type, provided that we have a canonical bundle formula for log canonical pairs (see, e.g.,~\cite{Amb05}). 
We also mention that the previous inequality can be strict.
Indeed, the del Pezzo surface of degree one $X_1$ admits a fibration to $\pp^1$.
The the elliptic curve $C$ of the $1$-complement $(X_1,C)$ dominates $\pp^1$. 
In the following example, we show that in general, the coregularity of the general fiber of a fibration may be higher than the coregularity of the
domain. 

\begin{example}
{\em 
Let $X=\pp^1 \times \pp^1$. 
Consider the projection
$p\colon X\rightarrow \pp^1$ onto the first component.
We define the divisors
\begin{align*}
    \Delta_{\rm vert} & = 
    (\{0\}\times \pp^1) +
    \frac{1}{2}(\{\infty\}\times \pp^1), \text{ and }
    \\
    \Delta_{\rm hor} & =
    \frac{1}{2}(\pp^1\times \{0\}) +
    \frac{1}{2}(\pp^1\times \{1\}) +
    \frac{1}{2}(\pp^1\times \{\infty\}) +
    \frac{1}{2}D,
\end{align*}
where $D$ is the diagonal.
The general fiber is
$\pp^1$ with four points of coefficient
$\frac{1}{2}$, so its coregularity is one.
On the other hand the coregularity
of $(X,\Delta_{\rm vert}+\Delta_{\rm hor})$ is zero.
}
\end{example}

In~\cite[Example 5.4]{FMP22}, the authors show examples in which the coregularity of the general fiber equals the dimension of the general fiber
while the pair itself has coregularity zero.

\subsection{Coregularity under deformations} 
\label{subsec:coreg-deform}

In this subsection, we discuss the behaviour
of the coregularity under deformations and degenerations. 
The following example shows that the coregularity
can decrease on the special fiber of a flat family
of singularities.

\begin{example}
{\em 
Consider the canonical
exceptional Brieskorn singularity 
\[
x^3+y^3+z^4+w^5=0.
\] 
Then, consider the following smoothing:
\[
\mathcal{X}:=
\{
x^3+y^3+z^4+w^5+tw=0 
\} \rightarrow \mathbb{A}^1_t.
\] 
The central fiber $\mathcal{X}_0$ is exceptional
so its coregularity equals two. 
On the other hand, for every $t\neq 0$, we have that
$\mathcal{X}_t$ is smooth at the origin, so 
its coregularity is zero. 
}
\end{example}

Similar examples show that coregularity zero
$n$-dimensional singularities can 
degenerate to exceptional $n$-dimensional singularities, i.e., 
singularities of dimension $n$
and coregularity $n-1$.
The following example shows that the coregularity
can increase in the special fiber of a flat family
of Fano varieties.

\begin{example}
{\em
Let $X_d$ be a del Pezzo surface of degree $d$. 
Fix $d\leq 4$.
We may find a flat family 
$\mathcal{X}\rightarrow \mathbb{A}^1$
for which $\mathcal{X}_0$ is a toric varitey
and $\mathcal{X}_1\simeq X$.
For instance, we can consider the blow-up
$X\rightarrow \pp^2$
and deform the blown-up points 
in such a way that 
each blow-up occurs at a torus invariant point of
$\pp^2$.
Hence, for  there is a flat family $\mathcal{X}$
for which the central fiber $\mathcal{X}_0$ 
is a Fano type variety
of coregularity zero
and nearby fiber $\mathcal{X}_t$, with $t\neq 0$, 
are del Pezzo surfaces of degree at most $4$, 
so they have positive coregularity.
}
\end{example}

\subsection{Properties of dual complexes} 
\label{subsec:prop-dual-complexes}

In this subsection, we discuss some further properties 
about dual complexes. 
In order to introduce the first theorem of this subsection, 
we need to recall the concept of standard 
$\pp^1$-link.

\begin{definition}
{\em 
Let $X\rightarrow S$ be a projective contraction.
Let $(X,D_1+D_2+\Delta)$ be a pair.
A morphism $X\rightarrow T$ over $S$ is said to be a 
{\em standard $\pp^1$-link} if the following conditions are satisfied: 
\begin{enumerate}
    \item we have that 
    $K_X+D_1+D_2+\Delta\sim_{\qq,T} 0$, 
    \item the morphism $\pi$ induce isomorphisms
    $\pi|_{D_i} \colon D_i\rightarrow T$ for each $i$,  
    \item the pair $(X,D_1+D_2+\Delta)$ is plt, and 
    \item every reduced fiber of $\pi$ is isomorphic to $\pp^1$.
\end{enumerate}
}
\end{definition}

\begin{remark}
{\em 
If $(X,\Delta)$ is a log Calabi--Yau pair 
which is birational to standard $\pp^1$-link,
then the dual complex
$\mathcal{D}(X,\Delta)$
is just two points. 
In particular, it is not connected.
}
\end{remark}

The following theorem states that the previous
is essentially the only case in which 
the dual complex of a log Calabi--Yau pair can be disconnected.
The following theorem is proved by Koll\'ar and Kovacs
in the setting of pairs~\cite{KK10}.
In the case of generalized pairs
it is proved by Filipazzi and Svaldi
and independently by Birkar~\cite{FS20,Bir20}.

\begin{theorem}\label{thm:conn}
Let $f\colon X\rightarrow S$ be a projective morphism.
Let $(X,\Delta)$ be a log Calabi--Yau pair over $S$.
Fix $s\in S$.
Assume that $f^{-1}(s)$ is connected.
Assume moreover that 
\begin{equation}\label{eq:non-klt} 
f^{-1}(s)\cap {\rm nklt}(X,\Delta) 
\end{equation}
is disconnected.
Then, the set~\eqref{eq:non-klt}
has exactly two components.
Moreover, there exists a dlt modification of $(X,\Delta)$
which is birational to a standard $\pp^1$-link over $s\in S$
up to \'etale base change.
\end{theorem}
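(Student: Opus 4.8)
The plan is to reduce to the dlt case, extract the dichotomy ``connected or exactly two components'' from the connectedness principle, and then build the $\pp^1$-link by a relative minimal model program over $S$.

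First I would replace $(X,\Delta)$ by a $\qq$-factorial dlt modification $\phi\colon (X',\Delta')\to (X,\Delta)$, available by the existence of dlt modifications (in the generalized setting, \cite{FS20}). Since $\phi$ is crepant we keep $K_{X'}+\Delta'\equiv_S 0$, and now ${\rm nklt}(X',\Delta')=\lfloor \Delta'\rfloor$ has snc-type strata near the generic point of each log canonical center. Because $\phi$ is a projective birational morphism of normal varieties it has connected fibers, and $\lfloor \Delta'\rfloor$ maps onto ${\rm nklt}(X,\Delta)$; hence the connected components of the non-klt locus inside $f^{-1}(s)$ upstairs surject onto those downstairs, so it suffices to analyze the dlt pair. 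In particular I may assume $\lfloor \Delta\rfloor$ equals the non-klt locus.

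The heart of the count is the connectedness principle. In the form I need, if $-(K_X+\Delta)$ is nef and big over $S$ then ${\rm nklt}(X,\Delta)\cap f^{-1}(s)$ is connected whenever $f^{-1}(s)$ is; this is the classical Shokurov--Koll\'ar statement, extended to generalized pairs in \cite{KK10,FS20,Bir20}. Here $K_X+\Delta\equiv_S 0$, so $-(K_X+\Delta)$ is only nef, which is exactly the borderline regime where connectedness can fail. The sharp version of the principle, which I expect to be the \emph{main obstacle} and the key technical input to cite, asserts that in this numerically trivial case the non-klt fiber has at most two connected components. Since the locus~\eqref{eq:non-klt} is disconnected by hypothesis, two connected components upstairs cannot both map into a single one downstairs (that would force connectedness), so both loci have exactly two components $D_1$ and $D_2$, disjoint over a neighborhood of $s$.

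It remains to produce the $\pp^1$-link. Working over $S$ near $s$, I would run a $(K_X+\Delta-\delta D_1)$-MMP for small $\delta>0$; as $K_X+\Delta\equiv_S 0$ this is steered by $-\delta D_1$ alone, and since $D_1$ has log discrepancy zero it cannot be crepantly extracted or contracted, so the program terminates with a Mori fiber space $g\colon X''\to T$ over $S$. On a general fiber $F$ adjunction gives $(K_{X''}+\Delta'')|_F\equiv 0$, while $D_1\cdot F>0$ and $D_2\cdot F>0$; comparing with $\deg K_F=-2$ forces $F\simeq \pp^1$ with $D_1\cdot F=D_2\cdot F=1$. Thus each $g|_{D_i}\colon D_i\to T$ is finite birational, hence an isomorphism as both are normal, and $(X'',D_1+D_2+\{\Delta''\})$ is plt because $(X,\{\Delta\})$ is klt away from $D_1\cup D_2$; this exhibits $g$ as a standard $\pp^1$-link. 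The last subtlety I anticipate is that the monodromy along $T$ may twist or interchange $D_1$ and $D_2$, which I would remove by a finite \'etale base change $T'\to T$, yielding the conclusion up to \'etale base change. Here $X''$ is birational to the dlt modification $X'$, giving the asserted dlt modification birational to a standard $\pp^1$-link.
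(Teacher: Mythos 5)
The paper itself offers no proof of Theorem~\ref{thm:conn}: it attributes the statement to Koll\'ar--Kov\'acs in the setting of pairs and to Filipazzi--Svaldi and Birkar for generalized pairs. Your outline follows exactly the strategy of those references --- pass to a dlt modification, run an MMP directed by $-D_1$, and analyze the resulting Mori fiber space --- so the comparison to make is against those proofs, and there the proposal has a genuine circularity at its central step. You treat the bound ``at most two connected components'' as a separately citable ``sharp connectedness principle'' and call it the key technical input. But that bound \emph{is} the first assertion of the theorem being proved, and in the cited proofs it is not available as an independent input: it is established simultaneously with the $\pp^1$-link structure by the very MMP you then run. Your own argument only recovers ``at most two'' for components of the non-klt locus that dominate $T$ (via $\deg \Delta''|_F = 2$ on $F\simeq \pp^1$ with two coefficient-one sections); ruling out further components of $f^{-1}(s)\cap {\rm nklt}(X,\Delta)$ that do not meet the general fiber of $g$ requires the relative connectedness over $T$ and the induction on dimension that the references carry out. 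As written, the count of components is assumed, not proved.

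Several further steps are asserted where the hypothesis must actually do work. First, termination of the $(K_X+\Delta-\delta D_1)$-MMP over $S$ is not automatic (the boundary need not be big over $S$); the references obtain it via special termination for dlt pairs. Second, you never exclude the outcome ``minimal model'': if $-D_1$ became nef over a neighborhood of $s$, the negativity lemma would force $\Supp D_1 \supseteq f^{-1}(s)$, contradicting disconnectedness of~\eqref{eq:non-klt} --- this is precisely where the disconnectedness hypothesis enters and what guarantees a Mori fiber space, whereas ``steered by $-\delta D_1$ alone'' does not address it. Third, disjointness of $D_1$ and $D_2$ must be shown to persist through each divisorial contraction and flip: a contracted fiber consists of curves $C$ with $D_1\cdot C>0$, and it is not automatic that such a fiber avoids $D_2$; one uses that each step is crepant for $(X,\Delta)$ together with connectedness for the birational contractions (where no $\pp^1$-link can occur). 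Finally, plt-ness of $(X'',D_1+D_2+\{\Delta''\})$ needs each $D_i$ to be irreducible and normal with no deeper log canonical centers along it; ``$(X,\{\Delta\})$ is klt away from $D_1\cup D_2$'' only controls the complement, since a priori each connected component of the non-klt locus could consist of several coefficient-one divisors with nontrivial strata --- again handled in the references by adjunction and induction on dimension. In short, your skeleton is the right one, but the heart of the theorem (the two-component bound and the degeneration analysis feeding it) is cited rather than proved, and the MMP steps are stated at a level of generality where the needed justifications are exactly the nontrivial content of \cite{KK10,FS20,Bir20}.
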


It is easy to show that the statement of Theorem~\ref{thm:conn}
does not hold if we drop the log Calabi--Yau assumption
as shown in the following example.

\begin{example}
{\em
Given a point $p\in \pp^2$, we define the following curves:
\begin{enumerate}
\item we let $C_p:=\frac{1}{n}(C_{1,p}+\dots+C_{n,p})$ be the average
of $n$ smooth conics through $p$ with the same tangent at $p$, and 
\item we let $L_p:=\frac{1}{n}(L_{1,p}+\dots+L_{n,p})$ be the average
of $n$ lines through $p$ with different tangent direction at $p$ 
and also different from the tangent direction of the conics. 
\end{enumerate}
Now, for $k$ different points $p_1,\dots,p_k$ in $\pp^2$, 
we can consider the log pair
\[
(\pp^2, C_{p_1}+\dots+C_{p_k}+L_{p_1}+\dots+L_{p_k}).
\]
As explained in Example~\ref{ex:example-coreg-0}, the previous pair
is log canonical and its minimal dlt modification extracts
$2k$ curves so that the pre-image of each $p_i$ is the union 
of two such curves. 
Then, the dual complex 
\[
\mathcal{D}(\pp^2,C_{p_1}+\dots+C_{p_k}+L_{p_1}+\dots+L_{p_k})
\] 
is just $k$ points. 
The previous pair is log Calabi--Yau if and only if $k=1$.
}
\end{example}

Now, we turn to introduce the concept of
$\pp^1$-linking of log canonical centers.

\begin{definition}
{\em 
Let $(X,\Delta)$ be a dlt pair.
Let $Z_1$ and $Z_2$ be two log canonical centers of $(X,\Delta)$.
We say that $Z_1$ and $Z_2$ are {\em direct $\pp^1$-linked }
if there exists a log canonical center $W$ of $(X,\Delta)$
containing both $Z_1$ and $Z_2$ so that the dlt pair
$(W,\Delta_W)$ obtained from adjunction of $(X,\Delta)$ to $W$
is birational to a standard $\pp^1$-link.
Due to Theorem~\ref{thm:conn}, if $Z_1$ and $Z_2$ are direct $\pp^1$-linked,
then they have the same dimension $c$ and 
$W$ must have dimension $c+1$.
We may say that $W$ is a {\em linking center}.
Furthermore, $Z_1$ and $Z_2$ are the only log canonical centers of $(W,\Delta_W)$.

The concept of direct $\pp^1$-linking induces an equivalence relation 
on the set of log canonical centers of $(X,\Delta)$,
where we assume that every log canonical center 
is direct $\pp^1$-linked to itself.
We say that two log canonical centers of $(X,\Delta)$
are {\em $\pp^1$-linked} if they belong to the same class 
of this equivalence relation.
Note that two $\pp^1$-linked centers are birational to each other.
}
\end{definition}

In the case of a toric log Calabi-Yau pair
the minimal dlt centers are just points 
and the linking centers are just the torus invariant curves.
In a similar vein as Theorem~\ref{thm:conn}, it is proved 
that two minimal dlt centers are birational equivalent.
Even further, they are $\pp^1$-linked.

\begin{theorem}
Let $f\colon X\rightarrow S$ be a projective contraction.
Let $(X,\Delta)$ be a dlt pair
which is log Calabi-Yau over $S$.
Let $s\in S$ be a closed point
and assume that $f^{-1}(s)$ is connected.
Let $Z\subset X$ be a log canonical center of $(X,\Delta)$
which is minimal with respect to the inclusion
among centers intersecting $f^{-1}(s)$.
Let $W$ be a log canonical center of $(X,\Delta)$
for which $s\in f(W)$. 
Then, there exists a log canonical center 
$Z_W\subset W$ of $(X,\Delta)$
for which $Z$ and $Z_W$ are $\pp^1$-linked and 
$f(Z_W)$ contains $s$.
In particular, two minimal log canonical centers of $(X,\Delta)$
whose image on $S$ contain $s$ are $\pp^1$-linked.
\end{theorem}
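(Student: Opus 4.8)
The plan is to reduce the whole statement to its final assertion and then prove the latter by a connectedness argument driven by Theorem~\ref{thm:conn}.

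For the reduction, observe that the lc centers of $(X,\Delta)$ that are contained in $W$ and whose image contains $s$ form a non-empty (it contains $W$) finite poset, since they are among the finitely many strata of $\lfloor\Delta\rfloor$. Let $Z_W$ be a minimal element of this poset. If some lc center $Z'\subsetneq Z_W$ had $s\in f(Z')$, then $Z'\subseteq W$ would contradict the minimality of $Z_W$ inside $W$; hence $Z_W$ is in fact minimal among all lc centers of $(X,\Delta)$ over $s$. Thus both $Z$ and $Z_W$ are minimal lc centers over $s$, and once we know that any two such centers are $\pp^1$-linked we are done, since $Z_W\subseteq W$ and $f(Z_W)\ni s$ by construction. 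The ``in particular'' clause is the case in which $W$ is itself minimal, forcing $Z_W=W$.

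It therefore suffices to show that two lc centers $Z_1,Z_2$, each minimal over $s$, are $\pp^1$-linked; I would prove this by induction on $\dim X$. The engine is adjunction: for an lc center $V$ of $(X,\Delta)$, writing $(K_X+\Delta)|_V=K_V+\Delta_V$ produces a dlt pair with $K_V+\Delta_V\sim_{\qq,S}0$, so $(V,\Delta_V)$ is log Calabi--Yau over $f(V)$, and its lc centers are exactly the lc centers of $(X,\Delta)$ properly contained in $V$. The key local move is this: if $V$ is an lc center of dimension one greater than a minimal center, lies over $s$, and the fibre of $V\to f(V)$ over $s$ is connected, then applying Theorem~\ref{thm:conn} to $(V,\Delta_V)$ either shows $\operatorname{nklt}(V,\Delta_V)$ is connected over $s$, or exhibits a dlt modification of $V$ birational to a standard $\pp^1$-link over $s$, whose two minimal centers are then directly $\pp^1$-linked. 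In the latter situation two neighbouring minimal centers of $(X,\Delta)$ over $s$ become directly linked. To connect $Z_1$ to $Z_2$ I would then build a chain of minimal centers over $s$, each consecutive pair sharing such a one-higher center $V$ over $s$. The existence of this chain reflects the connectedness of $f^{-1}(s)$: the $\pp^1$-linked equivalence classes of minimal centers over $s$ correspond to the connected components of $\operatorname{nklt}(X,\Delta)\cap f^{-1}(s)$, and Theorem~\ref{thm:conn} asserts precisely that this locus is connected unless it splits into two pieces realized by a global standard $\pp^1$-link, in which case $Z_1$ and $Z_2$ lie one in each piece and are directly $\pp^1$-linked by definition. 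Assembling the individual direct links along the chain yields the desired $\pp^1$-link between $Z_1$ and $Z_2$.

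The main obstacle I anticipate is keeping the connectedness hypothesis of Theorem~\ref{thm:conn} alive under adjunction: to feed each intermediate pair $(V,\Delta_V)$ into the theorem I must know that the fibre $V\cap f^{-1}(s)$ is connected after passing to the Stein factorization of $V\to f(V)$. This is a relative connectedness principle of Koll\'ar--Shokurov type for the adjoined log Calabi--Yau pairs, in the generalized form underlying Theorem~\ref{thm:conn} (Koll\'ar--Kov\'acs and Filipazzi--Svaldi/Birkar). Propagating this connectedness correctly through the descending induction, and checking that every $\pp^1$-link produced genuinely lies over $s$ so that $f(Z_W)\ni s$ is preserved, is the technical heart; the combinatorial bookkeeping that turns the chain of direct links into a single equivalence is then routine.
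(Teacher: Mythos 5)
The paper itself gives no proof of this theorem: it is a survey statement quoted from the work of Koll\'ar and Kov\'acs~\cite{KK10} (the $\pp^1$-linking theorem, Theorem 4.40 in Koll\'ar's book on singularities of the MMP), so your proposal can only be measured against that standard argument --- and it reconstructs its architecture faithfully. Your poset reduction is valid and correctly dispatches the auxiliary center $W$: minimality of $Z_W$ inside $W$ does promote to minimality among all centers over $s$, so the full statement is formally equivalent to linking of minimal centers, which is also how the literature proof is organized. Your local move is sound, and can even be sharpened: if two distinct minimal centers over $s$ sat in a one-higher center $V$ with ${\rm nklt}(V,\Delta_V)$ connected over the relevant Stein point, two divisorial components of $\lfloor\Delta_V\rfloor$ would have to meet over $s$, producing an lc center strictly smaller than a minimal one; so connectedness forces uniqueness, and the dichotomy of Theorem~\ref{thm:conn} forces the standard $\pp^1$-link (hence a direct link, with $Z_1,Z_2$ playing the roles of the two sections) whenever two minimal centers share such a $V$.

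The one step that is asserted rather than proved is the chain. Saying that the $\pp^1$-linked classes of minimal centers over $s$ ``correspond to the connected components of ${\rm nklt}(X,\Delta)\cap f^{-1}(s)$'' is essentially the theorem itself, not something Theorem~\ref{thm:conn} gives you: that theorem only controls connectedness of the total nklt fiber, and it does not by itself produce consecutive minimal centers sharing a one-higher linking center. The missing mechanism is the descending induction you gesture at, made precise as follows: connectedness of $\lfloor\Delta\rfloor\cap f^{-1}(s)$ yields a chain of divisorial components $D_1,\dots,D_m$ of $\lfloor\Delta\rfloor$ with consecutive intersections nonempty over $s$; the points of the Stein factorization $T_{D_i}\to S$ lying over $s$ are precisely the connected components of $D_i\cap f^{-1}(s)$, so the chain can be refined so that each transit happens within a single Stein point, where the inductive hypothesis applies to $(D_i,\Delta_{D_i})\to T_{D_i}$ and where any lc center over that point contains a minimal one by your own poset argument. (A small cosmetic wrinkle: Theorem~\ref{thm:conn} produces the link only up to \'etale base change, which the definition of direct linking should be read as tolerating.) With that refinement the outline closes; as written it is a correct plan whose technical heart --- exactly the part you flagged --- is acknowledged but not executed.
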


Note that the concept of $\pp^1$-linking 
fits naturally with the definition of pseudo-manifold.

\begin{definition}
{\em 
A topological space $X$ with a triangulation $K$ is a {\em $n$-dimensional pseudo-manifold}
if the following conditions hold:
\begin{enumerate} 
\item we have that $X=|K|$, i.e., $X$ is the union of all the $n$-simplices, 
\item every $(n-1)$-simplex is the face of either one or two $n$-simplices for $n>1$, and 
\item for every pair $\sigma$ and $\sigma'$ of $n$-simplices in $K$, 
there is a sequence of $n$-simplices 
$\sigma=\sigma_0,\dots, \sigma_k=\sigma'$
such that the intersection
$\sigma_i\cap \sigma_{i+1}$
is a $(n-1)$-simplex
for every $i\in \{0,\dots,k-1\}$.
\end{enumerate}
The $n$ in the definition is called the {\em dimension} of the pseudo-manifold.
}
\end{definition}

The previous theorems together with the work of Koll\'ar and Xu gives
us the following structural theorem for dual complexes of log Calabi--Yau pairs.

\begin{theorem}
Let $(X,\Delta)$ be a log Calabi--Yau pair.
Then, the dual complex
$\mathcal{D}(X,\Delta)$ is a pseudo-manifold (possibly with boundary).
Furthermore, exactly one of the following cases hold:
\begin{enumerate}
    \item the dual complex $\mathcal{D}(X,\Delta)$
    is disconnected and it consists of two points, 
    \item the dual complex $\mathcal{D}(X,\Delta)$
    is connected and it is collapsible to a point, or
    \item the dual complex $\mathcal{D}(X,\Delta)$
    is connected, non-collapsible, and 
    \[
    H^i(\mathcal{D}(X,\Delta),\qq)=0 
    \text{ for }
    0<i<\dim \mathcal{D}(X,\Delta).
    \]
\end{enumerate}
\end{theorem}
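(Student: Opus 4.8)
The plan is to combine the connectedness theorem (Theorem~\ref{thm:conn}), the $\pp^1$-linking theorem stated just above, and the topological results of Koll\'ar and Xu~\cite{KX16,dFKX17}. First I would reduce to the case that $(X,\Delta)$ is dlt: passing to a dlt modification leaves the pair log Calabi--Yau and log crepant, hence changes neither $\mathcal{D}(X,\Delta)$ nor the coregularity. Writing $c={\rm coreg}(X,\Delta)$, the minimal log canonical centers of $(X,\Delta)$ all have dimension $c$ and are exactly the strata indexing the top-dimensional cells of $\mathcal{D}(X,\Delta)$, so that $\dim\mathcal{D}(X,\Delta)=\dim X-c-1$ and codimension-one cells correspond to strata of dimension $c+1$.

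Next I would establish the pseudo-manifold property in three checks. For purity, every stratum of $\lfloor\Delta\rfloor$ contains a minimal log canonical center, so every cell is a face of a top-dimensional cell. For the face condition, a codimension-one cell corresponds to an irreducible stratum $W$ of dimension $c+1$; adjunction of $K_X+\Delta$ to $W$ produces a dlt log Calabi--Yau pair $(W,\Delta_W)$ whose minimal log canonical centers are precisely the minimal centers of $(X,\Delta)$ contained in $W$, namely the top-dimensional cells having $v_W$ as a face. Since $\dim\mathcal{D}(W,\Delta_W)=0$, this dual complex is a finite set of points, and applying Theorem~\ref{thm:conn} to $W\to{\rm pt}$ forces its non-klt locus to be either connected (one point) or disconnected with exactly two components (two points). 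Thus $v_W$ is a face of exactly one or two top cells, the former accounting for the possible boundary. For strong connectivity I would invoke the $\pp^1$-linking theorem with $S$ a point: since $X$ is connected, any two minimal log canonical centers are $\pp^1$-linked, that is, joined by a chain of direct $\pp^1$-links whose linking centers have dimension $c+1$. When $\dim\mathcal{D}(X,\Delta)\geq 1$ these linking centers are proper strata, so each direct link is a shared codimension-one face; the chain is then a gallery, yielding both connectedness and the gallery-connectedness required of a pseudo-manifold.

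The trichotomy then falls out. If $\mathcal{D}(X,\Delta)$ is disconnected, the previous paragraph forces $\dim\mathcal{D}(X,\Delta)=0$, since in positive dimension a direct $\pp^1$-link would glue its two endpoints along a codimension-one face; hence the minimal centers are divisors, Theorem~\ref{thm:conn} shows there are exactly two of them, and the Remark on standard $\pp^1$-links identifies $\mathcal{D}(X,\Delta)$ with two points, giving case (1). If $\mathcal{D}(X,\Delta)$ is connected, I would appeal directly to Koll\'ar--Xu~\cite{KX16}: the dual complex of a log Calabi--Yau pair is either collapsible, giving case (2), or, if not, satisfies $H^i(\mathcal{D}(X,\Delta),\qq)=0$ in all intermediate degrees $0<i<\dim\mathcal{D}(X,\Delta)$, giving case (3). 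The three cases are mutually exclusive by connectedness and collapsibility.

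I expect the genuinely new work to be the uniform verification of the face and strong-connectivity conditions, in particular reconciling the abstract $\pp^1$-linking relation with honest gallery-adjacency, including the degenerate zero-dimensional case where a $\pp^1$-link creates no edge. The deep cohomological dichotomy underlying case (3) is the main obstacle in principle, but it is precisely what Koll\'ar and Xu supply, so here it enters as a black box rather than as something to be reproved.
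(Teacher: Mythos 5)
Your proposal is correct and takes essentially the same route the paper intends: the paper derives this theorem by combining Theorem~\ref{thm:conn}, the $\pp^1$-linking theorem stated just before it, and the work of Koll\'ar--Xu~\cite{KX16} as a black box for the collapsibility/cohomology dichotomy, which are exactly your three ingredients. Your verification of the pseudo-manifold axioms (purity via minimal centers in every stratum, the one-or-two face condition via adjunction to $(c+1)$-dimensional strata plus Theorem~\ref{thm:conn}, and gallery connectivity via direct $\pp^1$-links, with the degenerate zero-dimensional case handled separately) merely fills in details the paper leaves implicit.
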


We conclude this subsection with the following theorem due to Nakamura
that investigates the behavior of dual complexes
of numerically trivial pairs
whose singularities are worse than log canonical~\cite{Nak21}.

\begin{theorem}
Let $(X,\Delta)$ be a pair.
Assume that $(X,\Delta)$ 
has worse than log canonical singularities
and $K_X+\Delta\sim_\qq 0$.
Then, the dual complex $\mathcal{D}(X,\Delta)$ is collapsible.
\end{theorem}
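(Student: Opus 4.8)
The plan is to strip the statement down to a genuinely anti-effective situation and then to run a minimal model program, tracking how the dual complex degenerates at each step. First I would fix a log resolution $f\colon Y\to X$ and write $K_Y+\Delta_Y=f^*(K_X+\Delta)$, so that $K_Y+\Delta_Y\sim_\qq 0$ and $\Delta_Y$ has simple normal crossing support. Since $(X,\Delta)$ is worse than log canonical, at least one coefficient of $\Delta_Y$ exceeds $1$. Let $B_Y$ be obtained from $\Delta_Y$ by replacing every coefficient $c$ with $\min\{c,1\}$, and set $N:=\Delta_Y-B_Y$. Then $(Y,B_Y)$ is dlt, $N$ is a nonzero effective divisor with $\operatorname{Supp}N\subseteq\lfloor B_Y\rfloor$, and
\[
K_Y+B_Y=(K_Y+\Delta_Y)-N\sim_\qq -N.
\]
Because $\lfloor B_Y\rfloor$ is precisely the coefficient-$\geq 1$ locus of $\Delta_Y$, we have $\mathcal{D}(X,\Delta)=\mathcal{D}(\lfloor B_Y\rfloor)$. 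This reduction is where the Calabi--Yau hypothesis does the work: it forces the excess produced by the worse-than-lc component to be paid for by $-N$, converting the problem into one about a dlt pair whose log canonical class is strictly anti-effective.

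Next I would run a $(K_Y+B_Y)$-minimal model program. Since $K_Y+B_Y\sim_\qq -N$ with $N$ effective and $N\not\equiv 0$, the class $K_Y+B_Y$ is not pseudo-effective, so the program terminates with a Mori fibre space $Y''\to Z$, the ability to run it and its termination being supplied by the standard technology of~\cite{BCHM10} (if necessary after passing to a suitable model or running the program with scaling). The crucial structural point is that every $(K_Y+B_Y)$-negative extremal ray $R$ satisfies $N\cdot R>0$, and since $\operatorname{Supp}N\subseteq\lfloor B_Y\rfloor$ the contracted locus must meet the reduced boundary. Invoking the dual-complex analysis of de Fernex, Koll\'ar and Xu~\cite{dFKX17}, each elementary step then induces a collapse of $\mathcal{D}(\lfloor B_Y\rfloor)$ (or leaves it unchanged): a divisorial contraction of a component of $\lfloor B_Y\rfloor$ is an elementary collapse, a flip collapses the faces supported over the flipped locus, and a contraction of a divisor outside $\lfloor B_Y\rfloor$ has no effect. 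Hence $\mathcal{D}(\lfloor B_Y\rfloor)$ collapses onto $\mathcal{D}(\lfloor B''_Y\rfloor)$.

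It then remains to treat the output $Y''\to Z$. Here $-(K_{Y''}+B''_Y)\sim_\qq N''$ is relatively ample with $\rho(Y''/Z)=1$, so the fibres are of Fano type, and applying the connectedness theorem (Theorem~\ref{thm:conn}) over a point of $Z$ shows that the non-klt locus is connected on each fibre. This excludes the disconnected two-point case and forces the minimal strata of $\lfloor B''_Y\rfloor$ over a point to be $\pp^1$-linked, so the dual complex of the Mori fibre space collapses to that of the base data; an induction on $\dim Z$ (with the case of a single Fano fibre, contributing a collapsible complex, as base) then finishes this step. Concatenating the collapses coming from the program with the collapse of the terminal model exhibits a model of $(X,\Delta)$ whose dual complex collapses to a point, i.e. $\mathcal{D}(X,\Delta)$ is collapsible.

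The hard part will be the bookkeeping in the middle step: verifying uniformly, across divisorial contractions and especially across flips, that each elementary move of the $(K_Y+B_Y)$-program yields a genuine \emph{collapse} of $\mathcal{D}(\lfloor B_Y\rfloor)$ rather than a mere simple-homotopy equivalence. The nonvanishing of $N$ (equivalently, the strict anti-effectivity of $K_Y+B_Y$) is exactly what should upgrade "simple homotopy equivalence" to "collapse," since it guarantees that every contracted curve meets the reduced boundary; but making this precise for flipping contractions, and arranging the chosen model so that it literally realizes the collapsing sequence, is the delicate point. Securing termination of the program in higher dimension is a secondary, more routine obstacle, dealt with by running the MMP with scaling.
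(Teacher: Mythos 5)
The paper itself contains no proof of this statement---it is quoted as a theorem of Nakamura with a citation to~\cite{Nak21}---so your outline has to be judged against what such an argument must actually contain. Your skeleton (pass to a dlt model whose log canonical class is anti-effective with excess supported in the boundary, run an MMP while tracking collapses as in~\cite{dFKX17}, then analyze the Mori fibre space) is the right one, but your first reduction is wrong as written, and the error propagates into exactly the step you lean on. On a log resolution the coefficient of $\Delta_Y$ along $E$ is $1-a_E(X,\Delta)$, which is \emph{negative} on every exceptional divisor of log discrepancy greater than one; truncating at $1$ therefore leaves a non-effective $B_Y$, so $(Y,B_Y)$ is not a dlt pair, and after discarding the negative part the honest relation is $K_Y+B_Y\sim_\qq F-N$ with $F\geq 0$ exceptional, $N\geq 0$ the excess, and $F,N$ without common components. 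Then $K_Y+B_Y\sim_\qq -N$ is false; non-pseudo-effectivity of $K_Y+B_Y$ needs an argument (push forward if $N$ has a non-exceptional component, or the lemma that a pseudo-effective $f$-exceptional divisor is effective); and, most importantly, your key claim that every $(K_Y+B_Y)$-negative extremal ray $R$ has $N\cdot R>0$ fails: rays with $F\cdot R<0$ and $N\cdot R=0$ are permitted, and flips along such rays can destroy strata of $\lfloor B_Y\rfloor$ lying inside $\Supp F$ without the free-face mechanism you invoke. The repair must be performed explicitly: first run a $(K_Y+B_Y)$-MMP \emph{over} $X$; by the negativity lemma the relative minimal model has $F$ contracted (this is the dlt blow-up argument), and only on that model does your clean situation $K+B\sim_\qq -N$ with $\Supp N\subseteq \lfloor B\rfloor$ hold. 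Note also that the per-step collapse statements of~\cite{dFKX17} are proved for carefully constructed runs of the MMP (specific choices of rays, scaling), not for an arbitrary run, which you yourself flag as the delicate point but do not resolve.

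The Mori fibre space endgame is also not yet a proof, and it is the step where the hypothesis ``worse than lc'' must act decisively: for $N=0$ the conclusion is false, since an lc log Calabi--Yau pair can have dual complex a sphere, which is not collapsible. Connectedness and $\pp^1$-linking (Theorem~\ref{thm:conn}) only yield connectivity and homotopy-type information, never collapses, so ``the dual complex of the Mori fibre space collapses to that of the base data'' is precisely the remaining content of the theorem rather than a consequence of cited results. The mechanism that actually works is different: since non-pseudo-effectivity of $K+B\sim_\qq -N$ is preserved along the MMP, $N$ is never entirely contracted, so at the end $N''\neq 0$; since vertical divisors are numerically trivial on the extremal ray of $Y''\to Z$, the condition $N''\cdot R>0$ forces a component $D_0\subseteq \lfloor B''\rfloor$ of $N''$ that is horizontal and positive on the fibres; one then shows that every lc centre of $(Y'',B'')$ of positive dimension over $Z$ meets $D_0$, which (after further work) exhibits the relevant dual complex as a cone with apex $v_{D_0}$, and cones are collapsible. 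Finally, your ``induction on $\dim Z$'' requires the canonical bundle formula in the language of generalized pairs, together with an argument that the non-lc defect descends to the base---neither of which is automatic, and neither of which appears in your outline. In short: the architecture is right, but the reduction must be corrected by an intermediate relative MMP, and the connectedness-based endgame must be replaced by the horizontal-ample-component cone argument.
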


In the previous theorem, we need a concept of dual complexes
for pairs whose singularities are worst than log canonical.
The definition of dual complex in subsection~\ref{subsec:DC}
extends naturally by taking a dlt modification of a possibly non-lc pair, i.e., 
the dual complex stores the combinatorial information of all
log canonical places with non-positive log discrepancy.

\subsection{Examples of dual complexes} 
\label{subsec:ex-dual-complexes}

In this subsection, we discuss 
some examples of dual complexes.
The first is the example of smooth toric pairs.

\begin{example}\label{ex:toric}
{\em 
Let $N$ be a free finitely generated abelian group
and $N_\qq:=N\otimes_\zz \qq$ be the associated $\qq$-vector space.
Let $P\subset N_\qq$ be a smooth polytope. 
We can consider the associated fan $\Sigma_P$. 
This associated fan corresponds to a projective toric variety 
$X(\Sigma_P)$. 
Let $\Delta(\Sigma_P)$ be the reduced torus invariant divisor, i.e.,
the reduced sum of all the 
prime torus invariant divisors.
Then, we have that 
\[
\mathcal{D}(X(\Sigma_P),\Delta(\Sigma_P))\simeq \partial P. 
\]
The pair $(\mathcal{D}(X(\Sigma_P),\Delta(\Sigma_P))$ is already smooth, 
so the previous equality actually holds with the given triangulation. 
Instead, if $P$ is a simplicial polytope, then 
$X(\Sigma_P)$ has $\qq$-factorial singularities
and the pair
$(X(\Sigma_P),\Delta(\Sigma_P))$
is qdlt. 
In this case, the previous equality holds up to possibly
performing some cuts on the polytope $P$.
}
\end{example}

In order to construct more interesting examples, 
one can start from a projective toric variety $X$
admitting the action of a finite group $G$
that preserves the torus
and consider the quotient $X/G$.
Since $G$ preserves the torus, then it also preserves
the torus invariant boundary. 
Thus, the quotient $X/G$ admits a complement which 
makes it into a pair of coregularity zero.
In this case, we expect the dual complex
of the induced pair structure on the quotient 
to equal $\delta P/H$ where $H$ is a homomorphic image of $G$.
In some cases, we may have $G=H$.
In the following, we show one such example.

\begin{example}
{\em 
Let $X_n:=(\pp^1)^n$ with coordinates
$([x_1:y_1],\dots,[x_n:y_n])$.
Let $\Delta_n$ be the torus invariant boundary.
By Example~\ref{ex:toric}, know that 
\[
\mathcal{D}(X_n,\Delta_n)\simeq \partial [0,1]^n.
\] 
Consider the involution
$\tau \colon X_n\rightarrow X_n$ given by
\[
\tau([x_1:y_1],\dots,[x_n:y_n])=
([y_1:x_1],\dots,[y_n:x_n]).
\]
Observe that $\tau$ preserves the torus. 
Furthermore, the only fixed points of $\tau$ 
are points of the torus.
Let $Y_n:=X_n/\tau$
and denote by $p$ the quotient morphism.
Let $\Gamma_n$ be the boundary on $Y_n$ for which 
\[
K_{X_n}+\Delta_n = p^*(K_{Y_n}+\Gamma_n).
\] 
Then, the pair $(Y_n,\Gamma_n)$ is dlt and 
\[
\mathcal{D}(Y_n,\Gamma_n)\simeq 
\mathcal{D}(X_n,\Delta_n)/\tau \simeq \mathbb{P}^{n-1}_\rr.
\]
If $n=3$, then this is an example of a reduced $2$-complement 
without boundary.
In dimension two every $2$-complement of coregularity zero
has an associated dual complex with boundary.
}
\end{example}

\section{Questions}
In this section, we present a handful of questions about the coregularity of Fano varieties.
First, we start with some structural questions regarding
the coregularity,
complements, and dual complexes.

\subsection{Structural questions}
Birkar proved that $n$-dimensional Fano type varieties admit $N_n$-complements (see, e.g.,~\cite{Bir19}). 
The strategy often reduces
to either one of the following cases:
exceptional Fano varieties,
lifting complements from the canonical bundle formula, or 
lifting complements from a non-klt center.
We expect that in most cases, we can produce a bounded complement
which comes from the minimal log canonical center of a $\qq$-complement.
In other words, if we can produce a log canonical center of dimension $c$, 
then we should be able to effectively
produce a log canonical center of dimension $c$.
This leads to the following conjecture about complements for Fano type varieties
with bounded coregularity.

\begin{conjecture}
Let $c$ be a nonnegative integer.
Let $s(c)$ be the $c$-th Sylvester's number.
Let $X$ be a Fano type variety of coregularity at most $c$.
Then, the following statements hold:
\begin{itemize} 
\item If $c=0$, then $X$ admits a $2$-complement.
\item If $c\geq 1$, then $X$ admits a $(2s(c)-3)(s(c)-1)$-complement.
\end{itemize}
\end{conjecture}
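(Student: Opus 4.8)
The natural strategy is an induction on the coregularity $c$, using the two pillars of Shokurov's and Birkar's complement theory---lifting a complement from a log canonical place and lifting one from the base of a fibration through the canonical bundle formula---but now keeping track of the \emph{precise} index rather than merely its finiteness as in Theorem~\ref{thm:boundedness-complements}. The guiding principle is that coregularity $c$ concentrates all the arithmetic difficulty on a $c$-dimensional minimal log canonical center: if $(X,B)$ is a $\qq$-complement computing the coregularity and $Z$ is its minimal center of dimension $c$, then adjunction endows $Z$ with a klt log Calabi--Yau structure, so bounding the index of a complement on $X$ reduces, through the lifting machinery, to bounding the index of the extremal coefficients produced on $Z$. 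I would set up the induction to prove the asserted bound for coregularity exactly $c$, so that each value of the coregularity is treated on its own.

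For the base case $c=0$ I would use the structural results already recorded for coregularity zero: after a dlt modification the minimal center is a point, the adjacent strata carry standard coefficients $1-\tfrac1m$ or the extremal pair $\tfrac12,\tfrac12$, and the only obstruction to a $1$-complement is a $D$-type phenomenon removed by a cover of degree at most two. This is exactly the surface statement that a coregularity zero Fano type surface becomes crepant to $(\pp^2,L_1+L_2+L_3)$ after such a cover (compare the torus-complexity bound of Theorem~\ref{thm:equiv-comp}); descending the reduced toric boundary of the cover yields the asserted $2$-complement.

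For the inductive step I would split into the two usual cases. If the dual complex has a divisorial vertex lying over a center of the expected codimension, I would restrict the sought complement by adjunction to that divisor $E$; the induced log Calabi--Yau pair on $E$ has coregularity $c$ in one lower dimension, so induction supplies a complement of the prescribed index, and I would lift it to $X$ using the vanishing and base-point-free inputs behind Birkar's theorem, verifying that the lift preserves the index. In the remaining case there is a fibration $\phi\colon X\to Y$; applying the canonical bundle formula as in the proposition on coregularity and fibrations above yields a generalized log Calabi--Yau pair $(Y,\Delta_Y+M_Y)$ of coregularity at most $c$, to which I would apply the generalized dlt modification of~\cite{FS20} and the inductive hypothesis on the base, and then lift the resulting complement along $\phi$. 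In both cases Proposition~\ref{prop:coreg-bir} guarantees that the crepant birational modifications used do not alter the coregularity, so the induction is well-founded.

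The appearance of Sylvester's sequence, and the main obstacle, both reside in the index bookkeeping across a single inductive step. The defining recursion $s(c)=s(c-1)\bigl(s(c-1)-1\bigr)+1$ is precisely the rule by which the denominators of standard coefficients grow under one layer of adjunction: an extremal klt log Calabi--Yau structure on a $c$-dimensional center is the Sylvester configuration whose coefficients force the residual fractional part to be $1/(s(c)-1)$, so clearing it requires the factor $s(c)-1=\prod_{i<c}s(i)$, while the complementary factor $2s(c)-3=2(s(c)-1)-1$ morally accounts for the two-torsion of the degree two cover and the new extremal coefficient. The decisive difficulty is to make this exact: the canonical bundle formula produces a moduli $\qq$-divisor $M_Y$ whose index is controlled in the sense of~\cite{Fil18} but is only known to be bounded, not to obey the Sylvester recursion, so the heart of a proof is an effective, coregularity-sensitive bound on the index of $M_Y$. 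A secondary obstruction is that coregularity is not a fiberwise invariant---the general fiber of $\phi$ may have strictly larger coregularity, as the examples following Proposition~\ref{prop:coreg-bir} and~\cite[Example 5.4]{FMP22} show---so the induction must be organized on the base, and one must verify, via the boundedness up to deformation of exceptional singularities, that the extremal cases realizing $s(c)$ are exactly the finitely many exceptional configurations, where the constant can be confirmed by direct computation.
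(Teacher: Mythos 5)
First, a point of order: the statement you were asked to prove is posed in the paper as a \emph{conjecture}, and the paper contains no proof of it. The only support offered there is indirect: the sharpness example of \cite{ETW22}, a $c$-dimensional klt log Calabi--Yau pair of index exactly $(2s(c)-3)(s(c)-1)$, and the coregularity-sensitive ACC results of \cite{FMP22}. So there is no paper argument to compare yours against, and your proposal must stand on its own. It does not: you yourself concede that ``the heart of a proof is an effective, coregularity-sensitive bound on the index of $M_Y$,'' and that this bound is not available. What you have written is a plausible research program in the Prokhorov--Shokurov--Birkar mold, not a proof, and the open ingredient you identify is genuinely the crux --- the canonical bundle formula controls the moduli part only in terms of dimension data (\cite{Fil18,FS20}), not via any Sylvester-type recursion in the coregularity.

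Beyond that admitted gap, the induction itself is ill-founded as you have set it up. You induct on the coregularity $c$, but neither of your two lifting mechanisms decreases it: adjunction to a divisorial log canonical place $E$ lowers the \emph{dimension} by one while leaving the coregularity equal to $c$ (the minimal center is unchanged), so the pair on $E$ falls outside your inductive hypothesis; and the fibration case gives a base of coregularity \emph{at most} $c$, with no guarantee of strict drop. The standard escape --- inducting on dimension as in Birkar's proof of Theorem~\ref{thm:boundedness-complements} --- is unavailable here, because the whole content of the conjecture is that the bound is independent of $\dim X$; any dimension induction degrades the constant to $N_n$. Your base case has the same defect in miniature: the $c=0$ statement for \emph{all} dimensions is essentially Conjecture~\ref{conj:structural-coreg-0} and is established in the paper only for surfaces (via \cite{Sho00,Pro01}) and for torus complexity one (Theorem~\ref{thm:equiv-comp} plus the subsequent theorem), not in general. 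Finally, your descent step runs backwards: in the paper's surface theorem the $2$-complement is constructed \emph{first} and the degree-two cover is its index-one cover; pushing a $1$-complement down from a cover requires Galois-invariance of the boundary, which you have not arranged. To salvage the strategy you would need dimension-free effective adjunction and a dimension-free effective canonical bundle formula, keyed to coregularity --- precisely the two statements the conjecture is meant to encapsulate, so as written the argument is circular at both hinges.
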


Observe that in the previous conjecture
we do not bound the dimension of $X$.
Furthermore, there is a conjectural explicit bound for the largest complement that we can find in coregularity $c$ 
in terms of Sylvester's numbers.
We mention that the previous conjecture is motivated by two recent results. 
First, in~\cite{FMP22}, Fernando Figueroa, Junyao Peng, and the author
generalized results about log canonical thresholds
which formerly depended on the dimension of the germ 
to results that only depend on the coregularity.
On the other hand, in~\cite{ETW22}, the authors give an example
of a $c$-dimensional log Calabi-Yau klt variety with index
$(2s(c)-3)(s(c)-1)$.
This result is related to previous work in which many existential bounds in algebraic geometry are proved to be either double logarithmic or double exponential~\cite{TW21,ETW21}.
Observe that in the cases of low coregularity
$0,1$ or $2$, we expect $2$-complements, $6$-complements,
and $66$-complements, respectively.
However, Sylvester's sequence grows doubly-exponentially.
Already its $7$th term is larger than $10^{12}$.
Nevertheless, in the case of Fano type varieties of coregularity zero
we expect a very nice behavior:

\begin{conjecture}\label{conj:structural-coreg-0}
Let $X$ be a $n$-dimensional Fano type variety of coregularity zero.
Then, there exists a $2$-complement $(X,B)$ and a finite cover
$Y\rightarrow X$ satisfying the following: 
\begin{itemize}
    \item The log pull-back $(Y,B_Y)$ of $(X,B)$ to $Y$ is a log Calabi-Yau pair, and 
    \item we have a PL-homeomorphism $\mathcal{D}(Y,B_Y)\simeq S^{n-1}$.
\end{itemize} 
\end{conjecture}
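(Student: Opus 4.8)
The plan is to split the statement into two essentially independent tasks and then reconcile them: first, construct a $2$-complement $(X,B)$ realizing the coregularity zero, i.e.\ with $2(K_X+B)\sim 0$ and minimal log canonical center of dimension zero; and second, identify the dual complex of the crepant pull-back of $(X,B)$ to a suitable finite cover with the sphere $S^{n-1}$. The two are matched at the end by taking the cover to be (a refinement of) the index-one cover attached to the relation $2(K_X+B)\sim 0$.

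For the first task I would argue by induction on $n$, lifting complements from the minimal log canonical center, exactly in the spirit of the surface statement of Theorem~\ref{thm:surf-coreg-0}. By definition of coregularity zero, $X$ carries a $\qq$-complement $(X,\Gamma)$ whose minimal log canonical center $Z$ is a point. Passing to a dlt modification and performing adjunction down the chain of log canonical centers to $Z$, the zero-dimensionality of $Z$ forces the boundary to become reduced near $Z$ and the local picture to be a qdlt point $(\mathbb{A}^n/\zz_k, H_1+\dots+H_n)$. Such a configuration admits a $1$- or $2$-complement, the index $2$ appearing precisely from the quotient $\pp^1$-link involution. I would then propagate this bounded complement upward by the Prokhorov--Shokurov lifting from non-klt centers, whose applicability is guaranteed by Theorem~\ref{thm:boundedness-complements}; the key point is that coregularity zero keeps the coefficients in $\{0,\tfrac12,1\}$ along the way, so no Sylvester-type blow-up of the index occurs. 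This yields the desired $2$-complement $(X,B)$.

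For the second task, take a dlt modification $(Y_0,B_0)$ of $(X,B)$. Its dual complex $\mathcal{D}(X,B)=\mathcal{D}(\lfloor B_0\rfloor)$ is an $(n-1)$-dimensional pseudo-manifold by the structural theorem for dual complexes of log Calabi--Yau pairs, and by the $\pp^1$-linking of minimal centers together with Theorem~\ref{thm:conn} it is connected, the minimal centers forming a single $\pp^1$-linked class. The structural theorem then places $\mathcal{D}(X,B)$ in one of two regimes: collapsible (a ``disk with boundary'', as for $D$-type surface singularities, whose dual complex is a segment) or a rational homology sphere. The finite cover $\pi\colon Y\to X$ enters to remove the boundary and to trivialize the fundamental group: the deck involution of the index-one cover of $2(K_X+B)\sim 0$ acts on the collapsible complex by reflecting it across its boundary, so $\mathcal{D}(Y,B_Y)$ is the double of a disk, a closed pseudo-manifold, and a further cover unwinding $\pi_1$ makes it simply connected. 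Since a crepant finite cover preserves the log Calabi--Yau condition and, via the induced map of dual complexes together with Proposition~\ref{prop:coreg-bir}, the coregularity, the complex $\mathcal{D}(Y,B_Y)$ is a simply connected, top-dimensional, rational homology sphere pseudo-manifold. In the range $n\le 4$ this already suffices, by the results of Koll\'ar and Xu establishing Conjecture~\ref{conj:dual-comp}, to conclude $\mathcal{D}(Y,B_Y)\simeq_{\rm PL} S^{n-1}$.

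The main obstacle is the last step in high dimension: recognizing a simply connected rational homology sphere pseudo-manifold as a genuine PL-sphere is open for $n\ge 5$, and is essentially the content of Conjecture~\ref{conj:dual-comp} beyond the range of Koll\'ar and Xu. A secondary difficulty is keeping the two tasks compatible: one must arrange that the cover witnessing the sphere is the cover attached to the same $2$-complement produced in the first step, not an unrelated crepant cover. This forces the complement-lifting to be carried out $\pi_1$-equivariantly, so that the monodromy governing the boundary of $\mathcal{D}(X,B)$ is exactly the deck group used in the second task. I expect this equivariant bookkeeping in complement theory, rather than the topology, to be the delicate part, while the genuine mathematical barrier remains the PL-sphere recognition problem in dimension at least four.
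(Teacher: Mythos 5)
The statement you are trying to prove is presented in the paper as a \emph{conjecture} (Conjecture~\ref{conj:structural-coreg-0}); the paper offers no proof of it, only motivation: Example~\ref{ex:d34d4} showing the $2$-complement (rather than a $1$-complement) is necessary, the toric model case, and the relation to Conjecture~\ref{conj:dual-comp}. So there is no argument in the paper to compare yours against, and your proposal should be judged on its own terms --- where it has genuine gaps, which, to your credit, you partly acknowledge.

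Concretely, both halves of your plan reduce to open problems rather than to results available in the paper. For the first task, the existence of a $2$-complement on an $n$-dimensional Fano type variety of coregularity zero, \emph{uniformly in $n$}, is exactly the $c=0$ case of the Sylvester-number complements conjecture stated earlier in the paper; it is proved there only for surfaces (Prokhorov--Shokurov) and for $3$-folds (Theorem~\ref{thm:3-fold-comp}). Your inductive lifting scheme does not close this: Theorem~\ref{thm:boundedness-complements} only supplies an $N_n$-complement with $N_n$ depending on the dimension, and the assertion that ``coregularity zero keeps the coefficients in $\{0,\tfrac12,1\}$ along the way, so no Sylvester-type blow-up of the index occurs'' is precisely the claim that must be proved, not an input. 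Adjunction to log canonical centers produces divisors with DCC (not standard) coefficients, and lifting through the canonical bundle formula introduces a moduli part; controlling both so that the index stays equal to $2$ in all dimensions is the actual content of the conjecture, and nothing in the paper's toolkit delivers it. For the second task, you correctly identify that PL-sphere recognition beyond $n\le 4$ is Conjecture~\ref{conj:dual-comp} and open; but note additionally that even the intermediate topological claims are shakier than you state: a simply connected rational homology sphere that is merely a pseudo-manifold need not be a PL-sphere in any dimension (pseudo-manifolds can fail to be manifolds along codimension-two strata, and the qdlt local structure only gives finite-quotient charts), and the claim that the index-one-cover involution doubles a collapsible complex along its boundary, while true in the surface picture (segment doubling to a circle), is not established in general --- the involution can have fixed cells in the interior of $\mathcal{D}(X,B)$. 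Finally, the compatibility issue you flag (the cover realizing the sphere must be the cover attached to the chosen $2$-complement) is a real obstruction, not bookkeeping: Conjecture~\ref{conj:dual-comp}, even where known ($n\le 4$, by~\cite{KX16}), produces \emph{some} finite cover with no stated relation to any index-one cover of a given complement. In summary, your proposal is a reasonable map of the conjectural landscape, but each of its two pillars rests on an open conjecture from the paper itself, so it does not constitute a proof.
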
 

Example~\ref{ex:d34d4}, shows that the $2$-complement in the statement is indeed necessary. 
On one hand, we know that all projective toric varieties
have coregularity zero. 
Further, if we consider their reduced torus invariant boundary, 
which is indeed a $1$-complement, 
we obtain a log Calabi-Yau pair whose dual complex 
is a PL-sphere.
In~\cite{Kal20}, the author gives an example of a log Calabi-Yau pair of Fano type whose dual complex is a sphere, but
it is not birational to a toric pair.
The previous conjecture says that even if coregularity zero
Fano type varieties are not necessarily toric, 
we can still find a $2$-complement 
which up to a cover behaves like a torus boundary 
of a toric variety.
Note that this $2$-complement can be described as a $1$-complement quotient by an involution. 
Indeed, a natural way to construct $2$-complemented Fano type varieties is to quotient projective toric varieties by involutions (which are not contained in the torus action).
We do not see Conjecture~\ref{conj:structural-coreg-0}
as an endgame itself, but rather a starting point to generalize
the theory of toric varieties
to coregularity zero Fano varieties.
Indeed, if we enrich the structure of $\mathcal{D}(Y,B_Y)$ with certain normal bundles of the strata, one obtains a fan-like structure $\Sigma(Y,B_Y)$ which can be used to describe much of the geometry of $Y$ (and hence $X$) itself.
In the case of coregularity one, we
expect a similar behaviour than Conjecture~\ref{conj:structural-coreg-0}.
However, in this case we expect a $6$-complement
and the dual complex to be the quotient of a sphere
of dimension $n-2$.

The previous discussion about bounded complements 
and coregularity zero is also expected for higher coregularity.
However, in such a case, the dual complex itself will retrieve
less information about the Fano variety $X$.
For instance, the dual complex will tell us nothing about
the minimal log canonical centers. 
In order to introduce the next conjecture, 
we define the concept of minimal dlt center.

\begin{definition}
{\em 
Let $(X,B)$ be a log Calabi--Yau pair. 
A {\em minimal dlt center} of $(X,B)$ is a 
minimal log canonical center of
any dlt modification of $(X,B)$.
}
\end{definition}

By the work of Koll\'ar and Kovacs~\cite{KK10}, we know that any two minimal dlt centers of dimension $c$ are $\pp^1$-linked, i.e., 
they can be connected by a sequence of subvarieties of dimension $c+1$ that are birational to $\pp^1$-bundles (see subsection~\ref{subsec:prop-dual-complexes}). 
This result implies that the birational class
of a minimal dlt center of a log Calabi-Yau pair is well-defined.
Of course, as usual, we try to study log Calabi-Yau structures
with minimized coregularity.
The following conjecture is somewhat related to the boundedness of Fano varieties.

\begin{conjecture}\label{conj:bir-bound-mlcc}
Let $c$ and $N$ be positive integers. 
There exists a birationally bounded family $\mathcal{M}_{c,N}$
of algebraic varieties satisfying the following.
Let $X$ be a Fano type variety of coregularity $c$.
Let $(X,B)$ be a $N$-complement of $X$ computing the coregularity.
Then, the minimal dlt centers of $(X,B)$ belong to 
$\mathcal{M}_{c,N}$.
\end{conjecture}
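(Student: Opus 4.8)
The plan is to transfer the log Calabi--Yau structure from $X$ to a minimal dlt center by adjunction, thereby replacing an unbounded-dimensional problem by a boundedness statement in the fixed dimension $c$. First I would pass to a $\qq$-factorial dlt modification $(X',B')$ of $(X,B)$; this preserves the relation $N(K_{X'}+B')\sim 0$ as well as the minimal dlt centers, and by the very definition of coregularity any such center $Z$ satisfies $\dim Z=c$. Moreover the coefficients of $B'$ lie in the hyperstandard set determined by $N$. Running Kawamata subadjunction in its generalized form (the canonical bundle formula together with the theory of generalized pairs already used in the earlier fibration arguments) along the minimal lc center $Z$, which is normal, produces a generalized pair $(Z,B_Z+M_Z)$ with
\[
(K_{X'}+B')|_Z \sim_\qq K_Z+B_Z+M_Z \sim_\qq 0,
\]
where $B_Z$ is the boundary part and $M_Z$ the nef moduli part. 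Minimality of $Z$ forces this pair to be generalized klt, so $(Z,B_Z+M_Z)$ is a $c$-dimensional generalized klt log Calabi--Yau pair. Since the boundary part $B_Z$ is computed by the different, its coefficients remain in a fixed finite set depending only on $N$ and $c$, so the boundary part is already under control.

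The crucial step is to bound the index of $K_Z+B_Z+M_Z$, i.e.\ to produce $N'=N'(N,c)$ with $N'(K_Z+B_Z+M_Z)\sim 0$. The boundary contributes a bounded denominator automatically, and the difficulty lies entirely in the moduli part $M_Z$: although $N(K_{X'}+B')\sim 0$ restricts to $Z$, the subadjunction only identifies the restriction with $K_Z+B_Z+M_Z$ up to $\qq$-linear equivalence, and the Cartier index of $M_Z$ is governed by the variation of the implicit fibration rather than by $N$. Here I would invoke effective adjunction, i.e.\ boundedness of the moduli $b$-divisor for the canonical bundle formula, so that $M_Z$ becomes $b$-semiample with bounded Cartier index after a controlled base change. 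Granting this, $(Z,B_Z+M_Z)$ is a generalized klt log Calabi--Yau pair of dimension $c$ whose index and coefficients are bounded in terms of $N$ and $c$.

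Finally I would apply boundedness of $c$-dimensional generalized klt log Calabi--Yau pairs with bounded index and coefficients in a fixed finite set. Because $c$ is fixed and all numerical data are bounded, this is a boundedness statement in a single dimension rather than across all dimensions; taking $\mathcal{M}_{c,N}$ to be the resulting birationally bounded family of underlying varieties $Z$ then finishes the proof. The $\pp^1$-linking of minimal dlt centers of Koll\'ar and Kov\'acs~\cite{KK10} guarantees that the birational class of $Z$ is independent of the chosen center, so the target family is well defined, as implicitly required in Conjecture~\ref{conj:bir-bound-mlcc}.

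The main obstacle is the control of the moduli part in the second step. The fibration-like structure relating $X'$ to its minimal lc center has relative dimension $\dim X-c$, which is \emph{unbounded} since the conjecture imposes no bound on $\dim X$; thus one genuinely needs an effective canonical bundle formula whose bounds depend only on the coregularity $c$ and on $N$, and not on the relative dimension. This is precisely the philosophy of the coregularity-only bounds obtained in~\cite{FMP22}, and I expect the resolution of the conjecture to hinge on such effective adjunction. A secondary, more classical obstacle is the boundedness input of the last step: boundedness of Calabi--Yau pairs of bounded index is known only in low dimensions in general, but here it is required solely in the \emph{fixed} dimension $c$ and only up to birational equivalence, which is weaker than full boundedness and parallels Birkar's boundedness of exceptional Fano varieties~\cite{Bir19}.
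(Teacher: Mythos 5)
You should know at the outset that the statement you were asked to prove is stated in the paper as a \emph{conjecture}, and the paper contains no proof of it --- only a short discussion of evidence. Against that backdrop, your proposal is a sensible and honestly hedged strategy, and its first half matches the paper's own remarks: pass to a dlt modification, note that a minimal dlt center $Z$ has dimension $c$ and that adjunction gives a generalized klt log Calabi--Yau structure $(Z,B_Z+M_Z)$, and observe via \cite{KK10} that the birational class of $Z$ is independent of the chosen center. But the two steps you yourself flag as ``obstacles'' are not technical details to be outsourced --- they are precisely where the conjecture lives, and both are open. First, the effective adjunction you invoke (bounded Cartier index of the moduli $b$-divisor, with bounds independent of the relative dimension) is the Prokhorov--Shokurov effective $b$-semiampleness problem; it is known only in very restricted relative dimensions, whereas here the relative dimension $\dim X - c$ is unbounded by hypothesis. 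Appealing to the ``philosophy'' of \cite{FMP22} does not supply this: the results there concern thresholds controlled by coregularity, not index bounds for moduli parts of the canonical bundle formula.

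Second, and more fundamentally, the final boundedness input fails as stated. Birational boundedness of $c$-dimensional (generalized) klt log Calabi--Yau pairs of bounded index is \emph{not} known in a fixed dimension; the result of \cite{BDCS20} that the paper cites applies only when the center is rationally connected, and the paper explicitly points to \cite{San21} for minimal dlt centers that are not rationally connected. Indeed the paper remarks that already for $c=2$ it is unclear whether one obtains only a bounded family of K3 surfaces, so your last step assumes a statement at least as strong as the conjecture's first genuinely open case. In summary: your reduction correctly isolates where the difficulty sits, and to your credit you identify both gaps, but the argument is conditional on two unproven inputs (one of which is essentially equivalent to the hard part of the conjecture), so it should be presented as a reduction or strategy, not a proof --- which is consistent with the paper leaving the statement as Conjecture~\ref{conj:bir-bound-mlcc}.
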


Observe that if the minimal dlt center $M$
of $(X,B)$ is rationally connected, 
then we get a 
rationally connected 
klt log Calabi-Yau pair $(M,B_M)$ of bounded index. 
By~\cite{BDCS20}, we know that these belong to birationally bounded families. 
However, in general, we may get minimal dlt centers
that are not rationally connected (see, e.g.,~\cite{San21}).
We also expect the previous conjecture to hold
if the coefficients of $B$ are controlled in a set
satisfying the DCC set (i.e., the $N$-complement hypothesis can be weakened).
In summary, Conjecture~\ref{conj:structural-coreg-0} and Conjecture~\ref{conj:bir-bound-mlcc} all-together imply that Fano type varieties of bounded coregularity
admit bounded complements
with birationally bounded minimal dlt centers.
It is worth mentioning that in the case of coregularity zero and one the previous conjecture is trivial.
However, even in dimension $2$, it is not clear we can only obtain a bounded family of K3 surfaces
as minimal dlt centers of coregularity two Fanos.

Let $(X,B)$ be a log Calabi-Yau pair of coregularity zero.
In~\cite{KX16}, Koll\'ar and Xu proved
that every strata of $(X,B)$
is a rationally connected variety.
In the case of coregularity one, 
a similar argument shows that
every two general points in each
strata can be joined
by a sequence
of rational curves
and possibly a single elliptic curve.
This motivates the following question
relating the coregularity
with the covering genus of algebraic varieties.

\begin{problem}
Let $c$ be a nonnegative integer.
Show the existence of a constant $g(c)$
satisfying the following.
Let $(X,B)$ be a log Calabi--Yau pair
of coregularity $c$.
Then, every log canonical center of $(X,B)$
has covering genus at most $g(c)$.
\end{problem}

Even though complements
of minimal coregularity seems to be the 
best choice in most cases,
there could be more than one complement
computing the coregularity.
For instance, in $\pp^2$
we can choose any three transversal lines
as we can choose any conic and a line. 
It is somewhat clear that the most 
components the complement has the better. 
For instance, in 
$\pp^n$ any two $1$-complements 
with $(n+1)$ components differ by an automorphism.
Similar principles hold for toric varieties.
In the next subsection, we introduce an invariant that allows us to make a precise statement 
about the number of components of a complement.

\subsection{Complexity of Fano varieties}
The complexity of a Fano variety, 
or more generally a Fano type morphism, 
measures the difference between 
the coefficients of the complements
and the dimension plus the Picard rank.
We give a precise definition in the following.
In order to do so, we introduce
Fano type morphisms and log Calabi--Yau morphisms.

\begin{definition}
{\em 
Let $\phi\colon X\rightarrow Z$ be a projective contraction.
We say that $\phi$ is a {\em Fano type morphism}
if there exists a boundary $\Delta$ on $X$, big over $Z$, for
which $(X,\Delta)$ is klt and $K_X+\Delta\sim_{\qq,Z}0$.
If $Z$ is a point, then a Fano type morphism
is simply a Fano type variety.
If $X\rightarrow Z$ is the identity and $z\in Z$ a closed point, then a Fano type
morphism around $z\in Z$ is simply a klt type singularity structure
around the point $z$.

We say that $\phi$ is a {\em log Calabi--Yau morphism}
if there exists a boundary $B$ on $X$
for which $(X,B)$ is log canonical
and $K_X+B\sim_{\qq,Z}0$. 
If $Z$ is a point, then a log Calabi--Yau morphism
is simply a log Calabi--Yau variety.
If $X\rightarrow Z$ is the identity and $z\in Z$ a closed point, 
then a log Calabi--Yau morphism around $z\in Z$
is simply a log canonical type singularity structure 
around the point $z$.

A $N$-complement for the projective contraction $X\rightarrow Z$
is a boundary $B$ for which $(X,B)$ is log canonical
and $N(K_X+B)\sim_Z 0$.
}
\end{definition}

\begin{definition}
{\em 
Let $X\rightarrow Z$ be a Fano type morphism. 
Let $(X,B)$ be a log Calabi--Yau structure over $Z$. 
Write $B=\sum_{i=1}^k a_i B_i$, where each $B_i$ is a 
prime reduced divisor and each $a_i$ are nonnegative rational numbers.
We denote by $|B|$ the sum of the coefficients $a_i$.
Then, the {\em relative complexity} of $(X,B)$ over $Z$ is defined to be
\[
c((X,B)/Z):=
\dim X +\rho(X/Z) - |B|.
\] 
The {\em local complexity} of $(X,B)$ over a closed point $z\in Z$ is defined to be
\[
c((X,B);z):= \dim X + \rho(X_z) - |B|,
\]
where $X_z$ is obtained by a base change to the localization of $Z$ at $z$.
We define the {\em absolute relative complexity} to be:
\[
c(X/Z) := \min \left\{ 
c((X,B);z) \mid \text{ $(X,B)$ is log Calabi--Yau over $Z$}\right\}.  
\]
Of course, the dimension and the relative Picard are fixed. 
Hence, the previous definition just intends to maximize the sum of the boundary coefficients 
among log Calabi--Yau structures on $X$ over $Z$.
In the case that $X/Z$ admits no log Calabi--Yau structure, 
then we just set the complexity to be $\dim X+\rho(X/Z)$.
}
\end{definition}

Note that for a toric morphism $X\rightarrow Z$
we can always consider the toric boundary $B$ of $X$
and $(X,B)$ is log Calabi-Yau.
In this case, $B$ is reduced and the number of its
components is exactly $\rho(X)+\dim X$. 
Hence, the absolute relative complexity is zero
around the image of every minimal log canonical center of $(X,B)$. 
In general, it can be proved that toric morphisms
have relative complexity zero everywhere.
In~\cite{Sho00}, Shokurov conjectured that the complexity was nonnegative
and if it was zero, then the morphism was indeed formally toric.
In~\cite{MS21}, the authors settle this conjecture, obtaining the following result:

\begin{theorem}\label{thm:complexity-toric}
Let $X\rightarrow Z$ be a projective contraction.
Then, we have that $c(X;z)\geq 0$.
If $c(X;z)=0$, then formally around 
$z$ the morphism $X\rightarrow Z$ is toric.
Furthermore, if $c((X,B);Z)=0$, 
then $(X,\lfloor B\rfloor)$ 
together with the morphism $X\rightarrow Z$
are formally toric over $z$.
\end{theorem}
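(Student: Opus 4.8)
The plan is to follow the strategy of Brown--McKernan--Svaldi--Zong for the absolute projective complexity bound and to upgrade it to the relative, formal-local setting. First I would fix the closed point $z\in Z$, pass to the localization (or completion) of $Z$ at $z$, and consider an arbitrary log Calabi--Yau pair $(X,B)$ over $Z$; to bound $c(X;z)$ from below it suffices to show $|B|\le \dim X+\rho(X_z)$ for each such $B$. Taking a $\qq$-factorial dlt modification $\phi\colon (Y,B_Y)\to (X,B)$ (which adds each extracted place to $B_Y$ with coefficient one and hence leaves $\dim+\rho-|B|$ unchanged) and then running a suitable relative MMP, I would record how $\dim$, the relative Picard number $\rho(Y_z)$, and $|B_Y|$ transform under the divisorial contractions and flips. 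Since these steps only decrease $\dim+\rho-|B|$, a bound on the reduced model implies the bound on $X$, so it suffices to treat a model on which relative adjunction is transparent.

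To prove $c(X;z)\ge 0$ I would argue by induction on $\dim X$, the one-dimensional case being the elementary count on $\pp^1$ (degree two forces $|B|\le 2=\dim+\rho$). For the inductive step I would first enlarge $B$ to a complement dominating it via Theorem~\ref{thm:boundedness-complements} — this only increases $|B|$, so the bound for the complement implies it for $B$ — thereby producing a component $S$ of $\lfloor B\rfloor$ meeting the fibre over $z$. Performing adjunction $(K_X+B)|_S=K_S+B_S$, the pair $(S,B_S)$ is again log Calabi--Yau over the image of $S$, so by induction $|B_S|\le \dim S+\rho(S_z)$. The crux of the step is the comparison between the restriction map $N^1(X)\to N^1(S)$ and the incidence of the remaining boundary components with $S$: each component of $B$ meeting $S$ contributes to $B_S$, while the defect between the number of such components and $\rho(S)$ is controlled by the rank of the restriction map together with the relation $-K_X\equiv B$. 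Careful bookkeeping of these contributions yields $|B|\le \dim X+\rho(X_z)$.

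For the rigidity statement I would assume $c((X,B);z)=0$ and trace equality back through the induction; equality at every stage forces each adjoint pair $(S,B_S)$ to realize the one-lower-dimensional equality case, hence to be formally toric, and forces the restriction maps on Néron--Severi to have no unexpected kernel. I would then reconstruct the dense torus on $X$ itself: the components of $\lfloor B\rfloor$ must form the strata of a pseudo-manifold, and the standard $\pp^1$-links between codimension-one strata (Theorem~\ref{thm:conn} and the theory of $\pp^1$-linking) supply the one-parameter subgroups that assemble into an effective $\TT$-action having $(X,\lfloor B\rfloor)$ as its toric boundary; compatibility of these actions with $\phi$ shows that $X\to Z$ is formally toric around $z$, giving the third assertion, and the second follows since a minimizing $B$ realizes $c((X,B);z)=0$. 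I expect the main obstacle to be precisely this reconstruction in the equality case: controlling the kernel of the restriction maps $N^1(X)\to N^1(S)$ so that no boundary components are lost, and then globalizing the locally produced $\mathbb{G}_m$-actions into a single torus compatible with the contraction and descending to the formal germ over $z$. The relative and formal-local bookkeeping of $\rho(X_z)$, in place of the absolute $\rho(X)$ used in the projective case, is the other delicate point, which I would handle by working over the completion and invoking relative adjunction and the relative cone theorem throughout.
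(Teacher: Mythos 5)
Your inductive step for the inequality has a gap right at its start. You propose to ``enlarge $B$ to a complement dominating it'' via Theorem~\ref{thm:boundedness-complements} so as to obtain a component $S$ of $\lfloor B\rfloor$ to perform adjunction on; but an $N$-complement need not have any reduced component. For an exceptional Fano type variety every complement is klt by definition, so no such $S$ exists, and yet the inequality $c(X;z)\geq 0$ must still be proved for these varieties. Moreover, when $Z$ is a point and $(X,B)$ is already log Calabi--Yau, any complement $B^+\geq B$ satisfies $B^+-B\geq 0$ and $B^+-B\equiv 0$, hence $B^+=B$: there is no room to enlarge at all. This is exactly why the projective case in \cite{BMSZ18} is not run through adjunction to a component of $\lfloor B\rfloor$: there one runs a suitable MMP to a Mori fiber space and estimates the horizontal part of the boundary on a general fiber directly; note also that the bookkeeping cannot be done with $\rho$ alone, and one is forced to work with the finer invariant given by the span of the classes of the components of $B$ in $N^1(X)$ in order for the adjunction step to close.

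The equality case contains a second, independent gap. A standard $\pp^1$-link (Theorem~\ref{thm:conn}) yields a birational identification of two log canonical centers through a $\pp^1$-fibration; it does not produce a $\mathbb{G}_m$-action on $X$, and your proposal contains no mechanism for promoting these links to one-parameter subgroups, for assembling $\dim X$ of them into a single effective torus, or for making that torus compatible with $X\rightarrow Z$ and with passage to the formal germ at $z$. This reconstruction is precisely the hard content of the theorem, and you flag it only as ``the main obstacle'' without an argument. The proof this survey points to, namely \cite{MS21}, proceeds by a different device altogether: finite generation of the local Cox ring of the Fano type germ over $z\in Z$ (cf.~\cite{BM21,BGLM21}), showing that complexity zero forces this Cox ring to be a polynomial ring, and then descending the torus action from the characteristic space; the absolute projective case in \cite{BMSZ18} instead descends toric structures step by step through the MMP. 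As written, the proposal therefore establishes neither the inequality (because of the first gap) nor the toricity statements (because of the second), and the passage from the projective to the relative, formal-local setting---which is what required the Cox ring technique in the first place---is left entirely to the phrase ``bookkeeping,'' which understates where the real difficulty lies.
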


It is worth mentioning that
in the local setting
the previous theorem gives
a characterization of toric singularities
using the language
of log canonical singularities.
The previous theorem was obtained based on the work of
many other mathematicians. 
Shokurov proved the theorem for arbitrary morphisms of surfaces in~\cite{Sho00}.
In~\cite{Yao13}, Yao gives a proof of the projective statement for log smooth pairs.
In~\cite{KM99}, Keel and McKernan proved the statement for projective surfaces. 
In~\cite{Pro01b}, Prokhorov proved the statement for certain projective 3-folds using techniques from the minimal model program.
In~\cite{BMSZ18}, the authors prove the projective version of the previous theorem.
The proof of Theorem~\ref{thm:complexity-toric}
used the language of local Cox rings (see, e.g.,~\cite{BM21,BGLM21}).

We expect that the previous theorem 
can be generalized to an inequality
that also considers the coregularity 
of a Fano type pair. 
We propose the following problem.

\begin{problem}\label{problem:coreg-comp}
Let $X$ be a Fano type variety. 
Let $(X,B)$ be a log Calabi-Yau pair.
Then, we have that 
\[
|B|+{\rm coreg}(X) \leq \dim X +\rho(X).
\]
Furthermore, if the equality holds, 
then we have that ${\rm coreg}(X)=0$.
\end{problem}

In other words, the sum of the coefficients
of $B$ is not just bounded by $\dim X+\rho(X)$,
but it is also bounded
by a sharper bound
$\dim X +\rho(X)-{\rm coreg}(X)$.
The previous problem, of course, is more interesting when the coregularity 
is large (or the regularity is small).
For instance, it says that for exceptional Fano varieties
the sum of the coefficients of any complement
is at most $\rho(X)$.

Another possible generalization 
of the complexity is towards the language of generalized pair (see, e.g.,~\cite{Bir21b}).
A generalized pair $(X,B+M)$ is, briefly speaking, a pair with an extra summand $M$
that is the push-forward of a nef divisor
on a higher birational model.
This divisor $M$ is often called the {\em moduli divisor} of the generalized pair,
as it naturally appears in the outcome of the canonical bundle formula (see, e.g.,~\cite{Amb06,Fil20}).
In~\cite{MS21}, the language of generalized pair was used, although the complexity invariant does not depend on the moduli part $M$.
We propose the following problem regarding
generalized pairs
and complexity.

\begin{problem}\label{problem:gen-comp}
Let $X\rightarrow Z$ be a Fano type morphism.
Let $(X,B+M)$ be a generalized log canonical pair so that 
\[
K_X+B+M\sim_{Z,\qq} 0.
\]
Let $|B|$ be the sum of the coefficients of $B$.
Let $M'$ be the model where $M$ descends.
Write $M'=\sum_{i=1}^k a_i M'_i$ where each $a_i$ is a nonnegative rational number
and each $M'_i$ is a nef Cartier divisor
which is not $\qq$-linearly trivial.
We write $|M|$ for the sum of the $a_i$'s.
Then, we have that 
\[
c((X,B+M);Z):=
\dim X + \rho(X) - |B|-|M| \geq 0.
\] 
Furthermore, if the equality holds, then
$X$ is a toric variety.
\end{problem}

Of course, one expects a statement that generalizes both Problem~\ref{problem:coreg-comp} and Problem~\ref{problem:gen-comp}. 
We point out that in the statement of Problem~\ref{problem:gen-comp}, if $B=0$, then 
we expect $X$ to be a special toric variety, similar to a weighted projective space, as in the statement
of Kobayashi-Ochiai Theorem.

The coregularity, by definition,
is a discrete invariant
and can only take integral values.
However, the complexity, on the other hand,
may take fractional values
as shown in the case of $E_8$-singularities. 
In~\cite{BMSZ18}, the authors prove that if the absolute complexity of a projective variety is less than one then it is zero.
In this direction, we know very little about the complexity as an invariant itself, i.e., 
its values larger than one. 

\begin{problem}
Let $n$ be a positive integer.
Describe the set $\mathcal{C}_n$ of absolute complexities
of $n$-dimensional Fano type varieties. 
Does $\mathcal{C}_n$ have accumulation points for some $n$? 
Do the accumulation points comes from absolute complexities in lower dimensions?
\end{problem}

As of today, we do not know examples 
in which the complexity has accumulation points. 
So, maybe the last question is vacuous.
The following example explains the behaviour of the complexity
for klt surface singularities.

\begin{example}
{\em 
Let $(X;x)$ be a klt surface singularity.
Then, we have that 
\[
c(X;x) \in \left\{ 
0,1,\frac{4}{3},\frac{5}{4},\frac{7}{6} 
\right\}. 
\] 
If the absolute complexity is zero,
then the singularity is toric
and the absolute complexity is computed
by the toric boundary which is a $1$-complement.
In the absolute complexity one case, 
the singularity admits a double cover
which is toric. 
The complexity is computed by a reduced
$2$-complement which is unibranch through the singularity. 
In the case that
$c(X;x)\in \{ \frac{4}{3},\frac{5}{4},\frac{7}{6}\}$,
the singularity is of
$E_6,E_7$ or $E_8$ type, respectively.
In these cases, 
the complexity is computed
by a $3,4$ or $6$-complement
which is unibranch through the singularity, respectively.
}
\end{example}

Following the philosophy that 
complements should be lifted from 
minimal log canonical centers,
the following question seems to be natural.

\begin{problem}
Show that the set of absolute complexities
of klt singularities
of coregularity at most $c$ 
is contained in the set
of absolute complexities 
of klt pairs with standard coefficients
of dimension at most $c$.
\end{problem}

The situation of complexity is quite similar to such of coregularity.
Both invariants are minimized in the toric case. 
The second invariant characterizes toricness. 
The first invariant, on the other hand, 
could be zero and the pair not be even 
birationally toric.
Among complements computing the coregularity, 
those which minimize the complexity seem 
to be of special interest. 
We propose the following question.

\begin{question} 
Let $X$ be a Fano type variety.
Can we find a complement 
that computes both
the coregularity and the complexity?
Furthermore, can we find this complement effectively? i.e., to be a $N_c$-complement
where $c$ is the coregularity of $X$.
\end{question} 

The problem of finding 
a complement computing the coregularity
often reduces to finding boundaries
with large coefficients (or multiplicities). 
This gives a very naive connection between the coregularity and the complexity.

We recall the following problem 
introduced in~\cite{BMSZ18}. 
It is inspired
by the classification of DuVal singularities.

\begin{problem}
Let $X$ be a variety of absolute complexity strictly less than two.
Prove that $X$ admits a two-to-one cover
which is rational. 
\end{problem}

In~\cite[Section 7]{BMSZ18},
the authors give an example
of an irrational 
example of absolute coregularity zero. 
This example is a conic bundle
over $\pp^1\times \pp^1$ 
quotient by an involution.

\subsection{Exceptional Fanos and singularities}
In this subsection, we further discuss
exceptional Fano type varieties
and exceptional singularities.

We recall that a Fano type variety
$X$ is said to be {\em exceptional}
if for every complement $(X,B)$
the pair $(X,B)$ has klt singularities.
In other words, it is not possible to produce
a non-trivial log canonical center 
with a complement on $X$. 
Note that we always assume that $X$ is a log
canonical center of $(X,B)$, but this can be regarded
as a trivial log canonical center.
On the other hand, we say that 
a klt singularity $(X;x)$ is {\em exceptional}
if for every complement $(X,B;x)$
there exists at most one log canonical place.
In particular, if $(X;x)$ is an exceptional klt singularity 
and $(X,B;x)$ is a strictly log canonical complement,
then the dlt modification of $(X,B;x)$ is indeed a plt blow-up
for $(X;x)$.
Observe that in the local setting we can always
produce a log canonical center through $x\in X$, 
hence the exceptional case correspond to such 
in which we can produce only one log canonical place. 
Note that the definition of exceptional singularities
does not rule out the possibility that two 
different complements of $(X;x)$ may have different log canonical places.
However, the following lemma implies that this is not the case.

\begin{lemma}\label{lem:unique-plt}
Let $(X,B;x)$ be a klt singularity. 
Assume that it is exceptional.
Then, there is a unique exceptional divisor $E$ over $X$ with center
$c_X(E)=x$ 
so that every strictly log canonical complement of $(X,B;x)$
has $E$ as its unique log canonical center.
\end{lemma}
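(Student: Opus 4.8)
The plan is to reduce the statement to the uniqueness of the Kollár component of $(X,B;x)$ and then exploit exceptionality through adjunction. First I would record that, since $(X,B;x)$ is exceptional, each strictly log canonical complement $(X,B^+;x)$ has a single log canonical place over $x$; because the local theory of complements produces such a $B^+$ for which $x$ is itself a log canonical center, this unique place $E$ must satisfy $c_X(E)=x$. Taking a dlt modification of $(X,B^+)$ over $x$ then extracts exactly $E$, and after running a suitable relative MMP as in Proposition~\ref{prop:from-klt-to-Fano} I obtain a plt blow-up $\pi\colon Y\to X$ with a single exceptional divisor $E$, with $-E$ ample over $X$, and with $(Y,B^+_Y+E)$ plt. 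Adjunction of $K_Y+E+B^+_Y$ to $E$ yields a log Calabi--Yau pair $(E,B_E)$; by the cone correspondence and the exceptionality of the singularity, $(E,B_E)$ is an \emph{exceptional} Fano type variety, so every complement on $E$ is klt. The whole lemma thus reduces to showing that the divisor $E$ does not depend on the chosen complement.

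To prove this independence I would argue by contradiction. Suppose $(X,B_1^+)$ and $(X,B_2^+)$ are strictly log canonical complements with distinct log canonical places $E_1\neq E_2$, and fix the plt blow-up $\pi_1\colon Y_1\to X$ extracting $E_1$, together with its exceptional Fano $(E_1,B_{E_1})$. Since $c_X(E_2)=x$ and the exceptional locus of $\pi_1$ over $x$ is $E_1$, the center $Z:=c_{Y_1}(E_2)$ is a proper subvariety of $E_1$. Writing $\pi_1^*(K_X+B_2^+)=K_{Y_1}+cE_1+\Delta_2$ with $c<1$, the divisor $E_2$ is a log canonical place of $(Y_1,cE_1+\Delta_2)$ with center $Z$. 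Comparing the two crepant pullbacks on $Y_1$, both of which are $\qq$-trivial over $X$, I would transfer the data of $E_2$ to the Kollár component: restricting to $E_1$ should produce a boundary $\Theta_2$ on $E_1$ with $-(K_{E_1}+\Theta_2)$ ample (the positivity coming from $-E_1$ being $\pi_1$-ample and $E_1$ mapping to the point $x$) and with a non-klt locus meeting $Z$.

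Finally, I would complete $\Theta_2$ to an honest complement: since $(E_1,\Theta_2)$ is log Fano with a non-klt center, the existence of bounded complements (Theorem~\ref{thm:boundedness-complements}) furnishes a complement $(E_1,\Theta_2^+)$ with $\Theta_2^+\geq\Theta_2$, and monotonicity of log discrepancies under enlarging the boundary keeps the corresponding log canonical place, so $(E_1,\Theta_2^+)$ is a \emph{non-klt} complement of the exceptional Fano $E_1$, a contradiction. Hence $E_2=E_1$, every strictly log canonical complement shares the same log canonical place $E$, and the asserted uniqueness follows. I expect the main obstacle to be precisely this transfer step: passing from the second complement to a genuinely log canonical, non-klt boundary on $E_1$ is delicate, because naively raising the coefficient of $E_1$ to $1$ in order to perform adjunction overshoots log canonicity along $Z$. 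I plan to handle this by working with the log Calabi--Yau pair $(Y_1,cE_1+\Delta_2)$ over $x$ and invoking the connectedness theorem (Theorem~\ref{thm:conn}) to control the non-klt locus before restricting, rather than restricting the coefficient-one pair directly.
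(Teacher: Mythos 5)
The paper states this lemma without proof (it is asserted and then used), so your attempt has to be judged against the standard argument, and your skeleton is indeed the right one: extract a plt blow-up $\pi_1\colon Y_1\to X$ with Koll\'ar component $E_1$ as in Proposition~\ref{prop:from-klt-to-Fano}, show that the adjunction pair $(E_1,\mathrm{Diff})$ is an exceptional log Fano, pull back a second complement to get $\pi_1^*(K_X+B_2^+)=K_{Y_1}+\Delta_2+cE_1$ with $c<1$, transfer to $E_1$, and contradict exceptionality of $(E_1,\mathrm{Diff})$. Two remarks before the main point. First, your step establishing $c_X(E_2)=x$ is circular: that the theory of complements produces \emph{one} complement whose lc center is $x$ says nothing about the center of the unique place of a \emph{different} complement; the correct order is to take $E_1$ to be the place of that reference complement (so $c_X(E_1)=x$) and let the uniqueness argument force every other place to coincide with it. Second, ``the cone correspondence'' is not what makes $(E_1,\mathrm{Diff})$ exceptional, since $(X;x)$ is not a cone; you need to lift a hypothetical non-klt complement of $(E_1,\mathrm{Diff})$ through the plt blow-up, using that $-(K_{Y_1}+B_{Y_1}+E_1)\equiv_X a_{E_1}(X,B)(-E_1)$ is $\pi_1$-ample together with vanishing and inversion of adjunction, to manufacture a complement of $(X,B;x)$ with two lc places.

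The genuine gap is the transfer step you flagged, and your proposed remedy does not close it: Theorem~\ref{thm:conn} only says the relative non-klt locus of $(Y_1,cE_1+\Delta_2)$ over $x$ is connected; it gives no mechanism for producing a boundary on $E_1$ that is simultaneously \emph{log canonical}, \emph{non-klt}, and has \emph{anti-ample} log canonical divisor. If you take the lc threshold $s^*=\min\{1,\sup\{s:(Y_1,E_1+s\Delta_2)\ \text{lc near }\pi_1^{-1}(x)\}\}$ and then, as your sketch suggests, complete $-(K_{E_1}+\Theta_{s^*})$ to a complement by writing it as ample plus the leftover effective piece $(1-s^*)\Delta_2|_{E_1}$, you reintroduce exactly the non-lc singularities you removed. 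What actually kills the overshoot is the $\qq$-linear identity
\[
K_{Y_1}+E_1+s\Delta_2 \;\sim_{\qq,X}\; (1-s)\bigl(K_{Y_1}+E_1\bigr)+s(1-c)E_1
\]
(throughout with $B_{Y_1}$ included if $B\neq 0$), whose restriction to $E_1$ exhibits $-(K_{E_1}+\Theta_s)$ as a convex combination of the two ample classes $-(K_{Y_1}+B_{Y_1}+E_1)|_{E_1}$ and $-E_1|_{E_1}$, hence ample for \emph{every} $s\in[0,1]$. At $s=s^*$ the pair $(E_1,\Theta_{s^*})$ is lc and non-klt (if $s^*<1$ the threshold place has center inside $E_1$, since away from $E_1$ the pair is a copy of $(X,sB_2^+)$, lc for $s\le 1$; if $s^*=1$ one gets non-plt points by inversion of adjunction where $c_{Y_1}(E_2)$ meets $E_1$), and one completes with $\frac{1}{m}$ times a general member of $|-m(K_{E_1}+\Theta_{s^*})|$ --- no leftover divisor and no appeal to Theorem~\ref{thm:boundedness-complements} needed. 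Without this identity (or an equivalent device) your contradiction does not materialize; with it, your outline becomes a complete proof and also yields $c_X(E)=x$, repairing the first step.
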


The previous lemma indicates that an exceptional klt singularity
admits a unique plt blow-up. 
Furthermre, the log pair obtained by adjunction to the exceptional
of such plt blow-up is an exceptional log Fano pair.
The same argument proves the following lemma.

\begin{lemma}\label{lem:from-ex-Fano-to-ex-sing}
Let $(E,B_E)$ be a log Fano pair with standard coefficients.
Assume that $(E,B_E)$ is exceptional.
Let $(X;x)$ be the klt singularity obtained by taking the cone
on $E$ with respect to an ample $\qq$-divisor $A$ satisfying the following conditions:
\begin{itemize}
    \item we have a $\qq$-linear equivalence
    $A\sim -r(K_E+B_E)$, and 
    \item for every prime divisor $P$ on $X$,
    the Weil index of $A$ at $P$ equals the Weil index of $B_E$ at $P$.
\end{itemize}
Then, the klt singularity $(X;x)$ is exceptional. 
\end{lemma}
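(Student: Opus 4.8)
The plan is to reduce every complement of the cone singularity $(X;x)$ to a complement of the base pair $(E,B_E)$ by restricting to the exceptional divisor of the blow-up of the vertex, and then to invoke the exceptionality of $(E,B_E)$.

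First I would fix the geometry. The two hypotheses on $A$ are exactly what is needed to run the cone adjunction~\eqref{eq:adj-cone}: the second condition (equality of Weil indices) forces $\Delta_A=B_E$, while the first condition $A\sim -r(K_E+B_E)$ guarantees that $-(K_E+\Delta_A)$ is ample, so that by Proposition~\ref{prop:from-Fano-to-klt} the singularity $(X;x)$ is indeed klt. Let $\pi\colon Y\to X$ be the blow-up of the vertex and $E'\cong E$ its exceptional divisor; then $\pi$ is a plt blow-up of $(X;x)$ and adjunction gives
\[
(K_Y+E')|_{E'}\sim_\qq K_E+\Delta_A=K_E+B_E .
\]
This identifies $E'$ as the natural Koll\'ar component sitting over $x$, with associated log Fano pair exactly $(E,B_E)$.

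The core of the argument is a correspondence for $\mathbb{G}_m$-invariant complements. Let $(X,B_0;x)$ be any $\mathbb{G}_m$-invariant $\qq$-complement. Writing $B_0$ as the cone over a boundary $B_E'$ on $E$, the condition $K_X+B_0\sim_\qq 0$ pins down the order of vanishing of $B_0$ along $E'$, forcing $a_{E'}(X,B_0)=0$; hence $E'$ is automatically a log canonical place of $(X,B_0)$. Restricting $\pi^*(K_X+B_0)=K_Y+E'+B_{0,Y}$ to $E'$ then produces a pair $(E,B_E+B_E')$ which is log canonical and satisfies $K_E+B_E+B_E'\sim_\qq 0$, that is, a $\qq$-complement of $(E,B_E)$. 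Since $(E,B_E)$ is exceptional, this complement is klt, so it has no log canonical place. Now any log canonical place $F\neq E'$ of $(X,B_0)$ centered at $x$ has center contained in $E'$, and by adjunction it restricts to a log canonical place of $(E,B_E+B_E')$, contradicting klt-ness. Therefore every $\mathbb{G}_m$-invariant complement of $(X;x)$ has $E'$ as its unique log canonical place.

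Finally I would pass from an arbitrary complement $(X,B;x)$ to the invariant situation by degenerating $B$ along the $\mathbb{G}_m$-action: the flat limit $B_0=\lim_{t\to 0}\sigma_t^*B$ is an invariant $\qq$-complement, to which the previous paragraph applies. The point to establish, and the main obstacle, is that this degeneration preserves the log canonical property and does not decrease the number of log canonical places over $x$, so that a hypothetical complement with two distinct log canonical places would specialize to an invariant complement with at least two, contradicting the uniqueness just proved. I expect to control this using lower semicontinuity of log discrepancies under the $\mathbb{G}_m$-degeneration together with the boundedness of $N$-complements, which keeps the relevant valuations in a bounded family and their specializations distinct. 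This is precisely the mechanism behind the uniqueness statement of Lemma~\ref{lem:unique-plt}, and running the same argument here yields that $(X;x)$ admits at most one log canonical place for every complement, i.e. that $(X;x)$ is exceptional.
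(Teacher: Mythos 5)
Your cone setup is correct (the Weil index hypothesis gives $\Delta_A=B_E$ in~\eqref{eq:adj-cone}, so the vertex blow-up $\pi\colon Y\to X$ is a plt blow-up with $(K_Y+E')|_{E'}\sim_\qq K_E+B_E$), and the invariant-case mechanism---restrict to $E'$, invoke exceptionality of $(E,B_E)$, exclude a second place by inversion of adjunction---is sound. The proof breaks at the reduction to $\mathbb{G}_m$-invariant complements. The semicontinuity you invoke goes the wrong way: for a $\mathbb{G}_m$-invariant valuation $v$ one has $v(B_0)\geq v(B)$ for the flat limit $B_0=\lim_{t\to 0}\sigma_t^*B$, so log discrepancies can only \emph{drop} in the limit, and the degeneration of a log canonical complement need not be log canonical. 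Concretely, on the cone $\mathbb{A}^2=C_{\pp^1}(\mathcal{O}(1))$ with the weight-$(1,1)$ vertex action, the germ $(\mathbb{A}^2,\tfrac{5}{6}\{y^2=x^3\};0)$ is a strictly log canonical $\qq$-complement, but its degeneration is $(\mathbb{A}^2,\tfrac{5}{3}\{y=0\})$, which is not log canonical. So a hypothetical complement with two log canonical places need not specialize to an invariant complement at all; boundedness of $N$-complements does not repair this, since the failure already occurs inside the fixed linear system $\tfrac{1}{N}|-NK_X|$, and you cannot appeal to exceptionality of the base to rule out such complements without circularity. (A smaller slip: $K_X+B_0\sim_\qq 0$ does \emph{not} force $a_{E'}(X,B_0)=0$ even for invariant $B_0$, e.g.\ $B_0=\tfrac{1}{2}(\{x=0\}+\{y=0\})$ on $\mathbb{A}^2$; the correct statement is $K_E+B_E+\Gamma_E\sim_\qq -a\epsilon A$ with $a=a_{E'}(X,B_0)\geq 0$ and $\epsilon>0$.)

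The equivariant detour is also unnecessary: your adjunction mechanism works for an \emph{arbitrary} complement, which is how the gap should be closed. Given any $\qq$-complement $(X,B;x)$, write $\pi^*(K_X+B)=K_Y+(1-a)E'+B_Y$ with $a=a_{E'}(X,B)\geq 0$; restricting $K_Y+E'+B_Y$ to $E'$ and using $E'|_{E'}\sim_\qq -\epsilon A$ yields a log canonical pair $(E,B_E+\Gamma)$ with $\Gamma\geq 0$ and $K_E+B_E+\Gamma\sim_\qq -a\epsilon A$, which is anti-nef; since $E$ is of Fano type this class is semiample (alternatively use Theorem~\ref{thm:boundedness-complements}), so adding a general member $\Gamma'\sim_\qq a\epsilon A$ produces a $\qq$-complement of $(E,B_E)$, which is klt by exceptionality. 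Every log canonical place of the germ $(X,B;x)$ other than $E'$ has center on $Y$ meeting $E'=\pi^{-1}(x)$, so inversion of adjunction along the plt divisor $E'$ would force $(E,B_E+\Gamma)$ to be non-klt, a contradiction; hence there is at most one log canonical place, namely $E'$ when $a=0$. For comparison, the paper gives no written proof of this lemma---it only remarks that ``the same argument'' as for Lemma~\ref{lem:unique-plt} applies---but this direct plt-blow-up/restriction/completion argument is exactly the mechanism the paper runs explicitly in its proof of the theorem on exceptional Fano varieties with ${\rm Cl}(E)\simeq\zz$ in the same subsection, so it is evidently the intended route.
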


Thus, the plt blow-up and cone construction 
explained in subsection~\ref{subsec:fano-vs-klt}
preserve the exceptional condition.
In general, finding exceptional Fano varieties
is harder than producing 
exceptional log Fano pairs with standard coefficients.
For instance, in dimension one, 
there is no exceptional Fano variety.
However, we have three exceptional log Fano pairs
with standard coefficients: 
\[
\left(\pp^1,\frac{1}{2}\{0\}+\frac{2}{3}\{1\}+\frac{2}{3}\{\infty\}\right), \quad
\left(\pp^1, \frac{1}{2}\{0\}+\frac{2}{3}\{1\}+\frac{3}{4}\{\infty\} \right), \text{ and }
\left(\pp^1, \frac{1}{2}\{0\}+\frac{2}{3}\{1\}+\frac{4}{5}\{\infty\} \right)
\] 
These three correspond to the $E_6,E_7$ and $E_8$ singularities.
In general, given a Fano variety $X$ it is often the case
that we can find a boundary with standard coefficients $\Delta$
for which $(X,\Delta)$ is an exceptional log Fano pair.
It is fairly simple to produce examples with $X=\pp^n$. 
Then, using Lemma~\ref{lem:from-ex-Fano-to-ex-sing}
we can produce exceptional klt singularities. 
However, even simple questions about exceptional quotient singularities remain open.
In dimension $7$ there is no exceptional quotient singularities~\cite{CS11b}.

\begin{conjecture}\label{conj:quot-excep}
For every dimension $n$ there is an exceptional quotient singularity
of dimension larger than $n$.
\end{conjecture}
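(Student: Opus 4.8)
The plan is to transfer the problem to the Fano side, where the cone construction of subsection~\ref{subsec:fano-vs-klt} applies, and then to reduce the conjecture to a purely representation-theoretic assertion about the existence of finite linear groups without low-degree semi-invariants in arbitrarily large dimension.

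First I would observe that $\cc^m/G$ is the affine cone over the projective orbifold $(\pp^{m-1}/\bar G,\Delta)$, where $\bar G$ is the image of $G$ in $\operatorname{PGL}_m(\cc)$ and $\Delta$ is the branch divisor produced by the Riemann--Hurwitz formula, as in Example~\ref{ex:1}; since $K_{\pp^{m-1}}\sim \mathcal{O}(-m)$, this pair carries standard coefficients. By the Chevalley--Shephard--Todd theorem we may assume $G$ contains no pseudo-reflections, so that the cone is genuinely singular. By Lemma~\ref{lem:from-ex-Fano-to-ex-sing}, together with Proposition~\ref{prop:from-Fano-to-klt}, if $(\pp^{m-1}/\bar G,\Delta)$ is an exceptional log Fano pair and the polarization is chosen so that the Weil indices match, then the cone singularity $(\cc^m/G;0)$ is exceptional. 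Thus it suffices to produce exceptional log Fano quotients $\pp^{m-1}/\bar G$ with $m\to\infty$.

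Second, I would make exceptionality of the Fano quotient checkable through invariant theory. A complement of $(\pp^{m-1}/\bar G,\Delta)$ pulls back to a $G$-invariant anticanonical divisor on $\pp^{m-1}$, that is, to the zero locus of a $G$-semi-invariant form of degree $m$, and its log canonical centers correspond to the non-klt locus of such a divisor. Hence exceptionality is implied by a lower bound $\glct(\pp^{m-1}/\bar G)>1$, which follows once $G$ has no semi-invariant hypersurfaces of small degree creating log canonical singularities; this is the higher-dimensional analogue of the Markushevich--Prokhorov criterion used in dimensions two and three. Concretely, I would aim for the sufficient condition that $G$ has no semi-invariant of degree at most $m$, reinforced by a lower bound on $|G|$ in the spirit of Cheltsov's threshold. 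Then I would construct the groups: taking $G$ perfect, for instance quasi-simple, eliminates nontrivial characters, so semi-invariants coincide with genuine invariants, and one wants a faithful irreducible, non-self-dual representation $V$ of dimension $m$ whose invariant ring begins in degree larger than $m$. Non-self-duality already kills the degree-one and degree-two invariants, and the remaining degrees are governed by the Molien series $\frac{1}{|G|}\sum_{g\in G}\det(I-tg)^{-1}$; one seeks families along which this series equals $1+O(t^{m+1})$, generalizing the Klein group acting on $\cc^3$, whose first invariant is the quartic of degree $4>3$, to a cofinal sequence of dimensions.

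The hard part will be the uniform control of the lowest invariant degree along an infinite family of growing dimension. The counting heuristic, that $|G|\gtrsim\binom{2m-1}{m}$ forces the expected number of invariants of degree at most $m$ below one, is not rigorous, since the Molien coefficients are non-negative integers that need not vanish when their average is small; making it precise requires either explicit Lie-type families whose invariants are known to appear only in high degree, or a genuine argument that $\operatorname{Sym}^d V$ contains no trivial summand for all $d\le m$. A second, subtler obstacle is that the implication ``no low-degree semi-invariants $\Rightarrow$ exceptional'' is itself established only in low dimensions; extending it, presumably via a uniform lower bound on $\glct$ in terms of the first invariant degree, is where most of the work lies, and it is the reason the statement is posed as a conjecture rather than a theorem.
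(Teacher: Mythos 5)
There is nothing to compare your proposal against: the statement you were given is Conjecture~\ref{conj:quot-excep}, and the paper offers no proof of it. It is stated as an open problem, surrounded only by context: the Markushevich--Prokhorov semi-invariant criteria in dimensions two and three~\cite{MP99}, the partial extensions of Cheltsov and Shramov in dimensions up to nine~\cite{CS11,CS11b,CS12}, and the striking fact, recorded in the paper, that there is \emph{no} exceptional quotient singularity in dimension $7$. So the question is not whether your route differs from the paper's, but whether your proposal constitutes a proof. It does not, and to your credit you say so yourself.

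Your reduction scheme is sound as far as it goes and matches the paper's own framework: realizing $\cc^m/G$ as an orbifold cone over $(\pp^{m-1}/\bar G,\Delta)$ with matching Weil indices so that Lemma~\ref{lem:from-ex-Fano-to-ex-sing} and Proposition~\ref{prop:from-Fano-to-klt} transfer exceptionality from the log Fano quotient to the cone, and the sufficient criterion $\glct>1$ via absence of low-degree semi-invariants, are exactly the tools the surrounding literature uses. But both pillars on which the argument would rest are unproven, and they are precisely the content of the conjecture. First, the implication ``no semi-invariants of degree at most $m$ (plus a size bound on $G$) $\Rightarrow$ exceptional'' is established only in low dimensions; no uniform version in terms of the first semi-invariant degree exists, and the dimension-$7$ nonexistence result is a concrete warning that exceptionality of quotients is not a generic or monotone phenomenon in the dimension, so no soft argument will give it. Second, you exhibit no infinite family of groups with the required invariant-theoretic property: taking $G$ quasi-simple with a non-self-dual faithful irreducible representation only kills invariants in degrees one and two, and your Molien-series counting heuristic is, as you acknowledge, non-rigorous, since small average coefficients do not force vanishing. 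A proof would need explicit families (say of Lie type) with first invariant degree provably exceeding the dimension along a cofinal set of dimensions, together with the uniform geometric criterion; neither is supplied here or in the paper. What you have written is a reasonable research program for attacking the conjecture, correctly aligned with the paper's point of view, but the statement remains open.
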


We also expect that it is possible to classify exceptional Brieskorn singularities. 

\begin{problem}
Classify exceptional Brieskorn singularities.
\end{problem}

Some progress towards the previous problem is achieved in~\cite{IP01}.
In general, finding examples of exceptional Fano varieties is more challenging. 
In~\cite[Example 21.3]{KM99},
the authors give an example
of a smooth exceptional Fano surface.
We have no standard procedure to create examples
of exceptional Fano varieties in every dimension.

\begin{problem}
Find examples, in every dimension, of exceptional Fano varieties.
Can we produce non-rational examples in every dimension larger than $2$?
\end{problem}

We conclude this subsection by giving some positive evidence for Problem~\ref{problem:coreg-comp}. 

\begin{theorem}
Let $E$ be an exceptional Fano variety
for which ${\rm Cl}(E)\simeq \zz$. 
Let $(E,B_E)$ be a complement.
Then, the sum of the coefficients of $B_E$ is
strictly less than one.
\end{theorem}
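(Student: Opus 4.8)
The plan is to translate the hypothesis $\Cl(E)\simeq\zz$ into a single numerical identity among the coefficients of $B_E$, and then to use exceptionality to bound the relevant degrees. First I would fix an ample generator $H$ of $\Cl(E)\simeq\zz$. Since $K_E$ is $\qq$-Cartier and its class is an integer multiple of $H$, the divisor $H$ is itself $\qq$-Cartier, so $E$ is $\qq$-factorial and every Weil divisor has a well-defined log canonical threshold. Writing $-K_E\sim mH$ with $m\in\zz_{\geq 1}$ and $B_i\sim d_iH$ with $d_i\in\zz_{\geq 1}$ (each $d_i\geq 1$ because $B_i$ is a nonzero effective divisor and $H$ generates the class group), the log Calabi--Yau condition $K_E+B_E\sim_\qq 0$ becomes, after passing to $\Cl(E)\otimes\qq\cong\qq$, the identity
\[
\sum_{i=1}^k a_i d_i = m .
\]
Thus it suffices to prove the strict inequality $d_i>m$ for every $i$.

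The heart of the argument is a reformulation of exceptionality, which I would isolate as a lemma: if $E$ is an exceptional Fano variety, then $(E,D)$ is klt for every effective $\qq$-divisor $D\sim_\qq -K_E$; equivalently $\lct(E,D)>1$ for all such $D$, so that $\glct(E)>1$. To prove it I would argue by contradiction: suppose $c:=\lct(E,D)\leq 1$ for some effective $D\sim_\qq -K_E$, and let $v$ be a log canonical place of $(E,cD)$ with center $c_E(v)=W\subsetneq E$. If $c=1$ then $(E,D)$ is already a strictly log canonical complement, contradicting that $E$ is exceptional. If $c<1$, I would choose a general member $\Gamma'$ of the base-point-free linear system $|-NK_E|$ (base-point-free for $N\gg 0$ by the basepoint-free theorem) and set $\Gamma:=\tfrac{1-c}{N}\Gamma'$, so that $\Gamma\geq 0$, $\Gamma\sim_\qq (1-c)(-K_E)$, and $\Gamma$ avoids the generic point of $W$. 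Then $cD+\Gamma\sim_\qq -K_E$, the place $v$ retains log discrepancy zero because $\mathrm{ord}_v\Gamma=0$, and $(E,cD+\Gamma)$ is log canonical; this produces a strictly log canonical complement, again contradicting exceptionality.

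With the lemma in hand the conclusion is immediate. For each $i$ the divisor $D_i:=\tfrac{m}{d_i}B_i$ is effective and satisfies $D_i\sim_\qq mH\sim_\qq -K_E$, so $(E,D_i)$ is klt. Since $B_i$ appears in $D_i$ with coefficient $m/d_i$, being klt forces $m/d_i<1$, that is $d_i>m$ for every $i$. Because $B_E\neq 0$ (otherwise $-K_E\sim_\qq 0$, contradicting ampleness) at least one product $a_id_i$ is positive, and therefore
\[
\sum_{i=1}^k a_i = \sum_{i=1}^k (a_id_i)\cdot\frac{1}{d_i} < \frac{1}{m}\sum_{i=1}^k a_id_i = \frac{m}{m}=1,
\]
using $1/d_i<1/m$. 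This is exactly the desired strict bound on the sum of the coefficients.

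The step I expect to require the most care is the global log canonicity of $(E,cD+\Gamma)$ in the lemma: a priori, adding $\Gamma$ could create new non-log-canonical points away from $W$. The point is that $\Gamma$ is a general member of a base-point-free system taken with coefficient $\tfrac{1-c}{N}<1$, so by Bertini it meets $\mathrm{Supp}(D)$ and the non-klt locus of $(E,cD)$ transversally and cannot push any log discrepancy below zero; making this transversality argument precise is the only genuinely nontrivial part of the proof, the remainder being the degree bookkeeping recorded above.
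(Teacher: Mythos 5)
Your proof is correct, and it takes a genuinely different route from the paper's. You stay entirely on $E$ itself: first you upgrade exceptionality to the statement that $(E,D)$ is klt for \emph{every} effective $\qq$-divisor $D\sim_\qq -K_E$, via the standard tie-breaking trick (scale $D$ to its log canonical threshold $c$, then add $\frac{1-c}{N}$ times a general member of the base-point-free system $|-NK_E|$, which preserves the log canonical place while keeping the pair log canonical --- the Bertini point you rightly flag, handled as usual on a log resolution); then you exploit $\Cl(E)\simeq\zz$ through pure degree bookkeeping: writing $-K_E\sim mH$ and $B_i\sim d_iH$, klt-ness of $\bigl(E,\tfrac{m}{d_i}B_i\bigr)$ forces $d_i>m$, and since $\sum_i a_id_i=m$ you get $\sum_i a_i<\tfrac{1}{m}\sum_i a_id_i=1$. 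The paper argues instead through the local theory it surveys: it forms the orbifold cone over $E$ with respect to the ample generator, uses Lemma~\ref{lem:from-ex-Fano-to-ex-sing} to see the cone singularity $(X;x)$ is exceptional, uses $\Cl(E)\simeq\zz$ to write the cone over each $B_i$ as a principal divisor ${\rm div}(f_i)$ and form a general combination $\Delta$, invokes uniqueness of the plt blow-up of an exceptional singularity (Lemma~\ref{lem:unique-plt}) to conclude the vertex blow-up stays plt for the perturbed boundaries, and finally, assuming $\sum_i b_i\geq 1$, restricts to $E$ and applies Birkar's boundedness of complements (Theorem~\ref{thm:boundedness-complements}) to manufacture a strictly log canonical complement on $E$ containing the reduced divisor $\Delta_Y|_E$, contradicting exceptionality. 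Your route is more elementary and self-contained --- it needs neither the cone construction, nor plt blow-ups, nor boundedness of complements --- and it yields the sharper quantitative byproduct that every component of any complement has class at least $m+1$ in $\Cl(E)$, where $m$ is the anticanonical degree; the paper's proof, by contrast, deliberately showcases the global-to-local dictionary (cones, plt blow-ups, complements) that the article is organized around. Two small points to keep honest in a write-up: justify, as you do, that $H$ is $\qq$-Cartier (so $E$ is $\qq$-factorial and $\lct(E;D)$ makes sense for each Weil divisor $D$), and note that $K_E+B_E\equiv 0$ forces the identity $\sum_i a_id_i=m$ because an ample class in $\Cl(E)\otimes\qq\cong\qq$ cannot be numerically trivial.
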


\begin{proof}
Assume by contradiction
that the sum of the coefficients of $B_E$ is larger than or equal to one.
Let $A$ be an ample generator of ${\rm Cl}(E)$.
Let $(X;x)$ be the cone over $E$ 
with respect to the $\qq$-polarization 
induced by $A$.
By Lemma~\ref{lem:from-ex-Fano-to-ex-sing}, the singularity
$(X;x)$ is an exceptional singularity.
Let $\pi\colon Y \rightarrow X$ be the blow-up of the maximal ideal at $x$.
Then, $\pi$ is a plt blow-up and
the exceptional divisor is isomorphic to $E$.
Let $B$ be the cone over $B_E$.
Write $B=\sum_{i=1}^k b_iB_i$ 
where each $b_i$ is a positive rational number
and each $B_i$ is a Cartier divisor through $x$. 
We can write 
$B_i={\rm div}(f_i)$ locally around $x$ for each $i\in \{1,\dots,k\}$.
Let 
\[
\Delta:={\rm div}\left( 
\lambda_1f_1+\dots \lambda_kf_k
\right) 
\]
where the $\lambda_i\in \kk$ are general enough.
Let $Z\rightarrow X$ be a log resolution of $(X,B;x)$.
Then, for every divisor $E\subset Z$ 
exceptional over $X$, we have that
\[
{\rm mult}_E(\lambda_1f_1+\dots \lambda_kf_k) \leq {\rm mult}_E(B).
\] 
Hence, we conclude that
for every choice of $0<\lambda_i<b_i$, the pair
\begin{equation}\label{eq: klt-ex-pair}
\left( 
X,\lambda\Delta+\sum_{i=1}^k (b_i-\lambda_i)B_i 
\right) 
\end{equation} 
is a klt pair.
By Proposition~\ref{prop:from-klt-to-Fano}, we know that
the pair~\eqref{eq: klt-ex-pair}
admits a plt blow-up at $x\in X$.
Note that this plt blow-up
is also a plt blow-up of $(X;x)$.
By Lemma~\ref{lem:unique-plt}, 
we know that $(X;x)$ admits a unique plt blow-up.
Hence, the plt blow-up of the pair~\ref{eq: klt-ex-pair}
must be $\pi$. 
In particular, we conclude that 
the pair
\begin{equation}
\left( 
Y, \lambda \Delta_Y +\sum_{i=1}^k(b_i-\lambda_i)B_{i,Y}+E
\right) 
\end{equation}
is plt for every choice of 
$0<\lambda_i <b_i$.
By assumption, we have that 
$\sum_{i=1}^kb_i\geq 1$.
Then, the pair
$(Y,E+(1-\epsilon)\Delta_Y)$
is plt 
and $K_Y+E+(1-\epsilon)\Delta_Y$ is anti-ample over $X$,
for every $\epsilon$ small enough.
In particular, the pair
\[
\left(E,(1-\epsilon)\Delta_Y|_E\right)
\] 
is log Fano for every $\epsilon$ small enough.
By Theorem~\ref{thm:boundedness-complements}, there is a $N$-complement for $(E,(1-\epsilon)\Delta_Y|_E)$.
Hence, we may find a log Calabi-Yau pair $(E,\Delta_E+\Gamma_E)$ where the coefficients of $\Gamma_E$ belong to $\zz\left[\frac{1}{N}\right]_{>0}$
and $\Delta_E:=\Delta_Y|_E$ is reduced.
This contradicts the fact that $E$ is exceptional.
Indeed, every prime divisor in the support of $\Delta_E$ is a log canonical center of $(E,\Delta_E+\Gamma_E)$.
\end{proof}

\subsection{Explicit computations}
As we have discussed above,
del Pezzo surfaces have coregularity zero
if and only they have degree at least two.
There are many more classes
of $3$-dimensional Fano manifolds.
The following question seems natural:

\begin{question}
Classify terminal Fano $3$-folds 
of Picard rank one with coregularity zero.
\end{question}

Given a positive answer 
for Conjecture~\ref{conj:structural-coreg-0},
in order to prove that 
a terminal Fano $3$-fold of Picard
rank one has coregularity zero,
it suffices to study the linear systems
$|-K_X|$ and $|-2K_X|$.
Hence, this problem simplifies
considerably if a basis for
$H^0(-K_X)$ and $H^0(-2K_X)$ are found.

Let us mention that the previous question 
will help to get a complete classification
of $3$-folds of coregularity zero. 
Indeed, if $X$ has coregularity zero,
up to a two-to-one cover, we may find a 
$1$-complement $(X,B)$. 
Let $(Y,B_Y)$ be the dlt modification
of $(X,B)$. 
Then, the variety $Y$ has canonical singularities. 
Replacing $Y$ with a terminalization,
we may assume itself is terminal.
Then, we may run a minimal model program for $K_Y$. 
If it terminates with a Mori fiber space
to a positive dimensional base, then the base 
also has coregularity zero.
One can study the structure of the base
and general fiber to understand the 
structure of the Mori fiber space itself.
In this case, we expect that the Mori fiber space 
to be similar to a product of the base and the general fiber.
On the other case, the MMP
terminates with a terminal Fano variety 
of Picard rank one with a $1$-complement of coregularity zero, i.e., 
one of the varieties as in the previous question.

In dimension two, we know that there are only two
possible birational classes of coregularity zero Fano varieties.
The ones that admit $2$-complements
and the ones that admit $1$-complements.
The latter class is indeed crepant birational
to the projective space with three lines.
It is natural to ask how many birational classes
of coregularity zero Fano varieties are there.

\begin{question}
Classify, up to crepant birational equivalence,
all $3$-folds of coregularity zero. 
\end{question}

The previous question will
help us understand which triangulations
can we find in the sphere $S^2$
among dual complexes of 
log Calabi--Yau $3$-folds of coregularity zero.
Indeed, every log Calabi--Yau $3$-fold
of coregularity zero
is crepant birational to a Fano type variety.
In the case of dimension $4$ 
we do not have a classification
of Fano varieties of
Picard rank one and terminal singularities.
Hence, this approach may only be successful in dimension $3$.
However, a better understanding in dimension three
may allow us to conjecture some behavior in higher dimensions.

We recall that in the toric case, 
smooth Fano varieties are
in correspondence with the so-called
smooth Fano polytopes.
There are exactly
$5$ smooth Fano toric surfaces, 
$18$ smooth Fano toric $3$-folds,
and $123$ smooth Fano toric $4$-folds (see, e.g.,~\cite{Bat91}).
It is not known how the 
number of smooth Fano polytopes
grows with the dimension.

Classically, the invariant that has been used to classify Fano
varieties is the index.
If a Fano variety
of dimension $n$ 
has index larger than $n$, then it is
isomorphic to the projective space.
Fano varieties of index $n-1$
have been classified by Fujita~\cite{Fuj90}
and Fano varieties of index $n-2$
have been classified by Mukai~\cite{Muk89}.
We expect 
Fano varieties of large index
to have smaller coregularity.
Indeed, 
whenever the index is large, we can already find
some interesting elements in $|-K_X|$ (see, e.g.,~\cite{Ale91}).
This raises the following question.

\begin{question}
Is there an inequality involving the 
coregularity and the index of a Fano variety? 
\end{question}

Iskovskikh and Manin proved that a smooth quartic threefold
is birationally superrigid~\cite{IM71}.
Many Fano complete intersections of index one
are proved to be birationally superrigid~\cite{dFEM03}. 
Pukhlikov conjectured that any 
Fano variety of Picard rank one 
and index one is birationally superrigid.
In~\cite{Cas07}, Castravet gives a counter-example for this conjecture.
We recall that the example given by Kaloghiros in~\cite{Kal20}
has coregularity zero and 
it is indeed a birationally rigid example.
It would be interesting to construct more
examples of coregularity zero varieties
which are birationally rigid. 

\begin{problem}
For each dimension $n$, construct a birationally rigid 
Fano variety of dimension $n$ and coregularity zero.
\end{problem}

Among Fano varieties, 
Grassmannians are one of the most 
well-known examples.
It would be interesting to compute
the coregularity explicitly 
for these examples.

\begin{problem} 
For $1\leq k<n$, compute the coregularity
of the Grassmannian $G(k,n)$.
\end{problem} 

An answer to the previous question may also
help to compute the coregularity of $3$-folds.
Indeed, in~\cite{Gus82}, Gushel noted
that some Fano $3$-folds of Picard rank one
are contained in $G(2,m)$.
Using this description, Gushel tried to 
classify Fano $3$-folds. 
Finally, the following question 
is natural but probably is quite challenging.

\begin{question}
Let $n$ be a positive integer
and $d$ be a positive integer less than $n+1$.
Find the coregularity of a general
hypersurface $H_d$ of degree $d$ in $\pp^n$.
\end{question}  

\subsection{Thresholds}

In singularity theory,
thresholds are important invariants of singularities.
In the realm of birational geometry, 
the log canonical threshold
plays a fundamental role in many 
characterizations 
and proofs. 
In a few words, it measures how many times
we can add a divisor through the singularities
so the pair still has log canonical singularities. 
Let $X$ be a normal quasi-projective variety
and $\Delta$ be a boundary for which $(X,\Delta)$ is log canonical.
Let $\Gamma$ be a $\qq$-Cartier divisor on $X$.
\[
{\rm lct}((X,\Delta);\Gamma):=\sup \{ t \mid 
(X,\Delta+t\Gamma) \text{ has log canonical singularities }\}. 
\]
In~\cite{HMX14}, the authors prove that in a fixed dimension,
whenever we control the coefficients of both $\Delta$ and $\Gamma$ in a DCC set, the possible log canonical thresholds is an ACC set.
In what follows, we discuss about local behavior of singularities, 
so we write 
${\rm lct}((X,\Delta;x);\Gamma)$ for the threshold
of the pair around $x$.
We always assume that the divisor $\Gamma$ contains the point $x$.
We define the set of log canonical thresholds
with bounded coregularity to be
\[
{\rm LCT}_{c}(I,J):=\{ t \mid 
t={\rm lct}((X,\Delta;x);\Gamma),
{\rm coeff}(\Delta)\in I,
{\rm coeff}(\Gamma)\in J, 
\text{ and }
{\rm coreg}(X,\Delta+t\Gamma;x)\leq c
\}.  
\]
In~\cite{FMP22}, the authors show that the 
set ${\rm LCT}_c(I,J)$
behaves like the set of log canonical thresholds
with bounded dimension.
In other words, we have the following result.

\begin{theorem}
Let $c$ be a positive integer.
Let $I$ and $J$ be two sets of real numbers
satisfying the descending chain condition.
Then, the set 
${\rm LCT}_c(I,J)$ satisfies the ascending chain condition.
\end{theorem}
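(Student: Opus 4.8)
The plan is to argue by contradiction and to use the coregularity bound to replace the ambient germ by a log canonical centre of dimension at most $c$, after which the classical ACC for log canonical thresholds in bounded dimension~\cite{HMX14} applies. Suppose ${\rm LCT}_c(I,J)$ contained a strictly increasing sequence $t_1<t_2<\cdots$. For each $i$ fix a germ $(X_i,\Delta_i;x_i)$ and a divisor $\Gamma_i$ through $x_i$ with ${\rm coeff}(\Delta_i)\in I$, ${\rm coeff}(\Gamma_i)\in J$, $t_i={\rm lct}((X_i,\Delta_i;x_i);\Gamma_i)$, and ${\rm coreg}(X_i,\Delta_i+t_i\Gamma_i;x_i)\le c$. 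At the threshold the pair $(X_i,\Delta_i+t_i\Gamma_i)$ is log canonical and strictly so through $x_i$; the coregularity bound says precisely that it admits a log canonical centre $W_i$ passing through $x_i$ with $\dim W_i\le c$, which we take to be minimal with respect to inclusion.

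First I would pass to a dlt modification $\phi_i\colon Y_i\to X_i$ of $(X_i,\Delta_i+t_i\Gamma_i)$ and realise $W_i$, after replacing it by its birational transform, as a stratum of the reduced boundary $\lfloor B_{Y_i}\rfloor$, where $K_{Y_i}+B_{Y_i}=\phi_i^*(K_{X_i}+\Delta_i+t_i\Gamma_i)$. Performing divisorial adjunction repeatedly down to the minimal centre, rather than a single subadjunction through the canonical bundle formula, keeps us inside the category of honest pairs and avoids a moduli $b$-divisor: iterated adjunction of a dlt pair to a stratum produces again a dlt pair $(W_i,\Delta_{W_i})$ with $K_{W_i}+\Delta_{W_i}=(K_{Y_i}+B_{Y_i})|_{W_i}$, and the coefficients of $\Delta_{W_i}$ lie in the hyperstandard set $I^{+}$ built from $I\cup J$ (see~\cite{Kol13,KK10}). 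Crucially $I^{+}$ depends only on $I$ and $J$, so the $(W_i,\Delta_{W_i})$ form a family of log canonical pairs of dimension at most $c$ whose boundary coefficients lie in a single DCC set.

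The key bookkeeping step is to carry the threshold through this reduction. I would track the two summands of the boundary separately, writing $B_{Y_i}=\Delta_{Y_i}+t_i\Gamma_{Y_i}+\sum_j E_{ij}$ with the $E_{ij}$ the log canonical places of coefficient one, so that upon restriction to $W_i$ the divisor $\Gamma_{W_i}:=\Gamma_{Y_i}|_{W_i}$ is separated off and $t_i$ reappears as the coefficient with which $\Gamma_{W_i}$ enters. Minimality of $W_i$ forces the residual pair $(W_i,\Delta_{W_i})$ to be klt, and monotonicity of log canonical thresholds under adjunction yields that $t_i$ equals the threshold ${\rm lct}((W_i,\Delta_{W_i}-t_i\Gamma_{W_i});\Gamma_{W_i})$ computed on $W_i$, up to a controlled hyperstandard correction. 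Since $\dim W_i\le c$ and all coefficients range in the fixed DCC sets $I^{+}$ and $J$, the bounded-dimensional ACC of~\cite{HMX14} shows that these thresholds on the centres form an ACC set, contradicting the strict monotonicity of the $t_i$.

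The main obstacle will be exactly this last transfer of the threshold: showing that $t_i$ is faithfully recorded as a log canonical threshold, and not merely bounded, on the minimal centre, and doing so with coefficient control uniform over the whole sequence. Two points require care. First, the restricted divisor $\Gamma_{W_i}$ could degenerate, for instance if $\Gamma_i$ meets $W_i$ only at $x_i$, and one must rule this out or replace $W_i$ by a centre on which $\Gamma_i$ restricts nontrivially, using that $x_i\in\Gamma_i\cap W_i$. Second, if one prefers the subadjunction route through the canonical bundle formula, the moduli part $M_{W_i}$ must be shown to have coefficients in a DCC set and to be controllable in bounded dimension; this is where the theory of generalized pairs and the boundedness of the moduli $b$-divisor in fixed dimension~\cite{Amb05,Amb06,Fil20} would be invoked, with an ACC statement for generalized log canonical thresholds substituted for~\cite{HMX14}. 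Either way the heart of the argument is the reduction in dimension supplied by the coregularity bound, with the ACC in bounded dimension used as a black box.
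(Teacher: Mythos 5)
First, a remark on the ground truth: the paper does not actually prove this theorem --- it is a survey statement quoted from~\cite{FMP22}, accompanied only by the one-line idea that the threshold computation is ``reduced to a minimal log canonical center, which by definition must have dimension at most $c$.'' Your skeleton matches that idea, and several of your intermediate steps are correct and standard: at the threshold the pair is log canonical with a minimal stratum $W_i$ of dimension at most $c$ on a dlt modification, iterated adjunction of a dlt pair to a stratum is purely divisorial (so your worry about moduli $b$-divisors on this route is unnecessary --- the canonical bundle formula only enters if one subadjoins on $X_i$ itself), minimality forces $(W_i,\Delta_{W_i})$ klt, and the coefficients land in a derived (hyperstandard) DCC set. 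Note, however, that the derived set is built from $I\cup\bigl(\bigcup_i t_iJ\bigr)$, not from $I\cup J$: the $t_i$ enter the coefficients, and one needs the standard lemma that this union is still DCC when the $t_i$ increase.

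The genuine gap is precisely the transfer step you flag but do not resolve. The claim that $t_i$ ``equals the threshold ${\rm lct}((W_i,\Delta_{W_i}-t_i\Gamma_{W_i});\Gamma_{W_i})$ up to a controlled hyperstandard correction'' is unjustified and is not how the reduction works: under adjunction $t_i$ does not survive as a threshold on $W_i$, but only enters the coefficients of $\Delta_{W_i}$ through expressions of the form $\frac{m-1+\sum_j n_ja_j+n\,t_i\,\gamma}{m}$; there is no natural divisor on $W_i$ whose lct recovers $t_i$, and for $s<t_i$ inversion of adjunction changes which divisors have coefficient one, so $W_i$ need not even be a stratum of the perturbed pair. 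Consequently the bounded-dimension lct ACC of~\cite{HMX14} cannot be invoked as a black box at this point. The mechanism that actually closes the argument --- as in~\cite{FMP22}, and as in the classical derivation of ACC for thresholds from the global ACC --- is coefficient bookkeeping plus the \emph{global} ACC for numerically trivial pairs: since $K_{W_i}+\Delta_{W_i}\equiv 0$ over the image of $W_i$ in $X_i$, one obtains (after restricting to a general fiber when that image is positive-dimensional) projective log Calabi--Yau pairs of dimension at most $c$ with coefficients in a fixed DCC set, and the global ACC forces these coefficients into a finite set, contradicting strict monotonicity in $t_i$ once one arranges that an lc place with positive multiplicity along $\Gamma_i$ contributes a coefficient genuinely depending on $t_i$ (with multiplicities constant along a subsequence). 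Your proposal leaves exactly these two points open: (i) making $t_i$ appear with positive multiplicity in a surviving coefficient, which you mention only as a degeneration to ``rule out,'' and (ii) the positive-dimensional-center case, where the $t_i$-carrying component must be shown to dominate the base or else a generalized-pair argument with ACC for generalized thresholds must be run, as in your fallback sketch. With the global ACC substituted for the lct ACC and these two verifications supplied, your reduction would become a complete proof along the same lines as the cited one.
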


The authors also prove
that accumulation points
of log canonical thresholds
with coregularity $c$
come from log canonical thresholds
with coregularity less than $c$.
In the case of coregularity zero,
the only accumulation points
are inherit from the sets $I$ and $J$.
The main idea is to reduce the threshold
computation to a minimal log canonical centers,
which by definition must have dimension at most $c$.
It is expected, that not only canonical threshold,
but other invariants can be reduced to computations
on minimal log canonical centers.

A polytopal version of the log canonical threshold,
the so-called log canonical threshold polytope, 
was introduced by Musta\c{t}\u{a} and Libgober in~\cite{ML11}.
Let $X$ be a normal quasi-projective variety with klt type singularities.
Let $D_1,\dots,D_r$ be a sequence of 
effective $\qq$-Cartier divisors on $X$.
The log canonical threshold polytope is defined to be:
\[
P(X;D_1,\dots,D_r):=
\{ (t_1,\dots,t_r)\in \rr_{\geq 0}^r \mid 
(X,t_1D_1+\dots+t_rD_r) \text{ is log canonical}
\}. 
\]
Analogously as in the previous case, 
we can define log canonical thresholds polytopes
of coregularity $c$. 
This motivates the following question.
\begin{question}
Does the ascending chain condition
for log canonical threshold polytopes
of dimension $r$
and coregularity $c$ 
holds?
\end{question}
In the previous problem, we are fixing 
the number of divisors $r$
so all the polytopes have the same dimension
and they can be compared in the same affine space.
Here, ascending chain condition, means with respect
to the inclusion of polytopes.
In a similar vein, one can ask whether the ascending chain
condition of the volume of such polytopes hold.

An invariant that behaves similar to log canonical thresholds
is the pseudo-effective threshold.
The pseudo-effective threshold
of a variety $X$ with respect to a divisor $D$
is the smallest real number $t$ for which
$K_X+tD$ is pseudo-effective. 
If such a number does not exist, 
then we just say that the pseudo-effective threshold is infinite.
We denote the pseudo-effective threshold by $p(X;D)$.
We can define the set of pseudo-effective thresholds
with coregularity at most $c$ to be 
\[
\mathcal{P}_c(I,J):=
\{ 
p \mid p=p(X;D), 
{\rm coeff}(D)\in I, \text{ and }
{\rm coreg}(X,pD)\leq c 
\}.
\]
In this direction, we propose the following question.

\begin{question}
Let $c$ be a positive integer.
Let $I$ be a set satisfying the descending chain condition.
Does $\mathcal{P}_c(I)$ satisfy the ascending chain condition?
\end{question}

We expect a positive answer to the previous question.
In~\cite{FMP22}, the authors prove a similar statement
for numerically trivial thresholds.
However, we expect the case of pseudo-effective thresholds to 
require some new ideas.

\subsection{K-stability}
The algebraic K-stability theory
uses techniques from algebraic geometry
to decide whether
a Fano variety admits a K\"ahler-Einstein metric.

In~\cite{Fuj19,Li19}, Fujita and Li introduced a 
valuative criterion to detect K-semistability
of K-stability of a Fano variety.
These criteria can be written in terms
of blow-ups of the Fano variety $X$.

\begin{definition}
{\em 
Let $X$ be a $n$-dimensional Fano variety.
Let $p\colon Y\rightarrow X$ be a projective birational morphism
extracting a prime divisor $E$ over $X$.
The {\em expected multiplicity}
$S_E(X)$ of $-K_X$ at $E$ is defined to be
\[
\frac{1}{(-K_X)^n}\int_0^{\infty} {\rm vol}(p^*(-K_X)- tE) dt.
\] 
This invariant is called the expected multiplicity
because somehow it measures the average multiplicity
at the divisor $E$ among 
effective divisos which are $\qq$-linearly 
equivalent to $-K_X$.
The expected multiplicity $S_E(X)$
only depends on $E$ and does not depend on the chosen 
model $Y$.
}
\end{definition}

\begin{definition}
{\em 
Let $X$ be a $n$-dimensional Fano variety.
Let $E$ be a prime divisor over $X$.
The {\em beta invariant} of $E$ is defined to be
the difference between the log discrepancy
at $E$
and the expected multiplicity at $E$, i.e., we define
\[
\beta_E(X) := a_E(X) - S_E(X).
\] 
}
\end{definition}

The following theorem follows from the work of Fujita, Li, Blum, and Xu~\cite{Fuj19,Li19,BX19}.

\begin{theorem}
Let $X$ be a Fano variety.
\begin{itemize}
    \item The variety $X$ is K-semistable
    if and only if $\beta_E(X)\geq 0$ for every $E$ over $X$.
    \item The variety $X$ is K-stable if and only if
    $\beta_E(X)>0$ for every $E$ over $X$.
\end{itemize}
\end{theorem}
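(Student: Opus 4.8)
The plan is to convert the differential-geometric definition of K-stability, phrased via test configurations and the Donaldson--Futaki invariant, into the purely valuative quantity $\beta_E(X)=a_E(X)-S_E(X)$. I would build the argument on three pillars: a reduction to special test configurations, an intersection-theoretic computation of the Donaldson--Futaki invariant, and the stability threshold $\delta(X)$. First I would recall that $X$ is K-semistable (resp.\ K-stable) exactly when $\mathrm{DF}(\mathcal{X},\mathcal{L})\geq 0$ (resp.\ $>0$ for every nontrivial $(\mathcal{X},\mathcal{L})$) for all test configurations degenerating $(X,-K_X)$ over $\mathbb{A}^1$, with their $\mathbb{G}_m$-action. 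The first key step, due to Li and Xu, is to run a suitable relative minimal model program on the total space $\mathcal{X}$ over $\mathbb{A}^1$, replacing an arbitrary test configuration by a \emph{special} one whose central fiber is again a klt Fano variety, without increasing $\mathrm{DF}$. This reduces the whole problem to special test configurations.

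Next, for a special test configuration I would invoke the Fujita--Li computation: the central fiber determines a divisorial valuation on the function field of $X$, equivalently a prime divisor $E$ over $X$, and the intersection-theoretic formula for the Donaldson--Futaki invariant yields the identity $\mathrm{DF}(\mathcal{X},-K_{\mathcal{X}/\mathbb{A}^1})=\beta_E(X)$. Conversely, the divisors arising this way are precisely the \emph{dreamy} ones (finitely generated associated graded ring). Packaging everything through the stability threshold
\[
\delta(X):=\inf_{E}\frac{a_E(X)}{S_E(X)},
\]
where the infimum is taken over all prime divisors $E$ over $X$ (Blum--Jonsson, Fujita--Odaka), the semistable case becomes the chain of equivalences $\delta(X)\geq 1 \Leftrightarrow \beta_E(X)\geq 0$ for every $E$. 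The nontrivial content here is that dreamy valuations suffice to detect the sign of the infimum, which follows from density together with lower semicontinuity of $a_E/S_E$; once $\mathrm{DF}=\beta_E$ is in hand, this direction is formal.

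The hard part will be the strict, K-stable case, since pointwise positivity $\beta_E(X)>0$ for every $E$ is a priori weaker than the uniform inequality $\delta(X)>1$. To close this gap I would appeal to the compactness and properness results of Blum and Xu~\cite{BX19}: the infimum defining $\delta(X)$ is actually attained, and in the threshold situation $\delta(X)=1$ it is attained by a divisorial valuation inducing a genuine special test configuration with $\mathrm{DF}=0$. Hence $\beta_E(X)>0$ for all $E$ forces $\delta(X)>1$, and combining this with the equivalence between uniform and ordinary K-stability for Fano varieties produces K-stability. I expect the two steps carrying essentially all the depth of the statement to be the MMP reduction to special test configurations and the existence of a $\delta$-minimizer; the remaining identifications between $\mathrm{DF}$, $\beta_E$, and $\delta$ are comparatively bookkeeping, and I would keep those calculations to a minimum in the write-up.
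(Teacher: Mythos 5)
The paper itself offers no proof of this theorem: it is stated as a survey result with citations to Fujita, Li, and Blum--Xu, so your proposal must be measured against those references. Your first two pillars match them exactly: the Li--Xu MMP reduction to special test configurations and the identity (up to a positive normalization constant) between the Donaldson--Futaki invariant of a special test configuration and $\beta_E$ for the associated dreamy divisor are precisely the backbone of the semistable criterion. In the semistable case your phrase ``density together with lower semicontinuity of $a_E/S_E$'' is not how the passage from dreamy divisors to arbitrary $E$ actually works: the direction you need (K-semistable $\Rightarrow \beta_E \geq 0$ for \emph{every} $E$) is proved by approximating the filtration induced by $E$ by its finitely generated truncations, producing test configurations whose normalized Donaldson--Futaki invariants converge to $\beta_E$ (Fujita; Blum--Jonsson). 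That is a fixable sketch-level imprecision.

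The genuine gap is in your treatment of the K-stable case. First, Blum--Xu do \emph{not} show that when $\delta(X)=1$ the infimum is attained by a \emph{divisorial} valuation; their minimizer is only quasi-monomial, possibly of higher rank, and at that point it induces no test configuration. Second, and more seriously, your key inference ``$\beta_E(X)>0$ for all $E$ forces $\delta(X)>1$'' is exactly the statement that K-stability implies uniform K-stability, which is \emph{not} in Blum--Xu: it requires the finite generation theorem of Liu--Xu--Zhuang for higher-rank minimizers, a much deeper result that postdates the theorem you are proving (and is cited elsewhere in the paper for a different purpose). Your route is therefore both unsupported by the reference you invoke and unnecessarily strong. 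The correct argument, which is what Blum--Xu actually do, avoids $\delta>1$ entirely: the implication $\beta_E>0$ for all $E$ $\Rightarrow$ K-stable is the \emph{easy} direction (any nontrivial special test configuration corresponds to a dreamy divisor $E$ with $\mathrm{DF}=c\,\beta_E>0$); for the hard direction, suppose $X$ is K-stable but $\beta_E=0$ for some divisorial $E$. Semistability gives $\delta(X)\geq 1$, so $E$ computes $\delta(X)=1$; Blum--Xu then show that such a divisorial minimizer is a log canonical place of a $\mathbb{Q}$-complement, hence dreamy (finite generation here comes from BCHM on a Fano type variety, not from any compactness of the valuation space), and the resulting nontrivial special test configuration has $\mathrm{DF}=0$, contradicting K-stability. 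Note the connection to complements is exactly the point the paper emphasizes immediately after the theorem. As written, your proof of the strict case does not go through with the tools you cite.
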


It is known that
it suffices to check the condition
$\beta_E(X)\geq 0$ (resp. $\beta_E(X)>0$)
for divisors $E$ which are 
log canonical places
of complements of $X$. 
This means that it suffices to check 
the divisors $E$ for which
$a_E(X,B)=0$ for some $\qq$-complement $B$ of $X$.
Furthermore, in the K-semistable case, there exists a divisor $E_0$ over $X$
which minimizes $\beta$.
The following question is proposed by Xu:

\begin{question}\label{question:xu}
Let $X$ be a Fano variety.
Can we find a complement $(X,B)$
that computes the coregularity of $X$
for which the minimizer of $\beta$
is a log canonical place of $(X,B)$?
\end{question} 

Similar questions can be asked for other invariants
related to K-stability:
the alpha-invariant,
the delta-invariant,
and the normalized volume.
Note that, in the case of Fano varieties of coregularity zero, 
a positive answer to Conjecture~\ref{conj:structural-coreg-0}
and Question~\ref{question:xu}, implies that, 
in order to detect K-semistability
it suffices the linear systems
$H^0(-K_X)$ and $H^0(-2K_X)$
and the associated log canonical places.
Analogously, 
in the case of Fano varieties of coregularity one,
we expect that it suffices to study
$H^0(-NK_X)$ for $N$ at most $6$.
This would considerably simplify the work 
to check the K-stability of Fano varieties of small coregularity.

\subsection{Fundamental groups}
In this subsection, we discuss connections 
of the coregularity with fundamental groups
of klt singularities and Fano type varieties.
We also discuss connections
with log canonical singularities
and log Calabi--Yau pairs. 
We start with the definition of local fundamental groups. 
For simplicity, throughout this subsection, we work
over the field of complex numbers.

\begin{definition}
{\em 
Let $(X,\Delta;x)$ be a singularity of pairs.
The {\em standard approximation} $\Delta_s$ of $\Delta$
is the largest effective divisor $\Delta_s \leq \Delta$
whose coefficients have the form $1-\frac{1}{m}$ for 
positive integers $m$.
If ${\rm coeff}_P(\Delta_s)=1-\frac{1}{m}$, then we say that
$\Delta$ has {\em orbifold index} $m$ at $P$, 
which will be denoted by $m_P$.
For each prime component $P$ of $\Delta$, 
we denote by $\gamma_P$ a loop around $P$.
Given an open subset $U\subset X$ containing $x$, 
we denote by $\pi_1(U,\Delta|_U)$ to be 
the quotient of $\pi_1(U\setminus \supp(\Delta|_U))$
by the smallest normal subgroup generated by the elements
$\gamma_P^{m_P}$.
The {\em regional fundamental group} 
of $(X,\Delta;x)$, denoted by $\pi_1^{\rm reg}(X,\Delta;x)$
is defined to be the inverse limit
of the groups
$\pi_1(U,\Delta|_U)$,
where $U$ runs over all the neighborhoods of $x\in X$.

Analogously, if $(X,\Delta)$ is a quasi-projective pair,
now in the global setting, 
we define $\pi_1^{\rm reg}(X,\Delta)$ to be the inverse
limit of $\pi_1(U,\Delta|_U)$ for all big open subsets
$U\subset X$.
}
\end{definition}

In~\cite{KK14}, Koll\'ar and Kapovich proved that 
any finitely presented group
can appear as the regional fundamental group of a normal algebraic singularity.
However, the situation is much better when we consider singularities 
of the minimal model program. 
In~\cite{Bra20}, Braun proved that the regional fundamental group
of a klt type singularity is finite: 

\begin{theorem}
Let $(X,\Delta;x)$ be a klt type singularity.
Then, the group
$\pi_1^{\rm reg}(X,\Delta;x)$ is finite.
\end{theorem}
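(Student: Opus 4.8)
The plan is to run a simultaneous induction on dimension, interweaving this local statement with a global one: that the orbifold fundamental group $\pi_1^{\rm orb}(E,\Delta_E)$ of a Fano type pair of dimension $n-1$ is finite. As a first reduction I would invoke the local boundedness of complements to replace $\Delta$ by an $N$-complement $(X,B;x)$ with $x$ a log canonical center and $N(K_X+B)\sim 0$ near $x$; since enlarging the boundary only enlarges the non-klt locus and the meridian relations we must control, it suffices to bound $\pi_1^{\rm reg}$ for this enriched structure. The role of Theorem~\ref{thm:boundedness-complements} here is essential: it forces the boundary coefficients appearing on the auxiliary Fano variety to lie in the fixed set $\frac{1}{N}\zz$, which is exactly what makes the induced orbifold structure, and hence its fundamental group, tractable in a manner depending only on $n$ and $N$.

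Next I would pass from the singularity to a projective variety via the plt blow-up of Proposition~\ref{prop:from-klt-to-Fano}: take $\pi\colon Y\to X$ extracting a unique prime divisor $E$ over $x$ and apply adjunction to produce a Fano type pair $(E,\Delta_E)$ of dimension $n-1$, with coefficients controlled by $N$. By the degeneration of $(X;x)$ to its orbifold cone over $E$ discussed in subsection~\ref{subsec:fano-vs-klt}, together with the cone description of Proposition~\ref{prop:from-Fano-to-klt}, the link of $(X;x)$ is (up to finite data) a Seifert $S^1$-fibration over the orbifold $(E,\Delta_E)$, whose circle direction is the $\mathbb{G}_m$-direction of the cone. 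This yields an exact sequence
\[
\zz/m \longrightarrow \pi_1^{\rm reg}(X,\Delta;x) \longrightarrow \pi_1^{\rm orb}(E,\Delta_E) \longrightarrow 1,
\]
in which the left-hand cyclic group is central and finite because $-(K_E+\Delta_E)$ is an ample $\qq$-divisor of bounded index. Since an extension of a finite group by a finite group is finite, finiteness of $\pi_1^{\rm reg}(X,\Delta;x)$ is reduced to finiteness of $\pi_1^{\rm orb}(E,\Delta_E)$.

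The global step is then to show $\pi_1^{\rm orb}(E,\Delta_E)$ is finite for a Fano type pair. Because Fano type varieties are rationally connected and rationally connected varieties are simply connected, the underlying variety $E$ contributes nothing, so the only generators are the meridian loops around the prime components $P$ of $\Delta_E$, taken modulo $\gamma_P^{m_P}$. To control the relations among these meridians I would restrict to the log canonical centers of a dlt structure on $(E,\Delta_E)$: each such center is again rationally connected by Koll\'ar--Xu, while the behaviour of $(E,\Delta_E)$ transverse to a center is governed by a strictly lower-dimensional klt singularity, to which the inductive hypothesis applies. Feeding in the finiteness of those lower-dimensional regional fundamental groups produces enough relations to close the induction and bound $\pi_1^{\rm orb}(E,\Delta_E)$.

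The main obstacle is the gap between the topological fundamental group and its profinite completion. Finiteness of the \emph{\'etale} regional fundamental group — equivalently a bound on the order of every finite quotient of $\pi_1^{\rm reg}$ — is available from Xu~\cite{Xu14}, so what genuinely remains is to rule out an infinite group admitting only finitely many finite quotients. This is precisely where the uniformity of Theorem~\ref{thm:boundedness-complements} is indispensable: the bounded complement makes the Seifert data, the index $m$, and the meridian relations above depend only on $n$ and $N$, so the extension and the inductive steps are realized inside a single finite group of \emph{bounded order}, not merely inside something with bounded finite quotients. I expect the hardest technical point to be making the transverse-singularity analysis along the log canonical centers of $(E,\Delta_E)$ genuinely topological rather than purely algebraic, so that the inductively finite pieces assemble into honest finite relations and no phantom infinite part of $\pi_1^{\rm orb}(E,\Delta_E)$ can survive.
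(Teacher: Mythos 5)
First, note that the paper does not actually prove this theorem: it is quoted from Braun~\cite{Bra20}, with the remark that it rests on the analogous projective statement that $\pi_1^{\rm reg}$ of a Fano type pair is finite (also stated in the survey without proof). Your local-to-global reduction — plt blow-up as in Proposition~\ref{prop:from-klt-to-Fano}, degeneration to the orbifold cone, and the Seifert-type extension of $\pi_1^{\rm orb}(E,\Delta_E)$ by a finite cyclic group coming from the $\mathbb{G}_m$-direction — is indeed the route the paper gestures at and the one used in the literature, so that half of your proposal is sound. The problem is that you then have to prove the global statement for the Fano type pair $(E,\Delta_E)$, which is the entire content of Braun's theorem, and your sketch of it contains two genuine gaps.

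The first gap is the step ``Fano type varieties are rationally connected, hence simply connected, so the underlying variety $E$ contributes nothing and only meridians remain.'' Simple connectedness of a (resolution of a) rationally connected variety controls $\pi_1(E)$, not $\pi_1$ of the smooth locus minus the boundary; the latter is normally generated by meridians around boundary \emph{and} exceptional divisors, and normal generation by elements of finite order does not imply finiteness of the group. A concrete test case: for $\bigl(\pp^1,\tfrac{1}{2}(p_1+p_2+p_3+p_4)\bigr)$ the underlying variety is simply connected, every meridian satisfies $\gamma_{p_i}^2=1$, every local group is finite, and your assembly argument would conclude finiteness — yet $\pi_1^{\rm orb}$ is the infinite $(2,2,2,2)$-triangle group. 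This pair is log Calabi--Yau rather than log Fano, which shows that your argument never actually uses anti-ampleness of $-(K_E+\Delta_E)$ in the global step; but any correct proof must use it quantitatively (in dimension one it is exactly what separates spherical from Euclidean and hyperbolic triangle groups), and in higher dimension this is done in \cite{Bra20} by an induction through the MMP (Mori fiber space structures, comparing $\pi_1^{\rm orb}$ of fiber, total space, and orbifold base), not by feeding local finiteness into a relation count. The second gap is your claim that the uniformity of Theorem~\ref{thm:boundedness-complements} bridges the profinite gap. You correctly isolate the issue — Xu's theorem~\cite{Xu14} only bounds finite quotients, and an infinite group can have finite profinite completion — but bounded complements bound indices and coefficients, i.e.\ exactly the kind of data that again only controls finite quotients (degrees of quasi-\'etale covers). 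Bounded presentation data does not bound a group: the $(2,3,7)$-triangle group has tiny Seifert and meridian data and is infinite. Saying the group is ``realized inside a single finite group of bounded order'' presupposes the finiteness you are trying to prove, so as written this step is circular, and the actual crossing of the profinite gap in \cite{Bra20} requires the genuinely new global argument, not boundedness of complements.
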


This result is based on an analogous projective statement for Fano type varieties. 

\begin{theorem}
Let $(X,\Delta)$ be a projective pair of Fano type.
Then, the group
$\pi_1^{\rm reg}(X,\Delta)$ is finite. 
\end{theorem}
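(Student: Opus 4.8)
The plan is to reduce the finiteness of $\pi_1^{\rm reg}(X,\Delta)$ to a uniform bound on the degrees of the connected finite covers it classifies, and to produce that bound from the boundedness of Fano varieties. Throughout I write $\Delta_s$ for the standard approximation, so that the orbifold structure defining $\pi_1^{\rm reg}(X,\Delta)$ is recorded by the indices $m_P$ of $\Delta_s$, and I note that since $(X,\Delta)$ is of Fano type the auxiliary pair $(X,\Delta_s)$ is again of Fano type and $-(K_X+\Delta_s)$ is big. First I would set up the Galois correspondence: a finite quotient of $\pi_1^{\rm reg}(X,\Delta)$ corresponds to a connected finite cover $\pi\colon Y\to X$ that is \'etale over $X\setminus\supp(\Delta_s)$ and whose ramification index over each $P$ divides $m_P$, and by the extension theorem for such covers (in the spirit of Grauert--Remmert, as used in~\cite{Bra20}) these topological orbifold covers are algebraic. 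The defining property is that $\pi$ is \emph{crepant} over $(X,\Delta_s)$: writing $\Delta_Y\ge 0$ for the residual ramification boundary one has $K_Y+\Delta_Y=\pi^*(K_X+\Delta_s)$. It therefore suffices to bound $\deg\pi$ by a constant independent of the cover, since a uniform degree bound forces the orbifold universal cover to be finite, and hence $\pi_1^{\rm reg}(X,\Delta)$ to be a finite group (take a cover of maximal degree and observe it admits no further nontrivial cover).

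Next I would show the covers stay in one bounded family. Because $(X,\Delta_s)$ is of Fano type, I can choose $\Theta\ge\Delta_s$ with $(X,\Theta)$ klt and $-(K_X+\Theta)$ ample; pulling this structure back crepantly gives a klt pair $(Y,\Theta_Y)$ with $-(K_Y+\Theta_Y)=\pi^*\bigl(-(K_X+\Theta)\bigr)$ ample, so each $Y$ is again of Fano type. The essential point is uniform control of singularities: crepant covers preserve log discrepancies, so if $(X,\Delta_s)$ is $\epsilon$-log canonical then every $(Y,\Delta_Y)$ is $\epsilon$-log canonical for the \emph{same} $\epsilon>0$. Hence all the $Y$ are $n$-dimensional $\epsilon$-lc Fano type varieties, and by Birkar's boundedness theorem (BAB) they form a bounded family; in particular their anticanonical volumes are bounded above by a constant $V=V(n,\epsilon)$.

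Finally I would extract the degree bound. Since $K_Y+\Delta_Y=\pi^*(K_X+\Delta_s)$ and $\pi$ is finite, multiplicativity of volume under finite pullback together with $\Delta_Y\ge 0$ gives
\[
\deg\pi\cdot\vol\bigl(-(K_X+\Delta_s)\bigr)=\vol\bigl(-(K_Y+\Delta_Y)\bigr)\le\vol(-K_Y)\le V.
\]
As $\vol\bigl(-(K_X+\Delta_s)\bigr)>0$ by bigness, this yields $\deg\pi\le V/\vol\bigl(-(K_X+\Delta_s)\bigr)$, uniformly in the cover, completing the reduction and hence the proof.

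The main obstacle is the uniform singularity bound in the second step: one must guarantee that a single $\epsilon$ works for \emph{all} covers simultaneously, and this is exactly what crepancy buys --- without it the covers could escape every bounded moduli and BAB would not apply. A secondary subtlety, which I would import wholesale from~\cite{Bra20}, is the algebraicity and Galois-theoretic packaging of the orbifold covers (the passage from the analytically defined $\pi_1^{\rm reg}$ to honest finite morphisms of varieties) together with the final deduction that a degree bound on covers yields an honestly finite group rather than merely a finite profinite completion.
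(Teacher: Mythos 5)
Note first that the paper you are working from does not actually prove this theorem: it is stated as a survey item and attributed to Braun~\cite{Bra20}, so your proposal has to be measured against Braun's argument. Your first three steps are a faithful and essentially correct sketch of the \emph{boundedness half} of that argument, which in substance goes back to Xu's finiteness of the local \'etale fundamental group~\cite{Xu14}: the Galois correspondence with quasi-\'etale orbifold covers, the crepancy $K_Y+\Delta_Y=\pi^*(K_X+\Delta_s)$ with $\Delta_Y\geq 0$ because the ramification indices divide the orbifold indices $m_P$, the preservation of $\epsilon$-log canonicity (log discrepancies transform as $a_{E'}(Y,\Delta_Y)=r\,a_E(X,\Delta_s)$ with $r\geq 1$), and the degree bound via BAB together with multiplicativity of volume under finite pullback are all sound, modulo routine care in invoking BAB (Birkar's statement wants $-(K_Y+\Delta_Y)$ nef and big, so one needs a further small reduction, harmless since $Y$ is of Fano type).

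The genuine gap is your final deduction. A uniform bound on the degrees of \emph{finite} covers proves only that every finite quotient of $\pi_1^{\rm reg}(X,\Delta)$ has bounded order, i.e., that the profinite completion (the \'etale orbifold fundamental group) is finite. After passing to the cover of maximal degree, you are left with a Fano type pair whose regional fundamental group has no nontrivial finite quotients, and nothing in your argument rules out that this group is infinite: there exist infinite finitely presented groups with no nontrivial finite quotients whatsoever (Higman's group is the standard example), and fundamental groups of smooth loci of quasi-projective varieties are not known to be residually finite, so no abstract group theory closes this. Showing that a nontrivial regional fundamental group of a Fano type pair always admits a nontrivial finite quotient --- equivalently, upgrading the profinite statement to the topological one --- is precisely the central innovation of~\cite{Bra20}; the profinite statement was essentially known before. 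So when you propose to ``import wholesale from~\cite{Bra20} \dots the final deduction that a degree bound on covers yields an honestly finite group rather than merely a finite profinite completion,'' you are importing the very theorem you set out to prove. Calling this a secondary packaging subtlety misjudges where the difficulty lies: Braun's handling of it requires substantial additional geometric input beyond the degree bound, whereas everything you wrote out in detail is the part that was already understood.
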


Once the finiteness is settled,
it is natural to try to understand which groups
we can find in a fixed dimension. 
For instance, every finite group $G$ 
appears as the regional fundamental group of a klt type singularity.
Indeed, we can find a representation $G\rightarrow{\rm GL}_n(\cc)$
for which the induced action of $G$ on $\cc^n$ is free in codimension one.
Then, the quotient $X_G:=\cc^n/G$ satisfies that
$\pi_1^{\rm reg}(X_G;0)\simeq G$.
Thus, the regional fundamental groups of quotient singularities
are all possible finite groups. 
On the other hand, if we want to study 
quotient singularities of a fixed dimension, 
we need to understand finite groups in a fixed ${\rm GL}_n(\cc)$.
The following result is due to Camille Jordan 
gives a strong control of such groups.

\begin{theorem}
There exists a constant $c(n)$, only depending on $n$, satisfying the following.
Let $G\leqslant {\rm GL}_n(\cc)$ be a finite subgroup. 
Then, there exists a normal abelian subgroup
$A\leqslant G$ of index at most $c(n)$.
\end{theorem}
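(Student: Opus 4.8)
The plan is to prove this classical theorem of Jordan by the commutator-contraction argument, in the style of Bieberbach and Frobenius. First I would reduce to the unitary group: averaging any Hermitian inner product over the finite group $G$ produces a $G$-invariant Hermitian form, so after conjugation we may assume $G\leqslant U(n)$, a compact group which I equip with the operator-norm metric $\|\cdot\|$. The crucial elementary estimate is that commutators of near-identity elements contract: writing $AB-BA=(A-I)(B-I)-(B-I)(A-I)$ and using $\|A^{-1}B^{-1}\|=1$ for unitary $A,B$, one obtains
\[
\|ABA^{-1}B^{-1}-I\|\leq 2\,\|A-I\|\cdot\|B-I\|.
\]
I would then fix an absolute constant $\epsilon<\tfrac12$ and let $H\leqslant G$ be the subgroup generated by $V_\epsilon\cap G$, where $V_\epsilon=\{g\in U(n)\mid \|g-I\|<\epsilon\}$.

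The heart of the argument is to show that $H$ is abelian. Among the non-identity elements of $G\cap V_\epsilon$ (if there are none, $H$ is trivial) I would pick $A$ minimizing $\delta:=\|A-I\|$. For any $B\in G\cap V_\epsilon$ the commutator $[A,B]$ again lies in $G$, and the contraction estimate gives $\|[A,B]-I\|\leq 2\delta\epsilon<\delta$; since this places $[A,B]$ back inside $V_\epsilon$ but strictly closer to $I$ than $A$, minimality of $\delta$ forces $[A,B]=I$. Thus $A$ is central in $H$. To upgrade this to full commutativity I would run an induction on $n$ via the eigenspace decomposition of the central element $A$: if $A$ is non-scalar its eigenspaces are $H$-invariant of dimension $<n$, and the corresponding projections of $H$ are generated by near-identity elements, hence abelian by induction; the remaining case in which every minimal near-identity element is scalar is handled by passing to $PU(n)$ and inducting on $|G|$. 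I expect this upgrade --- disentangling the scalar case and making the induction clean --- to be the main technical obstacle, even though the one-element centrality statement is immediate from the contraction lemma.

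With $H$ abelian in hand, bounding its index is a compactness count. Since $V_{\epsilon/2}$ is a fixed open neighborhood of $I$ and $U(n)$ is compact, finitely many left translates $g_1V_{\epsilon/2},\dots,g_NV_{\epsilon/2}$ cover $U(n)$, with $N=N(n)$ depending only on $n$ (as $\epsilon$ is an absolute constant). If two elements $g,h\in G$ lie in the same translate, then $g^{-1}h\in V_\epsilon\cap G\subseteq H$ by translation-invariance of the norm, so $g$ and $h$ determine the same left coset of $H$; hence $[G:H]\leq N$. Finally, to obtain a \emph{normal} abelian subgroup I would replace $H$ by its normal core $A:=\bigcap_{g\in G}gHg^{-1}$, which is normal, abelian (being a subgroup of $H$), and of index at most $N!$, since the action of $G$ on the at most $N$ cosets of $H$ yields a homomorphism $G\to S_N$ whose kernel is contained in $A$. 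Setting $c(n):=N(n)!$ then completes the proof.
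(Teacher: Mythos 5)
The paper itself offers no proof of this statement: it is quoted as Jordan's classical theorem (with the remark that Collins showed $c(n)=n!$ suffices for $n\geq 71$), so your proposal can only be judged against the standard classical arguments. What you outline is precisely the Bieberbach--Frobenius commutator-contraction proof, and most of it is sound as written: the unitarization by averaging, the estimate $\|[A,B]-I\|\leq 2\|A-I\|\,\|B-I\|$, the deduction that a non-identity element $A\in G\cap V_\epsilon$ of minimal distance to $I$ commutes with every element of $G\cap V_\epsilon$ and hence is central in $H$, the covering bound $[G:H]\leq N(n)$, and the passage to the normal core of index at most $N(n)!$ are all correct.

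The genuine gap is exactly where you flagged it, and your proposed repair does not close it. If the minimal element $A$ is scalar, it is central in $G$ for free and its eigenspace decomposition is trivial, so you must instead minimize $\|A-I\|$ over the \emph{non-scalar} elements of $G\cap V_\epsilon$ (if none exist, $H$ consists of scalars and is trivially abelian); minimality then only forces $[A,B]$ to be $I$ \emph{or a scalar}, and your suggestion of passing to ${\rm PU}(n)$ suffers the same leak: abelianness modulo scalars only yields $[H,H]\subseteq\{\lambda I\}$, i.e.\ $H$ nilpotent of class at most two, not abelian. The standard fix is the determinant trick: a commutator has determinant $1$, so a scalar commutator equals $\lambda I$ with $\lambda^n=1$, while $|\lambda-1|=\|[A,B]-I\|\leq 2\epsilon^2$; since distinct $n$-th roots of unity satisfy $|\lambda-1|\geq 2\sin(\pi/n)$, choosing $\epsilon=\epsilon(n)<\sin(\pi/n)$ forces $\lambda=1$. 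With that, $A$ is a central non-scalar element of $H$, its eigenspaces give $\mathbb{C}^n=\bigoplus_i V_i$ with $\dim V_i<n$, the restrictions $H|_{V_i}$ are finite subgroups of $U(V_i)$ generated by elements within $\epsilon$ of the identity (operator norms do not increase on invariant subspaces), and your induction on dimension runs with the single constant $\epsilon(n)$, since $\sin(\pi/m)>\sin(\pi/n)$ for $2\leq m<n$; as $H\hookrightarrow\prod_i H|_{V_i}$, abelianness follows. Note that this makes $\epsilon$ depend on $n$, contrary to your ``absolute constant,'' but harmlessly so: the covering number, and hence $c(n)$, was always going to depend on $n$.
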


Collins proved that for $n\geq 71$, one can indeed take $c(n)=n!$.
In some words, the previous theorem says that finite subgroups
of ${\rm GL}_n(\cc)$ are ``almost abelian" with respect to the dimension.
Note that the previous theorem can also simplify the study
of quotient singularities.
Indeed, the rank of $A$ is at most $n$ and 
the quotient $\cc^n/A$ is a toric singularity.
Thus, any $n$-dimensional quotient singularity admits a cover
of degree at most $c(n)$ which makes it a toric singularity.
As a consequence, we have the following corollary.

\begin{corollary}
There exists a constant $c(n)$, only depending on $n$ satisfying the following.
Let $(X;x)$ be a $n$-dimensional quotient singularity. 
Then, there exists a normal abelian subgroup
$A\leqslant \pi_1^{\rm reg}(X;x)$ 
of index at most $c(n)$ and rank at most $n$.
\end{corollary}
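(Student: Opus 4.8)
The plan is to combine Jordan's theorem (stated just above) with the identification of the regional fundamental group of a quotient singularity with the acting group. First I would recall that if $(X;x)$ is an $n$-dimensional quotient singularity, then after discarding pseudo-reflections via the Chevalley--Shephard--Todd theorem, we may write $(X;x) \simeq (\cc^n/G; 0)$ for a finite subgroup $G \leqslant {\rm GL}_n(\cc)$ acting freely in codimension one, and that under this normalization one has a canonical isomorphism $\pi_1^{\rm reg}(X;x) \simeq G$. This is exactly the computation $\pi_1^{\rm reg}(X_G; 0) \simeq G$ recorded in the discussion preceding the statement, so I may invoke it directly.

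With this identification in hand, I would apply Jordan's theorem to the finite group $G \leqslant {\rm GL}_n(\cc)$. This produces a normal abelian subgroup $A \leqslant G$ of index at most $c(n)$, where $c(n)$ is the Jordan constant depending only on $n$. Transporting $A$ across the isomorphism $G \simeq \pi_1^{\rm reg}(X;x)$ gives a normal abelian subgroup $A \leqslant \pi_1^{\rm reg}(X;x)$ of index at most $c(n)$, which settles the index claim. The remaining assertion is the bound on the rank of $A$: since $A$ is abelian and sits inside ${\rm GL}_n(\cc)$, its elements are simultaneously diagonalizable, so $A$ embeds into the diagonal torus $(\cc^\times)^n \cong (\G_m)^n$. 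A finite abelian subgroup of an $n$-dimensional torus is generated by at most $n$ elements, hence has rank at most $n$. Equivalently, as remarked in the text, the quotient $\cc^n/A$ is a toric singularity, which forces $A$ to be a finite subgroup of the torus and therefore of rank at most $n$.

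The argument is essentially a dictionary translation: once the algebraic statement (Jordan) is placed against the geometric one (quotient singularities realize all finite groups as regional fundamental groups, with the group recoverable from the singularity), the corollary is immediate. The only genuine point requiring care—and the step I expect to be the main obstacle—is the normalization allowing one to pass from an arbitrary $n$-dimensional quotient singularity to the presentation $\cc^n/G$ with $G$ free in codimension one and with $\pi_1^{\rm reg} \simeq G$ on the nose; this relies on the Chevalley--Shephard--Todd reduction to remove pseudo-reflections and on the fact that after this reduction the regional fundamental group is computed purely by the deck group. Granting that identification, the diagonalizability of finite abelian linear groups and the resulting rank bound are routine.
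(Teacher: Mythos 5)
Your proposal is correct and takes essentially the same approach as the paper: the corollary is obtained by combining Jordan's theorem with the identification $\pi_1^{\rm reg}(\cc^n/G;0)\simeq G$ for $G$ acting freely in codimension one (after the Chevalley--Shephard--Todd reduction removing pseudo-reflections), exactly as the paper does in the discussion immediately preceding the statement. Your rank bound via simultaneous diagonalization of the finite abelian subgroup $A\leqslant {\rm GL}_n(\cc)$, embedding $A$ into the torus $(\mathbb{G}_m)^n$, is the same observation the paper records when it notes that $\cc^n/A$ is a toric singularity.
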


We expect the behavior of klt type singularities
to be similar to such of quotient singularities.
The following theorem gives a realization
of this principle from the perspective of fundamental groups.
This theorem is due to the work of
Braun, Filipazzi, Svaldi, and the author (see, e.g.,~\cite{BFMS20,Mor20c,Mor21}).

\begin{theorem}
Let $n$ be a positive integer. 
There exists a constant $c(n)$, only depending on $n$,
satisfying the following.
Let $(X,\Delta;x)$ be a klt type singularity of dimension $n$
and regularity $r$. 
Then, there exists a short exact sequence
\[
1 \rightarrow A \rightarrow \pi_1^{\rm reg}(X,\Delta;x) \rightarrow N \rightarrow 1,
\]
where $A$ is an abelian group of rank at most $r+1$
and $N$ is a group of order at most $c(n)$.
\end{theorem}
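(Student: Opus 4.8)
The plan is to combine the finiteness of the regional fundamental group with a complement computing the regularity, isolating the finitely many toric directions transverse to a minimal log canonical center: these produce the abelian part, of rank at most $r+1$, while the residual symmetries assemble into a finite group bounded uniformly in $n$ by a Jordan-type argument together with boundedness of complements. By the finiteness theorem for klt type singularities stated above, $G:=\pi_1^{\rm reg}(X,\Delta;x)$ is finite, so it suffices to produce a normal abelian subgroup $A\leqslant G$ of rank at most $r+1$ with $|G/A|\leq c(n)$. Since $G$ is finite there is a quasi-\'etale Galois cover $\pi\colon(\widetilde X,\widetilde\Delta)\to(X,\Delta)$ with group $G$ and trivial regional fundamental group at the point over $x$, relative to which the constructions below are $G$-equivariant.

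First I would choose a $\qq$-complement $\Gamma$ so that $(X,\Delta+\Gamma;x)$ is log canonical, $x$ is a log canonical center, and the coregularity $c=n-r-1$ is computed; by Theorem~\ref{thm:boundedness-complements} we may take $N_n(K_X+\Delta+\Gamma)\sim 0$ near $x$ with $N_n$ depending only on $n$. Passing to a dlt modification $(Y,B_Y)\to(X,\Delta+\Gamma)$, the dual complex $\mathcal D(X,\Delta+\Gamma;x)$ has dimension $r$, and a maximal cell corresponds to a minimal log canonical center $W\subseteq Y$ of dimension $c$ meeting exactly $r+1$ components of $\lfloor B_Y\rfloor$. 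The vertices and faces of that $r$-cell give a flag of log canonical centers $W_0\supset W_1\supset\cdots\supset W_r=W$, with $\dim W_i=n-1-i$, and along this flag the completion of $(Y,\lfloor B_Y\rfloor)$ is a finite quotient of $(\mathbb{A}^{r+1}\times W_0,(H_1+\dots+H_{r+1})\times W_0)$ by the quotient-dlt description near $W$.

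The meridians $\gamma_1,\dots,\gamma_{r+1}$ around these $r+1$ components commute, and their images generate a subgroup $A\leqslant G$ of rank at most $r+1$; because the monodromy of the family of complements merely permutes the boundary components and acts linearly on the transverse $\G_m^{r+1}$, the subgroup $A$ is normal, and it is precisely the part of $G$ acting through the transverse torus. The essential point in bounding the rank is that peeling off the flag $W_0\supset\cdots\supset W_r$ contributes exactly one meridian direction per step, so after the $r+1$ directions are accounted for, the local transverse structure along $W$ is exhausted and no further abelian rank can hide in it.

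It then remains to bound $N:=G/A$. This group is built from the monodromy permuting the $r+1$ divisors, a subgroup of $S_{r+1}$ of order at most $(r+1)!\leq n!$, together with the residual action on the minimal log canonical center $W$, which by adjunction carries a klt log Calabi--Yau structure $(W,B_W)$ of dimension $c\leq n$ with an $N_n$-complement. Here I would invoke the regional fundamental group only locally along the center, so that the global geometry of $W$ (which may be non--rationally connected) does not enter, and reduce to a klt singularity of dimension $c$; an induction on dimension, with Jordan's theorem controlling the linear action on $W_0$ and the quotient-singularity corollary above as the base case, yields $|N|\leq c(n)$. The main obstacle is exactly this last step: obtaining a bound on $N$ uniform in $n$, rather than mere finiteness, which requires both a Jordan-type control of the symmetry groups of the minimal log canonical centers and the guarantee that no residual abelian rank survives in the local structure of $W$ beyond the $r+1$ transverse directions already counted.
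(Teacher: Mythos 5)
The paper itself offers no proof of this theorem: it is quoted from the work of Braun, Filipazzi, Svaldi and the author \cite{BFMS20,Mor20c,Mor21}, so your proposal can only be measured against the strategy of those papers. In outline you do reproduce it: quotient by $G$, pull back a bounded complement (Theorem~\ref{thm:boundedness-complements}) to get a $G$-invariant one, pass to an equivariant dlt modification, let the meridians transverse to a maximal cell of the dual complex supply the abelian part of rank at most $r+1$, and handle the residue via permutations plus the action on a minimal log canonical center. But the two assertions carrying all the weight --- that $A$ is normal and that $G/A$ is uniformly bounded --- are not established, and the route you indicate for them fails.

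Concretely: (i) $G$ acts on the dual complex, which may have arbitrarily many maximal cells, and $G$ need not stabilize yours; an element moving the cell conjugates your $r+1$ meridians to meridians around other components, which your argument gives no reason to lie in $A$. The local observation that monodromy ``acts linearly on the transverse $\G_m^{r+1}$'' applies only to the stabilizer of the chosen cell, whose index in $G$ is a priori unbounded. (ii) More fundamentally, the minimal log canonical center $W$ has dimension $c=n-r-1$, in general positive, and $(W,B_W)$ is a \emph{projective} rationally connected klt log Calabi--Yau pair of bounded index (rational connectedness via \cite{HM07}, birational boundedness via \cite{BDCS20}). Such a pair can carry finite abelian symmetries of unbounded order and of rank up to $\dim W$; these lie outside your $A$, so your exact sequence forces them into $N$, and no constant $c(n)$ bounds $|N|$ by your argument. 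Your appeal to ``Jordan's theorem controlling the linear action on $W_0$'' together with induction on local singularities does not reach this: $W$ is projective, and what is actually needed is, first, the Jordan property for automorphism groups of rationally connected varieties (which rests on boundedness of Fano varieties, not on classical Jordan for ${\rm GL}_n(\cc)$), and second --- the heart of the cited works, and the reason the rank bound is $r+1$ rather than $n$ --- a rigidity statement that a finite abelian action of large rank or order along $W$ produces additional log canonical centers, hence a complement of regularity strictly greater than $r$, contradicting the maximality of $r$. You assert that ``no further abelian rank can hide'' in the transverse directions, but you never address the abelian rank hiding in the action along $W$ itself; without that mechanism your construction yields at most the existence of \emph{some} abelian subgroup of rank at most $r+1$, not a normal one of uniformly bounded index.
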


Thus, the ``abelian part" $A$ of the regional fundamental group
of a klt type singularity is determined by its regularity
while the  ``non-abelian part" $N$ is determined by the dimension.
We expect that the abelian part $A$ of $\pi_1^{\rm reg}(X,\Delta;x)$
behaves like the maximal torus of a reductive group.
However, we do not have statements about its maximality.
Even though we have a good structure theorem, 
we do not know how to control the constant $c(n)$.
It is natural to expect that $N$ is some sort of permutation group. 
We propose the following question that may enlighten :

\begin{problem}
For each dimension $n$, construct examples of $n$-dimensional 
klt type singularities $(X,\Delta;x)$ for which
$|\pi_1^{\rm reg}(X,\Delta;x)/A|> n!$ for every
abelian subgroup $A$.
\end{problem}

The previous problem is just asking to check that Collins
bound is not optimal for klt singularities. 
In low dimensions, we even expect that the possible groups 
can be classified: 

\begin{problem}
Classify the regional fundamental groups of klt $3$-fold singularities.
\end{problem}

Rationally connected varieties are quite similar to Fano varieties.
Indeed, every Fano type variety is rationally connected.
On the other hand, every rationally connected variety can 
be birationally transformed into a tower of Mori fiber spaces
whose general fibers are Fano type varieties. 
Thus, it is natural to study the regional fundamental group of
rationally connected varieties. 
In the case of singular rationally connected varieties,
we do not expect the finiteness of the regional fundamental group.
For instance, we have the following example
of a rational surface with rational log canonical singularities and infinite fundamental group:
\begin{example}
{\em 
Let $X=\{y^2-x(x^2-z^2)=0\}\subset \pp^3_{[x:y:z:t]}$, i.e., 
the projective cone over an elliptic curve. 
Observe that $X$ is invariant with respect
to the involution
\[
\tau \colon \pp^3 \rightarrow \pp^3, \text{ with }
\tau([x:y:z:w])=[-x:y:-z:t].
\] 
The fixed points of $\tau$ 
are $[0:0:0:1]$
and four other points.
Let $Y=X/\tau$.
Then $Y$ has four $A_1$ singularities
and a rational singularity $y\in Y$
which is the quotient by an involution
of a simple elliptic singularity.
The quotient $Y$ is rational.
Indeed, it contains the total space of an orbifold bundle
over $\pp^1$ with four orbifold points with isotropy $\frac{1}{2}$.
The regional fundamental group is infinite since it is an extension
of $\zz/2\zz$ and the regional fundamental group
of the cone over an elliptic curve.
}
\end{example}

However, the following conjecture seems to be optimal in this direction:

\begin{conjecture}
Let $X$ be rationally connected.
Let $(X,\Delta)$ be a dlt pair. 
Then, the regional fundamental group
$\pi_1^{\rm reg}(X)$ is finite. 
\end{conjecture}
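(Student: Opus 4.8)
The plan is to reduce to the finiteness of regional fundamental groups of Fano type varieties, which is available from the theorem of Braun, Filipazzi, Svaldi, and the author quoted above, and then to propagate finiteness along a Mori fibration by induction on $\dim X$. The first point is that the dlt hypothesis is precisely what excludes examples like the cone over an elliptic curve: if $(X,\Delta)$ is dlt then $(X,(1-\epsilon)\Delta)$ is klt for every $0<\epsilon<1$, so $X$ is of klt type. In particular $X$ has rational singularities and its regional fundamental group $\pi_1^{\rm reg}(X)$ agrees with the fundamental group of the smooth locus, which is amenable to the minimal model program; dropping the dlt hypothesis allows merely lc type singularities and the conclusion genuinely fails. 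Hence I may assume that $X$ carries a klt structure, and I would in fact prove the slightly stronger statement that $\pi_1^{\rm reg}(X)$ is finite for every rationally connected klt type variety, so that an inductive argument with boundaries on the base is available.

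Next I would run the minimal model program. As $X$ is rationally connected it is uniruled, so after a small $\qq$-factorialization and a $K_X$-MMP relative to a suitable klt boundary the program terminates with a Mori fiber space $\phi\colon X'\rightarrow Z$, where $X'$ is of klt type, the general fiber $F$ is a Fano type variety, and $Z$ is rationally connected of strictly smaller dimension. At this step I must invoke the birational invariance of the regional fundamental group within the category of klt type varieties, giving $\pi_1^{\rm reg}(X)\simeq \pi_1^{\rm reg}(X')$; this rests on the behaviour of $\pi_1^{\rm reg}$ under crepant and terminalizing modifications used in the Fano type case. The fiber $F$ is Fano type, so $\pi_1^{\rm reg}(F)$ is finite by the cited theorem, and the canonical bundle formula endows $Z$ with a boundary $\Delta_Z$ for which $(Z,\Delta_Z)$ is again rationally connected of klt type; by the inductive hypothesis $\pi_1^{\rm reg}(Z)$, and hence its orbifold quotient $\pi_1^{\rm reg}(Z,\Delta_Z)$, is finite.

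The crux is an orbifold homotopy exact sequence for $\phi$, of the shape
\[
\pi_1^{\rm reg}(F)\longrightarrow \pi_1^{\rm reg}(X')\longrightarrow \pi_1^{\rm reg}(Z,\Delta_Z)\longrightarrow 1,
\]
in which $\Delta_Z$ records the multiple fibers and the discriminant of $\phi$. Granting this, $\pi_1^{\rm reg}(X')$ is an extension of a finite group by a quotient of a finite group, hence finite, and transporting back along the MMP finishes the induction. I expect this exact sequence to be the main obstacle. Since the total space, the general fiber, and the base are all singular, one cannot simply cite the homotopy exact sequence of a topological fiber bundle; the statement must be set up on the smooth loci together with their orbifold structures, and one has to check that every loop arising from a singular or multiple fiber of $\phi$ is captured by the orbifold divisor $\Delta_Z$ on the base, and that the monodromy around the discriminant is suitably tame. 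A secondary, more routine, difficulty is to make the birational invariance of $\pi_1^{\rm reg}$ across divisorial contractions and flips precise, which I would handle by tracking the smooth locus through each elementary MMP step and using that flips are isomorphisms in codimension one.
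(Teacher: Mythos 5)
You should first be aware that the statement you are proving is posed in the paper as an open \emph{conjecture}: the paper contains no proof of it, only motivation (the cone over an elliptic curve showing that the dlt hypothesis cannot be dropped, and Braun's finiteness theorem for $\pi_1^{\rm reg}$ of Fano type pairs as the known special case). So your proposal must stand on its own, and as written it does not. To be clear about what is sound: the overall skeleton --- reduce dlt to klt type, run an MMP to a Mori fiber space $\phi\colon X'\to Z$ with Fano type general fiber $F$, apply the cited finiteness theorem to $F$, induct on the rationally connected base --- is the natural and expected line of attack, and your birational bookkeeping is essentially fine (flips are isomorphisms in codimension one, so removing closed subsets of real codimension at least four from the smooth loci identifies the groups; a divisorial contraction induces a surjection on fundamental groups of smooth loci, so finiteness propagates from the MMP output back to $X$).

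There are, however, two genuine gaps. First, a concrete error: you claim that finiteness of $\pi_1^{\rm reg}(Z)$ implies finiteness of ``its orbifold quotient'' $\pi_1^{\rm reg}(Z,\Delta_Z)$. The direction is backwards. By the paper's definition, killing all the loops $\gamma_P$ (not just their powers $\gamma_P^{m_P}$) gives a surjection $\pi_1^{\rm reg}(Z,\Delta_Z)\twoheadrightarrow\pi_1^{\rm reg}(Z)$, so the orbifold group is \emph{larger}, not smaller: already for $Z=\pp^1$ with $\Delta_Z=\frac{1}{2}\{0\}+\frac{2}{3}\{1\}+\frac{6}{7}\{\infty\}$ the orbifold fundamental group is an infinite triangle group while $\pi_1(\pp^1)$ is trivial. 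Consequently the boundary-free strengthening you propose to induct on (finiteness of $\pi_1^{\rm reg}$ for rationally connected klt type varieties) does not close the induction; you must induct on \emph{pairs} $(Z,\Delta_Z)$ with controlled coefficients, which in turn forces you to confront the fact that the canonical bundle formula a priori produces only a generalized pair whose moduli part is nef, not an effective klt boundary with standard coefficients on $Z$. Second, the orbifold homotopy exact sequence that you ``grant'' is not a deferred verification but the open core of the conjecture: no Nori-- or Campana--type exact sequence is known that applies to a Mori fiber space with klt type total space, singular and multiple fibers, and a base boundary coming from the canonical bundle formula, and proving exactness there --- in particular that $\Delta_Z$ captures every local monodromy over the discriminant --- is precisely the step at which all known approaches break. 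Your proposal is a reasonable road map to an open problem, not a proof.
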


The following conjecture relates
the fundamental group of rationally connected varieties 
and the coregularity.

\begin{conjecture} 
Let
$X$ be a rationally connected variety of dimension $n$, regularity $r$, and coregularity $c$.
Assume that $G:=\pi_1^{\rm reg}(X)$ is finite.
Then, $G$ admits a subnormal subseries:
\[
A_0 \triangleleft A_1 \triangleleft A_2 \triangleleft G,
\]
such that the following conditions are satisfied: 
\begin{itemize}
    \item $A_0$ acts on either $\mathbb{D}^{r-1}$ or $S^{r-1}$, 
    \item $A_1/A_0$ is an abelian group of rank at most $r$, and 
    \item $A_2/A_1$ has order at most $N(c)$.
\end{itemize}
Here, $N(c)$ only depends on the coregularity $c$.
\end{conjecture}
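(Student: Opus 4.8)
The plan is to bootstrap from the structure theorem for regional fundamental groups of klt type singularities and Fano type varieties, which already yields a short exact sequence $1\to A\to G\to N\to 1$ with $A$ abelian of rank at most $r+1$ and $|N|\le c(n)$, and to upgrade it by replacing the dimension-dependence of the finite part with a coregularity-dependence. The geometric backbone is a log Calabi--Yau structure $(X,B)$ computing the coregularity, so that a $\mathbb{Q}$-factorial dlt modification $\phi\colon (Y,B_Y)\to (X,B)$ has minimal log canonical centers of dimension $c$ and dual complex $\mathcal{D}(Y,B_Y)$ of dimension $r$. Since $X$ is rationally connected and $G$ is finite, there is a Galois cover $\widetilde{X}\to X$ \'etale in codimension one with group $G$; pulling back $B$ and taking a $G$-equivariant dlt modification, $G$ acts simplicially on $\mathcal{D}(Y,B_Y)$ and permutes the minimal dlt centers.

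First I would isolate the finite quotient bounded by the coregularity. The action of $G$ on $\mathcal{D}(Y,B_Y)$ factors through the automorphism group of its combinatorial type. By $\pp^1$-linking (the work of Koll\'ar--Kovacs and Theorem~\ref{thm:conn}), all minimal dlt centers of a given log Calabi--Yau pair are birational, and their configuration, together with the induced adjunction pairs, is governed by the coregularity rather than the ambient dimension. Granting the toric-like description of coregularity-$c$ complements (the generalization of Conjecture~\ref{conj:structural-coreg-0}), the combinatorial automorphisms of $\mathcal{D}(Y,B_Y)$ that fix the lattice structure form a finite group whose order is bounded by a constant $N(c)$ depending only on $c$. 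Setting $A_2$ to be the preimage in $G$ of this lattice-preserving subgroup, one obtains $A_2\triangleleft G$ with $A_2/A_1$ (defined below) of order at most $N(c)$.

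Next I would extract the abelian lattice part. After passing to the finite cover on which $\mathcal{D}(Y,B_Y)$ becomes a genuine PL-sphere of dimension $r$ (Koll\'ar--Xu; Conjecture~\ref{conj:dual-comp}), the residual action is by a torus-type symmetry whose cocharacter lattice has rank at most $r$; this produces the abelian subquotient $A_1/A_0$ of rank at most $r$, refining the rank-$(r+1)$ abelian normal subgroup of the known structure theorem. The leftover rank-one direction, together with the monodromy around a single minimal center, is absorbed into $A_0$: it acts on the link of a minimal dlt center inside $\mathcal{D}(Y,B_Y)$, which is an $(r-1)$-sphere $S^{r-1}$ when the pair has no boundary and a disk $\mathbb{D}^{r-1}$ when it does (the boundary case occurring exactly as in the two-dimensional $2$-complement examples). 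This is precisely the spherical or disk space-form action required of $A_0$.

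The hard part will be the coregularity bound on $A_2/A_1$: showing it is controlled by $N(c)$ rather than by $c(n)$. The known theorem only produces dimension-dependence, through Jordan's theorem and boundedness of complements, whereas the coregularity bound requires that the entire combinatorial and birational complexity of the minimal-center configuration be read off from the $c$-dimensional adjunction pair alone. This hinges on the conjectural toric-like normal form for coregularity-$c$ complements, known only for small $c$, and on a coregularity-only bound for automorphisms of dual complexes of log Calabi--Yau pairs, which would itself follow from a boundedness statement for minimal dlt centers in the spirit of Conjecture~\ref{conj:bir-bound-mlcc}. I would therefore expect the cases $c=0$ and $c=1$ to be accessible unconditionally, using the explicit sphere and quotient descriptions available there, while the general case must await these boundedness inputs.
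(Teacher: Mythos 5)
You should first note a mismatch of genre: the statement you are proving is an open \emph{conjecture} in the paper. The paper offers no proof at all, only a one-sentence heuristic immediately after the statement: the $G$-action should split into three pieces --- the action on the dual complex of a log Calabi--Yau structure (this is $A_0$, acting on $S^{r-1}$ or $\mathbb{D}^{r-1}$), the part fixing every stratum of the dual complex pointwise (the abelian quotient $A_1/A_0$ of rank at most $r$), and the action on a minimal log canonical center (the quotient $A_2/A_1$ of order at most $N(c)$). Your proposal, as you yourself concede in the last paragraph, is conditional on Conjecture~\ref{conj:structural-coreg-0} (and its higher-coregularity analogue), Conjecture~\ref{conj:dual-comp}, and Conjecture~\ref{conj:bir-bound-mlcc}, so even on its own terms it is a strategy, not a proof; that much is consistent with the statement being open.

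The genuine gap, however, is that you have assigned the three pieces to the wrong levels of the subnormal series, and with that assignment your key step is false, not merely conditional. You claim that the combinatorial automorphisms of $\mathcal{D}(Y,B_Y)$ preserving the lattice structure form a group of order at most $N(c)$, and you put this at the top as $A_2/A_1$, while putting the action near a minimal dlt center at the bottom as $A_0$. But the dual complex has dimension $r=n-c-1$, which grows with $n$ for fixed $c$, and so does its symmetry group: already for $c=0$ and $X=\mathbb{P}^n$ with its toric boundary, the dual complex is $\partial \Delta_n\simeq S^{n-1}$ and the fan-preserving automorphisms form $S_{n+1}$, of order $(n+1)!$ --- no constant $N(0)$ can bound this. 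This is exactly why the conjecture places the dual-complex action in $A_0$, whose order is \emph{not} bounded (it is only required to act on $S^{r-1}$ or $\mathbb{D}^{r-1}$), and reserves the $N(c)$ bound for the residual action on the $c$-dimensional minimal center, where a Jordan-type bound in dimension $c$ (fed by boundedness of the adjunction pairs on minimal dlt centers, in the spirit of Conjecture~\ref{conj:bir-bound-mlcc}) is at least plausible. There is also a dimension slip in your bottom piece: minimal dlt centers correspond to \emph{maximal} cells of $\mathcal{D}(Y,B_Y)$, not vertices, so ``the link of a minimal dlt center inside $\mathcal{D}(Y,B_Y)$'' is not an $(r-1)$-sphere in the sense you use it; what acts on $S^{r-1}$ or $\mathbb{D}^{r-1}$ in the intended picture is the group acting on the dual complex itself (equivalently, on the boundary of a maximal cell together with its identifications). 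Repairing your sketch therefore requires transposing your $A_0$ and $A_2/A_1$ and rerouting the $N(c)$ bound through the minimal center rather than through dual-complex combinatorics; after that transposition it aligns with the paper's heuristic, but it remains conditional on the same open boundedness inputs.
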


The previous conjecture can be regarded as follows. 
If $G$ is finite, then the action of $G$ on the universal cover
splits into three different pieces:
the action on the dual complex of a log Calabi--Yau structure, 
the action that fixes point-wise every stratum of the dual complex, 
and the action in the minimal log canonical center of the CY structure.

We recall the following conjecture about the regional fundamental group
of log Calabi--Yau pairs, which is motivated by Campana's conjecture~\cite{Cam11}.

\begin{conjecture}
Let $(X,\Delta)$ be a log Calabi--Yau pair of dimension $n$.
Then, there exists a short exact sequence:
\[
1\rightarrow A\rightarrow \pi_1^{\rm reg}(X,\Delta)\rightarrow N \rightarrow 1, 
\] 
where $A$ is an abelian group generated by at most $2n$ elements
and $N$ is a finite group of order at most $c(n)$. 
\end{conjecture}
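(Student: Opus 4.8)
The plan is to build the short exact sequence by isolating three sources of elements in $\pi_1^{\rm reg}(X,\Delta)$, following the heuristic that the universal-cover action splits into a contribution from the dual complex, a contribution from the orbifold monodromy around $\lfloor\Delta\rfloor$ together with an abelian variety factor, and a contribution from the minimal log canonical center. First I would replace $(X,\Delta)$ by a dlt modification $(Y,\Delta_Y)$: since the two pairs are log crepant equivalent and the regional fundamental group is a crepant birational invariant of log Calabi--Yau pairs, this reduction is harmless and lets me work with a stratified space whose strata are the log canonical centers of $\lfloor\Delta_Y\rfloor$. I would then pass to the \emph{index-one cover} of $K_Y+\Delta_Y$, a finite Galois cover, quasi-\'etale in codimension one, after which $K+\Delta$ becomes Cartier and the orbifold indices along $\Delta$ are trivialized, reducing the problem to a log Calabi--Yau pair with reduced boundary and trivial canonical class. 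The Galois group of this cover is the first contribution to the finite quotient $N$.

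Next I would extract the abelian part $A$. On the index-one cover the klt locus carries a numerically trivial canonical class, so the singular Beauville--Bogomolov decomposition (in the form proved by Druel, Greb--Guenancia--Kebekus, and H\"oring--Peternell) supplies a further finite quasi-\'etale cover splitting as a product $B\times\prod_i V_i\times\prod_j W_j$ of an abelian variety $B$ with irreducible Calabi--Yau and irreducible holomorphic symplectic factors. The factors $V_i$ and $W_j$ are simply connected, so the only infinite and the only abelian part of the fundamental group of the total cover is $\pi_1(B)\cong\zz^{2\dim B}$ with $\dim B\le n$. This produces the abelian kernel $A$, generated by at most $2n$ elements; meanwhile the reduced boundary contributes only finite cyclic monodromy (its loops have been killed by the index-one cover), and the minimal log canonical center $W$, of dimension equal to ${\rm coreg}(X,\Delta)<n$, is handled by induction on the dimension through the adjunction-induced log Calabi--Yau pair $(W,\Delta_W)$.

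To control $N$ I would assemble the finite contributions. The fundamental group of the dual complex $\mathcal D(X,\Delta)$ is finite: by Conjecture~\ref{conj:dual-comp} (known in dimension at most four by~\cite{KX16}) it is, up to a finite cover, a PL-sphere, hence simply connected, and the residual monodromy on the strata is governed by the finiteness of $\pi_1^{\rm reg}$ for klt and Fano type pairs together with Jordan's theorem applied to the linear action on $A$. Composing the Galois groups of the index-one cover and of the Beauville--Bogomolov cover with the automorphisms of $A$ arising from the stratification yields a finite group whose order is bounded purely in terms of $n$, giving the constant $c(n)$. Fitting these pieces into a single sequence $1\to A\to\pi_1^{\rm reg}(X,\Delta)\to N\to1$ requires checking that $A$ is normal and that the extension data are compatible across the strata, which is the technical heart of the argument.

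The main obstacle is twofold. First, the singular Beauville--Bogomolov decomposition is currently a \emph{varietal} (boundary-free) statement, so carrying it out in the genuinely logarithmic setting---for a pair $(X,\Delta)$ with $\Delta\neq0$ rather than for the klt ambient variety alone---demands a log version of the decomposition theorem that is not yet available in full generality, and this is where I expect new input to be needed. Second, even granting the decomposition, the hardest bookkeeping is to show that the abelian contributions from the ambient abelian variety and from the minimal log canonical center combine into a \emph{single} abelian group of total rank at most $2n$, rather than merely an iterated extension of abelian groups, and that the finite quotient stays bounded by $c(n)$ independently of the regularity; this rests on the still-open sphericity of the dual complex (Conjecture~\ref{conj:dual-comp}) in high dimension and on an effective form of Jordan's theorem for the monodromy representation.
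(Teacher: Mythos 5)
You should first be aware that the paper offers no proof of this statement: it is recorded as an open conjecture, motivated by Campana's conjecture, and the closest the paper comes is the cited \emph{theorem} for klt type singularities (abelian kernel of rank at most $r+1$, finite quotient of order at most $c(n)$) from the work of Braun, Filipazzi, Svaldi, and the author. So there is no paper argument for your proposal to match, and your sketch --- which candidly defers to a not-yet-existing log Beauville--Bogomolov decomposition and to Conjecture~\ref{conj:dual-comp}, open in dimension at least $5$ --- is a research program rather than a proof. That much you acknowledge, and the overall heuristic (dual-complex action, stratum-fixing action, minimal lc center) is indeed the one the paper itself suggests in the surrounding discussion.

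Beyond the acknowledged gaps, however, there is a concrete false step. You claim that after passing to the index-one cover of $K_Y+\Delta_Y$ ``the reduced boundary contributes only finite cyclic monodromy (its loops have been killed by the index-one cover).'' This is wrong. By the paper's definition of $\pi_1^{\rm reg}(X,\Delta)$, the relation $\gamma_P^{m_P}$ is imposed only at components of the standard approximation, i.e., at components with coefficient $1-\frac{1}{m}$, $m$ finite; a coefficient-one component of $\Delta$ imposes \emph{no} relation on its loop. The index-one cover trivializes the torsion of $K_X+\Delta$ and hence the fractional-coefficient (orbifold) data, but it does not, and cannot, kill loops around the reduced part of the boundary: for a projective toric log Calabi--Yau pair $(X,\Delta)$ one has $\pi_1^{\rm reg}(X,\Delta)\simeq \zz^n$ generated exactly by such boundary loops, and already $(\pp^1,\{0\}+\{\infty\})$ gives $\zz$. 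These loops are the principal source of the abelian kernel $A$ in the conjecture --- they are why the expected rank bound is $2n$ rather than the $2\dim B$ of an abelian-variety factor --- and your architecture, which routes the boundary monodromy into the finite quotient $N$ and sources $A$ solely from the Beauville--Bogomolov abelian factor, would fail on these basic examples. Relatedly, once you delete or cover the boundary you are working with an open Calabi--Yau, not a projective klt variety with numerically trivial canonical class, so even the varietal decomposition theorem does not apply to the space whose fundamental group you are computing; what is needed is an orbifold/log statement (in the spirit of Campana), which is precisely the open ground the conjecture occupies.
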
 

We expect that in some universal cover the action of $A$
is either given by the action of an abelian variety 
or an algebraic torus. 
In other words, the abelian action on a Calabi--Yau variety
should come from a complexification of the circle.
In~\cite{Kol11}, Koll\'ar proved that the fundamental group
of a log canonical singularity is not necessarily finite, 
and we can find the fundamental groups of Riemann surfaces
among them.
It is expected that in dimension at least $5$, 
among regional fundamental groups of lc singularities, we
can find any finitely presented group:

\begin{problem}
Show that any finitely presented group appears
as the regional fundamental group of a log canonical singularity
of dimension $5$.
\end{problem}

On the other hand, we expect that this behavior 
is mostly related to the dual complex. 
Hence, groups acting on log canonical singularities
and fixing the dual complex should be better behaved:

\begin{conjecture}
Let $(X,\Delta;x)$ be a log canonical singularity of dimension $n$.
Then, the normal subgroup
\[
N:= {\rm ker}(\pi_1(X,\Delta;x) \rightarrow \pi_1(\mathcal{D}(X,\Delta;X)))
\] 
is virtually nilpotent of rank at most $2n+1$.
\end{conjecture}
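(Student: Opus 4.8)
The plan is to realize the regional fundamental group $\pi_1^{\rm reg}(X,\Delta;x)$ as an extension governed by the combinatorics of the dual complex, and then to control the kernel $N$ by reducing it to the minimal log canonical centers. First I would fix a dlt modification $\phi\colon (Y,B_Y)\to (X,\Delta;x)$, so that $\mathcal{D}(X,\Delta;x)=\mathcal{D}(\lfloor B_Y\rfloor)$ and the log canonical strata are encoded by the simplicial structure. The point $x$ corresponds to a minimal log canonical center $Z$, which carries a log Calabi--Yau type structure by the local-to-global correspondence recalled in the preliminaries (the exceptional divisor of a plt blow-up of $x$ supports such a structure). The strategy is to compare $\pi_1^{\rm reg}(X,\Delta;x)$ with the fundamental group of the link of $x$, and to exhibit a map from this link onto the geometric realization $|\mathcal{D}(X,\Delta;x)|$ whose homotopy fibers are, up to finite data, the links of the minimal log canonical centers.

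Second, I would analyze the resulting long exact sequence. The composite map to $\pi_1(\mathcal{D}(X,\Delta;x))$ has kernel $N$ generated by the images of the fiber fundamental groups, and each such fiber group is the regional fundamental group of a minimal center $Z$ with its induced log Calabi--Yau structure. Assuming the conjectural structure theorem for fundamental groups of log Calabi--Yau pairs stated just above, the fiber group sits in an extension $1\to A\to \pi_1^{\rm reg}(Z,\Delta_Z)\to F\to 1$ with $A$ abelian generated by at most $2\dim Z\le 2n$ elements and $F$ finite. The extra rank $+1$ in the target statement should come from the loop around the exceptional divisor of the plt blow-up of $Z$, that is, the $S^1$-factor in the cone direction over the minimal center; this is precisely the source of the discrepancy between the ``$2n$'' of the Calabi--Yau case and the ``$2n+1$'' here.

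Third, I would upgrade the iterated abelian data to a \emph{nilpotent} group of bounded rank, rather than merely a solvable one. The key is the monodromy: as one moves along loops in the dual complex, the fiber groups are identified by gluing along the $1$-cells, and one must show this monodromy is virtually unipotent. Here I expect the $\pp^1$-linking structure of minimal dlt centers to be the decisive tool, since any two minimal centers are joined by a chain of subvarieties birational to $\pp^1$-bundles, and the connectedness and pseudomanifold structure of dual complexes established earlier constrain the gluing identifications to automorphisms of these $\pp^1$-links. Unipotence of such monodromies would force the iterated extension to be nilpotent of rank at most $2n+1$.

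The hard part will be twofold. On the one hand, the statement as conceived is conditional on the as-yet-unproven structure theorem for fundamental groups of log Calabi--Yau pairs, so any unconditional argument must handle that input simultaneously. On the other hand, the genuinely new difficulty is the monodromy control: one must prove that the gluing identifications of the fiber groups along the $1$-cells of $\mathcal{D}(X,\Delta;x)$ are virtually unipotent, so that the total group is nilpotent rather than merely solvable, and that the accumulated rank does not exceed $2n+1$. I expect that a careful study of the plt blow-ups of the minimal centers, combined with Braun's finiteness and the structure theorem for klt singularities recalled above, is what makes this control tractable.
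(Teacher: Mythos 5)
You should be aware at the outset that the paper does not prove this statement: it appears there as an open conjecture, supported only by motivation (the classification of log canonical surface singularities and orbifold cones over projective Calabi--Yau surfaces), so there is no proof of the paper to compare yours against. Read on its own terms, your proposal is a strategy rather than a proof, and you acknowledge as much; the difficulty is that every load-bearing step is either conditional or asserted. First, you invoke the paper's preceding conjecture on $\pi_1^{\rm reg}$ of log Calabi--Yau pairs (abelian of rank at most $2n$ extended by a group of bounded order), which is itself open, so even the description of your ``fiber groups'' is conjectural. Second, the claim that the link of $x$ maps to the geometric realization of $\mathcal{D}(X,\Delta;x)$ with homotopy fibers that are, up to finite data, links of \emph{minimal} log canonical centers is not established and is delicate: the natural map from the link to the dual complex (via a dlt modification and a deformation retraction of a tubular neighborhood of $\lfloor B_Y\rfloor$) has fibers over points of lower-dimensional cells that see non-minimal strata, so the kernel $N$ is generated by contributions from \emph{all} strata, not only the minimal ones; reducing to minimal centers requires an argument that the higher-dimensional strata contribute only through the minimal ones, which you do not supply. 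Third, the step you correctly identify as the crux --- virtual unipotence of the monodromy along $1$-cells --- is exactly where the conjecture lives. The $\pp^1$-linking theorem gives that any two minimal dlt centers are birational via standard $\pp^1$-links, but it says nothing about the induced outer action on their fundamental groups; ``unipotence of such monodromies'' is a hope, not a consequence of anything quoted in the paper.

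Even granting all of the above, two further gaps remain. An iterated extension of abelian groups with virtually unipotent monodromy is not automatically virtually nilpotent of bounded rank: one must bound the nilpotency class and show that the ranks of the successive abelian layers do not accumulate as one traverses distinct cells of the dual complex --- your bookkeeping gives at most $2\dim Z+1$ for a single fiber (with $\dim Z\leq n-1$), but $N$ is generated by many fiber groups glued along the $1$-skeleton, and nothing in the proposal prevents the total rank from exceeding $2n+1$ unless the monodromy identifications collapse these contributions, which again is the unproven point. And the ``$+1$'' from the $S^1$ in the cone direction is a plausible heuristic for the discrepancy with the Calabi--Yau conjecture's $2n$, but it is nowhere pinned down as an actual generator of $N$ rather than of the image in $\pi_1(\mathcal{D}(X,\Delta;x))$. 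In short: your outline is a reasonable research plan, consistent with the heuristics the paper itself offers (the klt structure theorem, $\pp^1$-linking, and the log Calabi--Yau fundamental group conjecture), but it does not constitute a proof, and the statement remains open.
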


The classification of log canonical surface singularities
is a source of examples that motivates the previous conjecture.
Similar examples can be found by taking orbifold cones
over projective Calabi--Yau surfaces.

\subsection{Degeneration coregularity}

Degenerations of log Calabi--Yau
pairs 
and Fano varieties
are often found in the literature.
In general, being able to 
degenerate a Fano variety
or a Calabi--Yau variety
to a special kind of 
of varieties. 
In many cases, we can use
the structure of the central fiber
to deduce information about nearby fibers.
We define the {\em degeneration coregularity} as follows: 
\[
{\rm degcoreg}(X):=
\min \left\{ 
{\rm coreg}(\mathcal{X}_0) \mid 
\mathcal{X}\rightarrow \mathbb{A}^1
\text{ is a flat family with slc fibers and $\mathcal{X}_t\simeq X$ for some $t$} 
\right\}.
\] 
In the previous definition,
we coregularity of a non-normal pair
is defined to be the minimum coregularity
among the components of its normalization.
One can also define a variation
of the degeneration coregularity
in which we ask the family to be 
isotrivial
outside the central fiber.
A similar definition can be given
for log Calabi--Yau pairs $(X,\Delta)$.
It is clear that 
we have an inequality
\[
{\rm degcoreg}(X,\Delta) 
\leq 
{\rm coreg}(X,\Delta). 
\] 
Furthermore, this inequality may be strict.
For instance, 
if we consider an elliptic curve
$E$, then we have that
\[
0={\rm degcoreg}(E) < {\rm coreg}(E) =1.
\] 
Indeed, we can degenerate our elliptic curve $E$ to a cycle of $k$ rational curves
glued along zero and infinity.

In~\cite{Kul77}, 
Kulikov studied the central
fiber of degenerations of K3 surfaces.
They prove that
for semistable degeneration of K3 surfaces
the central fiber can only have 
thee possible types:
\begin{itemize}
    \item it is a smooth K3 surface, 
    \item the dual complex of the central fiber is a segment of a line, 
    the endpoints correspond to rational surfaces 
    and the interior points correspond to
    ruled surfaces, or 
    \item the dual complex of the central fiber is the triangulation of a sphere, every component of the central fiber is a rational surface of coregularity zero.
\end{itemize}
The three cases can be distinguished in
terms of monodromy around the central fiber.
If the monodromy around the special fiber
is trivial, 
then the central fiber is 
a smooth K3 surface.
In particular, a K3 surface may have
degeneration coregularity zero,
although it has coregularity two. 

In this direction, we propose a question in the opposite direction.
Given a polyhedral complex $\mathcal{P}$
whose gluing functions are linear affine maps and its maximal polyhedra are $n$-dimensional smooth, 
we can associate to it a $n$-dimensional toric 
variety $X(\mathcal{P})$. 
In the previous case, we say that 
$\mathcal{P}$ is a {\em $n$-dimensional linear} dual complex.
In general, the toric variety $X(\mathcal{P})$ may not be simple normal crossing, but
this condition can be obtained by imposing
that every vertex in the dual complex 
is contained in exactly $n+1$
maximal polyhedra. 
In such a case, we say that $\mathcal{P}$ is {\em smooth}.
Finally, if every face of dimension $n-1$ is contained
in exactly two faces of dimension $n$, 
then we say that $\mathcal{P}$ 
is a {\em Calabi--Yau} polyhedral complex. 
The following proposition follows from the previous definitions. 

\begin{proposition}
Let $\mathcal{P}$ be a $n$-dimensional linear snc Calabi--Yau polyhedral complex.
Then, the variety $X(\mathcal{P})$ is a log Calabi--Yau simple normal crossing variety 
so that each component is a projective irreducible toric variety.
\end{proposition}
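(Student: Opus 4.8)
The plan is to build $X(\mathcal{P})$ explicitly by gluing the toric varieties attached to the maximal polyhedra, and then to read off the simple normal crossing and log Calabi--Yau properties from the two combinatorial hypotheses (the Calabi--Yau condition in codimension one and the smoothness condition in top codimension). First I would set up the components: each maximal polyhedron $P$ of $\mathcal{P}$ is a smooth $n$-dimensional polytope, so its normal fan $\Sigma_P$ determines a smooth projective toric variety $X_P:=X(\Sigma_P)$, which is irreducible of dimension $n$ (see, e.g.,~\cite{CLS11}). Each facet $F$ of $P$ corresponds to a prime torus invariant divisor $D_{P,F}\subset X_P$, itself the smooth projective toric variety attached to the $(n-1)$-dimensional smooth polytope $F$. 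Whenever two maximal polyhedra $P$ and $P'$ are glued along a common facet $F$, the affine-linear gluing map of $\mathcal{P}$ identifies $F\subset P$ with $F\subset P'$ and hence induces a torus-equivariant isomorphism $D_{P,F}\simeq D_{P',F}$. Gluing the $X_P$ along these isomorphisms produces $X(\mathcal{P})$; the cocycle condition needed to glue globally along lower-dimensional faces is exactly the compatibility of the gluing maps built into the definition of a polyhedral complex.

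Next I would verify that $X(\mathcal{P})$ is simple normal crossing. Each component $X_P$ is smooth by construction, so it remains to control how the components meet. The Calabi--Yau hypothesis, that every $(n-1)$-face lies in exactly two maximal polyhedra, guarantees that every codimension-one stratum (double locus) of $X(\mathcal{P})$ lies on exactly two components. The smoothness hypothesis, that every vertex of $\mathcal{P}$ lies in exactly $n+1$ maximal polyhedra, guarantees that the deepest strata are points at which exactly $n+1$ components meet. Since the gluing is toric, the \'etale-local structure of $X(\mathcal{P})$ at any point is itself a toric snc model governed by the local fan of $\mathcal{P}$ at the corresponding face; the smoothness of each $P$ (so that the primitive edge vectors at each vertex form a lattice basis) together with the vertex condition shows that near such a point $X(\mathcal{P})$ is isomorphic to a union of coordinate hyperplanes $\{x_0x_1\cdots x_j=0\}\subset \mathbb{A}^{n+1}$. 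I expect this local identification to be the main obstacle: one has to match the combinatorics of the star of a face of $\mathcal{P}$ with the coordinate-hyperplane model and check transversality uniformly across all strata, rather than merely along the double locus.

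Finally I would establish the log Calabi--Yau property by adjunction on each component. For a maximal polyhedron $P$, the Calabi--Yau condition forces every facet of $P$ to be a gluing facet, so the double locus $D_P$ induced on $X_P$ is the entire reduced torus invariant boundary $\partial X_P$. For a projective toric variety the reduced torus invariant divisor is anticanonical, that is $K_{X_P}+\partial X_P\sim 0$ (see, e.g.,~\cite{CLS11}), so $K_{X_P}+D_P\sim 0$ on each component. These local trivializations are compatible along the double locus because the gluing is torus-equivariant and identifies the conormal data of $D_{P,F}$ and $D_{P',F}$; hence they patch to a trivialization $\omega_{X(\mathcal{P})}\simeq \mathcal{O}_{X(\mathcal{P})}$, exhibiting $X(\mathcal{P})$ as a log Calabi--Yau simple normal crossing variety whose components are projective irreducible toric varieties, as claimed.
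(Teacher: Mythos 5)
Your proof is correct and is essentially the argument the paper intends: the paper in fact offers no proof at all, stating only that the proposition ``follows from the previous definitions,'' and your explicit gluing construction (components $X_P$ from the normal fans of the maximal cells, identified along the toric divisors of shared facets) together with the toric adjunction $K_{X_P}+\partial X_P\sim 0$ is precisely the content behind that assertion. The one step deserving more care --- which you rightly flag as the main obstacle --- is the snc local model at intermediate strata, i.e., checking that the vertex condition and the Calabi--Yau condition force each $k$-dimensional face to lie in exactly $n+1-k$ maximal cells so that the star of every face matches $\{x_0x_1\cdots x_j=0\}$ (and, for the final patching of the log volume forms into a global trivialization of the dualizing sheaf, that the residue signs are consistent around codimension-two strata); since the paper supplies no detail on either point, your write-up is at least as complete as the paper's own treatment.
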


For instance, if
$\mathcal{P}$ is just a loop with $k$ vertices, 
then the associated toric variety
$X(\mathcal{P})$ is a cycle of $k$ copies
of $\pp^1$ glued along zero and infinity.
Many Calabi--Yau varieties degenerate
to snc pairs of coregularity zero. 
The following question aims to understand the opposite direction:

\begin{problem}
Let $\mathcal{P}$ be a $n$-dimensional linear snc Calabi--Yau polyhedral complex.
Describe the versal deformation space
${\rm Def}X(\mathcal{P})$ in terms of the combinatorics of $\mathcal{P}$.
Can $X(\mathcal{P})$ be deformed into a klt Calabi--Yau variety? 
\end{problem}

We expect the previous question to be more 
accessible when the dual complex of $X(\mathcal{P})$
is a triangulation of a $2$-dimensional sphere.

In a similar vein, we can define the 
{\em iterated degeneration coregularity}. 
We can define this invariant inductively as follows. 
If the degeneration coregularity of $X$ 
equals its dimension $n$, 
then the iterated degeneration
coregularity is just $n$. 
Otherwise, we define it as the minimum
among the 
iterated degeneration coregularity 
of the minimal dlt centers of the central fiber
of any Calabi--Yau semi-log canonical degeneration.
For instance, if we can degenerate
a log Calabi-Yau pair 
so that the minimal log canonical centers
of the central fiber is an elliptic curve, 
then its iterated degeneration coregularity is zero.
Indeed, we can degenerate the elliptic curve
to a semi-log canonical pair
of coregularity zero.
In most cases known to the author, 
the iterated degeneration coregularity
equals the degeneration coregularity. 
It would be interesting to find some examples in which these invariants do not agree.

\begin{question} 
Is there a klt Calabi-Yau variety (or pair)
for which the iterated degeneration coregularity
is not equal to the degeneration coregularity? 
\end{question}

We expect the previous question to have a positive answer. 
However, it would be interesting to have some examples
and understand how the dual complexes
of the different degenerations relate.

In the case that the central fiber of simple normal crossing degeneration is Fano, 
then the structure is much simpler.
Indeed, in this case, the dual complex 
is a simplex of dimension at most $n$, 
where $n$ is the dimension of the general fiber. 
In the case that the dual complex of the central fiber
is a $n$-dimensional simplex, 
the author together with Loginov
proved that each component 
of the central fiber 
is a generalized Bott tower~\cite{ML20}.
Furthermore, the way that these 
generalized Bott towers glue is unique. 
This kind of varieties have coregularity zero.
We propose the following problem 
which is the coregularity one version of the
result due to the author and Loginov.

\begin{problem}
Classify log smooth Fano pairs $(X,\Delta)$
for which $\Delta$ is reduced
and ${\rm coreg}(X,\Delta)$ equal to one.
\end{problem}

If the Picard rank of $X$ 
equals one,
then we expect that the varieties
in the previous problem are close to being toric.
Indeed, we can consider a complement of $(X,\Delta)$ and observe that its complexity is at most two.
However, this may not be the case when the Picard rank of $X$ is higher.

\subsection{Mirror symmetry}
Mirror symmetry is an important topic in the geometry
of Calabi--Yau varieties
and Fano varieties.
It is expected that every Calabi--Yau variety $X$
has a mirror $X^\vee$ whose complex structure
behaves like the symplectic structure of $X$
and vice-versa.
In the case of Fano varieties, the mirror
is an affine variety with a potential,
the so-called Landau-Ginzburg models. 
In many cases, this model can be compactified
into a log Calabi--Yau variety.
In general, we expect the mirror
of a log Calabi--Yau variety
to be another log Calabi--Yau variety.
This has been proved to be the case in 
dimension two~\cite{GHK15}.

The Strominger-Yau-Zaslow conjecture (known as SYZ)
states that a Calabi-Yau manifold $X$ can be fibered
into simpler objects: special Lagrangian tori, 
such that the dual $X^\vee$ can be obtained
by dualizing this family of Lagrangian tori.

Among the many approaches to Mirror symmetry, 
we find the Gross-Siebert program.
The Gross-Siebert program is an algebraic analog
of the SYZ conjecture.
In this program, the Calabi-Yau variety 
is degenerated into a toric variety 
in order to construct its mirror from the central fiber
of the degeneration.
More precisely, 
given a Calabi-Yau variety $X$, 
we try to find a flat degeneration 
$\mathcal{X}\rightarrow \mathbb{A}^1$ for which
$\mathcal{X}_t\simeq X$ for some $t$
and $\mathcal{X}_0$ is toric.
Then, the mirror of $\mathcal{X}_0$
can be constructed using toric geometry.
Finally, we can study appropriate deformations
of the toric mirror $\mathcal{X}_0^\vee$
in order to find the mirror $X^\vee$ of $X$.
In our language, the Gross-Siebert program intends 
to prove Mirror symmetry
for Calabi--Yau's of degeneration coregularity zero.
In the toric setting, the dual of a toric variety
is always toric. 
This leads to our first question regarding 
mirrors of log Calabi--Yau variety.

\begin{question}
Let $(X,\Delta)$ be a log Calabi--Yau variety.
Let $(X^\vee,\Delta^\vee)$ be its Mirror. 
Does the equality 
\[
{\rm coreg}(X,\Delta)={\rm coreg}(X^\vee,\Delta^\vee)
\]
holds?
\end{question}

We expect that the previous inequality should hold. 
This expectation is based on the Mirrors 
of log Calabi-Yau varieties that we can construct
in low dimension.
Specially, among those that are complete intersections
in weighted projective spaces.
Let us mention that Mirror symmetry is also expected
to preserve certain flat deformations and degenerations
of log Calabi--Yau pairs. 
Hence, it is natural to expect that the previous equality
also hold if we replace
the coregularity
with the degeneration coregularity.
Indeed, the Gross-Siebert program can be understood as a
very explicit manifestation of this phenomenon.
In such a case, there are two other natural questions
that arise from a positive answer to the previous question. 

\begin{question}
Let $(X,\Delta)$ be a log Calabi--Yau pair
of dimension $n$ 
and coregularity zero.
Assume Conjecture~\ref{conj:dual-comp}.
Let $(X^\vee,\Delta^\vee)$ be its mirror.
How do the triangulations
of the spheres $S^{n-1}$ given
by the dual complexes relate?
\end{question}

Finally, it is natural to compare
the minimal log canonical centers between
the mirrors. 
However, in order to do so, we need to pass
to a dlt modification.
The minimal dlt centers
of a log Calabi--Yau pair is only well-defined
up to birational equivalence. 
Hence, the best expectation in this direction is the following:

\begin{question}
Let $(X,\Delta)$ be a log Calabi--Yau variety.
Let $(X^\vee,\Delta^\vee)$ be its mirror.
Is a minimal dlt center
of $(X,\Delta)$
birational 
to the mirror of a minimal dlt center of
$(X^\vee,\Delta^\vee)$?
\end{question}

There is little to no evidence 
for a positive answer to the previous question. 

\subsection{Positive characteristic}
Recently, there has been a lot of progress
in the minimal model program
in positive characteristic (see, e.g.,~\cite{HW20}).
Hence, more tools to tackle problems
about complements
on Fano varieties
become accessible.
However, the lack of Kawamata-Viehweg vanishing~\cite{Ber17}
still imposes a big constraint to 
mimic proofs from characteristic zero.
Hence, new ideas are often required when tackling 
these problems in positive characteristics.
In the case of Fano geometry,
we have a couple of examples 
in positive characteristic 
that do not lift to characteristic zero~\cite{Tot17}. 
However, it seems that these examples
have positive coregularity.
We propose the following question:

\begin{question}
Find Fano varieties of coregularity zero
in positive characteristic 
that do not lift to 
characteristic zero.
\end{question}

Many results for complements,
coregularity,
and dual complexes,
are expected to hold in dimension 
three when the characteristic is larger or
equal to $5$.
However, there could be some interesting
examples in low characteristic that do not lift
to characteristic zero. 

\begin{question}
In dimension $3$, can we find a 
log Calabi--Yau pair 
$(X,\Delta)$ with $X$ Fano, 
such that the triangulation
of the dual complex
$\mathcal{D}(X,\Delta)$
can not be obtained in characteristic zero?
\end{question}

By the Lefschetz principle,
every PL-manifold obtained 
as $\mathcal{D}(X,\Delta)$ for some Calabi--Yau pair
in characteristic zero,
is already obtained over the complex numbers. 
The previous question in some sense
aims to understand if the theory
of complements can give a
combinatorial obstruction for the lifting
to characteristic zero.

\subsection{Fano rings}
The definition
of Fano varieties
is a projective one. 
On the other hand,
the definition
of klt singularities is a local one,
either in the \'etale topology
or analytic topology. 
One can define affine Fano varieties as follows:

\begin{definition}
{\em 
An affine variety $U$ is said to be
an {\em affine Fano variety} 
if there is a 
projective compactification $U\hookrightarrow X$
such that if we define 
$\Delta:=X\setminus U$ with its reduced structure,
the pair
$(X,\Delta)$ is log Fano
with log canonical singularities.
}
\end{definition}

We can give an analogous definition
for {\em affine Calabi--Yau variety}. 
These varieties are also called 
open Fanos and open Calabi--Yau's respectively in the literature.

\begin{definition}
{\em 
We say that a ring $R$ is a {\em Fano ring}
(resp. {\em Calabi--Yau ring})
if ${\rm Spec}(R)$ is an affine Fano variety
(resp. affine Calabi--Yau variety). 
}
\end{definition}

In dimension one, the only
Fano ring is $\kk[x]$.
On the other hand, 
the only Calabi--Yau ring is $\kk[x^{\pm 1}]$.
In higher dimensions, due to blow-ups, 
the classification of Fano rings 
and Calabi--Yau rings is probably a difficult task.
However, we expect that the situation
is somehow managable in dimension $2$, so we propose the following problem:

\begin{question}
Classify Fano rings and Calabi--Yau rings of dimension $2$.
\end{question}

It would be interesting to have a purely commutative algebra 
definition of Fano rings and Calabi--Yau rings.
As the previous definitions heavily rely on projective geometry.
In the study of coregularity zero Fano varieties, 
affine Fano varieties of coregularity zero 
will play the role that complex tori plays in toric geometry.
Thus, a better understanding of these rings is desirable.

\subsection*{Acknowledgements} 
The author would like to thank Priyankur Chaudhuri, 
Fernando Figueroa, Christopher Hacon, Mirko Mauri, Junyao Peng, and Talon Stark, 
for many discussions and comments on the first draft of this manuscript.

\bibliographystyle{habbvr}
\bibliography{mybibfile}

\begin{thebibliography}{100}
\expandafter\ifx\csname url\endcsname\relax
  \def\url#1{\texttt{#1}}\fi
\expandafter\ifx\csname doi\endcsname\relax
  \def\doi#1{\burlalt{doi:#1}{http://dx.doi.org/#1}}\fi
\expandafter\ifx\csname urlprefix\endcsname\relax\def\urlprefix{URL }\fi
\expandafter\ifx\csname href\endcsname\relax
  \def\href#1#2{#2}\fi
\expandafter\ifx\csname burlalt\endcsname\relax
  \def\burlalt#1#2{\href{#2}{#1}}\fi

\bibitem{Ale93}
V.~Alexeev.
\newblock Two two-dimensional terminations.
\newblock {\em Duke Math. J.}, 69(3):527--545, 1993.
\newblock \doi{10.1215/S0012-7094-93-06922-0}.

\bibitem{Ale94}
V.~Alexeev.
\newblock Boundedness and {$K^2$} for log surfaces.
\newblock {\em Internat. J. Math.}, 5(6):779--810, 1994.
\newblock \doi{10.1142/S0129167X94000395}.

\bibitem{Ale91}
V.~A. Alexeev.
\newblock Theorems about good divisors on log {F}ano varieties (case of index
  {$r>n-2$}).
\newblock In {\em Algebraic geometry ({C}hicago, {IL}, 1989)}, volume 1479 of
  {\em Lecture Notes in Math.}, pages 1--9. Springer, Berlin, 1991.
\newblock \doi{10.1007/BFb0086258}.

\bibitem{AH06}
K.~Altmann and J.~Hausen.
\newblock Polyhedral divisors and algebraic torus actions.
\newblock {\em Math. Ann.}, 334(3):557--607, 2006.
\newblock \doi{10.1007/s00208-005-0705-8}.

\bibitem{AHS08}
K.~Altmann, J.~Hausen, and H.~S\"{u}ss.
\newblock Gluing affine torus actions via divisorial fans.
\newblock {\em Transform. Groups}, 13(2):215--242, 2008.
\newblock \doi{10.1007/s00031-008-9011-3}.

\bibitem{Amb05}
F.~Ambro.
\newblock The moduli {$b$}-divisor of an lc-trivial fibration.
\newblock {\em Compos. Math.}, 141(2):385--403, 2005.
\newblock \doi{10.1112/S0010437X04001071}.

\bibitem{Amb06}
F.~Ambro.
\newblock The set of toric minimal log discrepancies.
\newblock {\em Cent. Eur. J. Math.}, 4(3):358--370, 2006.
\newblock \doi{10.2478/s11533-006-0013-x}.

\bibitem{Bal89}
E.~Ballico.
\newblock On {F}ano threefolds in characteristic {$p$}.
\newblock {\em Rend. Sem. Mat. Univ. Politec. Torino}, 47(1):57--70 (1991),
  1989.

\bibitem{Bat81}
V.~V. Batyrev.
\newblock Toric {F}ano threefolds.
\newblock {\em Izv. Akad. Nauk SSSR Ser. Mat.}, 45(4):704--717, 927, 1981.

\bibitem{Bat91}
V.~V. Batyrev.
\newblock On the classification of smooth projective toric varieties.
\newblock {\em Tohoku Math. J. (2)}, 43(4):569--585, 1991.
\newblock \doi{10.2748/tmj/1178227429}.

\bibitem{Ber17}
F.~Bernasconi.
\newblock Kawamata-viehweg vanishing fails for log del pezzo surfaces in
  characteristic 3, 2017,
  \burlalt{arXiv:1709.09238}{http://arxiv.org/abs/arXiv:1709.09238}.

\bibitem{Bir19}
C.~Birkar.
\newblock Anti-pluricanonical systems on {F}ano varieties.
\newblock {\em Ann. of Math. (2)}, 190(2):345--463, 2019.
\newblock \doi{10.4007/annals.2019.190.2.1}.

\bibitem{Bir20}
C.~Birkar.
\newblock On connectedness of non-klt loci of singularities of pairs, 2020,
  \burlalt{arXiv:2010.08226}{http://arxiv.org/abs/arXiv:2010.08226}.

\bibitem{Bir21b}
C.~Birkar.
\newblock Generalised pairs in birational geometry.
\newblock {\em EMS Surv. Math. Sci.}, 8(1-2):5--24, 2021.
\newblock \doi{10.4171/emss/42}.

\bibitem{Bir21}
C.~Birkar.
\newblock Singularities of linear systems and boundedness of {F}ano varieties.
\newblock {\em Ann. of Math. (2)}, 193(2):347--405, 2021.
\newblock \doi{10.4007/annals.2021.193.2.1}.

\bibitem{BCHM10}
C.~Birkar, P.~Cascini, C.~D. Hacon, and J.~McKernan.
\newblock Existence of minimal models for varieties of log general type.
\newblock {\em J. Amer. Math. Soc.}, 23(2):405--468, 2010.
\newblock \doi{10.1090/S0894-0347-09-00649-3}.

\bibitem{BDCS20}
C.~Birkar, G.~D. Cerbo, and R.~Svaldi.
\newblock Boundedness of elliptic calabi-yau varieties with a rational section,
  2020, \burlalt{arXiv:2010.09769}{http://arxiv.org/abs/arXiv:2010.09769}.

\bibitem{BX19}
H.~Blum and C.~Xu.
\newblock Uniqueness of {K}-polystable degenerations of {F}ano varieties.
\newblock {\em Ann. of Math. (2)}, 190(2):609--656, 2019.
\newblock \doi{10.4007/annals.2019.190.2.4}.

\bibitem{BB92}
A.~A. Borisov and L.~A. Borisov.
\newblock Singular toric {F}ano three-folds.
\newblock {\em Mat. Sb.}, 183(2):134--141, 1992.
\newblock \doi{10.1070/SM1993v075n01ABEH003385}.

\bibitem{Bra19}
L.~Braun.
\newblock Gorensteinness and iteration of {Cox} rings for {Fano} type
  varieties, 2019,
  \burlalt{arXiv:1903.07996}{http://arxiv.org/abs/arXiv:1903.07996}.

\bibitem{Bra20}
L.~Braun.
\newblock The local fundamental group of a {K}awamata log terminal singularity
  is finite.
\newblock {\em Invent. Math.}, 226(3):845--896, 2021.
\newblock \doi{10.1007/s00222-021-01062-0}.

\bibitem{BFMS20}
L.~Braun, S.~Filipazzi, J.~Moraga, and R.~Svaldi.
\newblock The {Jordan} property for local fundamental groups, 2020,
  \burlalt{arXiv:2006.01253}{http://arxiv.org/abs/arXiv:2006.01253}.

\bibitem{BGLM21}
L.~Braun, D.~Greb, K.~Langlois, and J.~Moraga.
\newblock Reductive quotients of klt singularities, 2021,
  \burlalt{arXiv:2111.02812}{http://arxiv.org/abs/arXiv:2111.02812}.

\bibitem{BM21}
L.~Braun and J.~Moraga.
\newblock Iteration of {Cox} rings of klt singularities, 2021,
  \burlalt{arXiv:2103.13524}{http://arxiv.org/abs/arXiv:2103.13524}.

\bibitem{BMSZ18}
M.~V. Brown, J.~McKernan, R.~Svaldi, and H.~R. Zong.
\newblock A geometric characterization of toric varieties.
\newblock {\em Duke Math. J.}, 167(5):923--968, 2018.
\newblock \doi{10.1215/00127094-2017-0047}.

\bibitem{Cam11}
F.~Campana.
\newblock Orbifoldes g\'{e}om\'{e}triques sp\'{e}ciales et classification
  bim\'{e}romorphe des vari\'{e}t\'{e}s k\"{a}hl\'{e}riennes compactes.
\newblock {\em J. Inst. Math. Jussieu}, 10(4):809--934, 2011.
\newblock \doi{10.1017/S1474748010000101}.

\bibitem{Cas07}
A.-M. Castravet.
\newblock Examples of {F}ano varieties of index one that are not birationally
  rigid.
\newblock {\em Proc. Amer. Math. Soc.}, 135(12):3783--3788, 2007.
\newblock \doi{10.1090/S0002-9939-07-08948-4}.

\bibitem{CL10}
K.~Chan and N.~C. Leung.
\newblock Mirror symmetry for toric {F}ano manifolds via {SYZ} transformations.
\newblock {\em Adv. Math.}, 223(3):797--839, 2010.
\newblock \doi{10.1016/j.aim.2009.09.009}.

\bibitem{Che08}
I.~Cheltsov.
\newblock Log canonical thresholds of del {P}ezzo surfaces.
\newblock {\em Geom. Funct. Anal.}, 18(4):1118--1144, 2008.
\newblock \doi{10.1007/s00039-008-0687-2}.

\bibitem{CS11}
I.~Cheltsov and C.~Shramov.
\newblock On exceptional quotient singularities.
\newblock {\em Geom. Topol.}, 15(4):1843--1882, 2011.
\newblock \doi{10.2140/gt.2011.15.1843}.

\bibitem{CS11b}
I.~Cheltsov and C.~Shramov.
\newblock Six-dimensional exceptional quotient singularities.
\newblock {\em Math. Res. Lett.}, 18(6):1121--1139, 2011.
\newblock \doi{10.4310/MRL.2011.v18.n6.a6}.

\bibitem{CS12}
I.~Cheltsov and C.~Shramov.
\newblock Nine-dimensional exceptional quotient singularities exist.
\newblock In {\em Proceedings of the {G}\"{o}kova {G}eometry-{T}opology
  {C}onference 2011}, pages 85--96. Int. Press, Somerville, MA, 2012.

\bibitem{Che06}
I.~A. Cheltsov.
\newblock Local inequalities and the birational superrigidity of {F}ano
  varieties.
\newblock {\em Izv. Ross. Akad. Nauk Ser. Mat.}, 70(3):185--221, 2006.
\newblock \doi{10.1070/IM2006v070n03ABEH002321}.

\bibitem{Cont85}
A.~Conte.
\newblock Fano threefolds with singularities.
\newblock In {\em Algebraic geometry, {S}itges ({B}arcelona), 1983}, volume
  1124 of {\em Lecture Notes in Math.}, pages 71--78. Springer, Berlin, 1985.
\newblock \doi{10.1007/BFb0074996}.

\bibitem{CLS11}
D.~A. Cox, J.~B. Little, and H.~K. Schenck.
\newblock {\em Toric varieties}, volume 124 of {\em Graduate Studies in
  Mathematics}.
\newblock American Mathematical Society, Providence, RI, 2011.
\newblock \doi{10.1090/gsm/124}.

\bibitem{dFEM03}
T.~de~Fernex, L.~Ein, and M.~Musta\c{t}\u{a}.
\newblock Bounds for log canonical thresholds with applications to birational
  rigidity.
\newblock {\em Math. Res. Lett.}, 10(2-3):219--236, 2003.
\newblock \doi{10.4310/MRL.2003.v10.n2.a9}.

\bibitem{dFH11}
T.~de~Fernex and C.~D. Hacon.
\newblock Deformations of canonical pairs and {F}ano varieties.
\newblock {\em J. Reine Angew. Math.}, 651:97--126, 2011.
\newblock \doi{10.1515/CRELLE.2011.010}.

\bibitem{dFKX17}
T.~de~Fernex, J.~Koll\'{a}r, and C.~Xu.
\newblock The dual complex of singularities.
\newblock In {\em Higher dimensional algebraic geometry---in honour of
  {P}rofessor {Y}ujiro {K}awamata's sixtieth birthday}, volume~74 of {\em Adv.
  Stud. Pure Math.}, pages 103--129. Math. Soc. Japan, Tokyo, 2017.
\newblock \doi{10.2969/aspm/07410103}.

\bibitem{ETW21}
L.~Esser, B.~Totaro, and C.~Wang.
\newblock Varieties of general type with doubly exponential asymptotics, 2021,
  \burlalt{arXiv:2109.13383}{http://arxiv.org/abs/arXiv:2109.13383}.

\bibitem{ETW22}
L.~Esser, B.~Totaro, and C.~Wang.
\newblock Calabi-yau pairs of large index, 2022, \burlalt{Work in
  progress}{http://arxiv.org/abs/Work in progress}.

\bibitem{Fano1942}
G.~Fano.
\newblock Su alcune variet\`a algebriche a tre dimensioni razionali, e aventi
  curve-sezioni canoniche.
\newblock {\em Comment. Math. Helv.}, 14:202--211, 1942.
\newblock \doi{10.1007/BF02565618}.

\bibitem{FMP22}
F.~Figueroa, J.~Moraga, and J.~Peng.
\newblock Log canonical thresholds and coregularity, 2022,
  \burlalt{arXiv:2204.05408}{http://arxiv.org/abs/arXiv:2204.05408}.

\bibitem{Fil18}
S.~Filipazzi.
\newblock On a generalized canonical bundle formula and generalized adjunction.
\newblock {\em Ann. Sc. Norm. Super. Pisa Cl. Sci.}, 21(5):1187--1221, 2020.

\bibitem{Fil20}
S.~Filipazzi.
\newblock On a generalized canonical bundle formula and generalized adjunction.
\newblock {\em Ann. Sc. Norm. Super. Pisa Cl. Sci. (5)}, 21:1187--1221, 2020.

\bibitem{FMX19}
S.~Filipazzi, J.~Moraga, and Y.~Xu.
\newblock Log canonical $3$-fold complements, 2019,
  \burlalt{arXiv:1909.10098}{http://arxiv.org/abs/arXiv:1909.10098}.

\bibitem{FS20}
S.~Filipazzi and R.~Svaldi.
\newblock On the connectedness principle and dual complexes for generalized
  pairs, 2020,
  \burlalt{arXiv:2010.08018}{http://arxiv.org/abs/arXiv:2010.08018}.

\bibitem{FMT89}
J.~Franke, Y.~I. Manin, and Y.~Tschinkel.
\newblock Rational points of bounded height on {F}ano varieties.
\newblock {\em Invent. Math.}, 95(2):421--435, 1989.
\newblock \doi{10.1007/BF01393904}.

\bibitem{Fuj19}
K.~Fujita.
\newblock A valuative criterion for uniform {K}-stability of {$\Bbb Q$}-{F}ano
  varieties.
\newblock {\em J. Reine Angew. Math.}, 751:309--338, 2019.
\newblock \doi{10.1515/crelle-2016-0055}.

\bibitem{Fuj90}
T.~Fujita.
\newblock {\em Classification theories of polarized varieties}, volume 155 of
  {\em London Mathematical Society Lecture Note Series}.
\newblock Cambridge University Press, Cambridge, 1990.
\newblock \doi{10.1017/CBO9780511662638}.

\bibitem{GT10}
W.~Goldman and D.~Toledo.
\newblock Affine cubic surfaces and relative sl(2)-character varieties of
  compact surfacess, 2010,
  \burlalt{arXiv:1006.3838}{http://arxiv.org/abs/arXiv:1006.3838}.

\bibitem{GHK15}
M.~Gross, P.~Hacking, and S.~Keel.
\newblock Mirror symmetry for log {C}alabi-{Y}au surfaces {I}.
\newblock {\em Publ. Math. Inst. Hautes \'{E}tudes Sci.}, 122:65--168, 2015.
\newblock \doi{10.1007/s10240-015-0073-1}.

\bibitem{Gus82}
N.~P. Gushel.
\newblock Fano varieties of genus {$6$}.
\newblock {\em Izv. Akad. Nauk SSSR Ser. Mat.}, 46(6):1159--1174, 1343, 1982.

\bibitem{HW20}
C.~Hacon and J.~Witaszek.
\newblock On the relative minimal model program for fourfolds in positive and
  mixed characteristic, 2020,
  \burlalt{arXiv:2009.02631}{http://arxiv.org/abs/arXiv:2009.02631}.

\bibitem{HM07}
C.~D. Hacon and J.~Mckernan.
\newblock On {S}hokurov's rational connectedness conjecture.
\newblock {\em Duke Math. J.}, 138(1):119--136, 2007.
\newblock \doi{10.1215/S0012-7094-07-13813-4}.

\bibitem{HMX14}
C.~D. Hacon, J.~McKernan, and C.~Xu.
\newblock A{CC} for log canonical thresholds.
\newblock {\em Ann. of Math. (2)}, 180(2):523--571, 2014.
\newblock \doi{10.4007/annals.2014.180.2.3}.

\bibitem{HLM20}
J.~Han, J.~Liu, and J.~Moraga.
\newblock Bounded deformations of {$(\epsilon,\delta)$}-log canonical
  singularities.
\newblock {\em J. Math. Sci. Univ. Tokyo}, 27(1):1--28, 2020.

\bibitem{IV12}
N.~O. Ilten and R.~Vollmert.
\newblock Deformations of rational {$T$}-varieties.
\newblock {\em J. Algebraic Geom.}, 21(3):531--562, 2012.
\newblock \doi{10.1090/S1056-3911-2011-00585-7}.

\bibitem{IP01}
S.~Ishii and Y.~Prokhorov.
\newblock Hypersurface exceptional singularities.
\newblock {\em Internat. J. Math.}, 12(6):661--687, 2001.
\newblock \doi{10.1142/S0129167X0100099X}.

\bibitem{Isk78a}
V.~A. Iskovskih.
\newblock Fano threefolds. {I}.
\newblock {\em Izv. Akad. Nauk SSSR Ser. Mat.}, 41(3):516--562, 717, 1977.

\bibitem{Isk78b}
V.~A. Iskovskih.
\newblock Fano threefolds. {II}.
\newblock {\em Izv. Akad. Nauk SSSR Ser. Mat.}, 42(3):506--549, 1978.

\bibitem{IM71}
V.~A. Iskovskih and J.~I. Manin.
\newblock Three-dimensional quartics and counterexamples to the {L}\"{u}roth
  problem.
\newblock {\em Mat. Sb. (N.S.)}, 86(128):140--166, 1971.

\bibitem{IP99b}
V.~A. Iskovskikh and Y.~G. Prokhorov.
\newblock Fano varieties.
\newblock In {\em Algebraic geometry, {V}}, volume~47 of {\em Encyclopaedia
  Math. Sci.}, pages 1--247. Springer, Berlin, 1999.

\bibitem{Kal20}
A.-S. Kaloghiros.
\newblock Some examples of {C}alabi-{Y}au pairs with maximal intersection and
  no toric model.
\newblock In {\em Birational geometry and moduli spaces}, volume~39 of {\em
  Springer INdAM Ser.}, pages 57--75. Springer, Cham, [2020] \copyright 2020.
\newblock \doi{10.1007/978-3-030-37114-2\_5}.

\bibitem{KK14}
M.~Kapovich and J.~Koll\'{a}r.
\newblock Fundamental groups of links of isolated singularities.
\newblock {\em J. Amer. Math. Soc.}, 27(4):929--952, 2014.
\newblock \doi{10.1090/S0894-0347-2014-00807-9}.

\bibitem{KM99}
S.~Keel and J.~McKernan.
\newblock Rational curves on quasi-projective surfaces.
\newblock {\em Mem. Amer. Math. Soc.}, 140(669):viii+153, 1999.
\newblock \doi{10.1090/memo/0669}.

\bibitem{Kol13}
J.~Koll\'{a}r.
\newblock {\em Singularities of the minimal model program}, volume 200 of {\em
  Cambridge Tracts in Mathematics}.
\newblock Cambridge University Press, Cambridge, 2013.
\newblock \doi{10.1017/CBO9781139547895}.
\newblock With a collaboration of S\'{a}ndor Kov\'{a}cs.

\bibitem{KK10}
J.~Koll\'{a}r and S.~J. Kov\'{a}cs.
\newblock Log canonical singularities are {D}u {B}ois.
\newblock {\em J. Amer. Math. Soc.}, 23(3):791--813, 2010.
\newblock \doi{10.1090/S0894-0347-10-00663-6}.

\bibitem{KMM92}
J.~Koll\'{a}r, Y.~Miyaoka, and S.~Mori.
\newblock Rationally connected varieties.
\newblock {\em J. Algebraic Geom.}, 1(3):429--448, 1992.

\bibitem{KX16}
J.~Koll\'{a}r and C.~Xu.
\newblock The dual complex of {C}alabi-{Y}au pairs.
\newblock {\em Invent. Math.}, 205(3):527--557, 2016.
\newblock \doi{10.1007/s00222-015-0640-6}.

\bibitem{Kol11}
J.~Kollár.
\newblock New examples of terminal and log canonical singularities, 2011,
  \burlalt{arXiv:1107.2864}{http://arxiv.org/abs/arXiv:1107.2864}.

\bibitem{Kud01}
S.~A. Kudryavtsev.
\newblock On purely log terminal blow-ups.
\newblock {\em Mat. Zametki}, 69(6):892--898, 2001.
\newblock \doi{10.1023/A:1010234532502}.

\bibitem{Kul77}
V.~S. Kulikov.
\newblock Degenerations of {$K3$} surfaces and {E}nriques surfaces.
\newblock {\em Uspehi Mat. Nauk}, 32(3(195)):167--168, 1977.

\bibitem{LLM18}
A.~Laface, A.~Liendo, and J.~Moraga.
\newblock Cohen-macaulay du bois singularities with a torus action of
  complexity one, 2018,
  \burlalt{arXiv:1806.08311}{http://arxiv.org/abs/arXiv:1806.08311}.

\bibitem{LLM19}
A.~Laface, A.~Liendo, and J.~Moraga.
\newblock The fundamental group of a log terminal {$\Bbb T$}-variety.
\newblock {\em Eur. J. Math.}, 5(3):937--957, 2019.
\newblock \doi{10.1007/s40879-018-0296-z}.

\bibitem{LLM20}
A.~Laface, A.~Liendo, and J.~Moraga.
\newblock On the topology of rational {$\Bbb T$}-varieties of complexity one.
\newblock {\em Mosc. Math. J.}, 20(2):405--422, 2020.
\newblock \doi{10.17323/1609-4514-2020-2-405-422}.

\bibitem{Li19}
C.~Li.
\newblock Minimizing normalized volumes of valuations.
\newblock {\em Math. Z.}, 289(1-2):491--513, 2018.
\newblock \doi{10.1007/s00209-017-1963-3}.

\bibitem{ML11}
A.~Libgober and M.~Musta\c{t}\u{a}.
\newblock Sequences of {LCT}-polytopes.
\newblock {\em Math. Res. Lett.}, 18(4):733--746, 2011.
\newblock \doi{10.4310/MRL.2011.v18.n4.a11}.

\bibitem{LS13}
A.~Liendo and H.~S\"{u}ss.
\newblock Normal singularities with torus actions.
\newblock {\em Tohoku Math. J. (2)}, 65(1):105--130, 2013.
\newblock \doi{10.2748/tmj/1365452628}.

\bibitem{LXZ21}
Y.~Liu, C.~Xu, and Z.~Zhuang.
\newblock Finite generation for valuations computing stability thresholds and
  applications to k-stability, 2021,
  \burlalt{arXiv:2102.09405}{http://arxiv.org/abs/arXiv:2102.09405}.

\bibitem{Mab87}
T.~Mabuchi.
\newblock Einstein-{K}\"{a}hler forms, {F}utaki invariants and convex geometry
  on toric {F}ano varieties.
\newblock {\em Osaka J. Math.}, 24(4):705--737, 1987.
\newblock \urlprefix\url{http://projecteuclid.org/euclid.ojm/1200780355}.

\bibitem{Mae86}
H.~Maeda.
\newblock Classification of logarithmic {F}ano threefolds.
\newblock {\em Compositio Math.}, 57(1):81--125, 1986.
\newblock \urlprefix\url{http://www.numdam.org/item?id=CM_1986__57_1_81_0}.

\bibitem{MP99}
D.~Markushevich and Y.~G. Prokhorov.
\newblock Exceptional quotient singularities.
\newblock {\em Amer. J. Math.}, 121(6):1179--1189, 1999.
\newblock
  \urlprefix\url{http://muse.jhu.edu/journals/american_journal_of_mathematics/v121/121.6markushevich.pdf}.

\bibitem{Mau20}
M.~Mauri.
\newblock The dual complex of log {C}alabi-{Y}au pairs on {M}ori fibre spaces.
\newblock {\em Adv. Math.}, 364:107009, 28, 2020.
\newblock \doi{10.1016/j.aim.2020.107009}.

\bibitem{MZ88}
M.~Miyanishi and D.-Q. Zhang.
\newblock Gorenstein log del {P}ezzo surfaces of rank one.
\newblock {\em J. Algebra}, 118(1):63--84, 1988.
\newblock \doi{10.1016/0021-8693(88)90048-8}.

\bibitem{MZ93}
M.~Miyanishi and D.-Q. Zhang.
\newblock Gorenstein log del {P}ezzo surfaces. {II}.
\newblock {\em J. Algebra}, 156(1):183--193, 1993.
\newblock \doi{10.1006/jabr.1993.1069}.

\bibitem{MZ04}
M.~Miyanishi and D.-Q. Zhang.
\newblock Equivariant classification of {G}orenstein open log del {P}ezzo
  surfaces with finite group actions.
\newblock {\em J. Math. Soc. Japan}, 56(1):215--245, 2004.
\newblock \doi{10.2969/jmsj/1191418704}.

\bibitem{Mor18c}
J.~Moraga.
\newblock A boundedness theorem for cone singularities, 2018,
  \burlalt{arXiv:1812.04670}{http://arxiv.org/abs/arXiv:1812.04670}.

\bibitem{Mor20c}
J.~Moraga.
\newblock Kawamata log terminal singularities of full rank, 2021,
  \burlalt{arXiv:2007.10322}{http://arxiv.org/abs/arXiv:2007.10322}.

\bibitem{Mor21b}
J.~Moraga.
\newblock Minimal log discrepancies of regularity one, 2021,
  \burlalt{arXiv:2108.03677}{http://arxiv.org/abs/arXiv:2108.03677}.

\bibitem{Mor21}
J.~Moraga.
\newblock On a toroidalization for klt singularities, 2021,
  \burlalt{arXiv:2106.15019}{http://arxiv.org/abs/arXiv:2106.15019}.

\bibitem{Mor21a}
J.~Moraga.
\newblock On minimal log discrepancies and {K}oll\'{a}r components.
\newblock {\em Proc. Edinb. Math. Soc. (2)}, 64(4):982--1001, 2021.
\newblock \doi{10.1017/S0013091521000729}.

\bibitem{ML20}
J.~Moraga and K.~Loginov.
\newblock Maximal log fano manifolds are generalized bott towers, 2020,
  \burlalt{arXiv:2012.00266}{http://arxiv.org/abs/arXiv:2012.00266}.

\bibitem{MS21}
J.~Moraga and R.~Svaldi.
\newblock A geometric characterization of toric singularities, 2021,
  \burlalt{arXiv:2108.01717}{http://arxiv.org/abs/arXiv:2108.01717}.

\bibitem{Mor82}
S.~Mori.
\newblock Threefolds whose canonical bundles are not numerically effective.
\newblock {\em Proc. Nat. Acad. Sci. U.S.A.}, 77(6, part 1):3125--3126, 1980.
\newblock \doi{10.1073/pnas.77.6.3125}.

\bibitem{MM82}
S.~Mori and S.~Mukai.
\newblock Classification of {F}ano {$3$}-folds with {$B_{2}\geq 2$}.
\newblock {\em Manuscripta Math.}, 36(2):147--162, 1981/82.
\newblock \doi{10.1007/BF01170131}.

\bibitem{Muk89}
S.~Mukai.
\newblock Biregular classification of {F}ano {$3$}-folds and {F}ano manifolds
  of coindex {$3$}.
\newblock {\em Proc. Nat. Acad. Sci. U.S.A.}, 86(9):3000--3002, 1989.
\newblock \doi{10.1073/pnas.86.9.3000}.

\bibitem{Nak21}
Y.~Nakamura.
\newblock Dual complex of log {F}ano pairs and its application to {W}itt vector
  cohomology.
\newblock {\em Int. Math. Res. Not. IMRN}, 13(13):9802--9833, 2021.
\newblock \doi{10.1093/imrn/rnz356}.

\bibitem{OZ99}
K.~Oguiso and D.-Q. Zhang.
\newblock On the complete classification of extremal log {E}nriques surfaces.
\newblock {\em Math. Z.}, 231(1):23--50, 1999.
\newblock \doi{10.1007/PL00004724}.

\bibitem{Pro00}
Y.~G. Prokhorov.
\newblock Blow-ups of canonical singularities.
\newblock In {\em Algebra ({M}oscow, 1998)}, pages 301--317. de Gruyter,
  Berlin, 2000.

\bibitem{Pro01b}
Y.~G. Prokhorov.
\newblock On a conjecture of {S}hokurov: characterization of toric varieties.
\newblock {\em Tohoku Math. J. (2)}, 53(4):581--592, 2001.
\newblock \doi{10.2748/tmj/1113247802}.

\bibitem{Pro01}
Y.~G. Prokhorov.
\newblock On log canonical thresholds.
\newblock In {\em On log canonical thresholds}, pages 3961--3970. Jaros\l aw A.
  Wi\'{s}niewski, 2001.
\newblock \doi{10.1081/AGB-100105984}.
\newblock Special issue dedicated to Alexei Ivanovich Kostrikin.

\bibitem{PS01}
Y.~G. Prokhorov and V.~V. Shokurov.
\newblock The first fundamental theorem on complements: from global to local.
\newblock {\em Izv. Ross. Akad. Nauk Ser. Mat.}, 65(6):99--128, 2001.
\newblock \doi{10.1070/IM2001v065n06ABEH000366}.

\bibitem{PS09}
Y.~G. Prokhorov and V.~V. Shokurov.
\newblock Towards the second main theorem on complements.
\newblock {\em J. Algebraic Geom.}, 18(1):151--199, 2009.
\newblock \doi{10.1090/S1056-3911-08-00498-0}.

\bibitem{Puk02}
A.~V. Pukhlikov.
\newblock Birationally rigid {F}ano hypersurfaces with isolated singularities.
\newblock {\em Mat. Sb.}, 193(3):135--160, 2002.
\newblock \doi{10.1070/SM2002v193n03ABEH000640}.

\bibitem{Rei87}
M.~Reid.
\newblock Young person's guide to canonical singularities.
\newblock In {\em Algebraic geometry, {B}owdoin, 1985 ({B}runswick, {M}aine,
  1985)}, volume~46 of {\em Proc. Sympos. Pure Math.}, pages 345--414. Amer.
  Math. Soc., Providence, RI, 1987.

\bibitem{Rei85}
M.~Reid.
\newblock Young person's guide to canonical singularities.
\newblock In {\em Algebraic geometry, {B}owdoin, 1985 ({B}runswick, {M}aine,
  1985)}, volume~46 of {\em Proc. Sympos. Pure Math.}, pages 345--414. Amer.
  Math. Soc., Providence, RI, 1987.

\bibitem{Roth1930}
L.~Roth.
\newblock The {A}nalogue of {S}chur's {S}extic in {H}igher {S}pace.
\newblock {\em J. London Math. Soc.}, 5(4):291--295, 1930.
\newblock \doi{10.1112/jlms/s1-5.4.291}.

\bibitem{San21}
T.~Sano.
\newblock On birational boundedness of some calabi-yau hypersurfaces, 2021,
  \burlalt{arXiv:2101.03708}{http://arxiv.org/abs/arXiv:2101.03708}.

\bibitem{IP99a}
I.~R. Shafarevich, editor.
\newblock {\em Algebraic geometry. {V}}, volume~47 of {\em Encyclopaedia of
  Mathematical Sciences}.
\newblock Springer-Verlag, Berlin, 1999.
\newblock Fano varieties, A translation of {{\i}t Algebraic geometry. 5}
  (Russian), Ross. Akad. Nauk, Vseross. Inst. Nauchn. i Tekhn. Inform., Moscow,
  Translation edited by A. N. Parshin and I. R. Shafarevich.

\bibitem{Sho00}
V.~V. Shokurov.
\newblock Complements on surfaces.
\newblock In {\em Complements on surfaces}, volume 102, pages 3876--3932. J.
  Math. Sci. (New York), 2000.
\newblock \doi{10.1007/BF02984106}.
\newblock Algebraic geometry, 10.

\bibitem{Sim16}
C.~Simpson.
\newblock The dual boundary complex of the {$SL_2$} character variety of a
  punctured sphere.
\newblock {\em Ann. Fac. Sci. Toulouse Math. (6)}, 25(2-3):317--361, 2016.
\newblock \doi{10.5802/afst.1496}.

\bibitem{Sum74}
H.~Sumihiro.
\newblock Equivariant completion.
\newblock {\em J. Math. Kyoto Univ.}, 14:1--28, 1974.
\newblock \doi{10.1215/kjm/1250523277}.

\bibitem{Tak00}
S.~Takayama.
\newblock Simple connectedness of weak {F}ano varieties.
\newblock {\em J. Algebraic Geom.}, 9(2):403--407, 2000.

\bibitem{Tak89}
K.~Takeuchi.
\newblock Some birational maps of {F}ano {$3$}-folds.
\newblock {\em Compositio Math.}, 71(3):265--283, 1989.
\newblock \urlprefix\url{http://www.numdam.org/item?id=CM_1989__71_3_265_0}.

\bibitem{Tot17}
B.~Totaro.
\newblock The failure of kodaira vanishing for fano varieties, and terminal
  singularities that are not cohen-macaulay, 2017,
  \burlalt{arXiv:1710.04364}{http://arxiv.org/abs/arXiv:1710.04364}.

\bibitem{TW21}
B.~Totaro and C.~Wang.
\newblock Klt varieties of general type with small volume, 2021,
  \burlalt{arXiv:2104.12200}{http://arxiv.org/abs/arXiv:2104.12200}.

\bibitem{Tsu88}
H.~Tsuji.
\newblock Logarithmic {F}ano manifolds are simply connected.
\newblock {\em Tokyo J. Math.}, 11(2):359--362, 1988.
\newblock \doi{10.3836/tjm/1270133981}.

\bibitem{Sho79}
V.~V. \v{S}okurov.
\newblock Smoothness of a general anticanonical divisor on a {F}ano variety.
\newblock {\em Izv. Akad. Nauk SSSR Ser. Mat.}, 43(2):430--441, 1979.

\bibitem{Xu14}
C.~Xu.
\newblock Finiteness of algebraic fundamental groups.
\newblock {\em Compos. Math.}, 150(3):409--414, 2014.
\newblock \doi{10.1112/S0010437X13007562}.

\bibitem{Yao13}
Y.~Yao.
\newblock A criterion for toric varieties, 2013.

\bibitem{Zha95}
D.-Q. Zhang.
\newblock The fundamental group of the smooth part of a log {F}ano variety.
\newblock {\em Osaka J. Math.}, 32(3):637--644, 1995.
\newblock \urlprefix\url{http://projecteuclid.org/euclid.ojm/1200786270}.

\bibitem{Zha06}
Q.~Zhang.
\newblock Rational connectedness of log {${\bf Q}$}-{F}ano varieties.
\newblock {\em J. Reine Angew. Math.}, 590:131--142, 2006.
\newblock \doi{10.1515/CRELLE.2006.006}.

\end{thebibliography}

\vspace{0.5cm}
\end{document}